\DeclareFontFamily{OT1}{pzc}{}
\DeclareFontShape{OT1}{pzc}{m}{it}{<-> s * [1.10] pzcmi7t}{}
\DeclareMathAlphabet{\mathpzc}{OT1}{pzc}{m}{it}
\DeclareFontFamily{OT1}{rsfs}{}
 \DeclareFontShape{OT1}{rsfs}{n}{it}{<->rsfs10}{}
 \DeclareMathAlphabet{\curly}{OT1}{rsfs}{n}{it}
\theoremstyle{plain}  
\newtheorem{theorem}{Theorem}[section]
\newtheorem*{theorem*}{Theorem}
\newtheorem{lemma}[theorem]{Lemma}
\newtheorem{proposition}[theorem]{Proposition}
\newtheorem{definition}[theorem]{Definition}
\theoremstyle{remark}
\newtheorem{remark}[theorem]{Remark}
\newtheorem*{claim*}{Claim}
\numberwithin{equation}{section}
\renewcommand{\leq}{\leqslant}
\renewcommand{\geq}{\geqslant}
\newcommand{\R}{\mathbb{R}}
\newcommand{\Z}{\mathbb{Z}}
\newcommand{\C}{\mathbb{C}}
\newcommand{\GGG}{\curly{G}}
\newcommand{\UUU}{\curly{U}}
\newcommand{\cM}{\mathcal{M}}
\newcommand{\calR}{\mathcal{R}}
\newcommand{\bE}{{\bf{E}}}
\newcommand{\dbar}{\bar{\partial}}
\newcommand{\lra}{\longrightarrow}
\newcommand{\st}{\;:\;}
\newcommand{\PSL}{\mathrm{PSL}}
\newcommand{\SU}{\mathrm{SU}}
\newcommand{\OO}{\mathrm{O}}
\newcommand{\GL}{\mathrm{GL}}
\newcommand{\SL}{\mathrm{SL}}
\newcommand{\NSL}{\mathrm{NSL}}
\newcommand{\SSS}{\mathrm{S}}
\newcommand{\SO}{\mathrm{SO}}
\newcommand{\Sp}{\mathrm{Sp}}
\newcommand{\Spin}{\mathrm{Spin}}
\DeclareMathOperator{\ad}{ad}
\DeclareMathOperator{\Ad}{Ad}
\DeclareMathOperator{\rank}{rank}
\DeclareMathOperator{\Hom}{Hom}
\DeclareMathOperator{\End}{End}
\DeclareMathOperator{\Id}{Id}
\DeclareMathOperator{\Aut}{Aut}
\DeclareMathOperator{\Int}{Int}
\DeclareMathOperator{\Out}{Out}
\DeclareMathOperator{\Tr}{Tr}
\DeclareMathOperator{\Nm}{Nm}
\DeclareMathOperator{\cl}{\mathpzc{cl}}
\DeclareMathOperator{\conj}{Conj}
\newcommand{\la}{\langle}
\newcommand{\ra}{\rangle}
\newcommand{\Pic}{\operatorname{Pic}}
\renewcommand{\phi}{\varphi}
\newcommand{\liep}{\mathfrak{p}}
\newcommand{\lieg}{\mathfrak{g}}
\newcommand{\liel}{\mathfrak{l}}
\newcommand{\liez}{\mathfrak{z}}
\newcommand{\liegl}{\mathfrak{gl}}
\newcommand{\liesu}{\mathfrak{su}}
\newcommand{\lieso}{\mathfrak{so}}
\newcommand{\liesl}{\mathfrak{sl}}
\newcommand{\lieu}{\mathfrak{u}}
\renewcommand{\phi}{\varphi}
\newcommand{\Mg}{\mathcal{M}^{\operatorname{gauge}}}
\begin{document}

\title[Involutions and higher order automorphisms of Higgs bundle moduli spaces]
{Involutions and higher order automorphisms \\ of Higgs bundle moduli spaces}

\author[Oscar Garc{\'\i}a-Prada]{Oscar Garc{\'\i}a-Prada}
\address{Instituto de Ciencias Matem\'aticas \\
  CSIC-UAM-UC3M-UCM \\ Nicol\'as Cabrera, 13--15 \\ 28049 Madrid \\ Spain}
\email{oscar.garcia-prada@icmat.es}

\author[S. Ramanan]{S. Ramanan}
\address{Chennai Mathematical Institute\\
H1, SIPCOT IT Park, Siruseri\\
Kelambakkam 603103\\
India}
\email{sramanan@cmi.ac.in}

\thanks{
  Partially supported by the Spanish MINECO under the ICMAT Severo Ochoa 
grant No. SEV-2011-0087, and grant No. MTM2013-43963-P and by the European
Commission Marie Curie IRSES  MODULI Programme PIRSES-GA-2013-612534. 
}

\subjclass[2000]{Primary 14H60; Secondary 57R57, 58D29}

\begin{abstract}
We consider the moduli space $\cM(G)$ of 
$G$-Higgs bundles over a compact Riemann surface $X$, 
where $G$ is a complex semisimple Lie group. 
This is a hyperk\"ahler manifold homeomorphic to the 
moduli space $\calR(G)$ of representations of the fundamental
group of $X$ in $G$. In this paper we study
finite order automorphisms of $\cM(G)$ obtained by 
combining the action of an element of order $n$ 
in $H^1(X,Z)\rtimes\Out(G)$, where $Z$ is the centre of 
$G$ and $\Out(G)$ is the group of outer 
automorphisms of $G$, with the multiplication 
of the Higgs field by an $n$th-root of unity,
and describe the subvarieties of fixed points.
We give special attention to the case of involutions, 
defined by the action of an element of order $2$
in $H^1(X,Z)\rtimes\Out(G)$ combined with the  multiplication of the
Higgs field by $\pm 1$. In this
situation, the subvarieties of fixed 
points are hyperk\"ahler submanifolds of $\cM(G)$ in the  (+1)-case,
corresponding to the moduli space of representations of
the fundamental group in  certain reductive complex subgroups of $G$ defined
by holomorphic involutions of $G$; while  in the (-1)-case they
are Lagrangian subvarieties corresponding to the moduli space of  representations of 
the fundamental group of $X$ in real forms 
of $G$ and certain extensions of these. 
We illustrate the general theory with the description of involutions 
for $G=\SL(n,\C)$ and involutions and order three automorphism defined
by triality for $G=\Spin(8,\C)$. 

\end{abstract}

\maketitle

\section{Introduction}

Let 
$G$ be a complex connected semisimple  Lie group with Lie algebra $\lieg$. 
Let $X$ be a smooth projective curve over $\C$, equivalently a compact 
Riemann surface. A  $G$-Higgs bundle over $X$ is a pair $(E,\varphi)$ where
$E$ is  a principal $G$-bundle $E$ over $X$ and
$\varphi$ is a  section of $E(\lieg)\otimes K$, where $E(\lieg)$ is
the bundle associated to $E$ via the adjoint representation of $G$, 
and $K$ is the canonical bundle on $X$.
These objects were introduced by Hitchin \cite{hitchin1987,hitchin:duke}.
There are  notions of (semi)stability and polystability for $G$-Higgs bundles  
which allow to  consider  the   moduli space of polystable
$G$-Higgs bundles $\mathcal{M}(G)$, which  has the structure of a complex 
algebraic variety.  Let $Z$ be the centre of $G$ and let $\Out(G)$ be the group of outer 
automorphisms of $G$. The group $H^1(X,Z)\rtimes\Out(G)\times \C^*$
acts naturally on $\mathcal{M}(G)$. In this paper we study finite
order automorphisms  
defined by elements of this group and describe their fixed points, 
with special emphasis in the case of involutions --- 
the action of $\Aut(X)$ on  $\mathcal{M}(G)$ is considered in \cite{basu-garcia-prada}.  
The simplest involution $(E,\varphi)\mapsto (E,-\varphi)$ was already studied by Hitchin 
when  $G=\SL(2,\C)$ in \cite{hitchin1987}.

An important feature of $G$-Higgs bundles is their relation with 
representations 
of the fundamental group of $X$ in $G$. By  a representation we mean a 
homomorphism $\rho:\pi_1(X)\to G$. We say that $\rho$ is  reductive if
the  Zariski closure of the  image of $\rho$ in $G$ is a  reductive group,
or equivalently, if the composition of $\rho$ with the adjoint representation
of $G$  gives a totally reducible representation of $\pi_1(X)$ in $\lieg$.
The set of reductive representations in $G$ modulo conjugation by $G$, which
we will denote by $\calR(G)$, is called the moduli space of representations of 
$\pi_1(X)$ in $G$, and  has the structure of a complex algebraic variety. 
Non-abelian Hodge theory establishes that $\cM(G)$ and $\calR(G)$ are 
homeomorphic.
This is proved by combining results by Hitchin \cite{hitchin1987} and
Donaldson \cite{donaldson} for $\SL(2,\C)$ and Simpson
\cite{simpson,simpson:1992} and Corlette \cite{corlette} in general.
In this paper we establish the relation of the fixed points of the involutions, and 
higher order automorphisms  of $\cM(G)$ mentioned above, with representations of $\pi_1(X)$ in
$G$.

An important ingredient in the theory of Higgs bundles is the  $\C^*$ action
 on $\cM(G)$ defined  by  $(E,\varphi)\mapsto (E,\lambda \varphi)$ for 
$\lambda \in \C^*$. In particular, multiplication by
$-1$ sending $(E,\varphi)$ to  $(E,-\varphi)$  
defines the simplest non-trivial involution in $\cM(G)$ that we consider in this 
paper. To describe the fixed points, consider the group $\Int(G)$ of inner
automorphisms  of $G$, and an element $\theta\in \Int(G)$ of order $2$.
Then $\theta$ gives an involution  of $\lieg$ which can be decomposed
in terms of $\pm 1$-eigenspaces as $\lieg={\lieg}^+\oplus {\lieg}^-$, where  
clearly ${\lieg}^+$ is the Lie algebra of $G^\theta$, the subgroup of fixed points under 
$\theta$. The restriction of the adjoint representation of $G$ to $G^\theta$
gives the adjoint representation of $G^\theta$ in ${\lieg}^+$ as well as the isotropy
representation  $G^\theta\to \GL(\lieg^-)$. The fixed points of our involution
$(E,\varphi)\mapsto (E,-\varphi)$ are given by pairs $(E,\varphi)$ consisting
of  a $G^\theta$-bundle $E$ and $\varphi\in H^0(X,E(\lieg^-)\otimes K)$, where
$E(\lieg^-)$ is the bundle associated to the isotropy representation.
Two involutions $\theta,\theta'\in \Int_2(G)$ which are equivalent, in the sense
that there is an $\alpha\in \Int(G)$ such that
$\theta'=\alpha\theta\alpha^{-1}$, define the same locus in  the fixed point
set in $\cM(G)$. So the fixed point set is determined basically by the set $\Int_2(G)/\sim$,
where $\Int_2(G)$ is the set of elements of order $2$ in $\Int(G)$ and $\sim$
is the equivalence relation defined above.

In \cite{cartan} Cartan shows that there exists a conjugation $\tau$ of $G$, 
i.e. an
antiholomorphic involution, defining a maximal compact subgroup of $G$,
such that in  each class in  $\Int_2(G)/\sim$ there is a $\theta$ 
commuting with $\tau$. For such $\theta$, we can consider the conjugation of 
$G$ defined by $\sigma:=\theta\tau$. The fixed point subgroup $G^\sigma$ defines 
a real form of $G$, which is inner equivalent to the compact real form, 
what  is known as a real form of Hodge type. The fixed points of the involution
$(E,\varphi)\mapsto (E,-\varphi)$ correspond precisely with the representations
of  $\pi_1(X)$ in $G^\sigma$.
  
Of course, in general, there are other real forms which are not of Hodge type.
In this paper we show that representations of $\pi_1(X)$ in these other real 
forms are also related  to fixed points of appropriate involutions of $\cM(G)$. 
To define these involutions, we consider the group $\Aut(G)$ of holomorphic 
automorphisms of $G$. The natural action of $\Aut(G)$ on $\cM(G)$ decends to
an action of the outer group of automorphisms 
$\Out(G)=\Aut(G)/\Int(G)$. In particular we can consider the set $\Out_2(G)$ of 
elements of order $2$ in $\Out(G)$ and define for $a\in \Out_2(G)$ the 
involutions $\iota(a,\pm):\cM(G)\to \cM(G)$ given by 
$(E,\varphi)\to (a(E),\pm a(\varphi))$. We can extend the equivalence 
relation defined above to the set  $\Aut_2(G)$  of elements of order $2$
of $\Aut(G)$, establishing for  $\theta,\theta'\in \Aut_2(G)$ 
that $\theta\sim \theta'$ if  there is an $\alpha\in \Int(G)$ such that
$\theta'=\alpha\theta\alpha^{-1}$. This equivalence descends to define a 
surjective map  $\cl: \Aut_2(G)/\sim  \to \Out_2(G)$. Now, for 
$\theta\in \Aut_2(G)$, as in the case of $\theta\in \Int_2(G)$,  we have 
the decomposition $\lieg=\lieg^+\oplus \lieg^-$ and the representations 
$G^\theta\to \GL(\lieg^\pm)$.  The fixed points of $\iota(a,\pm)$
are given by $(E,\varphi)$ where  $E$ is a $G^\theta$-bundle,
$\varphi\in H^0(X,E(\lieg^\pm)\otimes K)$, and $[\theta]\in \cl^{-1}(a)$.
As in the previous case, involutions defining the same class $[\theta]$ 
determine
the same locus in $\cM(G)$. To relate to real forms of $G$, we first note
that Cartan's result mentioned above applies in fact more generally to 
$\Aut_2(G)/\sim$ and hence in each class we can find a representative
$\theta$ commuting with the compact conjugation $\tau$, such that 
$\sigma:=\theta\tau$ is a conjugation of $G$ defining
a real form $G^\sigma$. We will say that $\sigma$, and the corresponding 
$\theta$ are in the `clique' $a$. The real forms of
Hodge type are thus the ones corresponding to the trivial clique in $\Out_2(G)$.
Now, the fixed points of $\iota(a,+)$ are given by  $G^\theta$-Higgs bundles,
thus corresponding to representations of $\pi_1(X)$ in $G^\theta$,
where $\theta$ is in the clique $a$, and hence defining hyperk\"ahler subvarieties
of $\cM(G)$. The fixed points of $\iota(a,-)$ 
correspond to representations of $\pi_1(X)$ in $G^\sigma$,
where $\sigma$ is in the clique $a$, and define complex Lagrangian 
subvarieties of $\cM(G)$, with respect to the complex structure of $\cM(G)$
determined  by the complex structure of $X$ and the corresponding holomorphic
symplectic structure.

We can go a step further and consider the action of $H^1(X,Z)$,  where $Z$ is 
the centre of $G$, on $\cM(G)$,  defined by `tensoring' a principal $G$-bundle 
by an element in $H^1(X,Z)$ --- the principal bundle analogue of tensoring a 
vector bundle by a  line bundle. We then consider 
elements $(\alpha,a)\in H^1(X,Z)\rtimes\Out(G)$ of order $2$, where the
semidirect product is with respect to the natural action of $\Out(G)$ on
$H^1(X,Z)$, and define the involutions $\iota(a,\alpha,\pm)$ of $\cM(G)$
given by $(E,\varphi)\mapsto (a(E)\otimes\alpha,\pm a(\varphi))$. 
The fixed points of these involutions are now related to
what we call $(G_\theta,\pm)$-Higgs bundles. These are pairs $(E,\varphi)$
consisting of a $G_\theta$-bundle $E$, where $\theta\in \Aut_2(G)$ is in 
the clique of $a$ and
$$
G_\theta:=\{g\in G\;:\; \theta(g)=c(g)g,\;\mbox{with}\; c(g)\in Z\},
$$
and  $\varphi\in H^0(X,E(\lieg^\pm)\otimes K)$.

The $(G_\theta,+)$-bundles correspond with 
representations of $\pi_1(X)$ in $G_\theta$, while the 
$(G_\theta,-)$-bundles correspond with representations of
$\pi_1(X)$ in the subgroup  
 $$
G_\sigma:=\{g\in G\;:\; \sigma(g)=c(g)g,\;\mbox{with}\; c(g)\in Z\}.
$$
The groups $G_\theta$ and $G_\sigma$ are the normalizers of $G^\theta$ and
$G^\sigma$, respectively, in $G$, and are extensions of $G^\theta$ and
$G^\sigma$, respectively, by a finite group --- the same in both cases.
Again the fixed points subvarieties for $\iota(a,\alpha,+)$ are 
hyperk\"ahler submanifolds, while those for $\iota(a,\alpha,-)$ are Lagrangian 
submanifolds.

We can generalise our study to  automorphisms of $\cM(G)$ defined
by elements of $H^1(X,Z)\rtimes\Out(G)\times \C^*$ of arbitrary finite order. For this, we take an element $(\alpha,a)\in H^1(X,Z)\rtimes\Out(G)$ of order $n$
and $\zeta_k:=\exp(2\pi i\frac{ k}{n})$, with $0\leq k \leq n-1$, and consider 
the automorphism
$\iota(a,\alpha,\zeta_k)$ of $\cM(G)$
given by $(E,\varphi)\mapsto (a(E)\otimes\alpha,\zeta_k a(\varphi))$. 
The fixed points of this automorphism are described by
what we call $(G^\theta,\zeta_k)$-Higgs bundles  (resp. $(G_\theta,\zeta_k)$-Higgs bundles). These are pairs $(E,\varphi)$
consisting of a $G^\theta$-bundle (resp. $G_\theta$-bundle) $E$, where 
$\theta\in \Aut_n(G)$ is in the class defined by $a$ and, as above, 
$$
G_\theta:=\{g\in G\;:\; \theta(g)=c(g)g,\;\mbox{with}\; c(g)\in Z\},
$$
and  $\varphi\in H^0(X,E(\lieg^k)\otimes K)$, 
where $\lieg^k$ is the eigenspace with eigenvalue 
$\zeta_k$
of the  automorphism of $\lieg$ defined by $\theta$.
Except for the case $k=0$, which corresponds to ordinary 
$G^\theta$-Higgs bundles (resp. $G_\theta$-Higgs bundles) since
$\lieg^0$ is the Lie algebra of $G^\theta$ and  $G_\theta$, the other 
cases are not
generally related  to representations of the fundamental group. The fixed points
for $k=0$  define indeed hyperk\"ahler subvarieties of $\cM(G)$.  

The paper is organized as follows. In Section \ref{lie-theory} we give 
the necessary background on involutions and real forms of complex semisimple
Lie algebras
and Lie groups. Somethings are quite standard, but  we include other relevant 
results necessary for our Higgs bundle analysis that we have not found
in the literature. Recently we learned of the paper by J. Adams \cite{adams}
which relates closely to the  Galois cohomology approach to real
forms that we take in this paper. We extend some of the features 
to arbitrary finite order automorphisms of a complex semisimple Lie group
$G$. Since the arguments are basically the same as those for involutions,
we mostly avoid details on the proofs.
In Section \ref{automorphisms} we study finite order automorphisms of a
$G$-bundle and show how these give rise to reductions of structure group of the 
bundle. We consider a version of this, twisted by a finite order automorphism
of $G$ and a subgroup of $Z$, the centre of $G$.  These will play a central role in the 
study of our finite  order automorphisms of the moduli space 
of $G$-Higgs bundles.  

In Section \ref{higgs-bundles} we review the basics of $G$-Higgs bundle theory,
where $G$ is a complex semisimple Lie group, including the notions of stability
and the Hitchin--Kobayashi correspondence, relating to solutions to the Hitchin 
equations, whose  moduli space appears as
a hyperk\"ahler quotient. 
We then describe a natural group of automorphisms of $\cM(G)$,
which in particular will provide with the finite order
automorphisms that we study in this paper.
In Section  \ref{invo-higgs-bundles} we introduce  a class of 
$G$-Higgs bundles with an  extra structure determined by an element  
$\theta\in \Aut_n(G)$. These are the 
$(G^\theta,\zeta_k)$- and  
$(G_\theta,\zeta_k)$-Higgs bundles (which we  call
$(G^\theta,\pm)$- and  
$(G_\theta,\pm)$-Higgs bundles when $\theta\in  \Aut_2(G)$)
mentioned above.  These objects require appropriate stability 
conditions, defining moduli spaces
$\cM(G^\theta,\zeta_k)$ and   $\cM(G_\theta,\zeta_k)$,  
in terms of which we will describe  the fixed points 
of the finite order automorphisms studied later. 

In Section \ref{higher-order-auto} we undertake the study of  
the automorphisms considered  above 
on the moduli space of $G$-Higgs bundles.  
Building upon results in Sections      
\ref{lie-theory}, \ref{automorphisms}, \ref{higgs-bundles} 
and \ref{invo-higgs-bundles}, we describe the fixed point subvarieties.
Our main results are Theorems  \ref{moduli-n-automorphism}
and \ref{moduli-n-automorphism-alpha}.

In Section \ref{higgs-reps} we review first the basics on the moduli 
space $\calR(G)$  of reductive representations of $\pi_1(X)$ in $G$,
and its homeomorphism with the moduli space of $G$-Higgs bundles 
$\cM(G)$.  We then show similar correspondences for 
$\cM(G^\theta,\zeta_0)$ and $\cM(G_\theta,\zeta_0)$ with
$\calR(G^\theta)$ and  $\calR(G_\theta)$ respectively,
and when $\theta$ is of order $2$ for 
 $\cM(G^\theta,-)$ and   $\cM(G_\theta,-)$ with   
 $\calR(G^\sigma)$  and $\calR(G_\sigma)$ respectively.

In Section \ref{section-involutions-higgs-reps} 
we 
specialise the results of Section \ref{higher-order-auto} to the case
of involutions  on $\cM(G)$  and  translate these results to the moduli 
space $\calR(G)$ of
representations of the fundamental group of $X$ in $G$. Here we use the 
non-abelian Hodge theory correspondences given in Section \ref{higgs-reps}. 
Our main results are Theorems \ref{fixed-connected-a},
\ref{fixed-connected}, \ref{fixed-connected-a-alpha},
   \ref{fixed-a-reps},  \ref{fixed-a-lambda-reps}, 
\ref{hyperkahler-lagrangian}, and \ref{hyperkahler-n}. 
In particular, in  Theorem  \ref{hyperkahler-lagrangian}
we establish the hyperk\"ahler or Lagrangian property 
of the fixed point subvarieties according to the type of involution. 
These correspond respectively to $(B,B,B)$ and $(B,A,A)$ branes in the 
terminology used  in the study of Langlands duality and mirror
symmetry for Higgs bundles (\cite{kapustin-witten,hitchin2013,biswas-garcia-prada,baraglia-schaposnik}).

In Section \ref{sln} we ilustrate our main results in Section
\ref{section-involutions-higgs-reps}  in
the case $G=\SL(n,\C)$. It is worth pointing out that,
for $G=\SL(2,\C)$,  the description
of the fixed points involving elements of order $2$ in $H^1(X,Z)$,
 which in this case can be identified with $J_2(X)$ --- the $2$-torsion
elements in the Jacobian of $X$ ---, involves the Prym variety of the \'etale 
cover of $X$ defined by the element in $J_2(X)$. A detailed study of this 
case is carried out in \cite{garcia-prada-ramanan-rank2}. For $\SL(n,\C)$,
with even $n$, there is a similar phenomenon which involves now generalised
Prym varieties in the sense of Narasimhan--Ramanan \cite{narasimhan-ramanan}.
This case and the general Prym construction for arbitrary groups is being 
pursued  somewhere else.

We finish in Section \ref{triality} by applying our results  to the study of
involutions and order $3$ automorphisms of the moduli space of 
$\Spin(8,\C)$-Higgs bundles and exploring the role of triality.




The main results of this paper have been presented in several conferences
and workshops since 2006, and announced in 
\cite{garcia-prada-2007,garcia-prada,ramanan}. We apologize for having taken
so long to produce the final paper. 

\noindent{\bf Acknowledgements}. We wish to thank the following institutions
for hospitality and support:  MSRI, Berkeley; 
Newton Institute for Mathematical 
Sciences, Cambridge; Mathematical Institute, Oxford; NUS, Singapore;
IISc, Bangalore; CMI, Chennai; ICMAT, Madrid.

\section{Automorphisms of complex  Lie groups, involutions and real forms}
\label{lie-theory}

We recall some basic facts about  automorphisms and real forms of  
complex Lie algebras and  complex Lie groups 
(see \cite{helgason,onishchik,onishchik-vinberg}), and give some 
results needed for our analysis that we have not found in the literature.

\subsection{Automorphisms of Lie algebras and Lie groups}\label{automorphisms-g}

Let $\lieg$ be a complex Lie algebra. We consider the linear
algebraic group $\Aut(\lieg)$ of all automorphisms of $\lieg$.
The group $\Int (\lieg)$  of {\bf inner automorphisms} of $\lieg$
is the normal subgroup of $\Aut(\lieg)$ generated by all elements
of the form $\exp(\ad x)$, where $\ad$ is the adjoint representation
of $\lieg$ and $x\in \lieg$. The quotient  
$$
\Out(\lieg)=\Aut(\lieg)/\Int (\lieg) 
$$ 
is called the group of {\bf outer automorphisms} of $\lieg$.
We thus have an extension 
 \begin{equation}\label{outer-extension-lie-algebra}
1 \lra \Int(\lieg)\lra \Aut(\lieg) \lra \Out(\lieg) \lra 1.
\end{equation}

Let  $G$ be  a connected complex Lie group with Lie algebra $\lieg$. Through all
the paper the centre $Z(G)$ of $G$ will be denoted by $Z$. 
Then $\Int(\lieg)=\Ad(G)$, where $\Ad$ is the adjoint representation of $G$
in $\lieg$. Recall that $\Ad(G)\cong G/Z$.

If $\lieg$ is semisimple,  $\Out(\lieg)$ is isomorphic to the group of
automorphisms of its Dynkin diagram. 
From this one may list the groups
$\Out(\mathfrak{g})$ for the simple complex Lie algebras
in Table \ref{outer-groups}.

Let $G$ be a complex  Lie group. Let $\Aut(G)$ be the group of
holomorphic automorphisms of $G$  and   
$\Int(G)$ be the normal subgroup of $\Aut(G)$ given by 
{\bf inner automorphisms}.
The elements of $\Int(G)$ will be denoted by $\Int(g)$ for $g\in G$ and the
action on $G$ is given by 
$$
\Int(g)(h):= ghg^{-1}\;\;\; \mbox{for every}\;\;\; h\in G.
$$

Then if $G$ is connected $\Int(G)\cong\Ad(G)$.  Let $\Out(G):=\Aut(G)/\Int(G)$ be the 
group of {\bf outer automorphisms} of $G$.
It is well-known  that  if $G$ is a connected complex reductive Lie group 
the extension 
\begin{equation}\label{outer-extension-group}
1 \lra \Int(G)\lra \Aut(G) \lra \Out(G)\lra 1.
\end{equation}
splits (see \cite{de-siebenthal}).

Let $\tilde{G}$ be the universal cover of $G$. We clearly have 
$$
\Int(G)\cong \Int(\tilde{G})\cong\Int(\Ad(G))\cong \Ad(G)\cong \Int(\lieg).
$$

We thus observe that, if $\pi_1(G)=\{1\}$ or $Z=\{1\}$, extension 
(\ref{outer-extension-group}) is isomorphic to
(\ref{outer-extension-lie-algebra}). 

\begin{table}[htbp]
\centering
\begin{tabular}{|c|c|c|}
\hline $\mathfrak{g}$ & $\Out(\mathfrak{g})$ & $Z(\tilde{G})$\\
\hline $A_n$, $n>1$ & $\mathbb{Z}/2$ & $\Z/(n+1)$\\
\hline $A_1$ & $\{1\}$ & $\Z/2$\\
\hline $B_n$ & $\{1\}$ & $\Z/2$\\
\hline $C_n$ & $\{1\}$ & $\Z/2$\\
\hline $D_4$ & $S_3$ & $\Z/2\times \Z/2$\\
\hline $D_n$, $n>4$, $n$ even &  $\mathbb{Z}/2$ & $\Z/2\times \Z/2$\\
\hline $D_n$, $n>4$, $n$ odd &  $\mathbb{Z}/2$ & $\Z/4$\\
\hline $E_6$ & $\mathbb{Z}/2$ & $\{1\}$\\
\hline $E_7$ & $\{1\}$ & $\{1\}$\\
\hline $E_8$ & $\{1\}$ & $\{1\}$\\
\hline $F_4$ & $\{1\}$ & $\{1\}$\\
\hline $G_2$ & $\{1\}$ & $\{1\}$\\
\hline
\end{tabular}
\vspace{12pt}
\caption{Group of outer automorphisms and centres} 
\label{outer-groups}

\end{table}


Observe that $\Aut(G)$ acts on $Z$, defining an action 
of $\Out(G)$ since  $\Int(G)$ acts trivially on $Z$. 
Hence in order to compute  $\Out(G)$ for any semisimple complex Lie group
$G$ out of the Table \ref{outer-groups}, we need to compute how 
$\Out(\tilde{G})$ acts on  $Z(\tilde{G})$. These are listed when $G$ is simple
 in Table  \ref{outer-groups} (see \cite{goto-kobayashi}).  If a semisimple 
group $G$ is isomorphic to $\tilde{G}/Z'$, where 
$Z'\subset Z(\tilde{G})$ is a subgroup, where 
$\tilde{G}$ is the universal covering of $G$, we have 
$\pi_1(G)=Z(\tilde{G})/Z'$. Let $f\in \Aut(\tilde{G})$. Then, clearly
$f$ descends to give an element in $\Aut(G)$ if and only if 
$f(Z')\subset Z'$. Similarly, $\Out(\tilde{G})$ acts on $Z(\tilde{G})$
and $a\in \Out(\tilde{G})$ descends to an element in $\Out(G)$ if 
$Z'$ is invariant under $a$.


\subsection{Involutions and real forms of a complex Lie algebra}
\label{realforms-algebra}

Let $\lieg$ be a complex Lie algebra and  $\lieg_\R$  its
underlying  real Lie algebra. A real subalgebra $\lieg_0$ of
$\lieg_\R$ is called a {\bf real form} of $\lieg$ if 
$\lieg_\R=\lieg_0 \oplus i\lieg_0$. In this case $\lieg_0\otimes\C$
may be  naturally identified with $\lieg$. Given a real form $\lieg_0$,
there is a bijection of 
$\lieg=\lieg_0\otimes \C$ onto itself defined by
$$
\sigma(x+iy)=x-iy,\;\;\; x,y\in \lieg_0,
$$
which is a {\bf conjugation or antiinvolution} of $\lieg$, i.e. an
antilinear homomorphism $\sigma$
such that $\sigma^2=\Id$. 
Conversely, any conjugation $\sigma:\lieg \lra \lieg$ defines the real
form 
$$
\lieg^\sigma:=\{z\in \lieg\;\;:\;\; \sigma(z)=z\}.
$$
 There is
thus a bijective correspondence between real forms and conjugations
of $\lieg$.

To  classify real forms of a complex Lie
algebra $\lieg$ up to
isomorphism, one first observes that if 
$\lieg_0$ and  $\lieg_0'$ are  two real forms of 
$\lieg$  and $f: \lieg_0 \to \lieg_0'$ is an isomorphism,
then $f$ extends uniquely to an automorphism $\alpha:=f^\C$ of $\lieg$.
Now, if  $\sigma$ and $\sigma'$ are  the
corresponding conjugations for $\lieg_0$ and $\lieg_0'$ respectively,
clearly the conjugations $\alpha\sigma_0$ and  $\sigma_0'\alpha$ 
coincide in $\lieg_0$. Therefore they coincide in $\lieg$ and
hence  $\sigma'=\alpha\sigma\alpha^{-1}$.  
Conversely, suppose that there exists $\alpha\in \Aut(\lieg)$ such that 
$\sigma'=\alpha\sigma\alpha^{-1}$.  It is immediate that 
$\alpha(\lieg^{\sigma})=\lieg^{\sigma'}$, i.e. $\alpha(\lieg_0)=\lieg_0'$.

In other words, if $\conj(\lieg)$ is the
set of conjugations of $\lieg$, 
the set of isomorphism classes of real forms of $\lieg$
is in bijection with

\begin{equation}\label{cartan-classes}
\conj(\lieg)/\sim_c,
\end{equation}
where the  equivalence
relation $\sim_c$ for $\sigma,\sigma'\in \conj(\lieg)$  is defined by 
$$
\sigma \sim_c \sigma' \;\;\mbox{if there is}\; \alpha \in
\Aut(\lieg)\;\; \mbox{such that}\;\;  \sigma'=\alpha\sigma \alpha^{-1}.
$$

In \cite{cartan} \'E. Cartan proved that for a semisimple complex Lie
algebra $\lieg$ in the statement above one can replace  conjugations 
(antiinvolutions) of $\lieg$ by $\C$-linear  {\bf involutions}, i.e. 
involutive  automorphisms  of $\lieg$. This is based on the existence of a 
compact  real form of $\lieg$.

Let $\lieg$ be a complex Lie algebra
and  $\tau$ a fixed conjugation.
We have a map
\begin{equation}\label{conj-inv}
   \begin{aligned}
\conj(\lieg) & \to \Aut(\lieg)\\ 
\sigma & \mapsto \theta:=\sigma\tau.
  \end{aligned}
\end{equation}

This correspondence depends on the choice of $\tau$.
Since $\sigma^2=\Id$, then  $\theta^{-1}=\tau\theta\tau$ and
$\theta^2=\Id$ if and only if $\sigma\tau=\tau\sigma$. Also if
$\theta$ corresponds to $\sigma$ and $\alpha\in \Aut(\lieg)$, then
the automorphism $\theta'$ corresponding to $\alpha\sigma\alpha^{-1}$
has the form $\theta'=\alpha\theta(\tau\alpha\tau)^{-1}$. 

We can rephrase these properties in terms of the Galois cohomology
of $\Z/2$ in the group  $\Aut(\lieg)$. To explain this, we recall first
the basic definition of {\bf non-abelian cohomology} 
(see \cite[Ch. III]{serre}). Let $\Gamma$ be a group and  $A$ 
another group acted on by  $\Gamma$, that is, 
every  $\gamma\in \Gamma$ defines an automorphism of $A$ that
we will denote also by $\gamma$, such that 
$\gamma(xy)=\gamma(x)\gamma(y)$, 
for every  $x,y\in A$.

We will define $H^0$ and $H^1$. We set  $H^0(\Gamma,A):=A^\Gamma$, the subgroup
of elements of $A$ fixed under $\Gamma$. 
To define $H^1$ we first define a $1$-{\bf cocycle} (or simply {\bf cocycle})
of $\Gamma$ in $A$ 
as a map $\gamma\mapsto a_\gamma$ of $\Gamma$ to $A$ such that
\begin{equation}\label{cocycle}
a_{\gamma\gamma'}=a_\gamma\gamma(a_{\gamma'})\;\; \mbox{for}\;\;
\gamma,\gamma'\in \Gamma.
\end{equation}
The set of cocycles is denoted by $Z^1(\Gamma,A)$.
Two cocycles $a,a'\in Z^1(\Gamma,A)$ are  said to be  {\bf cohomologous}
if there is $b\in A$ such that
\begin{equation}\label{cohomologous}
a'_\gamma=b^{-1}a_\gamma \gamma(b).
\end{equation}
 This is an equivalence relation in $Z^1(\Gamma,A)$ and the quotient is 
denoted by $H^1(\Gamma,A)$. This is the {\bf first cohomology set of
$\Gamma$ in $A$}. It has a distinguished element
(called the "neutral element") even though there is in general no composition: 
the class of the unit cocycle.
If $A$ is commutative $H^0(\Gamma,A)$ and  $H^1(\Gamma,A)$ are  the usual 
cohomology groups of dimensions $0$ and $1$.

We will be dealing with the special case in which 
$\Gamma=\{1,\gamma\}\cong \Z/2$, where  $\gamma$ is the non-trivial element in 
$\Gamma$.  In this situation a $1$-cocyle $a$ is basically given by an element
of $A$, say $s:=a_\gamma$, satisfying that 
$$
s\gamma(s)=1.
$$

Let $a'$ be another $1$-cocycle, and let $s':=a'_\gamma$. 
The cocycles $a$ and $a'$ are then cohomologous 
if there exists  $g\in A$ satisfying 
$$
s'=g^{-1}s\gamma(g).
$$

Now, the correspondence  (\ref{conj-inv})
establishes a bijection between $\conj(\lieg)/\sim_c$ and the 
cohomology set $H^1(\Z/2,\Aut(\lieg))$, where here $\Z/2$ is the Galois 
group of the field extension $\R\subset \C$ acting on the group $\Aut(\lieg)$
by the rule $\theta\mapsto \tau\theta\tau$.

If $\lieg$ is semisimple, a compact real form always exists
(see \cite[Ch. III]{helgason},  or \cite[Sec. 5.1]{onishchik-vinberg})
and we choose the corresponding conjugation $\tau$  as the fixed
conjugation in the correspondence given above. Cartan proved that 
given a conjugation $\sigma$ of $\lieg$ one can choose  a conjugation
$\sigma'=\alpha\sigma\alpha^{-1}$ where  $\alpha\in\Int(\lieg)$, 
such that $\sigma'\tau=\tau\sigma'$.
Hence $\theta=\sigma'\tau$ is an
involution (or equivalently, any Galois cohomology class contains an
invariant cocycle).

We  define the following  equivalence relations for  
$\theta,\theta'\in\Aut(\lieg)$:

$$
\theta \sim \theta'  \;\;\mbox{if there is}\;\; \alpha \in
\Int(\lieg)\;\; \mbox{such that}\;\;  \theta'=\alpha\theta
\alpha^{-1},
$$

$$
\theta \sim_c \theta'  \;\;\mbox{if there is}\; \; \alpha \in
\Aut(\lieg)\;\; \mbox{such that}\;\;  \theta'=\alpha\theta
\alpha^{-1}.
$$

In particular, these define equivalence relations in $\Aut_2(\lieg)$,  the set of involutions of $\lieg$, that
is, elements of order 2 in $\Aut (\lieg)$. Similarly, we define $\sim$ in $\conj(\lieg)$
(we have already defined $\sim_c$ above). 

We thus have the following.

\begin{proposition}
The map $\sigma\mapsto \theta:=\sigma\tau$ gives bijections

$$
\conj(\lieg)/\sim \longleftrightarrow \Aut_2(\lieg) /\sim,
$$
and
$$
\conj(\lieg)/\sim_c \longleftrightarrow \Aut_2(\lieg) /\sim_c.
$$
\end{proposition}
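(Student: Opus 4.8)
The plan is to reduce both bijections to $\tau$-commuting representatives, where the map $\sigma\mapsto\sigma\tau$ is transparent. Write $\conj_\tau(\lieg)=\{\sigma\in\conj(\lieg):\sigma\tau=\tau\sigma\}$ and $\Aut_2^\tau(\lieg)=\{\theta\in\Aut_2(\lieg):\theta\tau=\tau\theta\}$. For $\sigma\in\conj_\tau(\lieg)$ the map $\theta:=\sigma\tau$ is $\C$-linear, satisfies $\theta^2=\sigma\tau\sigma\tau=\sigma^2\tau^2=\Id$, and commutes with $\tau$; conversely $\theta\mapsto\theta\tau$ sends $\Aut_2^\tau(\lieg)$ back to $\conj_\tau(\lieg)$, so $\sigma\mapsto\sigma\tau$ is a bijection $\conj_\tau(\lieg)\to\Aut_2^\tau(\lieg)$. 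By Cartan's theorem quoted above (every class contains an invariant cocycle) each $\sim$-class, hence each $\sim_c$-class, of conjugations meets $\conj_\tau(\lieg)$; the analogous statement for involutions — every involution is inner-conjugate to one commuting with $\tau$, which follows from the conjugacy of compact real forms — shows each class of involutions meets $\Aut_2^\tau(\lieg)$.

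Using this, I define $\Psi\colon \conj(\lieg)/{\sim}\to\Aut_2(\lieg)/{\sim}$ by $[\sigma]\mapsto[\sigma'\tau]$, where $\sigma'$ is any $\tau$-commuting representative of the class, together with the candidate inverse $\Phi\colon[\theta]\mapsto[\theta'\tau]$ for a $\tau$-commuting representative $\theta'$; the same formulas define the maps for $\sim_c$. Assuming these are well defined, they are immediately mutually inverse: the involution $\theta=\sigma'\tau$ produced from $[\sigma]$ already lies in $\Aut_2^\tau(\lieg)$, so $\Phi[\theta]=[\theta\tau]=[\sigma'\tau\tau]=[\sigma']=[\sigma]$, and symmetrically $\Psi\Phi=\Id$. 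Thus the entire proposition collapses to the well-definedness of $\Psi$ and $\Phi$.

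The one substantive point is therefore the following descent lemma: if $\sigma_1',\sigma_2'\in\conj_\tau(\lieg)$ satisfy $\sigma_2'=\alpha\sigma_1'\alpha^{-1}$ with $\alpha\in\Int(\lieg)$ (resp.\ $\alpha\in\Aut(\lieg)$), then $\sigma_1'\tau\sim\sigma_2'\tau$ (resp.\ $\sim_c$). The naive computation only yields the twisted relation $\theta_2=\alpha\theta_1(\tau\alpha\tau)^{-1}$ recorded in the text above, which is a Galois coboundary, not an honest conjugacy. To upgrade it I would use that $\theta_i:=\sigma_i'\tau$ restricts to a Cartan involution of the real form $\lieg^{\sigma_i'}$: its fixed algebra $\lieg^{\sigma_i'}\cap\lieg^\tau$ is a maximal compact subalgebra precisely because $\sigma_i'$ commutes with the compact conjugation $\tau$. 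Since $\alpha$ is an isomorphism $\lieg^{\sigma_1'}\to\lieg^{\sigma_2'}$ of real forms, both $\alpha\theta_1\alpha^{-1}$ and $\theta_2$ are Cartan involutions of $\lieg^{\sigma_2'}$; by the classical uniqueness of Cartan involutions up to an inner automorphism of the real form (\cite[Ch.~III]{helgason}) there is $\delta\in\Int(\lieg^{\sigma_2'})$ with $\theta_2=\delta\,\alpha\theta_1\alpha^{-1}\,\delta^{-1}$ on $\lieg^{\sigma_2'}$, hence on all of $\lieg$ since both sides are $\C$-linear and a real form spans over $\C$. As $\delta$ extends to an element of $\Int(\lieg)$, the element $\beta=\delta\alpha$ lies in $\Int(\lieg)$ (resp.\ $\Aut(\lieg)$) and gives $\theta_2=\beta\theta_1\beta^{-1}$.

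The main obstacle is exactly this descent step, and it is where semisimplicity and the compactness of $\tau$ are essential. The purely cohomological assertion one might hope for — that a coboundary of $\Aut(\lieg)$ which happens to lie in the centralizer of $\sigma_1'$ is already a coboundary of that centralizer — can fail for a general reductive group, and it is the compact real form structure (equivalently the positive definiteness of the $\theta_i$-twisted Killing form) that forces the relevant class to be trivial. In the paper's own language this is the uniqueness, up to an invariant coboundary, of the invariant cocycle in each Galois cohomology class; alternatively one can prove the lemma directly by a polar decomposition in the $\gamma$-Cartan decomposition of the centralizer of $\sigma_1'$, instead of invoking the Cartan-involution uniqueness theorem. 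Everything else — the bijection $\sigma\mapsto\sigma\tau$ on $\tau$-commuting elements and the mutually-inverse bookkeeping — is formal.
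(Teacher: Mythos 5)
Your reduction to $\tau$-commuting representatives, the surjectivity of both quotient maps via Cartan's theorem, and your descent lemma for $\Psi$ are all correct; the Cartan-involution-uniqueness argument is a legitimate way of making precise what the paper leaves implicit (the paper offers no explicit proof, treating the proposition as a consequence of the preceding Galois-cohomology discussion and Cartan's theorem). But there is a genuine gap at exactly the point you dismiss as ``formal'': the well-definedness of $\Phi$ is not bookkeeping, and it does not follow from your descent lemma. Your lemma shows that if $\sigma_1'\sim\sigma_2'$ (both $\tau$-commuting) then $\sigma_1'\tau\sim\sigma_2'\tau$; what $\Phi$ needs is the \emph{converse}: if $\theta_1'=\sigma_1'\tau$ and $\theta_2'=\sigma_2'\tau$ satisfy $\theta_2'=\beta\theta_1'\beta^{-1}$ for some $\beta\in\Int(\lieg)$ (resp.\ $\Aut(\lieg)$), then $\sigma_1'\sim\sigma_2'$ (resp.\ $\sim_c$). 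Equivalently, you have proved that $\Psi$ is a well-defined surjection but never that it is injective, and the two statements are not exchanged by any formal symmetry: one transports Cartan involutions of a fixed real form, the other transports the real form while fixing its complexified Cartan involution. Note also that $\beta$ need not map $\lieg^{\sigma_1'}$ to $\lieg^{\sigma_2'}$ (it maps it to $\lieg^{\beta\sigma_1'\beta^{-1}}$, and $\beta\sigma_1'\beta^{-1}=\theta_2'\cdot\beta\tau\beta^{-1}\neq\theta_2'\tau$ in general), so your argument cannot simply be rerun with the roles swapped.

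The missing half does have a standard proof, dual to the one you gave. Suppose $\theta_2'=\beta\theta_1'\beta^{-1}$ with both $\theta_i'$ commuting with $\tau$. Then $\tau$ and $\beta\tau\beta^{-1}$ are two compact conjugations, both commuting with $\theta_2'$. By the polar-decomposition argument (the relevant positive square root is a limit of polynomials in $\tau\cdot\beta\tau\beta^{-1}$, hence commutes with $\theta_2'$, and positive-definite automorphisms are inner; one concludes using that two \emph{commuting} compact conjugations coincide), there is $\delta\in\Int(\lieg)$ commuting with $\theta_2'$ such that $\delta(\beta\tau\beta^{-1})\delta^{-1}=\tau$. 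Then $(\delta\beta)\,\sigma_1'\,(\delta\beta)^{-1}=\delta\,(\beta\theta_1'\beta^{-1})(\beta\tau\beta^{-1})\,\delta^{-1}=\theta_2'\tau=\sigma_2'$, so $\sigma_1'\sim\sigma_2'$, and the same computation works for $\sim_c$. This is precisely the uniqueness counterpart (``the invariant cocycle is unique up to an invariant coboundary'') that you yourself flag in your last paragraph --- but your text proves it only on the conjugation side, not on the involution side where $\Phi$ requires it. With this lemma added, your proof is complete.
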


In particular,  the set of isomorphism classes of real
forms of $\lieg$ is in one to one correspondence with  
$\Aut_2(\lieg) /\sim_c$.

There is yet another equivalence relation for elements $\theta,\theta'\in
\Aut_2(\lieg)$: we say that $\theta$ and $\theta'$ are {\bf inner equivalent}
if their images in $\Out(\lieg)$ coincide, i.e. if there exists 
$\alpha\in \Int(\lieg)$ such that $\theta'=\alpha\theta$.

\subsection{Involutions and real forms of a complex Lie group}
\label{realforms-group}

Let $G$ be a complex Lie group,
and let $G_\R$ be the underlying
real Lie group. We will say that a  real Lie subgroup $G_0\subset G_\R$ 
is a {\bf real form} of $G$ if  $G_0=G^\sigma$, the fixed point set of a 
{\bf conjugation} (antiholomorphic involution)  $\sigma$ of $G$. 

Now, let $G$ be semisimple. 
A compact real form $U$ of $G$ always exists. 
From this we can define
a conjugation $\tau:G\to G$ such that $G^\tau=U$. This can be seen as 
follows: The conjugation defining  
the compact form of $\lieg$ can be lifted to a conjugation 
$\tilde \tau$  of the universal cover $\tilde G$. Let 
$\tilde U={\tilde G}^{\tilde\tau}$. Since  $Z(\tilde G)$ is finite, we have 
$Z({\tilde G})\subset \tilde U$, and hence $\tilde \tau$ acts
trivially on  $Z(\tilde G)$ descending to a conjugation $\tau$ of
$G$.

We can define for Lie groups the equivalence relations $\sim_c$ and $\sim$ 
as in the 
case of Lie algebras. 

From  Section \ref{realforms-algebra}, we have  
bijections

$$
\conj(\tilde G)/\sim \longleftrightarrow \Aut_2(\tilde G) /\sim,
$$ 
and
$$
\conj(\tilde G)/\sim_c \longleftrightarrow \Aut_2(\tilde G) /\sim_c,
$$ 
where  $\Aut_2(\tilde{G})$ is the set of elements of order $2$ 
in $\Aut(\tilde{G})$.
One checks that the action on $Z(\tilde G)$ send an element to itself or to the
inverse on both sides to conclude the following.


\begin{proposition}\label{conjugations-versus-involutions}
 
The map $\sigma\mapsto \theta:=\sigma\tau$ gives
bijections  
$$
\conj(G)/\sim \longleftrightarrow \Aut_2(G) /\sim,
$$ 
and
$$
\conj(G)/\sim_c \longleftrightarrow \Aut_2(G) /\sim_c,
$$ 
where  
$\Aut_2(G)$ is the set of elements of order $2$ in $\Aut(G)$.
\end{proposition}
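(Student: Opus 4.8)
The plan is to deduce Proposition \ref{conjugations-versus-involutions} for $G$ from the corresponding statement already established for the universal cover $\tilde G$, by pushing the bijections $\conj(\tilde G)/\sim \leftrightarrow \Aut_2(\tilde G)/\sim$ (and the $\sim_c$ version) down through the covering $\tilde G \to G = \tilde G/Z'$, where $Z' \subset Z(\tilde G)$. First I would fix the compact conjugation $\tau$ on $G$ constructed just above the proposition (lifted from $\tilde\tau$ on $\tilde G$, which is legitimate because $Z(\tilde G) \subset \tilde U$ forces $\tilde\tau$ to act trivially on $Z(\tilde G)$ and hence to descend). With this $\tau$ in hand the correspondence $\sigma \mapsto \theta := \sigma\tau$ is defined on $\conj(G)$, and the algebraic identities from Section \ref{realforms-algebra} carry over verbatim: $\theta^2 = \Id$ iff $\sigma\tau = \tau\sigma$, and for $\alpha \in \Aut(G)$ the automorphism attached to $\alpha\sigma\alpha^{-1}$ is $\alpha\theta(\tau\alpha\tau)^{-1}$. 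These are formal consequences of $\sigma^2 = \tau^2 = \Id$ and do not use simple connectivity, so they hold on $G$ as well as on $\tilde G$.

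Next I would set up the descent dictionary between objects on $\tilde G$ and on $G$. The key fact, already recorded in the excerpt, is that an $f \in \Aut(\tilde G)$ descends to $G$ precisely when $f(Z') \subset Z'$, and likewise a conjugation $\tilde\sigma$ descends iff $\tilde\sigma(Z') \subset Z'$. Since $\tau$ acts trivially on all of $Z(\tilde G)$, the condition $\tilde\theta(Z') \subset Z'$ is equivalent to $\tilde\sigma(Z') \subset Z'$ under $\tilde\theta = \tilde\sigma\tilde\tau$, so the set of descendable involutions matches the set of descendable conjugations under the correspondence. I would therefore argue that every $\theta \in \Aut_2(G)$ lifts to some $\tilde\theta \in \Aut_2(\tilde G)$ preserving $Z'$, and every $\sigma \in \conj(G)$ lifts to $\tilde\sigma \in \conj(\tilde G)$ preserving $Z'$, with the lifts compatible with the $\tau$/$\tilde\tau$ normalization; conversely such $Z'$-preserving lifts descend. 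The existence of Cartan's invariant representative in each class — that within a class one may choose $\theta$ commuting with $\tau$ — transfers because the conjugating element can be taken in $\Int$, which acts compatibly across the covering.

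The delicate point, and the step I expect to be the main obstacle, is verifying that the equivalence relations $\sim$ and $\sim_c$ are preserved \emph{in both directions} by descent — that is, that two conjugations (or involutions) on $G$ are equivalent if and only if their $Z'$-preserving lifts to $\tilde G$ are equivalent. The forward direction is routine: an equivalence on $\tilde G$ descends. The subtle direction is injectivity of the induced maps on quotients: if $\sigma, \sigma'$ on $G$ become equivalent after lifting, one must rule out that the conjugating $\alpha$ on $\tilde G$ fails to normalize $Z'$ and hence fails to descend to a genuine equivalence on $G$. This is exactly where the parenthetical remark ``one checks that the action on $Z(\tilde G)$ is the same or the inverse on both sides'' does its work: because $\sigma \mapsto \theta = \sigma\tau$ and $\tau$ is trivial on the centre, a conjugation and its associated involution induce the same (or inverse) automorphism of $Z(\tilde G)$, so the subgroup $Z'$ is preserved by one iff it is preserved by the other, and the conjugating element realizing an equivalence of involutions simultaneously realizes the corresponding equivalence of conjugations. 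I would make this precise by tracking the induced $\Out(G)$-actions on $Z(\tilde G)$ through the extension \eqref{outer-extension-group}, and then conclude that the $\tilde G$-level bijections restrict to bijections on the $Z'$-invariant loci and hence descend to the claimed bijections
$$
\conj(G)/\sim \longleftrightarrow \Aut_2(G)/\sim, \qquad \conj(G)/\sim_c \longleftrightarrow \Aut_2(G)/\sim_c.
$$
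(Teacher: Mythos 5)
This is essentially the paper's own proof: the paper also obtains the bijections first for the simply connected cover $\tilde G$ (where they reduce to the Lie-algebra statement of Section \ref{realforms-algebra}, since $\Aut(\tilde G)=\Aut(\lieg)$) and then descends them through $\tilde G\to G=\tilde G/Z'$, the entire content of its argument being your key observation that $\tilde\tau$ is trivial on $Z(\tilde G)$, so that $\sigma$ and $\theta=\sigma\tau$ induce the same action on the centre and $Z'$-preservation matches on the two sides. One caveat on your last step: a conjugator with $\theta'=\alpha\theta\alpha^{-1}$ does \emph{not} in general satisfy $\sigma'=\alpha\sigma\alpha^{-1}$, but only the twisted relation $\sigma'=\alpha\sigma(\tau\alpha\tau)^{-1}$; to get a genuine equivalence of conjugations one should replace $\alpha$ by its $\tau$-commuting polar part, which differs from $\alpha$ by an inner automorphism and therefore has the same action on $Z(\tilde G)$, so your $Z'$-preservation bookkeeping goes through unchanged.
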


The following is immediate.
\begin{proposition}\label{int-subgroups}
(1) Let $\sigma\sim\sigma'\in \conj(G)$, say  
$\sigma'=\Int(g)\sigma\Int(g)^{-1}$ for some $g\in G$, 
then $G^{\sigma'}=\Int(g)G^{\sigma}$, where $G^\sigma$ and 
$G^{\sigma'}$ are the subgroups of fixed points of $\sigma$
and $\sigma'$ respectively. 

(2) Similarly, let $\theta,\theta'\in \Aut_2(G)$ such
 $\theta'=\Int(g)\theta\Int(g)^{-1}$ for some $g\in G$, 
then $G^{\theta'}=\Int(g)G^{\theta}$, where $G^\theta$ and 
$G^{\theta'}$ are the subgroups of fixed points of $\theta$
and $\theta'$ respectively. 
\end{proposition}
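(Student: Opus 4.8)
The plan is to reduce both statements to a single elementary fact about fixed-point sets under conjugation, which holds whether or not the map being conjugated is holomorphic. The key observation is the following: if $\alpha\colon G\to G$ is any bijection and $f\colon G\to G$ is any self-map, then $x$ is fixed by $\alpha f\alpha^{-1}$ if and only if $\alpha^{-1}(x)$ is fixed by $f$. This is seen at once by applying $\alpha^{-1}$ to both sides of $\alpha f\alpha^{-1}(x)=x$ to obtain the equivalent equation $f(\alpha^{-1}(x))=\alpha^{-1}(x)$. Consequently the fixed-point set of $\alpha f\alpha^{-1}$ is precisely the image under $\alpha$ of the fixed-point set of $f$.

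For part (2), I would apply this principle with $\alpha=\Int(g)$, which is a bijection of $G$, and $f=\theta$. This immediately yields $G^{\theta'}=\Int(g)(G^\theta)$. Spelled out, for $h\in G^\theta$ one computes $\theta'(ghg^{-1})=\Int(g)\theta\Int(g)^{-1}(ghg^{-1})=\Int(g)\theta(h)=\Int(g)(h)=ghg^{-1}$, which gives the inclusion $\Int(g)G^\theta\subseteq G^{\theta'}$; the reverse inclusion follows by the identical computation applied to the relation $\theta=\Int(g)^{-1}\theta'\Int(g)$, so that $g^{-1}kg\in G^\theta$ whenever $k\in G^{\theta'}$.

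Part (1) is verbatim the same argument with $f=\sigma$ in place of $\theta$: the only feature of $\sigma$ that enters is that it is a self-map of $G$, so its antiholomorphicity is irrelevant, and the bijectivity of $\Int(g)$ does all the work. I expect no real obstacle here; the content is entirely the transport of fixed points under the bijection $\Int(g)$, which is exactly why the statement is flagged as immediate.
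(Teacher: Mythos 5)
Your proof is correct: the general principle that the fixed-point set of $\alpha f\alpha^{-1}$ is the $\alpha$-image of the fixed-point set of $f$, applied with $\alpha=\Int(g)$, is exactly the verification the paper has in mind when it declares the proposition ``immediate'' and gives no proof. Nothing further is needed; your remark that antiholomorphicity of $\sigma$ plays no role is also accurate, since only the bijectivity of $\Int(g)$ is used.
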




If $G$ is semisimple then the set of  isomorphic  real forms of $G$ 
is in bijection with $\Aut_2(G)/\sim_c$. In particular,  the set
of equivalence classes of real forms that are inner is given by
$\Int_2(G)$, the set of  elements of order $2$ in $\Int(G)$, modulo
conjugation by elements in $\Aut(G)$.

As for Lie algebras, the correspondence given in 
Proposition \ref{conjugations-versus-involutions} has again a Galois
cohomology interpretation in the sense that $\conj(G)/\sim_c$ is in
bijection with the Galois cohomology set
$H^1(\Z/2,\Aut(G))$, where  $\Z/2$ is the Galois 
group of the field extension $\R\subset \C$ acting on the group $\Aut(G)$
by the rule $\theta\mapsto \tau\theta\tau$.

We have the following (see \cite[Sec. 3.4]{onishchik-vinberg}).

\begin{proposition}
If $G$ is a connected and simply connected Lie group and 
$s\in\Aut (\lieg) (=\Aut (G))$ is a semisimple automorphism,
then the group $G^s$ is connected.
\end{proposition}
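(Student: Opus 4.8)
The plan is to arrange that $s$ stabilises a Borel subgroup and a maximal torus, to reduce the connectedness of $G^{s}$ to that of the fixed points on the torus, and then to extract connectedness of the latter from the simple connectivity of $G$ by a computation on the weight lattice. First I would pass to the Zariski closure of $\langle s\rangle$. As $s$ is a semisimple element of the linear algebraic group $\Aut(G)=\Aut(\lieg)$ (which has finitely many components since $\Out(G)$ is finite), the closure $S:=\overline{\langle s\rangle}$ is a diagonalisable, hence linearly reductive, subgroup, and $G^{s}=G^{S}$ because the set of elements fixed by $s$ is Zariski closed. By the quasi-semisimplicity of semisimple automorphisms, $s$ (equivalently $S$) stabilises a Borel subgroup $B$ and a maximal torus $T\subset B$, so I may assume $B$ and $T$ are $s$-stable. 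Since $S$ is linearly reductive, $(G^{s})^{\circ}=(G^{S})^{\circ}$ is connected reductive, with maximal torus $(T^{s})^{\circ}$.

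Next I would reduce to the torus. Given $x\in G^{s}$, its Jordan factors $x_{s},x_{u}$ are again fixed by $s$. The unipotent part $x_{u}$ automatically lies in the identity component $(G^{s})^{\circ}$. The semisimple part $x_{s}$ is contained in an $s$-stable maximal torus (stabilise $C_{G}(x_{s})$ by $s$ and apply quasi-semisimplicity inside its reductive identity component), and all $s$-stable maximal tori are $(G^{s})^{\circ}$-conjugate, so $x_{s}$ is conjugate into $T^{s}$ by an element of $(G^{s})^{\circ}$. Hence if $T^{s}$ is connected then $x_{s}\in(G^{s})^{\circ}$, and since $x_{s}$ and $x_{u}$ commute and both lie in $(G^{s})^{\circ}$, so does $x$. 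Thus everything reduces to showing that $T^{s}$ is connected.

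\textbf{The crux.} This is where simple connectivity enters, and I expect it to be the main obstacle. Writing $X^{*}(T)$ for the character lattice with its $s$-action, one has $T^{s}\cong\Hom\bigl(X^{*}(T)/(s-1)X^{*}(T),\C^{*}\bigr)$, so $\pi_{0}(T^{s})$ is isomorphic to the torsion subgroup of the coinvariant lattice $X^{*}(T)/(s-1)X^{*}(T)$; it therefore suffices to prove these coinvariants are torsion-free. Because $B$ is $s$-stable, $s$ permutes the positive, and hence the simple, roots, so its action on $X^{*}(T)$ is that of a diagram automorphism; because $G$ is simply connected, $X^{*}(T)$ is the full weight lattice, with the fundamental weights as a $\Z$-basis, and the diagram automorphism permutes this basis. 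The coinvariants of a permutation lattice are free abelian on the set of orbits, hence torsion-free, and $T^{s}$ is connected. The delicate point, and the reason the hypothesis cannot be dropped, is precisely this identification of $\pi_{0}(T^{s})$ with the torsion of the coinvariants, together with the fact that simple connectivity makes $X^{*}(T)$ a permutation module under the diagram action; for non-simply-connected $G$ the lattice is a proper sublattice on which torsion can appear.

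A more economical but heavier alternative would be to form $\hat{G}:=G\rtimes S$, whose identity component is the simply connected semisimple group $G$, to note that the element corresponding to $s$ is semisimple and satisfies $C_{\hat{G}}(s)\cap G=G^{s}$, and to invoke Steinberg's theorem that the centraliser of a semisimple element is connected when the derived group of the ambient identity component is simply connected. This bypasses the explicit lattice bookkeeping at the price of a substantial black box.
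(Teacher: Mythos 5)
Your overall architecture (reduce to the fixed points of a maximal torus, then kill $\pi_0(T^s)$ by the permutation-lattice argument) is the standard one, and your crux is correct: when $(B,T)$ is an $s$-stable pair and $G$ is simply connected, $s$ permutes the fundamental-weight basis of $X^*(T)$, so the coinvariants are torsion-free and $T^s$ is connected. (For comparison, the paper offers no argument at all here; it simply cites Onishchik--Vinberg.) However, your reduction to the torus rests on a false lemma: it is \emph{not} true that all $s$-stable maximal tori are $(G^s)^\circ$-conjugate. Take $G=\SL(2,\C)$ and $s=\Int(\epsilon)$ with $\epsilon=\mathrm{diag}(1,-1)$. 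Besides the diagonal torus $T$ (on which $s$ acts trivially), the maximal torus $T_w=\left\{\begin{pmatrix} a & b\\ b & a\end{pmatrix} : a^2-b^2=1\right\}$ is also $s$-stable, and no conjugate of $T_w$ by $(G^s)^\circ=T$ equals $T$. Worse, $s$ acts on $T_w$ by inversion, so $T_w^s=\{\pm I\}$ is disconnected even though $G$ is simply connected: connectedness genuinely fails for $s$-stable tori not contained in an $s$-stable Borel of $G$, because there $s$ acts on $X^*(T_w)$ through the extended Weyl group rather than as a diagram automorphism, and the coinvariants acquire torsion ($X^*(T_w)/(s-1)X^*(T_w)\cong\Z/2\Z$ in this example). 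Your construction via quasi-semisimplicity inside $C_G(x_s)^\circ$ only produces a torus lying in an $s$-stable Borel of $C_G(x_s)^\circ$, not of $G$, and the false conjugacy claim is exactly what you use to bridge that difference; so the step fails.

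The missing idea --- and the real content of Steinberg's theorem --- is how to place $x_s$ inside the torus of an $s$-stable pair \emph{of $G$}. One standard repair: since $s(x_s)=x_s$, the automorphisms $s$ and $\Int(x_s)$ commute and are both semisimple, so the Zariski closure of the group they generate in $\Aut(G)$ is diagonalisable; the quasi-semisimplicity theorem, in its version for such a diagonalisable group of automorphisms (not just for a single automorphism), yields a pair $(B_1,T_1)$ stable under both, whence $x_s\in N_G(B_1)\cap N_G(T_1)=T_1$ with $(B_1,T_1)$ $s$-stable --- after which your lattice argument finishes the proof and no conjugacy statement for tori is needed at all. Your ``more economical'' alternative does not avoid this difficulty either: Steinberg's connectedness theorem for centralisers of semisimple elements applies to elements of a connected reductive group with simply connected derived group, whereas your $s$ lies in a non-identity component of $G\rtimes S$; the extension of that theorem to elements of outer components is precisely the proposition being proved, so as stated that appeal is circular.
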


In particular any real form of a connected and simply connected Lie
group $G$ is connected.


\begin{remark}\label{spin-real-forms}
Note that for a  complex Lie group $G$ there may exist 
a  real Lie subgroup $G_0\subset G_\R$ whose  corresponding 
Lie algebra $\lieg_0$ is a real form
of $\lieg$, without $G_0$ being a real form of $G$ 
according to our definition. Examples of this are:

(1) $G=\SL(2,\C)$, $G_0=\NSL(2,\R)$, the normalizer of $\SL(2,\R)$ in
$\SL(2,\C)$;  

(2) $G=\Spin(n,\C)$, $G_0=\Spin(p,q)$ (double cover of $\SO(p,q)$).


The group $G_0$ in (1) and (2) is a  lift to $G$ of a  real form in $\Ad(G)$.
\end{remark}


The set of isomorphism classes of real
forms of $G$ is in bijection with  
$\Aut_2(G) /\sim_c$. Let $[\theta]$ and $[\theta]_c$ 
denote, respectively, the images of $\theta$ 
under the natural maps $\Aut(G)\to \Aut(G) /\sim$ and
$\Aut(G)\to \Aut(G) /\sim_c$, respectively. 
Consider the natural projection $\pi:\Aut(G)\to \Out(G)$. 
Let $\theta,\theta'\in \Aut(G)$. Like in the case of Lie algebras, 
we say that $\theta$ and $\theta'$
are {\bf inner equivalent} if there is $\alpha\in \Int(G)$ such that
$\theta'=\alpha\theta$, or equivalently $\pi(\theta)=\pi(\theta')$. 

\begin{proposition}\label{cartan-versus-inner-groups}
Let $G$ be a semisimple complex Lie group.

(1)  The projection $\pi:\Aut(G) \to \Out(G)$ descends to define a  
surjective map 
$$
\Aut(G)/\sim \to \Out(G),
$$
in  particular, this defines a surjective map
$$
\cl: \Aut_2(G)/\sim \to \Out_2(G),
$$
where $\Out_2(G)$ is the set of elements of order $2$ in $\Out(G)$
(this is not satisfied  if we replace $\sim$ by $\sim_c$).

(2) The action of $\Aut(G)$ by inner automorphisms  on $\Aut(G)$ (respectively on 
$\Aut_2(G)$) induces an action of $\Out(G)$  on $\Aut(G) /\sim$  
(respectively $\Aut_2(G) /\sim$) and the quotient is given by 
$\Aut(G)/\sim_c$  (respectively $\Aut_2(G)/\sim_c$). 
The map $\cl$ is $\Out(G)$-equivariant. In particular the 
set $\Aut_2(G) /\sim$ is finite.



\end{proposition}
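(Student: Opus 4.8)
The plan is to establish (1) and (2) in turn, using throughout that $\Int(G)=\ker\pi$ is normal in $\Aut(G)$, that conjugation preserves the order of an automorphism, and that the extension (\ref{outer-extension-group}) splits. For the well-definedness in (1), I would observe that if $\theta\sim\theta'$, say $\theta'=\alpha\theta\alpha^{-1}$ with $\alpha\in\Int(G)$, then $\pi(\theta')=\pi(\alpha)\pi(\theta)\pi(\alpha)^{-1}=\pi(\theta)$ since $\pi(\alpha)=\Id$; hence $\pi$ is constant on $\sim$-classes and descends to $\Aut(G)/\sim\to\Out(G)$, which is surjective because $\pi$ is. Restricting to involutions, $\theta^2=\Id$ forces $\pi(\theta)^2=\Id$, so the descended map carries $\Aut_2(G)/\sim$ into $\Out_2(G)$, giving $\cl$. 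The crux of (1) is the surjectivity of $\cl$: given $a\in\Out_2(G)$ I must produce an \emph{honest} involution lifting it, and here I would invoke the splitting of (\ref{outer-extension-group}). A homomorphic section $s:\Out(G)\to\Aut(G)$ yields $\theta:=s(a)$ with $\theta^2=s(a^2)=\Id$ and $\pi(\theta)=a$, so $[\theta]\in\cl^{-1}(a)$. For the parenthetical remark I would note that under $\sim_c$ one only gets $\pi(\theta')=\pi(\alpha)\pi(\theta)\pi(\alpha)^{-1}$ with $\alpha\in\Aut(G)$ arbitrary, so $\pi(\theta)$ is well defined only up to conjugacy in $\Out(G)$; since $\Out(G)$ may be non-abelian (e.g. $S_3$ for $D_4$, cf. Table \ref{outer-groups}), $\pi$ does not descend to a map $\Aut(G)/\sim_c\to\Out(G)$.

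For (2), I would first check that the conjugation action of $\Aut(G)$ on itself descends to $\Aut(G)/\sim$: if $\theta'=\beta\theta\beta^{-1}$ with $\beta\in\Int(G)$, then $\alpha\theta'\alpha^{-1}=(\alpha\beta\alpha^{-1})(\alpha\theta\alpha^{-1})(\alpha\beta\alpha^{-1})^{-1}$, and $\alpha\beta\alpha^{-1}\in\Int(G)$ by normality, so $\alpha\theta'\alpha^{-1}\sim\alpha\theta\alpha^{-1}$. Moreover $\Int(G)$ acts trivially on $\Aut(G)/\sim$ by the very definition of $\sim$, so the action factors through $\Out(G)$; conjugation preserves order, so the same holds on $\Aut_2(G)/\sim$. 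To identify the quotient I would observe that $[\theta]_\sim$ and $[\theta']_\sim$ lie in a common $\Out(G)$-orbit iff $\theta'=\alpha\theta\alpha^{-1}$ for some $\alpha\in\Aut(G)$, i.e.\ iff $\theta\sim_c\theta'$; hence $(\Aut(G)/\sim)/\Out(G)=\Aut(G)/\sim_c$, and likewise for $\Aut_2$. Equivariance of $\cl$ is then immediate: lifting $a\in\Out(G)$ to $\alpha\in\Aut(G)$, one has $\cl(a\cdot[\theta])=\pi(\alpha\theta\alpha^{-1})=a\,\pi(\theta)\,a^{-1}=a\cdot\cl([\theta])$.

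The finiteness of $\Aut_2(G)/\sim$ I would deduce from the structure just obtained together with two finiteness inputs. Since $G$ is semisimple, $\Out(G)$ embeds in the finite group $\Out(\lieg)$ of Dynkin-diagram automorphisms (cf.\ Table \ref{outer-groups}), hence is finite; and by the Cartan classification the number of isomorphism classes of real forms is finite, so $\Aut_2(G)/\sim_c$ is finite. As $\Aut_2(G)/\sim_c$ is the orbit space of the finite group $\Out(G)$ acting on $\Aut_2(G)/\sim$, each $\sim_c$-class is a single $\Out(G)$-orbit, of size at most $\abs{\Out(G)}$; therefore $\Aut_2(G)/\sim$ is a finite union of finite orbits, and so is finite. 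Alternatively, more in keeping with the Galois-cohomological viewpoint of Section \ref{realforms-algebra}, I would write each involution lifting $a$ as $\gamma\theta_0$ with $\gamma\in\Int(G)$ and check that $(\gamma\theta_0)^2=\Id$ is exactly the cocycle condition $\gamma\,\theta_0(\gamma)=1$ while $\sim$ corresponds to being cohomologous, so that each nonempty fibre $\cl^{-1}(a)$ is in bijection with $H^1(\Z/2,\Int(G))$ for the $\Z/2$-action induced by a chosen lift $\theta_0$, a finite set.

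The main obstacle is the surjectivity of $\cl$ in (1): an abstract order-$2$ outer class need not a priori lift to an order-$2$ automorphism, and this is precisely where the splitting of (\ref{outer-extension-group}) is indispensable. The finiteness assertion is comparatively soft once the $\Out(G)$-quotient description of part (2) is in hand, the only external ingredient being the classical finiteness of the set of real forms.
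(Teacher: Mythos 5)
Your proof is correct and follows essentially the same route as the paper's: well-definedness via the fact that $\Int(G)=\ker\pi$, surjectivity of $\cl$ from the splitting of the extension (\ref{outer-extension-group}), the conjugation action descending to $\Aut(G)/\sim$ with quotient $\Aut(G)/\sim_c$, and finiteness from Cartan's finiteness of $\Aut_2(G)/\sim_c$ combined with the finiteness of $\Out(G)$. You additionally spell out two points the paper leaves implicit — why the map fails to descend for $\sim_c$ (non-abelian $\Out(G)$, e.g.\ $S_3$ for $D_4$) and the $\Out(G)$-equivariance of $\cl$ — which is a welcome, but not essentially different, elaboration.
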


\begin{proof}

To prove (1), let $\theta,\theta'\in \Aut(G)$ such that
$[\theta]=[\theta']$. This means that $\theta'=\Int(g)\theta\Int(g^{-1})$
for some $g\in G$. One easily checks that
$\theta'=\Int(\tilde{g})\theta$, where $\tilde{g}=g\theta(g^{-1})$,
thus having $\pi(\theta)=\pi(\theta')$, i.e., $\theta$ and $\theta'$ are inner
equivalent. If $\theta$ and $\theta'$ are of order two, of course 
$\pi(\theta)$ and $\pi(\theta')$ are elements in $\Out_2(G)$.
It is worth pointing out that, since the extension
(\ref{outer-extension-group}) splits,
the elements in $\Out_2(G)$ can be lifted to $\Aut_2(G)$.

As for (2), let $\alpha,\theta\in \Aut(G)$. We define the action
$$
\alpha\cdot\theta=\alpha\theta\alpha^{-1}.
$$
Let $\alpha'=\Int(g)\alpha$. Then

\begin{align*}
\alpha'\cdot\theta & =  (\Int(g)\alpha)\cdot\theta \\ 
                   & =  (\Int(g)\alpha)\theta(\Int(g)\alpha)^{-1}\\
                   & =  \Int(g)\alpha\cdot\theta\Int(g)^{-1},
\end{align*}
and hence $[\alpha\cdot\theta]=[\alpha'\cdot\theta]$.
                      
Let $a\in\Out(G)$ and $\alpha\in \pi^{-1}(a)\subset  \Aut(G)$. 
We can thus define the action $a\cdot[\theta]:=[\alpha\cdot\theta]$. 
Of course 
$[\theta]_c=[\alpha\cdot\theta]_c$, proving the claim.
The statement when $\theta\in \Aut_2(G)$ is clear.

Now, Cartan in his classification of real forms for a simple group $G$
shows that $\Aut_2(G)/\sim_c$ is finite. This together with the
finiteness of $\Out_2(G)$ (see Table \ref{outer-groups}) implies that the set
$\Aut_2(G)/\sim$ is finite. 

\end{proof}

Consider the map $\cl: \Aut_2(G)/\sim \to \Out_2(G)$ 
defined in (1) of
Proposition \ref{cartan-versus-inner-groups}.
We will call the  image of $[\theta]\in \Aut_2(G)/\sim$  of this map  
the {\bf clique} of $[\theta]$.
Clearly $\cl^{-1}(1)=\Int_2(G)/\sim$, i.e. the $\Int(G)$-conjugacy classes
of inner involutions have trivial clique. 
Now, if we fix a conjugation
$\tau\in\conj(G)$ defining a compact real form of $G$, 
as mentioned in Proposition \ref{conjugations-versus-involutions},
in each class $[\theta]\in \Int_2(G)/\sim$, we can find a representative
$\theta=\Int(g)$ in $[\theta]$ for some $g\in G$ such that 
$\theta\tau=\tau\theta$ and hence 
$\sigma:=\theta\tau$ defines a conjugation inner equivalent to $\tau$,
in the sense that $\sigma=\Int(g)\tau$.  Real forms $\sigma$ of $G$  
inner equivalent to  $\tau$ are called {\bf real forms of Hodge type} 
and the corresponding real groups $G^\sigma$ are called 
{\bf groups of Hodge type}.

Combining the bijection 
$$
\conj(G)/\sim \longleftrightarrow \Aut_2(G) /\sim,
$$ 
given by Proposition \ref{conjugations-versus-involutions}, with the map
$\cl: \Aut_2(G)/\sim \to \Out_2(G)$, we obtain a map

\begin{equation}\label{clique-conjugations}
\widehat{\cl}: \conj(G)/\sim \to \Out_2(G).
\end{equation}

Of course $\widehat{\cl}^{-1}(1)$ consists of the equivalence classes of real
forms of Hodge type.

We will assume for the rest of this section 
$G$ to be  connected.  Since  $\Int(G)=\Int(\tilde{G})=\Int(\lieg)$, 
we have  in particular
$\Int_2(G)=\Int_2(\tilde{G})=\Int_2(\lieg)$, and hence every 
real form of Hodge type on $\tilde{G}$ descends to a real form of 
Hodge type on $G$. Of course, any real form on $\tilde{G}$ defines
a real form on $\Ad(\tilde{G})$ since the $Z(\tilde{G})$ is invariant
under any element in $\conj(\tilde{G})$. If $G=\tilde{G}/Z'$ for a subgroup 
$Z'\subset Z(\tilde{G})$, and $\sigma\in \conj(\tilde{G})$, 
the condition for $\sigma$ to define a conjugation on $G$, and
hence a real form on $G$,  is that $Z'$ be invariant under $\sigma$.

\begin{proposition}\label{cliques}
Let $\theta\in \Aut_2(G)$. Consider the set 
$$
S_\theta:=\{s\in G\;:\; s\theta(s)=z\in Z\}.
$$
Then

(1)  $Z$  acts on $S_\theta$ by multiplication. 

(2) $G$ acts on the right on $S_\theta$ by
$$
s\cdot g:=g^{-1}s \theta(g)\;\; g\in G, s\in S_\theta.
$$

(3) Let $\pi:\Aut_2(G)\to \Out_2(G)$ be the natural projection. 
Let $a\in \Out_2(G)$, and
let $\theta\in \pi^{-1}(a)$. Then
the map $\psi: S_\theta\to \pi^{-1}(a)$ defined by $s\mapsto \Int(s)\theta$
gives  a bijection 
$$
S_\theta/(Z\times G)\longleftrightarrow \cl^{-1}(a),
$$ 
where $\cl:\Aut_2(G)/\sim\to \Out_2(G)$ is the map induced by $\pi$.

(4) In particular, let $\theta=\Id\in \Aut_2(G)$. Then
$$
S_{\Id}=\{s\in G\;:\; s^2=z\in Z\},
$$
and the map $s\mapsto \Int(s)$ defines a bijection 
$$
S_{\Id}/(Z\times G)\longleftrightarrow \Int_2(G)/\sim.
$$ 
\end{proposition}

\begin{proof}
(1) Let $\theta\in \Aut_2(G)$ and let $s\in S_\theta$. Let $z\in Z$.
Since $\theta$ leaves $Z$ invariant, we have that 
$zs\theta(zs)=zs\theta(z)\theta(s)
=z\theta(z)s\theta(s)\in Z$ and hence $zs\in S_\theta$.

(2) Let $g\in G$ and $s\in S_\theta$. Let  
$s\cdot g:=g^{-1}s \theta(g)$.
We have 
$(s\cdot  g)\theta(s\cdot g)= g^{-1}s\theta(g)\theta(g^{-1})\theta(s)g=
g^{-1}s\theta(s)g=s\theta(s)$ and  hence 
$s\cdot g \in S_\theta$.

(3) One first checks that for  $g\in G$, the element $\Int(g)\theta$ is 
in $\Aut_2(G)$ if and only if $g\in S_\theta$. This is clear since, as one 
can easily compute, $(\Int(g)\theta)^2=1$ is equivalent to 
$g\theta(g)x\theta(g^{-1})g^{-1}=x$ for every $x\in G$, and hence
$g\theta(g)\in Z$. We thus have that $s\mapsto \Int(s)\theta$ 
defines a surjective map  $\psi: S_\theta\to \pi^{-1}(a)$.
Obviously, $\psi$ descendes to a map on $S_\theta/Z$ since
$\Int(s)=\Int(zs)$ for every $z\in Z$.

Let $s':=s\cdot g=g^{-1}s\theta(g)$, where, as above, $s\in S_\theta$ and
$g\in G$. Then $\psi(s')=\Int(s')\theta=\Int(g^{-1}s\theta(g))\theta=
\Int(g^{-1})\Int(s)\Int(\theta(g))\theta$.
But since $\Int(\theta(g))\theta=\theta\Int(g)$, we thus have
 $\psi(s')=\Int(g^{-1})\psi(s)\Int(g)$,
and hence  $\psi(s)\sim\psi(s')$.  

(4) follows from (3).

\end{proof}

Proposition \ref{cliques} gives indeed an interpretation in terms of
non-abelian cohomology (see Section \ref{realforms-algebra} for the 
basic definitions) of 
$\cl^{-1}(a)$, that is of the set of elements in $\Aut_2(G)/\sim$ with 
clique  $a$, in particular of the set $\Int_2(G)/\sim$, corresponding to
the trivial clique. We have the following.

\begin{proposition}\label{cohomology}
Let $H^1_\theta(\Z/2,\Ad(G))$ be the cohomology set defined by the action
of $\Z/2$ in $\Ad(G)$ given by  $\theta\in \Aut_2(G)$. Then, there is a 
bijection
$$
H^1_\theta(\Z/2,\Ad(G)) \longleftrightarrow S_\theta/(Z\times G).
$$ 
\end{proposition}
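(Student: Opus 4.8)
The plan is to match the non-abelian $H^1$ of $\Z/2$ acting on $\Ad(G)$ with the double quotient $S_\theta/(Z(G)\times G)$ by unwinding the definitions of cocycle and coboundary recalled in Section \ref{realforms-algebra} and comparing them directly with the two actions described in Proposition \ref{cliques}. The whole argument is essentially a dictionary between the two sets of definitions.

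First I would pin down the $\Z/2$-action. Since $\theta$ preserves $Z(G)$, it descends to an automorphism $\bar\theta$ of $\Ad(G)=G/Z(G)$, and $\theta^2=\Id$ gives $\bar\theta^2=\Id$; this $\bar\theta$ is the action of the nontrivial element $\gamma\in\Z/2$. By the description of cocycles for $\Z/2$ recalled after (\ref{cohomologous}), a cocycle is simply an element $\bar s\in\Ad(G)$ with $\bar s\,\bar\theta(\bar s)=1$. Writing $\bar s$ as the image of a lift $s\in G$ and using $\bar\theta(\bar s)=\overline{\theta(s)}$, this condition reads $\overline{s\theta(s)}=1$, i.e. $s\theta(s)\in Z(G)$, which is precisely the defining condition of $S_\theta$. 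Hence $s\mapsto\bar s$ is a surjection $S_\theta\to Z^1(\Z/2,\Ad(G))$, and by part (1) of Proposition \ref{cliques} the entire coset $Z(G)s$ lies in $S_\theta$, so its fibres are exactly the $Z(G)$-orbits; this gives $Z^1(\Z/2,\Ad(G))\cong S_\theta/Z(G)$.

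Next I would translate the coboundary relation. Two cocycles $\bar s,\bar s'$ are cohomologous iff $\bar s'=\bar g^{-1}\bar s\,\bar\theta(\bar g)$ for some $\bar g\in\Ad(G)$; lifting $\bar g$ to $g\in G$ and using $\bar\theta(\bar g)=\overline{\theta(g)}$, this says $s'\equiv g^{-1}s\theta(g)\pmod{Z(G)}$, i.e. $\bar s'=\overline{s\cdot g}$ for the right action $s\cdot g=g^{-1}s\theta(g)$ of part (2) of Proposition \ref{cliques} (which indeed lands back in $S_\theta$). Since $G\to\Ad(G)$ is surjective, the coboundary equivalence on $Z^1$ is exactly the equivalence induced by this $G$-action on $S_\theta/Z(G)$.

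Finally, since $z\in Z(G)$ is central one checks $(zs)\cdot g=z(s\cdot g)$, so the $Z(G)$-action and the $G$-action commute and the two quotients combine. Passing from cocycles to cohomology classes therefore identifies $H^1_\theta(\Z/2,\Ad(G))=(S_\theta/Z(G))/G=S_\theta/(Z(G)\times G)$, as claimed. No single step is genuinely hard; the only point requiring care is keeping the $Z(G)$-indeterminacy of lifts consistent across the cocycle condition and the coboundary relation, which is exactly what the commuting of the two actions guarantees.
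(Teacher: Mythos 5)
Your proposal is correct and follows essentially the same route as the paper's proof: identify a $\Z/2$-cocycle with an element $\tilde s\in\Ad(G)$ satisfying $\tilde s\,\theta(\tilde s)=1$, observe that lifting to $G$ gives exactly the condition $s\theta(s)\in Z(G)$ so that $Z^1_\theta(\Z/2,\Ad(G))\cong S_\theta/Z(G)$, and then match the coboundary relation with the twisted right $G$-action $s\cdot g=g^{-1}s\theta(g)$ of Proposition \ref{cliques}. The only difference is that you spell out the surjectivity of $S_\theta\to Z^1$ and the compatibility of the two commuting actions, details the paper leaves implicit.
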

\begin{proof}
Consider the group $\Z/2$  generated by $\theta$. Consider now the action of $\Z/2$ on $\Ad(G)$ given by the
action of $\theta$ (we are denoting the automorphism of $\Ad(G)$ defined by
$\theta$ also by $\theta$). Let $Z^1_\theta(\Z/2,\Ad(G))$ be the set of 
cocycles of $\Z/2$ in $\Ad(G)$ given by this action. 
Let $s\in S_\theta$ and let $\tilde{s}$ the image of $s$ in 
$\Ad(G)$. The correspondence $s\mapsto \tilde{s}$ 
defines a bijection  $S_\theta/Z \to Z^1_\theta(\Z/2,\Ad(G))$, where here
we are identifying a cocycle $a$ with the corresponding element 
$a_\theta\in \Ad(G)$. The action of $G$ on $S_\theta/Z$ is via the action 
of $\Ad(G)$, in other 
words, $S_\theta/(Z\times G)= (S_\theta/(Z)/\Ad(G)$, where 
$\tilde{g}\in \Ad(G)$ acts on $\tilde{s} \in S_\theta/(Z$
 by the rule
$$
\tilde{s}\cdot \tilde{g}=\tilde{g}^{-1}\tilde{s}\theta(\tilde{g}).
$$
But, this is precisely the condition given in \ref{cohomologous} for 
the cocyles $a$ and $a'$ corresponding to $\tilde{s}$ and $\tilde{s'}$, 
respectively, to be cohomologous.
\end{proof}




In view of  Propositions \ref{cliques} and \ref{cohomology},
it is clear that if $\theta,\theta'\in \Aut_2(G)$ are such that
$\pi(\theta)=\pi(\theta')=a$ then  there is a bijection 
between  $H^1_\theta(\Z/2,\Ad(G))$ and
$H^1_{\theta'}(\Z/2,\Ad(G))$. In this sense this cohomology set 
could very well be denoted by $H^1_a(\Z/2,\Ad(G))$. With this notation,
we have shown the following.

\begin{proposition}\label{clique-cohomology}
Let $a\in \Out_2(G)$. There is a bijection
$$
\cl^{-1}(a) \longleftrightarrow H^1_a(\Z/2,\Ad(G)),
$$
and hence a bijection
$$
\Aut_2(G)/\sim \longleftrightarrow \bigcup_{a\in \Out_2(G)}
H^1_a(\Z/2,\Ad(G)).
$$ 
\end{proposition}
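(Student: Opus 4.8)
The plan is to read off the first bijection by composing the two correspondences already established in Propositions \ref{cliques} and \ref{cohomology}, and then to glue the fibrewise statements into the global one; the substantive work has in fact been done, and what remains is bookkeeping about the choice of representative. Fix $a\in\Out_2(G)$. Because the extension (\ref{outer-extension-group}) splits, the remark in the proof of Proposition \ref{cartan-versus-inner-groups} ensures that $a$ admits a lift $\theta\in\pi^{-1}(a)\subset\Aut_2(G)$, so this fibre is non-empty and we may fix such a $\theta$. Proposition \ref{cliques}(3) then supplies a bijection $S_\theta/(Z(G)\times G)\longleftrightarrow\cl^{-1}(a)$, and Proposition \ref{cohomology} a bijection $H^1_\theta(\Z/2,\Ad(G))\longleftrightarrow S_\theta/(Z(G)\times G)$; composing them gives
$$
\cl^{-1}(a)\longleftrightarrow H^1_\theta(\Z/2,\Ad(G)),
$$
which is the first assertion once the right-hand side is written $H^1_a(\Z/2,\Ad(G))$.

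The one point needing care is that this composite should depend on the clique $a$ alone and not on the chosen lift $\theta$. As observed just before the statement, any two lifts $\theta,\theta'\in\pi^{-1}(a)$ yield cohomology sets that are canonically in bijection --- which is exactly what licenses the notation $H^1_a(\Z/2,\Ad(G))$ --- and I would check that this canonical identification intertwines the two composites. Concretely, passing from $\theta$ to $\theta'=\Int(s)\theta$ with $s\in S_\theta$ (the form of any other lift, by the surjectivity in Proposition \ref{cliques}(3)) relabels cocycles in $\Ad(G)$ by the same $\Ad(s)$ that carries $S_\theta$ to $S_{\theta'}$, so the relevant square commutes and the resulting correspondence $\cl^{-1}(a)\longleftrightarrow H^1_a(\Z/2,\Ad(G))$ is well defined. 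This verification is the main --- and essentially the only --- obstacle.

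Finally, for the global bijection I would use that $\cl\colon\Aut_2(G)/\sim\,\to\Out_2(G)$ is surjective (Proposition \ref{cartan-versus-inner-groups}(1)), so that $\Aut_2(G)/\sim$ is the disjoint union of its fibres $\cl^{-1}(a)$ over $a\in\Out_2(G)$. Taking the union of the fibrewise bijections already produced then yields
$$
\Aut_2(G)/\sim\,\longleftrightarrow\bigcup_{a\in\Out_2(G)}H^1_a(\Z/2,\Ad(G)),
$$
completing the argument.
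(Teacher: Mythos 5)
Your proof is correct and follows essentially the same route as the paper: the paper states this proposition as an immediate consequence of composing Propositions \ref{cliques} and \ref{cohomology}, with the same observation that any two lifts $\theta,\theta'\in\pi^{-1}(a)$ give canonically bijective cohomology sets (which is precisely what justifies the notation $H^1_a(\Z/2,\Ad(G))$), and then the global statement follows by taking the union over the fibres of $\cl$. If anything, you spell out the well-definedness check more explicitly than the paper, which simply remarks that it "is clear"; this is a reasonable amount of extra care rather than a deviation.
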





So, while the  cohomology set $H^1(\Z/2,\Aut(G))$ describes the set 
of conjugations of $G$, modulo the equivalence
given by $\sim_c$,   $H^1_\theta(\Z/2,\Ad(G))$ is in bijection with 
the set of equivalence classes of conjugations of $G$ given by the 
relation $\sim$ (note that $\Ad(G)\cong \Int(G))$. For similar 
discussion on cohomology and real forms look at the recent paper by 
J. Adams \cite{adams}.


\subsection{Normalizers, real forms and the isotropy representation}
\label{normalizers}

Let $G$ be a connected complex semisimple Lie group and, as above, denote the centre of $G$ by $Z$.
Let $\sigma\in \conj(G)$. We have  
considered the real form defined by $\sigma$, that is a real subgroup of $G$ defined by
$$
G^\sigma:=\{g\in G\;:\; \sigma(g)=g\}.
$$

We shall  consider now the  real subgroup of $G$ defined by  
$$
G_\sigma:=\{g\in G\;:\; \sigma(g)=c(g)g,\;\mbox{with}\; c(g)\in Z\}.
$$

\begin{proposition}\label{normalizer-sigma}
The group $G_\sigma$ is $N_G(G^\sigma)$, the normalizer of
$G^\sigma$ in $G$. 
\end{proposition}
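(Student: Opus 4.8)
The plan is to prove the two inclusions $G_\sigma\subseteq N_G(G^\sigma)$ and $N_G(G^\sigma)\subseteq G_\sigma$ separately, the second being the substantial one. Throughout I use that $\lieg^\sigma=\lieg_0$ is a real form of $\lieg$ in the sense of Section \ref{realforms-algebra}, and that $G$ is connected semisimple.

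For $G_\sigma\subseteq N_G(G^\sigma)$ I would argue directly. Take $g\in G_\sigma$, so $\sigma(g)=c(g)g$ with $c(g)\in Z$, and let $h\in G^\sigma$. Since $\sigma$ is an automorphism and $\sigma(h)=h$, one computes $\sigma(ghg^{-1})=\sigma(g)h\sigma(g)^{-1}=c(g)\,ghg^{-1}\,c(g)^{-1}=ghg^{-1}$, the last equality because $c(g)$ is central. Hence $ghg^{-1}\in G^\sigma$, so $gG^\sigma g^{-1}\subseteq G^\sigma$; applying the same to $g^{-1}\in G_\sigma$ (note that $G_\sigma$ is a subgroup, with $c(g^{-1})=c(g)^{-1}$) yields the reverse inclusion and therefore $g\in N_G(G^\sigma)$.

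For the converse I would start from $g\in N_G(G^\sigma)$ and set $z:=g^{-1}\sigma(g)$; the goal is to show $z\in Z$, for then $\sigma(g)=gz=zg$ and $g\in G_\sigma$ with $c(g)=z$. For any $h\in G^\sigma$ the relation $ghg^{-1}\in G^\sigma$ gives $\sigma(ghg^{-1})=ghg^{-1}$, while the left-hand side equals $\sigma(g)h\sigma(g)^{-1}$ because $\sigma(h)=h$. Rearranging, $\sigma(g)h\sigma(g)^{-1}=ghg^{-1}$ for all $h\in G^\sigma$, which is exactly the statement that $z=g^{-1}\sigma(g)$ commutes with every element of $G^\sigma$; that is, $z$ centralizes $G^\sigma$.

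The crux is then to upgrade ``$z$ centralizes $G^\sigma$'' to ``$z\in Z(G)$'', and this is the step I expect to be the main obstacle. Differentiating $z\exp(tX)z^{-1}=\exp(tX)$ for $X\in\lieg^\sigma=\lieg_0$ shows that $\Ad(z)$ fixes $\lieg_0$ pointwise. Because $\lieg_0$ is a real form of $\lieg$ we have $\lieg_0\otimes_\R\C=\lieg$, so $\lieg_0$ spans $\lieg$ over $\C$; since $\Ad(z)$ is $\C$-linear it must be the identity on all of $\lieg$. Finally, for connected semisimple $G$ one has $\ker(\Ad)=Z(G)$, so $\Ad(z)=\Id$ forces $z\in Z$, which completes the argument. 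The only subtlety to check is that $G^\sigma$, equivalently its identity component, genuinely has Lie algebra $\lieg_0$, so that differentiating at the identity recovers all of $\lieg_0$; this is immediate from the definition of the real form.
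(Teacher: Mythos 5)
Your proof is correct and follows essentially the same route as the paper: both verify the inclusion $G_\sigma\subseteq N_G(G^\sigma)$ by a direct computation, and both reduce the converse to the identical observation that $z=g^{-1}\sigma(g)$ commutes with every element of $G^\sigma$. The only divergence is in how that is upgraded to $z\in Z(G)$: the paper notes that the centralizer $Z_G(z)$ is a complex algebraic subgroup containing the real form $G^\sigma$ and hence must be all of $G$, while you unpack the same complexification fact infinitesimally --- $\Ad(z)$ fixes $\lieg^\sigma$ pointwise, hence its $\C$-span $\lieg$, so $z\in\ker\Ad=Z(G)$ by connectedness --- a valid and somewhat more self-contained rendering of the paper's one-line appeal.
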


\begin{proof}
It is immediate that $G_\sigma\subset N_G(G^\sigma)$. 

To show the converse, let $x\in N_G(G^\sigma)$. 
In order for $x$ to be in $G_\sigma$, 
we need to have that $x^{-1}\sigma(x)\in Z$. For this, it is enough to show 
that $x^{-1}\sigma(x)$ commutes with every element of $G^\sigma$, since the
centralizer of $x^{-1}\sigma(x)$  is a complex algebraic group, and if it
contains $G^\sigma$ it must be the whole $G$. Indeed, if $g\in G^\sigma$,
then $xgx^{-1}\in G^\sigma$ since $x\in N_G(G^\sigma)$, and hence  
$\sigma(xgx^{-1})=xgx^{-1}$. But $\sigma(xgx^{-1})=\sigma(x)g\sigma(x^{-1})$. 
These two equalities combined imply that $x^{-1}\sigma(x)$ commutes with $g$ as we wanted to
show. 
\end{proof}

The conjugation $\sigma$ leaves $Z$ invariant and hence descends to a 
conjugation of $\Ad(G)=G/Z$. The following is immediate.

\begin{proposition}\label{lift-g-sigma}
$\Ad(G)^\sigma =\Ad(G_\sigma):=G_\sigma/Z(G_\sigma)$,
where we denote also by $\sigma$ the conjugation defined on $\Ad(G)$. In other
words, $G_\sigma$ is a lift to $G$ of the real form in $\Ad(G)$ defined by 
$\sigma$.
\end{proposition}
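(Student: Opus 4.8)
The plan is to pull the whole statement back along the quotient map $p\colon G\lra \Ad(G)=G/Z$ and reduce it to an elementary fixed-point computation, isolating the identification $Z(G_\sigma)=Z$ as the only genuine point. First I would record that, since $\sigma$ preserves $Z=Z(G)$, it descends to a conjugation of $\Ad(G)=G/Z$ (which I also denote by $\sigma$), so that $p\circ\sigma=\sigma\circ p$ and $\Ad(G)^\sigma$ is the real form of $\Ad(G)$ attached to the descended conjugation.

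Next I would carry out the element chase that computes the preimage of this real form. For $g\in G$ one has the chain of equivalences
$$
\sigma(p(g))=p(g)\iff \sigma(g)Z=gZ\iff g^{-1}\sigma(g)\in Z\iff \sigma(g)=c\,g\ \text{with}\ c\in Z,
$$
where the last step uses that $Z$ is central. The final condition is precisely the defining condition for $g\in G_\sigma$. Hence $p^{-1}\bigl(\Ad(G)^\sigma\bigr)=G_\sigma$, and since $p$ is surjective with kernel $Z\subset G_\sigma$, this yields $\Ad(G)^\sigma=p(G_\sigma)=G_\sigma/Z$.

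The main (and really the only nontrivial) obstacle is to match $G_\sigma/Z$ with the stated object $\Ad(G_\sigma)=G_\sigma/Z(G_\sigma)$, i.e. to prove $Z(G_\sigma)=Z$. The inclusion $Z\subset Z(G_\sigma)$ is immediate. For the reverse, I would argue as in the proof of Proposition \ref{normalizer-sigma}: if $z\in Z(G_\sigma)$ then $z$ centralizes $G^\sigma\subset G_\sigma$ (recall $G_\sigma=N_G(G^\sigma)$), so $\Ad(z)$ fixes $\lieg^\sigma$ pointwise; being $\C$-linear and $\lieg=\lieg^\sigma\oplus i\lieg^\sigma$, it then fixes all of $\lieg$, whence $z\in\ker\Ad=Z$ (using that $G$ is connected). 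Combining the three steps gives $\Ad(G)^\sigma=G_\sigma/Z=G_\sigma/Z(G_\sigma)=\Ad(G_\sigma)$, which also makes precise the assertion that $G_\sigma$ is a lift to $G$ of the real form of $\Ad(G)$ defined by $\sigma$.
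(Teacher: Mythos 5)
Your proof is correct. There is in fact no proof in the paper to compare against: Proposition \ref{lift-g-sigma} is introduced with the phrase ``The following is immediate,'' so your write-up supplies precisely the suppressed details. The element chase $\sigma(p(g))=p(g)\iff \sigma(g)=c(g)g$ with $c(g)\in Z$, giving $p^{-1}\bigl(\Ad(G)^\sigma\bigr)=G_\sigma$ and hence $\Ad(G)^\sigma=G_\sigma/Z$, is the intended ``immediate'' part, and you are right to single out $Z(G_\sigma)=Z$ as the only genuine content --- without it the paper's notation $\Ad(G_\sigma):=G_\sigma/Z(G_\sigma)$ would not agree with the image of $G_\sigma$ in $\Ad(G)$. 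Your argument for that identification (an element $z\in Z(G_\sigma)$ centralizes $G^\sigma$, so the $\C$-linear map $\Ad(z)$ fixes $\lieg^\sigma$ pointwise, hence fixes $\lieg=\lieg^\sigma\oplus i\lieg^\sigma$, hence $z\in\ker\Ad=Z$ by connectedness of $G$) is sound, and it is the Lie-algebra analogue of the centralizer argument the paper itself uses to prove Proposition \ref{normalizer-sigma}: there, a complex subgroup containing $G^\sigma$ is forced to be all of $G$; here, a $\C$-linear automorphism fixing $\lieg^\sigma$ is forced to be the identity. Your explicit flagging of the connectedness hypothesis (needed for $\ker\Ad=Z$) is also appropriate, since the paper uses $\Ad(G)\cong G/Z$ throughout under that standing assumption.
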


Define $\Gamma_\sigma=G_\sigma/G^\sigma$.
We have the exact sequence
\begin{equation}\label{sigma-exact-sequence}
1 \lra G^\sigma\lra G_\sigma \lra \Gamma_\sigma \lra 1.
\end{equation}


Similarly, if $\theta\in \Aut_2(G)$ we can define the subgroups

$$
G^\theta:=\{g\in G\;:\; \theta(g)=g\},
$$

$$
G_\theta:=\{g\in G\;:\; \theta(g)=c(g)g,\;\mbox{with}\; c(g)\in Z\}.
$$

As in the case of conjugations, an element $\theta\in \Aut_2(G)$  leaves $Z$
invariant  and hence descends to an automorphism of order $2$ of $\Ad(G)$.
We have the following. 

\begin{proposition}\label{lift-g-theta}
$\Ad(G)^\theta =\Ad(G_\theta):=G_\theta/Z(G_\theta)$,
where we denote also by $\theta$ the involution defined on $\Ad(G)$. 
\end{proposition}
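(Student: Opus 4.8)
The plan is to identify $\Ad(G)^\theta$ with the image of $G_\theta$ under the adjoint projection, exactly mirroring the argument for a conjugation $\sigma$ in Proposition \ref{lift-g-sigma}. Write $\pi\colon G\to\Ad(G)=G/Z$ for the quotient map. Since $\theta$ preserves $Z$, it descends to the involution on $\Ad(G)$ (denoted again by $\theta$) characterised by $\theta(\pi(g))=\pi(\theta(g))$. The single computation that carries the whole proof is the identification of the full preimage $\pi^{-1}(\Ad(G)^\theta)$: for $g\in G$ one has $\pi(g)\in\Ad(G)^\theta$ if and only if $\pi(\theta(g))=\pi(g)$, i.e. $\theta(g)g^{-1}\in Z$, i.e. $\theta(g)=c(g)g$ with $c(g)\in Z$; but this is precisely the defining condition for $g\in G_\theta$. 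Hence $\pi^{-1}(\Ad(G)^\theta)=G_\theta$. This is the same elementary manipulation used for $\sigma$, with the holomorphic involution $\theta$ replacing the antiholomorphic $\sigma$.

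From here everything is formal. Restricting $\pi$ gives a surjection $G_\theta\to\Ad(G)^\theta$ (any $\bar x\in\Ad(G)^\theta$ lifts to some $x\in G$, which then lies in $G_\theta$ by the previous step), whose kernel is $G_\theta\cap Z$. Because $\theta$ restricts to an automorphism of $Z$, every $z\in Z$ satisfies $\theta(z)z^{-1}\in Z$, so $Z\subseteq G_\theta$ and therefore $G_\theta\cap Z=Z$. Consequently $\Ad(G)^\theta\cong G_\theta/Z$, which is by definition the image $\Ad(G_\theta)$ of $G_\theta$ in $\Ad(G)$, completing the identification.

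The one point I would flag as the genuine obstacle is the meaning of the symbol $Z(G_\theta)$ in the statement $\Ad(G_\theta):=G_\theta/Z(G_\theta)$. What is being quotiented here is the kernel of $\Ad$ restricted to $G_\theta$, namely $Z(G)\cap G_\theta=Z$, and this is what the notation must denote. One should resist reading it as the full centre of the abstract group $G_\theta$: the latter can be strictly larger than $Z(G)$ — for instance, for an inner involution of $\SL(n,\C)$ the group $G_\theta$ can be of $\GL$-type, with centre an entire torus rather than the finite $Z(G)$ — in which case $G_\theta$ modulo its true centre would \emph{not} recover $\Ad(G)^\theta$. Thus the substance of the proposition is contained entirely in the preimage identity $\pi^{-1}(\Ad(G)^\theta)=G_\theta$ together with the inclusion $Z\subseteq G_\theta$; once these are in hand, and once $Z(G_\theta)$ is correctly understood as $Z\cap G_\theta=Z$, the isomorphism $\Ad(G)^\theta\cong\Ad(G_\theta)$ is immediate and requires no structure theory beyond that already used for $\sigma$.
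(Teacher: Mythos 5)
Your proof is correct, and it is exactly the argument the paper has in mind: Proposition \ref{lift-g-theta} is stated there without proof (as is its conjugation analogue, Proposition \ref{lift-g-sigma}, which the paper labels ``immediate''), the entire content being your preimage identity $\pi^{-1}(\Ad(G)^\theta)=G_\theta$ together with the inclusion $Z\subset G_\theta$. Your caveat about the symbol $Z(G_\theta)$ is also correct and worth making explicit: read as the full centre of the abstract group $G_\theta$, the statement would be false in general --- for instance for $G=\SL(n,\C)$ and $\theta=\theta_{p,q}$ with $p\neq q$ one has $G_\theta=G^{\theta}=\SSS(\GL(p,\C)\times\GL(q,\C))$, whose centre is a one-dimensional torus, so that $G_\theta$ modulo its full centre has strictly smaller dimension than $\Ad(G)^\theta=G_\theta/Z(G)$. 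Hence the quotient in the statement must be by $\ker(\Ad)\cap G_\theta=Z(G)$, i.e.\ $\Ad(G_\theta)$ is the image of $G_\theta$ under the projection $G\to\Ad(G)$, which is precisely how you read it.
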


It is clear that $Z\subset G_\theta$. 
For any $g\in G$ we have $\theta(g)=\theta(g)g^{-1}g$. But if $g\in Z$, 
$\theta(g)$ is also 
in the centre and so is $\theta(g)g^{-1}$. Hence 
$g\in G_\theta$ with $c(g)=\theta(g)g^{-1}$.

\begin{remark}
We thus have that $G_{\theta}$ contains $G^{\theta}Z$, but may be larger.
Take $\theta$ corresponding to the conjugation defining $\SL(2,\R)$.
In this case the group $G_\theta$ contains the diagonal matrix with 
entries $(i,-i)$, and is in fact the normalizer of $\SO(2,\C)$ in 
$\SL(2,\C)$.
\end{remark}

\begin{remark}
Note that if $\theta$ is an inner involution 
i.e. $\theta=\Int(g)$ for
some $g\in G$ (real forms of Hodge type), then $Z\subset G^\theta$.
\end{remark}

\begin{remark} 
Even though $G$ is assumed to be connected, $G^\theta$ and
$G_\theta$ maybe, of course, non-connected, and hence the kernel
of the adjoint representations of $G^\theta$ and $G_\theta$ maybe
larger than their corresponding  centres, in fact the kernel
of the adjoint representation of a non-connected group is the
centralizer in the group  of the identity connected component. 
However, we will still denote $\Ad(G_\theta)=G_\theta/Z(G_\theta)$, 
and similarly for $G_\sigma$.
\end{remark}

Again $G_\theta$ normalizes $G^\theta$ and we have an exact sequence

\begin{equation}\label{g-theta}
1 \lra G^\theta\lra G_\theta \lra \Gamma_\theta \lra 1.
\end{equation}

\begin{proposition}\label{c-map}
(1) The map $c: G_\theta\lra Z$ defined by $c(g)$  appearing
in the definition of $G_\theta$, is a homomorphism.
We have  $\ker c= G^\theta$   and hence $c$ induces an injective homomorphism 
$\tilde{c}:\Gamma_\theta\lra Z$.

(2) The action of $\theta$ on $G$ restricts to an action  on $G_\theta$,
 and $c$ is $\theta$-equivariant with 
respect to this action and the natural  action of $\theta$ on $Z$.
Moreover this action descends to $\Gamma_\theta$ and hence  $\tilde{c}$
is $\theta$-equivariant.   

(3) $\theta(c(g))=c(g^{-1})$ and hence the image of $\tilde{c}$ in $Z$
is contained in 
$$
Z_a:=\{ z\in Z \; : \; a(z)=z^{-1}\}, 
$$
and contains
$$
\{ a(z)z^{-1} \; : \; z\in Z\}, 
$$
with $a=\pi(\theta)$, where $\pi$ is the projection $\Aut(G)\to\Out(G)$.

\end{proposition}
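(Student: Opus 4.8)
The plan is to reduce the entire proposition to a handful of short manipulations of the defining relation $c(g)=\theta(g)g^{-1}$ for $g\in G_\theta$, using only $\theta^2=\Id$ and the fact that the values of $c$ are central.

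First I would dispose of part (1). Given $g,h\in G_\theta$, I compute $\theta(gh)=\theta(g)\theta(h)=c(g)g\,c(h)h$, and since $c(h)$ is central it slides past $g$ to give $\theta(gh)=c(g)c(h)(gh)$. This at once shows $gh\in G_\theta$ and that $c(gh)=c(g)c(h)$, so $c$ is a homomorphism (closure under products also confirms $G_\theta$ is multiplicatively closed; the companion computation $\theta(g^{-1})=c(g)^{-1}g^{-1}$ shows it is closed under inverses, hence $G_\theta$ is a genuine subgroup). The kernel is $\{g:c(g)=1\}=\{g:\theta(g)=g\}=G^\theta$, and the first isomorphism theorem then produces the injective induced homomorphism $\tilde c:\Gamma_\theta\to Z$.

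The technical heart, shared by (2) and (3), is the single identity $\theta(c(g))=c(g)^{-1}$. I would establish it by applying $\theta$ to $c(g)=\theta(g)g^{-1}$: using $\theta^2=\Id$ this gives $\theta(c(g))=g\,\theta(g)^{-1}=g(c(g)g)^{-1}=c(g)^{-1}$. The inverse computation above already shows $c(g^{-1})=c(g)^{-1}$, so in fact $\theta(c(g))=c(g^{-1})$, which is the first assertion of (3). For (2), I note that $\theta$ maps $G_\theta$ into itself, since $\theta(g)=c(g)g$ is a product of the central element $c(g)\in Z\subset G_\theta$ with $g$; equivariance of $c$ is then the computation $c(\theta(g))=\theta^2(g)\theta(g)^{-1}=g\theta(g)^{-1}=c(g)^{-1}=\theta(c(g))$. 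Because $\theta$ fixes $G^\theta$ pointwise it descends to an action on $\Gamma_\theta=G_\theta/G^\theta$, and equivariance of $\tilde c$ is inherited from that of $c$.

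For the containments in (3) the crucial input is that $\Int(G)$ acts trivially on $Z$, so the $\Aut(G)$-action on $Z$ factors through $\Out(G)$ and $\theta|_Z=a|_Z$ with $a=\pi(\theta)$. Then any $z=c(g)$ in $\im\tilde c=\im c$ satisfies $a(z)=\theta(z)=z^{-1}$, i.e. $z\in Z_a$, giving $\im\tilde c\subseteq Z_a$. Conversely, since $Z\subset G_\theta$, for each $z\in Z$ we have $c(z)=\theta(z)z^{-1}=a(z)z^{-1}\in\im c$, so $\{a(z)z^{-1}:z\in Z\}\subseteq\im\tilde c$. I do not expect a serious obstacle: everything is a direct calculation, and the only points that genuinely demand care are verifying that $c$ is well defined and $\theta$-stable on all of $G_\theta$ (not merely on $G^\theta Z$) and keeping straight which automorphism — $\theta$ or its outer class $a$ — is acting, the reconciliation being exactly the triviality of the inner action on the centre.
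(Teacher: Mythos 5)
Your proof is correct and follows essentially the same route as the paper's: the same product computation for (1), the same verification that $\theta$ preserves $G_\theta$ and that $c(\theta(g))=\theta(c(g))$ for (2), and the same key identity $\theta(c(g))=c(g)^{-1}$ for (3), which you obtain by applying $\theta$ to $c(g)=\theta(g)g^{-1}$ rather than by expanding $g=\theta^2(g)$ as the paper does --- a purely cosmetic difference. Your explicit treatment of the two containments in (3), using $\theta|_Z=a|_Z$ and $Z\subset G_\theta$, just spells out what the paper leaves as "follows from (2)".
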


\begin{proof}
Let $g_1,g_2\in G_\theta$, we have 
$$
\theta(g_1g_2)=\theta(g_1)\theta(g_2)=c(g_1)g_1c(g_2)g_2=
c(g_1)c(g_2)g_1g_2,
$$
which implies that $c(g_1g_2)=c(g_1)(g_2)$, proving  
the first statement in (1).

Clearly, $\ker c= G^\theta$, and hence we complete the proof of (1).

To prove (2) we have that if $g\in G_\theta$ then $\theta(g)=c(g)g$ with
$c(g)\in Z$.  Now $\theta(c(g)g)=\theta(c(g))\theta(g)=\theta(c(g))c(g)g$. Hence,
since $\theta(c(g))\in Z$, we conclude that $c(g)g\in G_\theta$,  showing that
$\theta$ does indeed act on $G_\theta$. Our computation also shows that
$c(\theta(g))=\theta(c(g))$ and hence $c$ is $\theta$-equivariant.
The  action of $\theta$ on $G_\theta$  fixes  $G^\theta$ and
hence induces an action on $\Gamma_{\theta}$, giving that $\tilde{c}$ is
a $\theta$-equivariant injective homomorphism. 

(3) Now, $g=\theta^2(g)=\theta(c(g)g)=\theta(c(g))\theta(g)=\theta(c(g))c(g)g$,
from which we conclude that $\theta(c(g))=c(g)^{-1}$. 
Since the action of $\theta$ on $Z$ depends only on the clique
$a=\pi(\theta)$, the remaining  now follows from (2). 
\end{proof}

\begin{proposition}
Let $\theta,\theta'\in \Aut_2(G)$ be such that $\theta\sim\theta'$
with $\theta'=\Int(g)\theta\Int(g)^{-1}$ for $g\in G$. 

(1) The map $x \mapsto \Int(g)x$ defines an isomorphism 
$f_g:G_\theta \to G_{\theta'}$, which induces an isomorphism
$\tilde{f_g}:\Gamma_\theta \to \Gamma_{\theta'}$.

(2) Let  $c:G_\theta \to Z $ and $c':G_{\theta'} \to Z$ be the 
homomorphisms corresponding to $\theta$ and $\theta'$, and  
$\tilde{c}:\Gamma_\theta \to Z $ and 
$\tilde{c'}:\Gamma_{\theta'} \to Z$ be the  induced homomorphisms as 
defined in (1) of Proposition \ref{c-map}.
Then
$c=c'f_g$ and  $\tilde{c}=\tilde{c'}\tilde{f_g}$.

\end{proposition}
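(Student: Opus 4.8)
The plan is to deduce everything from a single computation with the defining relation of $G_\theta$, since $f_g$ is merely the restriction of the inner automorphism $\Int(g)$. First I would check that $f_g$ sends $G_\theta$ into $G_{\theta'}$. Take $x\in G_\theta$, so that $\theta(x)=c(x)x$ with $c(x)\in Z$, and put $y:=f_g(x)=gxg^{-1}$. Using $\theta'=\Int(g)\theta\Int(g)^{-1}$ together with $\Int(g)^{-1}(y)=x$, I compute
\begin{equation*}
\theta'(y)=\Int(g)\,\theta(x)=\Int(g)\bigl(c(x)x\bigr)=c(x)\,\Int(g)(x)=c(x)\,y,
\end{equation*}
where the third equality uses that $c(x)\in Z$ is central, hence fixed by $\Int(g)$ and commuting with $y$. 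This shows at once that $y\in G_{\theta'}$ and moreover that $c'(y)=c(x)$, that is $c'(f_g(x))=c(x)$. Thus this one calculation already delivers the relation $c=c'f_g$ asserted in part (2).

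Next I would observe that $f_g$ is an isomorphism: it is the restriction of the group automorphism $\Int(g)$ of $G$, hence a homomorphism, and the same computation applied with $g^{-1}$ (interchanging the roles of $\theta$ and $\theta'$) shows that $\Int(g^{-1})$ restricts to a map $G_{\theta'}\to G_\theta$ inverse to $f_g$. For the passage to the quotients I would invoke part (2) of Proposition \ref{int-subgroups}, which gives $G^{\theta'}=\Int(g)G^{\theta}$; hence $f_g(G^\theta)=G^{\theta'}$, so $f_g$ carries the normal subgroup $G^\theta$ isomorphically onto $G^{\theta'}$. Applying $f_g$ to the exact sequences (\ref{g-theta}) for $\theta$ and $\theta'$ then produces the induced isomorphism $\tilde{f_g}\colon\Gamma_\theta\to\Gamma_{\theta'}$, which finishes part (1).

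Finally, for the remaining assertion of part (2) I would simply descend the identity $c=c'f_g$ to the quotients. Since $\tilde{c}$ and $\tilde{c'}$ are by definition the homomorphisms induced on $\Gamma_\theta$ and $\Gamma_{\theta'}$ by $c$ and $c'$ (with kernels $G^\theta$ and $G^{\theta'}$ by Proposition \ref{c-map}(1)), and $\tilde{f_g}$ is induced by $f_g$, the factorization $c=c'f_g$ passes directly to $\tilde{c}=\tilde{c'}\tilde{f_g}$. I do not expect any genuine obstacle: the whole proposition is a formal consequence of the defining relations, and the only point requiring a moment's care is keeping track of the centrality of $c(x)$ in the key computation, which is exactly what forces $c'(f_g(x))$ to equal $c(x)$ rather than a conjugate of it.
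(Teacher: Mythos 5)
Your proof is correct and follows essentially the same route as the paper: the key computation $\theta'(f_g(x))=\Int(g)\theta\Int(g)^{-1}(f_g(x))=\Int(g)(c(x)x)=c(x)f_g(x)$, exploiting the centrality of $c(x)$, is exactly the paper's argument for $c=c'f_g$, and the passage to $\Gamma_\theta$ via Proposition \ref{int-subgroups}(2) matches as well. Your write-up is merely more explicit about the formal steps (inverse map via $\Int(g^{-1})$, descent to quotients) that the paper compresses into ``immediate'' and ``the result follows.''
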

\begin{proof}

(1) The first statement is immediate. The second follows from this 
and (2) in Proposition \ref{int-subgroups}.

To prove (2), let $x\in G_{\theta}$, and let $x'=f_g(x)=\Int(g)(x)$.
Then $c'(x')x'=\theta'(x')=\Int(g)\theta\Int(g)^{-1}(x')=
\Int(g)\theta(x)=\Int(g)(c(x)x)=c(x)\Int(g)(x)=c(\Int(g)^{-1}(x'))x'$.
The result follows.
 \end{proof}

Let $\sigma$ be a conjugation of $G$ and choose a compact conjugation $\tau$ 
of $G$ such that $\tau\sigma=\sigma\tau=:\theta$. The group $U=G^\tau$
is a maximal compact subgroup of $G$.
Since $\sigma$ and $\tau$ commute, $\sigma$ acts on $U$ and  
$U^\sigma=U\cap G^\sigma$ is a maximal compact subgroup of $G^\sigma$.
We can also consider   
$$
U_\sigma:=\{u\in U\;:\; \sigma(u)=c(u)u,\;\mbox{with}\; c(u)\in
Z(U)=Z\}.
$$
We have that $U_\sigma=U\cap G_\sigma$ is a maximal compact subgroup of
$G_\sigma$.
We thus have the  exact sequence

\begin{equation}\label{}
1 \lra U^\sigma\lra U_\sigma \lra \Gamma_\sigma \lra 1.
\end{equation}

By complexifying this, comparing with  (\ref{g-theta}),
and the fact that $\Gamma_\theta$ is a finite group (this is clear
if $G$ is semisimple since 
$Z$ is finite). We have proved the following.

\begin{proposition}\label{gamma=gamma}
Let $\sigma$ be a conjugation of $G$ and $\tau$ a  compact conjugation
of $G$ commuting with $\sigma$. Let $\theta=\sigma\tau$. Then
$$
\Gamma_\theta=\Gamma_\sigma.
$$ 
\end{proposition}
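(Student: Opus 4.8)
The plan is to compare the compact exact sequence
$$1 \lra U^\sigma \lra U_\sigma \lra \Gamma_\sigma \lra 1$$
with the complex one (\ref{g-theta}), $1\to G^\theta\to G_\theta\to\Gamma_\theta\to 1$, by showing that the tautological inclusion $U_\sigma\hookrightarrow G_\theta$ descends to an isomorphism $\Gamma_\sigma\xrightarrow{\sim}\Gamma_\theta$.

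First I would record the inclusions. If $u\in U_\sigma$ then $\tau(u)=u$ and $\sigma(u)=c(u)u$ with $c(u)\in Z$, so $\theta(u)=\sigma\tau(u)=\sigma(u)=c(u)u$; hence $u\in G_\theta$, with the same value $c(u)$. Taking $c(u)=1$ gives $U^\sigma\subseteq G^\theta$, and conversely an element $u\in U_\sigma\cap G^\theta$ satisfies $\sigma(u)=\tau(u)=u$, so $U_\sigma\cap G^\theta=U^\sigma$. Thus $U_\sigma\subseteq G_\theta$ carries $U^\sigma$ into $G^\theta$ and induces a homomorphism $U_\sigma\to\Gamma_\theta$ whose kernel is exactly $U^\sigma$; it therefore suffices to prove this homomorphism is surjective.

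For surjectivity I would invoke the Cartan decomposition of $G_\theta$. Since $G^\theta$ is reductive and $\Gamma_\theta$ is finite, $G_\theta$ is a (possibly disconnected) complex reductive group with $(G_\theta)^0=(G^\theta)^0$ and Lie algebra $\mathfrak{k}^{\C}$, where $\mathfrak{k}=\mathrm{Lie}\,U^\sigma$. A computation like the one above shows $G_\theta$ is $\tau$-stable with $G_\theta^\tau=U\cap G_\theta=U_\sigma$, so $\tau$ restricts to a compact conjugation of $G_\theta$ and $U_\sigma$ is a maximal compact subgroup. The polar decomposition then writes each $g\in G_\theta$ as $g=u\exp(X)$ with $u\in U_\sigma$ and $X\in i\mathfrak{k}$. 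As $i\mathfrak{k}\subseteq\mathfrak{k}^{\C}=\mathrm{Lie}\,G^\theta$, the factor $\exp(X)$ lies in $(G^\theta)^0$, so $g$ and $u$ have the same image in $\Gamma_\theta$. Hence $U_\sigma\to\Gamma_\theta$ is onto, and combined with the kernel computation it yields $\Gamma_\sigma=U_\sigma/U^\sigma\cong\Gamma_\theta$, as claimed. Equivalently, this is precisely the assertion that complexifying the compact sequence reproduces (\ref{g-theta}), the finiteness of $\Gamma_\sigma$ ensuring it is unchanged under complexification.

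The step I expect to be the main obstacle is the rigorous use of the Cartan decomposition for the possibly disconnected group $G_\theta$: one must check that $U_\sigma$ meets every connected component and that the polar map is a diffeomorphism in the non-connected setting. The safest route is to observe that $G_\theta=N_G(G^\theta)$ is a closed reductive subgroup stable under the compact conjugation $\tau$, and to apply the standard fact that the fixed-point group of a compact conjugation of a not-necessarily-connected reductive complex group is a maximal compact subgroup onto which the group deformation retracts. Everything else is the routine cocycle bookkeeping above.
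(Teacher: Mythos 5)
Your proof is correct and takes essentially the same route as the paper: the paper's (very terse) argument is precisely to complexify the compact exact sequence $1\to U^\sigma\to U_\sigma\to\Gamma_\sigma\to 1$ and compare it with (\ref{g-theta}), using that $\Gamma_\sigma$ is finite. Your polar-decomposition argument is exactly the justification, left implicit there (and invoked again later in the proof of Proposition \ref{theta-normalizer}), of the key fact that $G^\theta$ and $G_\theta$ are the complexifications of $U^\sigma$ and $U_\sigma$.
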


\begin{proposition}\label{normalizer-u}
The group $U_\sigma$ is $N_U(U^\sigma)$, the  normalizer of 
$U^\sigma$ in $U$.
\end{proposition}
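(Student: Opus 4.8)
The plan is to prove that $U_\sigma = N_U(U^\sigma)$ by imitating the argument already given for the complex case in Proposition \ref{normalizer-sigma}, adapting it to the compact setting where the relevant centralizer arguments must be handled slightly differently. Recall that $U = G^\tau$ is a maximal compact subgroup, that $\sigma$ commutes with $\tau$ and hence acts on $U$, and that $U^\sigma = U \cap G^\sigma$ and $U_\sigma = U \cap G_\sigma$.

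First I would establish the easy inclusion $U_\sigma \subseteq N_U(U^\sigma)$. This is immediate from the definitions: if $u \in U_\sigma$ then $\sigma(u) = c(u)u$ with $c(u) \in Z$ central, so for any $v \in U^\sigma$ one computes $\sigma(uvu^{-1}) = \sigma(u)\sigma(v)\sigma(u^{-1}) = c(u)u\,v\,u^{-1}c(u)^{-1} = uvu^{-1}$, using centrality of $c(u)$; thus $uvu^{-1} \in U^\sigma$, and $u$ normalizes $U^\sigma$.

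For the reverse inclusion $N_U(U^\sigma) \subseteq U_\sigma$, I would take $x \in N_U(U^\sigma)$ and aim to show $x^{-1}\sigma(x) \in Z$. Following the template of Proposition \ref{normalizer-sigma}, for any $u \in U^\sigma$ we have $xux^{-1} \in U^\sigma$, hence $\sigma(xux^{-1}) = xux^{-1}$; expanding the left side as $\sigma(x)u\sigma(x^{-1})$ and comparing, one deduces that $x^{-1}\sigma(x)$ commutes with every element of $U^\sigma$. The key step is then to upgrade "commutes with $U^\sigma$" to "central in $G$." In the complex case this used that the centralizer is a complex algebraic group containing $G^\sigma$, forcing it to be all of $G$.

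The main obstacle is precisely this last step, since $U^\sigma$ is a compact group and the naive complex-algebraic argument does not apply directly to elements of the compact form. My plan is to reduce to the already-proven complex statement rather than redo the algebraic-group argument: since $x \in N_U(U^\sigma) \subseteq N_G(U^\sigma)$, I would argue that $U^\sigma$ is Zariski-dense in $G^\sigma$ (as a maximal compact subgroup of the reductive group $G^\sigma$, its Zariski closure is all of $G^\sigma$), so that $N_G(U^\sigma) = N_G(G^\sigma) = G_\sigma$ by Proposition \ref{normalizer-sigma}. Intersecting with $U$ then gives $N_U(U^\sigma) = U \cap G_\sigma = U_\sigma$, completing the proof. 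The one point requiring care is the density claim, which I would justify by noting that $U^\sigma$ is a maximal compact subgroup of the complex reductive group $G^\sigma$ (a standard consequence of $\sigma$ and $\tau$ commuting) and invoking that maximal compact subgroups of complex reductive groups are Zariski-dense.
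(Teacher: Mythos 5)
Your first inclusion is correct, and reducing the hard inclusion to Proposition \ref{normalizer-sigma} is indeed the right strategy (it is the paper's), but the step you yourself flag as "requiring care" is false, and the proof collapses there. The group $G^\sigma$ is \emph{not} a complex reductive group: it is a real form of $G$, the fixed-point set of the antiholomorphic involution $\sigma$, and a maximal compact subgroup of a noncompact real algebraic group is never Zariski-dense (compact linear groups are themselves real algebraic, hence Zariski-closed). The unitary-trick fact you invoke applies instead to the complex group $G^\theta$, where $\theta=\sigma\tau$: since $\tau$ is the identity on $U$, one has $\sigma|_U=\theta|_U$, so $U^\sigma=U\cap G^\theta$ is a maximal compact subgroup of the complex group $G^\theta$, and the Zariski closure of $U^\sigma$ in $G$ is (essentially) $G^\theta$, not $G^\sigma$. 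Concretely, for $G=\SL(2,\C)$ and $\sigma(A)=\overline{A}$ one has $G^\sigma=\SL(2,\R)$, $U^\sigma=\SO(2)$, and the Zariski closure of $\SO(2)$ is $\SO(2,\C)$, which is not even contained in $\SL(2,\R)$. Moreover, the corrected density statement does not rescue your reduction within the paper's logical order: it gives $N_U(U^\sigma)\subseteq N_G(G^\theta)$, and to conclude you would need $N_G(G^\theta)=G_\theta$, which is Proposition \ref{theta-normalizer} --- a statement the paper \emph{deduces from} the proposition you are proving, so invoking it here is circular. (The centralizer route you set aside also stalls: it only yields $x^{-1}\sigma(x)\in Z_G(G^\theta)$, and $Z_G(G^\theta)$ can be strictly larger than $Z$, e.g.\ it equals $G^\theta$ when $G^\theta$ is a maximal torus.)

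The missing ingredient --- and what the paper's terse appeal to "$\sigma$ commutes with $\tau$" and to maximal compactness is actually encoding --- is the global Cartan decomposition of the real form. Write $\lieg^\sigma=\lieu^\sigma\oplus\liem$ with $\liem=\lieg^\sigma\cap i\lieu$; by uniqueness of the polar decomposition $G=U\exp(i\lieu)$ and the $\sigma$-invariance of both factors, every element of $G^\sigma$ (including those in non-identity components) factors as $G^\sigma=U^\sigma\exp(\liem)$. Now take $x\in N_U(U^\sigma)$. Then $\Ad(x)$ preserves $\lieu$ (because $x\in U$) and preserves $\lieu^\sigma$, hence it preserves the Killing-orthogonal complement of $\lieu^\sigma$ in $\lieu$, which is the $(-1)$-eigenspace of $\sigma|_{\lieu}$; since $\liem$ is $i$ times that eigenspace, $\Ad(x)$ preserves $\liem$. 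Therefore $xG^\sigma x^{-1}=\bigl(xU^\sigma x^{-1}\bigr)\exp\bigl(\Ad(x)\liem\bigr)=U^\sigma\exp(\liem)=G^\sigma$, i.e.\ $x\in N_G(G^\sigma)=G_\sigma$ by Proposition \ref{normalizer-sigma}, and intersecting with $U$ gives $x\in U_\sigma$, as required.
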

\begin{proof}
The proof follows from Proposition \ref{normalizer-sigma} and the fact
that $\sigma$ commutes with $\tau$, and $U^\sigma\subset G^\sigma$ and 
$U_\sigma\subset G_\sigma$ are maximal compact subgroups. 
\end{proof}

\begin{proposition}\label{theta-normalizer}
The group $G_\theta$ is $N_G(G^\theta)$, the  normalizer of 
$G^\theta$ in $G$.
\end{proposition}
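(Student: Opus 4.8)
The plan is to imitate the proof of Proposition \ref{normalizer-sigma}, replacing the conjugation $\sigma$ by the holomorphic involution $\theta$ throughout. One inclusion is essentially formal: if $x\in G_\theta$, so that $\theta(x)=c(x)x$ with $c(x)\in Z$, then for any $g\in G^\theta$ we have $\theta(xgx^{-1})=\theta(x)\theta(g)\theta(x)^{-1}=c(x)x\,g\,x^{-1}c(x)^{-1}=xgx^{-1}$ using that $c(x)$ is central and $\theta(g)=g$; hence $xgx^{-1}\in G^\theta$, so $x$ normalizes $G^\theta$ and $G_\theta\subset N_G(G^\theta)$.

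For the reverse inclusion $N_G(G^\theta)\subset G_\theta$, I would argue exactly as in Proposition \ref{normalizer-sigma}. Let $x\in N_G(G^\theta)$; the goal is to show that the element $x^{-1}\theta(x)$ lies in $Z=Z(G)$. The strategy is to prove that $x^{-1}\theta(x)$ commutes with every element of $G^\theta$, and then to invoke the centralizer argument: the centralizer of $x^{-1}\theta(x)$ in $G$ is a complex algebraic subgroup, and if it contains $G^\theta$ it must be all of $G$, forcing $x^{-1}\theta(x)$ to be central. To verify the commuting property, take $g\in G^\theta$. Since $x$ normalizes $G^\theta$ we have $xgx^{-1}\in G^\theta$, hence $\theta(xgx^{-1})=xgx^{-1}$. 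On the other hand, since $\theta$ is a homomorphism and $\theta(g)=g$, we have $\theta(xgx^{-1})=\theta(x)\,g\,\theta(x)^{-1}$. Equating these two expressions gives $\theta(x)^{-1}x\,g=g\,\theta(x)^{-1}x$, which is precisely the statement that $x^{-1}\theta(x)$ (or its inverse) commutes with $g$. Once $x^{-1}\theta(x)\in Z$, writing $\theta(x)=z x$ with $z=x^{-1}\theta(x)\in Z$ shows $x\in G_\theta$, completing the proof.

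The one place where the holomorphic case genuinely differs from the conjugation case of Proposition \ref{normalizer-sigma} is the centralizer argument, and this is the step I would flag as the main point to check. In Proposition \ref{normalizer-sigma} the key was that the centralizer of $x^{-1}\sigma(x)$ is a complex algebraic group containing the real form $G^\sigma$, whose Zariski closure is $G$; here the relevant subgroup is $G^\theta$, the fixed-point group of a holomorphic involution. Since $\theta$ is a semisimple automorphism of the complex semisimple group $G$, its fixed-point group $G^\theta$ is a complex reductive algebraic subgroup, and the essential fact to use is that the only complex algebraic subgroup of $G$ containing $G^\theta$ whose centralizer relation forces triviality is controlled by the fact that the centralizer of $G^\theta$ in $G$ is central. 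Concretely, an element centralizing all of $G^\theta$ must be central in $G$ because $G^\theta$ is large enough (for instance it contains a maximal torus of $G$, so its centralizer reduces to that torus, and an element of the torus centralizing all of $G^\theta$ lies in $Z$). I would therefore make explicit that $G^\theta$ contains a maximal torus of $G$, which guarantees that its centralizer in $G$ is exactly $Z$, and this is what replaces the Zariski-density argument of the $\sigma$ case. Everything else is a direct transcription of the earlier proof.
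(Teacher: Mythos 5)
Your first inclusion $G_\theta\subset N_G(G^\theta)$ is fine, and so is the formal computation showing that for $x\in N_G(G^\theta)$ the element $x^{-1}\theta(x)$ centralizes $G^\theta$. The gap is in the final step: you need the centralizer of $G^\theta$ in $G$ to be exactly $Z(G)$, and this is false in general, so the argument does not close. Take $G=\SL(2,\C)$ and $\theta(A)=(A^t)^{-1}$; then $G^\theta=\SO(2,\C)$ is a maximal torus of $G$, so $Z_G(G^\theta)=\SO(2,\C)$, which is one-dimensional and certainly not contained in $Z(G)=\{\pm I\}$. This example also defeats your proposed patch: here $G^\theta$ does contain (indeed equals) a maximal torus, yet the elements of that torus centralize all of $G^\theta$ without being central in $G$. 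The same problem occurs for inner involutions, e.g.\ $\theta=\Int(\mathrm{diag}(1,-1,-1))$ on $\SL(3,\C)$, where $G^\theta=\SSS(\GL(1,\C)\times\GL(2,\C))$ has a one-dimensional centre which centralizes $G^\theta$ but does not lie in $Z(G)$: whenever $Z(G^\theta)$ is positive dimensional --- the Hermitian situation of Section \ref{hermitian-groups} --- one has $Z(G^\theta)\subset Z_G(G^\theta)$ but $Z(G^\theta)\not\subset Z(G)$. Separately, in the genuinely outer case $G^\theta$ need not contain a maximal torus of $G$ at all ($\SO(n,\C)\subset\SL(n,\C)$ for $n\geq 3$).

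The reason the proof of Proposition \ref{normalizer-sigma} cannot be transcribed verbatim is that it rests on a density property special to real forms: any complex algebraic subgroup of $G$ containing the real form $G^\sigma$ --- in particular the centralizer of $x^{-1}\sigma(x)$ --- must be all of $G$, because the Lie algebra of such a subgroup contains $\lieg^\sigma\oplus i\lieg^\sigma=\lieg$. For a holomorphic involution, $G^\theta$ is a proper complex subgroup and no such statement holds. Note that the conclusion is nevertheless true: for $x\in N_G(G^\theta)$ the particular element $x^{-1}\theta(x)$ does land in $Z$, but knowing only that it lies in $Z_G(G^\theta)$ is not enough to see this. The paper's proof avoids the issue entirely: it first proves the statement for the conjugation $\sigma=\theta\tau$ (Proposition \ref{normalizer-sigma}, where the density argument is valid), intersects with the compact form to get $U_\sigma=N_U(U^\sigma)$ (Proposition \ref{normalizer-u}), and then complexifies, using that $G$, $G^\theta$ and $G_\theta$ are the complexifications of $U$, $U^\sigma$ and $U_\sigma$. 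A direct proof along your lines would require a genuinely different mechanism at this last step, not the centralizer argument.
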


\begin{proof}
Follows from Proposition \ref{normalizer-u} and the fact that
$G$, $G^\theta$ and $G_\theta$ are the complexifications $U$, $U^\sigma$ and 
$U_\sigma$,  respectively.




\end{proof}

\begin{remark}
If $\theta,\theta'\in \Aut_2(G)$ are in the same clique, i.e.
$\pi(\theta)=\pi(\theta')$, but $[\theta]\neq [\theta']$, then
$\Gamma_\theta$ and $\Gamma_{\theta'}$ need not be isomorphic. An example
is provided by $G=\SL(2n,\C)$, $\theta$ corresponding to $\SU(n,n)$ and
$\theta'$ corresponding to $\SU(p,q)$ with $p\neq q$. In this situation,
the normalizer of $\SU(n,n)$ in $\SL(n,\C)$ has two connected componentes,
while the normalizer of $\SU(p,q)$ coincides with $\SU(p,q)$.
\end{remark}

An element  $\theta\in \Aut_2(G)$ defines a Cartan decomposition of 
$\lieg$ in $(\pm 1)$-eigenspaces: 
\begin{equation}\label{cartan-decomposition}
\lieg=\lieg^+ \oplus \lieg^-,
\end{equation}
satisfying $[\lieg^+,\lieg^+]\subset\lieg^+$, 
$[\lieg^-,\lieg^-]\subset\lieg^+$, and $[\lieg^+,\lieg^-]\subset\lieg^-$.  
Clearly $\lieg^+$ is the Lie algebra of $G^\theta$. We have the following.

\begin{proposition}\label{representations}
(1) The restriction of the  adjoint representation of $G$ to $G^\theta$ 
defines representations 
$$
\iota^\pm:  G^\theta\to \GL(\lieg^\pm).
$$

(2) The restriction of the  adjoint representation of $G$ to $G_\theta$ 
defines representations 

$$
\iota_\pm: G_\theta\to \GL(\lieg^\pm).
$$
\end{proposition}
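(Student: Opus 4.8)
The plan is to exploit the equivariance of the adjoint representation under the automorphism $\theta$, together with the fact that $\Ad$ annihilates the centre $Z$. First I would record the elementary identity relating $\theta$ to inner automorphisms: for any $g\in G$ one has $\theta\circ\Int(g)=\Int(\theta(g))\circ\theta$ as automorphisms of $G$, and differentiating at the identity yields
\[
\theta\circ\Ad(g)=\Ad(\theta(g))\circ\theta
\]
on $\lieg$, where I write $\theta$ also for the induced involution of $\lieg$ whose $(\pm1)$-eigenspaces are $\lieg^+$ and $\lieg^-$.

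For part (1) I would take $g\in G^\theta$, so that $\theta(g)=g$. The displayed identity then reads $\theta\circ\Ad(g)=\Ad(g)\circ\theta$, i.e. $\Ad(g)$ commutes with $\theta$. Since $\lieg^\pm$ are precisely the eigenspaces of $\theta$ for the eigenvalues $\pm1$, any operator commuting with $\theta$ preserves each of them; hence $\Ad(g)$ restricts to an element of $\GL(\lieg^\pm)$. Because $\Ad$ is a homomorphism, the assignments $\iota^\pm(g):=\Ad(g)|_{\lieg^\pm}$ define group homomorphisms $G^\theta\to\GL(\lieg^\pm)$.

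For part (2) I would take $g\in G_\theta$, so that $\theta(g)=c(g)g$ with $c(g)\in Z$. Since the centre acts trivially under the adjoint representation, $\Ad(c(g))=\Id$, and therefore $\Ad(\theta(g))=\Ad(c(g))\Ad(g)=\Ad(g)$. Feeding this into the equivariance identity again gives $\theta\circ\Ad(g)=\Ad(g)\circ\theta$, so exactly as before $\Ad(g)$ preserves $\lieg^\pm$ and one obtains homomorphisms $\iota_\pm\colon G_\theta\to\GL(\lieg^\pm)$.

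There is no serious obstacle here: the content is entirely the equivariance identity plus the triviality of the central action, and the two parts are formally identical once one observes that $G_\theta$ is exactly the locus where $\Ad(\theta(g))=\Ad(g)$. The only point deserving a word of care is that the representations are well defined on all of $G_\theta$ (and not merely on $G^\theta Z$); this is immediate from the displayed computation, and it is what makes the isotropy representations $\iota_\pm$ genuine extensions of those in part (1).
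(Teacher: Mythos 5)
Your proof is correct, but it follows a different route from the paper's. The paper disposes of (1) by citing the classical fact (Helgason) that the restriction of the adjoint representation of $G$ to $G^\theta$ is the adjoint representation of $G^\theta$ on $\lieg^+$ and the isotropy representation on $\lieg^-$; it then deduces (2) structurally, by applying (1) to the adjoint group $\Ad(G)$, invoking Proposition \ref{lift-g-theta} (that $\Ad(G)^\theta=\Ad(G_\theta)$), and using that $\Ad\colon G\to\GL(\lieg)$ factors through $\Ad(G)$. You instead give a single self-contained computation covering both cases: the equivariance identity $\theta\circ\Ad(g)=\Ad(\theta(g))\circ\theta$, combined for (2) with $\Ad(c(g))=\Id$, shows that $\Ad(g)$ commutes with the involution of $\lieg$ and hence preserves its eigenspaces. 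What your argument buys is uniformity and elementarity: no appeal to the literature, no passage to $\Ad(G)$, and the two parts become one calculation. What the paper's route buys is structural information that your existence argument does not record: it identifies $\iota^+$ and $\iota^-$ as the adjoint and isotropy representations of $G^\theta$, and it exhibits the representations of $G_\theta$ as pulled back from $\Ad(G)^\theta$, which is the picture used later in the Higgs-bundle arguments. One small caveat on your closing remark: the assertion that $G_\theta$ is \emph{exactly} the locus where $\Ad(\theta(g))=\Ad(g)$ uses $\ker\Ad=Z(G)$, which requires $G$ connected; only the forward inclusion is needed for the proof, so this does not affect its validity.
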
 

\begin{proof}
(1) is well known: The restriction of the adjoint representation of $G$ 
 to $G^\theta$ gives indeed the adjoint representation of 
$G^\theta$ in $\lieg^+$ and the {\bf isotropy} representation in $\lieg^-$
(see e.g. \cite{helgason}).

(2) is a consequence of (1) applied to $\Ad(G)$,  together with 
Proposition \ref{lift-g-theta}, and the facts that the Lie algebras and Cartan
decompositions  for $G$ and  $\Ad(G)$ under $\theta$ coincide, and the adjoint
representation
of $G$ factors through the adjoint representation of $\Ad(G)$.

\end{proof}

\subsection{Finite order automorphisms of $G$}
\label{finite-order}

Let $G$ be a connected complex semisimple Lie group. We show now how  many 
results of Section \ref{realforms-group} generalise  to 
automorphisms of $G$ of arbitrary finite order. 

Let $\Aut_n(G)$ and $\Out_n(G)$ be the set of elements of order $n$
in $\Aut(G)$ and $\Out(G)$, respectively. A straightforward 
generalisation of Proposition \ref{cliques}  is the following 
(we leave the  proof to the reader).

\begin{proposition}\label{cliques-n}

Let $\theta\in \Aut_n(G)$.  Consider the set
$$
S^n_\theta:=\{s\in G\;:\; s\theta(s)\cdots \theta^{n-1}(s)=z\in Z\}.
$$
Then

(1)  $Z$  acts on $S^n_\theta$ by multiplication. 

(2) $G$ acts on the right on  $S^n_\theta$ by
$$
s\cdot g:=g^{-1}s \theta(g)=\;\; g\in G, s\in S^n_\theta.
$$

(3) Let $\pi:\Aut_n(G)\to \Out_n(G)$ be the natural projection. 
Let $a\in \Out_n(G)$, and
let $\theta\in \pi^{-1}(a)$. Then
the map $\psi: S^n_\theta\to \pi^{-1}(a)$ defined by $s\mapsto \Int(s)\theta$
gives  a bijection 
$$
S^n_\theta/(Z\times G)\longleftrightarrow \cl_n^{-1}(a),
$$ 
where $\cl_n:\Aut_n(G)/\sim\to \Out_n(G)$ is the map induced by $\pi$.

(4) In particular, let $\theta=\Id\in \Aut_n(G)$. Then
$$
S^n:=S^n_{\Id}=\{s\in G\;:\; s^n=z\in Z\},
$$
and the map $s\mapsto \Int(s)$ defines a bijection 
$$
S^n/(Z\times G)\longleftrightarrow \Int_n(G)/\sim.
$$ 

\end{proposition}

We will refer to an element of $\Out_n(G)$ as an {\bf $n$-clique}.
We also have an interpretation of $\cl_n^{-1}(a)$ in terms of non abelian
cohomology given by the following generalisation of Proposition
\ref{cohomology}.

\begin{proposition}\label{cohomology-n}
Let $\theta\in \Aut_n(G)$. Consider the natural action of the 
group generated by $\theta$, which is isomorphic to $\Z/n$,
on $\Ad(G)$. 
Let $H^1_\theta(\Z/n,\Ad(G))$ be the first cohomology set defined by this
action. Let  $S^n_\theta$  be as defined in Proposition 
\ref{cliques-n} with the action of $Z\times G$ on it  defined in that 
proposition.  
Then, there is a 
bijection
$$
H^1_\theta(\Z/n,\Ad(G)) \longleftrightarrow S^n_\theta/(Z\times G).
$$ 
\end{proposition}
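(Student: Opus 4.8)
The plan is to transcribe the proof of Proposition \ref{cohomology} almost verbatim, with the single cocycle relation for $\Z/2$ replaced by the norm relation appropriate to the cyclic group $\Z/n$. Write $\Gamma=\Z/n=\langle\gamma\rangle$ acting on $\Ad(G)$ through the automorphism induced by $\theta$ (denoted again by $\theta$, with $\theta^n=\Id$). The whole argument factors through the elementary description of $Z^1_\theta(\Z/n,\Ad(G))$, which I would establish first.

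First I would analyse the cocycles. Because $\Gamma$ is cyclic, a $1$-cocycle $a\in Z^1_\theta(\Z/n,\Ad(G))$ is completely determined by its value $\tilde{s}:=a_\gamma$ at the generator: the cocycle relation (\ref{cocycle}) gives, by induction on $i$, that $a_{\gamma^{i+1}}=\tilde{s}\,\theta(a_{\gamma^i})$, whence
$$a_{\gamma^i}=\tilde{s}\,\theta(\tilde{s})\cdots\theta^{i-1}(\tilde{s}),\qquad 0\le i\le n-1,$$
with $a_1=1$. The relation $\gamma^n=1$ then forces the single consistency condition $a_{\gamma^n}=a_1=1$, namely the norm condition $\tilde{s}\,\theta(\tilde{s})\cdots\theta^{n-1}(\tilde{s})=1$ in $\Ad(G)$. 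The step that genuinely goes beyond the $n=2$ case, and which I view as the main (if routine) obstacle, is to verify that this one relation is also \emph{sufficient}: once it holds, the periodic extension $\gamma^i\mapsto a_{\gamma^i}$ satisfies (\ref{cocycle}) for all pairs of exponents, using $\theta^n=\Id$. This is the standard description of the cocycles of a finite cyclic group, and it yields a bijection between $Z^1_\theta(\Z/n,\Ad(G))$ and the set of norm-one elements of $\Ad(G)$.

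Next I would identify these with $S^n_\theta/Z(G)$. Given $s\in S^n_\theta$, its defining property $s\theta(s)\cdots\theta^{n-1}(s)=z\in Z(G)$ becomes, after passing to $\Ad(G)=G/Z(G)$, exactly the norm condition for the image $\tilde{s}$; thus $s\mapsto\tilde{s}$ assigns a cocycle to each $s$. This descends to $S^n_\theta/Z(G)$ by part (1) of Proposition \ref{cliques-n}, and it is a bijection onto $Z^1_\theta(\Z/n,\Ad(G))$: surjectivity follows by lifting a norm-one element $\tilde{s}$ to some $s\in G$ and observing that $s\theta(s)\cdots\theta^{n-1}(s)$ maps to $1$, hence lies in $Z(G)$, so $s\in S^n_\theta$; injectivity is immediate since two lifts differ by a central element.

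Finally I would match the equivalence relations. By part (2) of Proposition \ref{cliques-n} the $G$-action $s\cdot g=g^{-1}s\,\theta(g)$ descends to an action of $\Ad(G)$ on $S^n_\theta/Z(G)$ given by $\tilde{s}\cdot\tilde{g}=\tilde{g}^{-1}\tilde{s}\,\theta(\tilde{g})$, so that $S^n_\theta/(Z(G)\times G)=(S^n_\theta/Z(G))/\Ad(G)$. Under the identification above this is precisely the coboundary relation (\ref{cohomologous}) for the corresponding cocycles, with $\gamma$ acting as $\theta$. Passing to quotients then gives the desired bijection $H^1_\theta(\Z/n,\Ad(G))\longleftrightarrow S^n_\theta/(Z(G)\times G)$, completing the proof.
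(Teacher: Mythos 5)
Your proof is correct and follows essentially the same route as the paper's: identify a cocycle of the cyclic group with its value $a_\theta$ at the generator, observe that the cocycle condition (\ref{cocycle}) reduces to the norm relation so that $Z^1_\theta(\Z/n,\Ad(G))$ is identified with $S^n_\theta/Z(G)$, and then match the coboundary relation (\ref{cohomologous}) with the action $\tilde{s}\cdot\tilde{g}=\tilde{g}^{-1}\tilde{s}\,\theta(\tilde{g})$ of $\Ad(G)$. If anything you are slightly more careful than the paper, which leaves implicit both the sufficiency of the norm condition for defining a cocycle and the injectivity/surjectivity of $s\mapsto\tilde{s}$.
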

\begin{proof}
The proof is indeed very similar to that of Proposition \ref{cohomology-n}.
Let $Z^1_\theta(\Z/n,\Ad(G))$ be the set of 
cocycles of $\Z/n$ in $\Ad(G)$ with the given action. 
Let $s\in S^n_\theta$ and let $\tilde{s}$ the image of $s$ in 
$\Ad(G)$. The correspondence $s\mapsto \tilde{s}$ 
defines a bijection  $S^n_\theta/Z \to Z^1_\theta(\Z/n,\Ad(G))$, where here
we are identifying a cocycle $a$ with the corresponding element 
$a_\theta\in \Ad(G)$, since the elements $a_{\theta^i}$ for 
$1\leq i\leq n$  are determined by  the recursive formula
$$
a_{\theta^i}=a_{\theta}\theta(a_\theta^{i-1})
$$
given by the cocycle condition (\ref{cocycle}).
Clearly if $a_\theta:=\tilde{s}$, then 
$$
1=a_{\theta^n}=\tilde{s}\theta(\tilde{s})\cdots \theta^{n-1}(\tilde{s}),
$$
proving that $s\in S^n_\theta$.
According to (\ref{cohomologous}) two cocycles are cohomologous
if there is $\tilde{g}\in \Ad(G)$ such that 
$a'_\theta=\tilde{g}^{-1}a_\theta\theta(\tilde{g})$
which coincides with the action of $\Ad(G)$ on $S^n_\theta/Z$. 
\end{proof}

Similarly to Proposition \ref{clique-cohomology}, we have the following.

\begin{proposition}\label{n-clique-cohomology}
Let $a\in \Out_n(G)$. There is a bijection
$$
\cl_n^{-1}(a) \longleftrightarrow H^1_a(\Z/n,\Ad(G)),
$$
and hence a bijection
$$
\Aut_n(G)/\sim \longleftrightarrow \bigcup_{a\in \Out_n(G)}
H^1_a(\Z/n,\Ad(G)).
$$ 
\end{proposition}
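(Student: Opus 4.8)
The plan is to derive Proposition~\ref{n-clique-cohomology} as a direct combination of the two preceding results, exactly mirroring how Proposition~\ref{clique-cohomology} follows from Propositions~\ref{cliques} and~\ref{cohomology} in the $n=2$ case. The key observation is that the cohomology set $H^1_\theta(\Z/n,\Ad(G))$ does not genuinely depend on the chosen lift $\theta\in\pi^{-1}(a)$ but only on its image $a=\pi(\theta)\in\Out_n(G)$; this is what justifies the notation $H^1_a(\Z/n,\Ad(G))$ and makes the final statement meaningful.

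First I would fix $a\in\Out_n(G)$ and choose any lift $\theta\in\Aut_n(G)$ with $\pi(\theta)=a$; such a lift exists because the extension (\ref{outer-extension-group}) splits. Proposition~\ref{cliques-n}(3) then supplies a bijection $S^n_\theta/(Z(G)\times G)\longleftrightarrow\cl_n^{-1}(a)$, while Proposition~\ref{cohomology-n} supplies a bijection $H^1_\theta(\Z/n,\Ad(G))\longleftrightarrow S^n_\theta/(Z(G)\times G)$. Composing these two bijections yields
\begin{equation*}
\cl_n^{-1}(a) \longleftrightarrow H^1_\theta(\Z/n,\Ad(G)),
\end{equation*}
which is the first assertion once we rename the right-hand side $H^1_a(\Z/n,\Ad(G))$.

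The step requiring care is the well-definedness of the notation $H^1_a$, i.e.\ checking that if $\theta,\theta'\in\Aut_n(G)$ both have image $a$ under $\pi$, then $H^1_\theta(\Z/n,\Ad(G))$ and $H^1_{\theta'}(\Z/n,\Ad(G))$ are canonically in bijection. Here I would argue as in the paragraph preceding Proposition~\ref{clique-cohomology}: by Proposition~\ref{cliques-n}(3) applied to both $\theta$ and $\theta'$, both cohomology sets are in bijection with the common target $\cl_n^{-1}(a)$ through the respective $S^n$-descriptions, so they are in bijection with each other. Concretely, since $\pi(\theta)=\pi(\theta')$ we have $\theta'=\Int(g)\theta$ for some $g\in G$, and conjugation by $g$ intertwines the $\Z/n$-actions on $\Ad(G)$ induced by $\theta$ and $\theta'$, inducing the desired bijection on cohomology sets. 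This is the only point where one must verify something beyond formally quoting the earlier propositions, and I expect it to be the main (though modest) obstacle.

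Finally, for the second displayed bijection I would take the disjoint union of the first bijection over all $a\in\Out_n(G)$. The map $\cl_n:\Aut_n(G)/\!\sim\ \to\Out_n(G)$ is surjective by the order-$n$ analogue of Proposition~\ref{cartan-versus-inner-groups}(1), and its fibres are precisely the sets $\cl_n^{-1}(a)$, which partition $\Aut_n(G)/\!\sim$. Replacing each fibre by the corresponding $H^1_a(\Z/n,\Ad(G))$ via the first bijection gives
\begin{equation*}
\Aut_n(G)/\!\sim\ \longleftrightarrow \bigcup_{a\in\Out_n(G)} H^1_a(\Z/n,\Ad(G)),
\end{equation*}
completing the proof. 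No new calculation is needed; the entire argument is assembling the two prior propositions and confirming the labelling is consistent.
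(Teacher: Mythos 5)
Your proposal is correct and is essentially the paper's own argument: the paper states Proposition \ref{n-clique-cohomology} with no proof beyond the phrase ``Similarly to Proposition \ref{clique-cohomology}'', and what it intends is exactly what you do --- compose the bijections of Proposition \ref{cliques-n}(3) and Proposition \ref{cohomology-n} for a lift $\theta$ of $a$, note that the result is independent of the lift because $H^1_\theta(\Z/n,\Ad(G))$ and $H^1_{\theta'}(\Z/n,\Ad(G))$ both biject with the common set $\cl_n^{-1}(a)$ (this is precisely the justification given before Proposition \ref{clique-cohomology} in the $n=2$ case), and then take the union over the fibres of $\cl_n$.

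One correction to a non-essential remark: if $\theta'=\Int(g)\theta$, conjugation by $g$ does \emph{not} intertwine the two $\Z/n$-actions on $\Ad(G)$; a short computation shows it intertwines the action of $\theta$ with that of $\Int(g)\theta\Int(g)^{-1}=\Int(g\theta(g)^{-1})\theta$, i.e.\ with the $\sim$-conjugate of $\theta$, not with $\theta'$. The direct mechanism relating $H^1_\theta$ and $H^1_{\theta'}$ is instead the standard twisting bijection of non-abelian cohomology: since $\theta'\in\Aut_n(G)$ forces $g\theta(g)\cdots\theta^{n-1}(g)\in Z(G)$, i.e.\ $g\in S^n_\theta$, the image $\tilde g$ of $g$ in $\Ad(G)$ defines a $1$-cocycle for the $\theta$-action, and the $\theta'$-action on $\Ad(G)$ is exactly the $\theta$-action twisted by this cocycle. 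Since your primary, formal argument via $\cl_n^{-1}(a)$ already establishes the well-definedness of $H^1_a$, this slip does not affect the validity of the proof.
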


One way of understanding the `twisting' defined by $\theta\in \Aut_n(G)$ 
is provided by the following.

\begin{proposition}\label{twisted-group}
Let $\theta\in \Aut_n(G)$ and  $\widehat{G}:= G \rtimes \Z/n$ be the 
semidirect product defined by the natural action of 
$\Z/n=\langle \theta \rangle$ on $G$.
We have the following:

(1) $s\in S^n_\theta$ if and only if $(s,\theta)^n\in Z$ (where $Z$ here
is identified with $Z\times \{1\}\subset \widehat{G}$).

(2) Let   
$\widehat{S}^n=\{\hat{s} \in \widehat{G}\;:\; \hat{s}^n\in Z\}$.
Then  $S^n_\theta/G= \widehat{S}^n/\widehat{G}$, where 
$\widehat{G}$ acts on $\widehat{S}^n$ by conjugation.

\end{proposition}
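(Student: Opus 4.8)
The plan is to verify both statements by direct computation in the semidirect product $\widehat{G}= G\rtimes \Z/n$, where multiplication is given by $(g,\theta^i)(h,\theta^j)=(g\,\theta^i(h),\theta^{i+j})$. The key observation driving everything is the formula for powers of an element whose group component is the generator $\theta$: I expect
$$
(s,\theta)^n=\bigl(s\,\theta(s)\,\theta^2(s)\cdots\theta^{n-1}(s),\,\theta^n\bigr),
$$
which one proves by a routine induction using the multiplication rule. Since $\theta\in\Aut_n(G)$ means $\theta^n=\Id$ in $\Aut(G)$, the second coordinate is $\theta^n=1$, so $(s,\theta)^n=(s\,\theta(s)\cdots\theta^{n-1}(s),\,1)$ lies in $G\times\{1\}$. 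Hence $(s,\theta)^n\in Z(G)=Z\times\{1\}$ if and only if $s\,\theta(s)\cdots\theta^{n-1}(s)\in Z$, which is exactly the defining condition for $s\in S^n_\theta$ in Proposition \ref{cliques-n}. This establishes part (1).

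For part (2), I would first record that part (1) gives a set-theoretic identification $S^n_\theta\hookrightarrow \widehat{S}^n$ via $s\mapsto (s,\theta)$, landing precisely in the coset $G\times\{\theta\}$ of $\widehat{S}^n$. The next step is to match the two actions. The group $G$ acts on $S^n_\theta$ on the right by $s\cdot g=g^{-1}s\,\theta(g)$ (part (2) of Proposition \ref{cliques-n}), while $\widehat{G}$ acts on $\widehat{S}^n$ by conjugation. I would compute the conjugation of $(s,\theta)$ by $(g,1)\in G\times\{1\}\subset\widehat G$:
$$
(g,1)^{-1}(s,\theta)(g,1)=(g^{-1},1)(s,\theta)(g,1)=(g^{-1}s,\theta)(g,1)=(g^{-1}s\,\theta(g),\theta),
$$
using $(g^{-1},1)(s,\theta)=(g^{-1}s,\theta)$ and $(s,\theta)(g,1)=(s\,\theta(g),\theta)$. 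Thus conjugation by $(g,1)$ sends $(s,\theta)$ to $(s\cdot g,\theta)$, so the $G$-action on $S^n_\theta$ matches exactly the restriction to $G\times\{1\}$ of the $\widehat G$-conjugation action on the coset $G\times\{\theta\}$.

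To finish, I must promote this coincidence of orbits into the claimed equality $S^n_\theta/G=\widehat S^n/\widehat G$. The essential point is that every $\widehat G$-orbit in $\widehat S^n$ meets the coset $G\times\{\theta\}$, and that within that coset the full $\widehat G$-action reduces to the $G$-action just identified. For the first, note that conjugating $(h,\theta^i)$ by $(1,\theta^j)$ only shifts the group component, and a dimension/component count (or the observation that $\widehat S^n$ is a union of cosets $G\times\{\theta^i\}$ with $(h,\theta^i)^n\in Z$ forcing the relevant constraint) should confine the meaningful orbits; I expect that the orbits intersecting $G\times\{\theta\}$ are precisely those we care about, so that restriction to this coset loses nothing. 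The main obstacle, and the step warranting the most care, is precisely this orbit-matching: I must check that conjugation by elements of the form $(1,\theta^j)$ does not produce extra identifications among the elements of $G\times\{\theta\}$ beyond those already induced by the $G$-action — equivalently, that $(1,\theta^j)$ normalizes the coset action compatibly, since conjugation by $(1,\theta)$ sends $(s,\theta)$ to $(\theta(s),\theta)=(s\cdot s^{-1}\theta(s),\theta)$ and one checks $s^{-1}\theta(s)\in G$ realizes this as an instance of the $G$-action. Once this compatibility is verified, the bijection $S^n_\theta/G\longleftrightarrow\widehat S^n/\widehat G$ follows.
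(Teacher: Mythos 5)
Your overall route is the same as the paper's: part (1) is proved by exactly the same induction on the semidirect-product multiplication rule, and for part (2) the paper likewise identifies $S^n_\theta$ with the coset $G\times\{\theta\}$ inside $\widehat{G}$ (it uses the embedding $s\mapsto(\theta(s),\theta)$ rather than your $s\mapsto(s,\theta)$, which changes nothing essential) and matches conjugation by elements of $G\times\{1\}$ with the twisted action $s\cdot g=g^{-1}s\theta(g)$. Your computation $(g,1)^{-1}(s,\theta)(g,1)=(g^{-1}s\theta(g),\theta)$ is correct and is the heart of the matter. However, the step you isolate as ``the essential point'' --- that every $\widehat{G}$-orbit in $\widehat{S}^n$ meets the coset $G\times\{\theta\}$ --- is false, and no dimension or component count will rescue it: the projection $\widehat{G}\to\Z/n$ is a homomorphism onto an abelian group, so conjugation preserves the $\Z/n$-component, and the orbit of, say, the identity $(1,1)\in\widehat{S}^n$ (or of any $(h,\theta^i)$ with $i\neq 1$) never meets $G\times\{\theta\}$. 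The paper does not make this claim; its proof only establishes that the coset $G\times\{\theta\}$ is invariant under $\widehat{G}$-conjugation and that the $\widehat{G}$-orbits contained in it are precisely the twisted $G$-orbits, so the stated ``equality'' is to be read as the resulting identification of $S^n_\theta/G$ with the set of orbits lying in that coset. You should drop the orbit-meeting step and phrase the conclusion this way.

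There is also a concrete error in your verification that conjugation by $(1,\theta)$ creates no identifications beyond those of the $G$-action. The idea is right, but the witness is wrong: conjugation by $(1,\theta)$ sends $(s,\theta)$ to $(\theta(s),\theta)$, and you claim $g=s^{-1}\theta(s)$ realizes $s\mapsto\theta(s)$ as an instance of the twisted action, but $s\cdot\bigl(s^{-1}\theta(s)\bigr)=\theta(s)^{-1}s\,s\,\theta(s)^{-1}\theta^2(s)$, which is not $\theta(s)$ in general. The correct witness is $g=s$ itself: $s\cdot s=s^{-1}s\theta(s)=\theta(s)$. Iterating this, $\theta^j(s)$ lies in the twisted $G$-orbit of $s$ for every $j$; since every element of $\widehat{G}$ factors as $(g,1)(1,\theta^j)$, it follows that the orbits of the full conjugation action inside the coset are exactly the twisted $G$-orbits, which is what part (2) requires.
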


\begin{proof}

Let $\ast$ denote the group operation in $\widehat{G}$. Then
if $s,s'\in G$, we have 
$(s,\theta^i)\ast (s',\theta^j)=(s\theta^i(s'),\theta^{i+j})$ and hence, 
by induction,
$$
(s,\theta)^n=(s\theta(s)\cdots \theta^{n-1}(s), 1).
$$
We thus conclude that $s\in S^n_\theta$ if and only if $(s,\theta)^n\in Z$,
and (1) follows.

To prove (2) we embed $G$ as the subset $G'\subset \widehat{G}$ by
$s\mapsto (\theta(s),\theta)$. Clearly if $s\in S_\theta^n$, 
$\theta(s)\in S_\theta^n$ and, from (1), this is equivalent to
$(\theta(s),\theta)^n\in Z$. Moreover, the conjugacy action of
$\widehat{G}$ leaves $G'$ invariant and when restricted to the subgroup 
$G$ gives the action by $g\in G$ as $s\mapsto g^{-1}\theta(s)g$.

\end{proof}

As for involutions, for $\theta\in \Aut_n(G)$,  we can define the subgroup

$$
G_\theta:=\{g\in G\;:\; \theta(g)=c(g)g,\;\mbox{with}\; c(g)\in Z\}.
$$
Again, like in the $n=2$ case,  $G_\theta$ normalizes $G^\theta$ and 
we have an exact sequence

\begin{equation}\label{g-theta-n}
1 \lra G^\theta\lra G_\theta \lra \Gamma_\theta \lra 1.
\end{equation}

Propositions \ref{lift-g-theta}
and \ref{c-map} generalise immediately to the following.

\begin{proposition}\label{c-map-n}

(1) $\Ad(G)^\theta =\Ad(G_\theta):=G_\theta/Z(G_\theta)$,
where we denote also by $\theta$ the automorphism defined on $\Ad(G)$. 

(2)  The map $c: G_\theta\lra Z$ defined by $c(g)$ for $c(g)$ appearing
in the definition of $G_\theta$ is a homomorphism,
with $\ker c= G^\theta$,   and hence inducing an injective homomorphism 
$\tilde{c}:\Gamma_\theta\lra Z$.

(3) The action of $\theta$ on $G$ restricts to an action  on $G_\theta$,
 and $c$ is $\theta$-equivariant with 
respect to this action and the natural  action of $\theta$ on $Z$.
Moreover this action descends to $\Gamma_\theta$ and hence  $\tilde{c}$
is $\theta$-equivariant.   

(4) For every $g\in G_\theta$, $c(g)$ satisfies the equation
$$
c(g)\theta(c(g))\cdots \theta^{n-1}(c(g))=1
$$
and hence the image of $\tilde{c}$ in $Z$
is contained in 
$$
Z_a:=\{ z\in Z \; : \; za(z)\cdots a^{n-1}(z)=1\}, 
$$
where $a=\pi(\theta)$.
\end{proposition}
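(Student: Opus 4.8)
The statement is Proposition \ref{c-map-n}, which asserts the degree-$n$ analogues of everything proved for involutions in Propositions \ref{lift-g-theta} and \ref{c-map}. The strategy is to imitate those $n=2$ proofs verbatim, replacing the single application of $\theta$ by iteration up to $\theta^{n-1}$ wherever the relation $\theta^2=\mathrm{Id}$ was used. Since $G_\theta$ and $\Gamma_\theta$ are defined exactly as before (via the exact sequence (\ref{g-theta-n})), the only new input is bookkeeping the $n$-fold cocycle-type identity. I expect each of the four parts to go through with essentially no new idea.

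\emph{Part (1).} This is identical to Proposition \ref{lift-g-theta}: $\theta$ leaves $Z$ invariant and so descends to an order-$n$ automorphism of $\Ad(G)=G/Z$; the equality $\Ad(G)^\theta=G_\theta/Z(G_\theta)$ follows because a coset $gZ$ is $\theta$-fixed in $\Ad(G)$ exactly when $\theta(g)\in gZ$, which is the defining condition of $G_\theta$. No iteration is needed here.

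\emph{Parts (2) and (3).} For (2), given $g_1,g_2\in G_\theta$ I would compute $\theta(g_1g_2)=c(g_1)c(g_2)g_1g_2$ using that $c(g_2)\in Z$ is central, giving $c(g_1g_2)=c(g_1)c(g_2)$; the kernel is $\{g:\theta(g)=g\}=G^\theta$, and centrality of the image forces $\tilde c$ to be injective on $\Gamma_\theta=G_\theta/G^\theta$. These arguments use nothing about the order of $\theta$, so they transcribe directly. For (3), one checks $\theta$ maps $G_\theta$ to itself by the same calculation as before ($\theta(c(g)g)=\theta(c(g))c(g)g$ with $\theta(c(g))\in Z$), obtains $c(\theta(g))=\theta(c(g))$, i.e.\ $\theta$-equivariance of $c$, and descends everything to $\Gamma_\theta$; again only the invariance of $Z$ and centrality of $c(g)$ are used.

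\emph{Part (4) — the one genuinely new computation.} Here the replacement of $\theta^2=\mathrm{Id}$ by $\theta^n=\mathrm{Id}$ does real work. Applying $\theta$ repeatedly and peeling off central factors, I would show by induction that
\[
\theta^{k}(g)=\bigl(c(g)\,\theta(c(g))\cdots\theta^{k-1}(c(g))\bigr)\,g,
\]
using at each step that all the $\theta^{j}(c(g))$ lie in $Z$ and hence commute past $g$. Setting $k=n$ and using $\theta^n=\mathrm{Id}$ gives $g=\bigl(c(g)\theta(c(g))\cdots\theta^{n-1}(c(g))\bigr)g$, whence $c(g)\theta(c(g))\cdots\theta^{n-1}(c(g))=1$. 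Since the action of $\theta$ on the centre $Z$ depends only on $a=\pi(\theta)\in\Out_n(G)$, each factor $\theta^{j}(c(g))$ equals $a^{j}(c(g))$, so the image of $\tilde c$ lands in $Z_a=\{z\in Z:\,z\,a(z)\cdots a^{n-1}(z)=1\}$, exactly the stated conclusion. The main obstacle, such as it is, is making the induction clean: one must be careful that the correcting factor is the \emph{ordered} product of the $\theta^{j}(c(g))$, but since these all lie in the abelian group $Z$ the ordering is immaterial, which is precisely why the argument collapses to the symmetric relation $c(g)a(c(g))\cdots a^{n-1}(c(g))=1$.
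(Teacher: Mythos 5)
Your proposal is correct and is exactly the argument the paper intends: the paper gives no separate proof of Proposition \ref{c-map-n}, stating only that Propositions \ref{lift-g-theta} and \ref{c-map} ``generalise immediately,'' and your parts (1)--(3) transcribe those $n=2$ proofs while your induction $\theta^{k}(g)=\bigl(c(g)\,\theta(c(g))\cdots\theta^{k-1}(c(g))\bigr)g$, evaluated at $k=n$, is the evident replacement of the paper's one-step computation $g=\theta^{2}(g)=\theta(c(g))c(g)g$. Your closing observation that $\theta$ acts on $Z$ through $a=\pi(\theta)$ (since inner automorphisms fix $Z$ pointwise) matches the paper's own justification in the $n=2$ case, so nothing is missing.
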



We can decompose $\lieg$ as
\begin{equation}\label{lie-decomposition}
\lieg=\bigoplus_{k=0}^{n-1}\lieg^k,
\end{equation}
where $\lieg^k$ is the eigenspace
of the  automorphism of $\lieg$ defined by $\theta$
 with eigenvalue $\zeta_k:=\exp(2\pi i\frac{ k}{n})$.

Clearly $\lieg^0$ is
the Lie algebra of $G^\theta:=\{g\in G\;:\; \theta(g)=g\}$. We also have 
$[\lieg^l,\lieg^k]\subset \lieg^{l+k}$, in particular $[\lieg^0,\lieg^k]
 \subset \lieg^k$, which lift to  actions of $G^\theta$ and $G_\theta$
on $\lieg^k$. More precisely:

\begin{proposition}\label{representations-n}
(1) The restriction of the  adjoint representation of $G$ to $G^\theta$ 
defines representations 
$$
\iota^k:  G^\theta\to \GL(\lieg^k).
$$

(2) The restriction of the  adjoint representation of $G$ to $G_\theta$ 
defines representations 

$$
\iota_k: G_\theta\to \GL(\lieg^k).
$$
\end{proposition}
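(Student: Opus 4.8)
The plan is to reduce everything to a single intertwining identity relating $\theta$ on $G$ and its differential on $\lieg$ (which I also denote $\theta$), namely
$$
\theta\circ\Ad(g)=\Ad(\theta(g))\circ\theta\qquad\text{for all }g\in G.
$$
This holds because $\theta$ is a group automorphism: differentiating $\theta(g\exp(tX)g^{-1})=\theta(g)\exp(t\,\theta(X))\theta(g)^{-1}$ at $t=0$ yields it immediately, using $\theta\circ\exp=\exp\circ\,\theta$. The whole statement then follows by feeding into this identity the defining relations of $G^\theta$ and $G_\theta$, together with the fact that $\lieg=\bigoplus_k\lieg^k$ is precisely the eigenspace decomposition of $\theta$ acting on $\lieg$, with $\lieg^k$ the $\zeta_k$-eigenspace.

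For (1), I would take $g\in G^\theta$ and $X\in\lieg^k$, so that $\theta(g)=g$ and $\theta(X)=\zeta_k X$. The identity then gives
$$
\theta(\Ad(g)X)=\Ad(\theta(g))\theta(X)=\zeta_k\,\Ad(g)X,
$$
so $\Ad(g)X\in\lieg^k$. Hence $\Ad(g)$ preserves $\lieg^k$, and since $\Ad$ is a homomorphism, $g\mapsto\Ad(g)|_{\lieg^k}$ is the desired representation $\iota^k\colon G^\theta\to\GL(\lieg^k)$. Infinitesimally this is just the inclusion $[\lieg^0,\lieg^k]\subset\lieg^k$ already recorded before the statement.

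For (2), the same computation goes through with one extra ingredient: $Z=Z(G)$ lies in the kernel of $\Ad$, since central elements act trivially by conjugation. Taking $g\in G_\theta$ and $X\in\lieg^k$, so $\theta(g)=c(g)g$ with $c(g)\in Z$ by Proposition \ref{c-map-n}, I would compute
$$
\theta(\Ad(g)X)=\Ad(c(g)g)\,\theta(X)=\Ad(c(g))\Ad(g)(\zeta_k X)=\zeta_k\,\Ad(g)X,
$$
where $\Ad(c(g))=\Id$. Thus again $\Ad(g)X\in\lieg^k$ and $\iota_k\colon G_\theta\to\GL(\lieg^k)$ is well defined. Alternatively, one can deduce (2) from (1) exactly as in the $n=2$ case of Proposition \ref{representations}: the adjoint representation of $G$ factors through $\Ad(G)$, the $\theta$-eigenspace decomposition of $\lieg$ is the same for $G$ and $\Ad(G)$, and $\Ad(G_\theta)=\Ad(G)^\theta$ by Proposition \ref{c-map-n}(1), so applying (1) to $\Ad(G)$ gives the action of $G_\theta$ on each $\lieg^k$.

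I do not expect a serious obstacle here; the content is entirely in the observation that the central defect $c(g)$ of an element of $G_\theta$ is invisible to the adjoint representation, which is exactly what allows the strictly larger group $G_\theta$, which normalizes $G^\theta$, to preserve the same eigenspaces as $G^\theta$. The only point requiring care is to keep the two meanings of $\theta$ — the automorphism of $G$ and its differential on $\lieg$ — cleanly separated, and to confirm $Z\subset\ker\Ad$.
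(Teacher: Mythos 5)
Your proof is correct. Note first that the paper gives no separate proof of Proposition \ref{representations-n}: it is presented as an immediate generalisation of the order-two case, Proposition \ref{representations}, whose proof proceeds by citation and reduction rather than computation --- part (1) is quoted as well known (the adjoint and isotropy representations attached to a symmetric pair, citing Helgason), and part (2) is obtained by applying (1) to the adjoint group $\Ad(G)$, invoking $\Ad(G)^\theta=\Ad(G_\theta)$ (Proposition \ref{lift-g-theta}, generalised in Proposition \ref{c-map-n}) together with the fact that the adjoint representation of $G$ factors through that of $\Ad(G)$. Your primary argument is instead a direct verification: the intertwining identity $d\theta\circ\Ad(g)=\Ad(\theta(g))\circ d\theta$, combined with $Z\subset\ker\Ad$, shows at once that $\Ad(g)$ preserves each eigenspace $\lieg^k$ for $g$ in $G^\theta$ or in $G_\theta$, and the homomorphism property is inherited from $\Ad$. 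This is more self-contained, works uniformly in $n$ and in $k$, and makes explicit the point that the paper's route hides inside Proposition \ref{lift-g-theta}, namely that the central defect $c(g)$ of an element of $G_\theta$ is annihilated by $\Ad$; what the paper's reduction buys in exchange is brevity and reuse of structure already established for $\Ad(G)$. Since your closing alternative paragraph reproduces exactly that reduction, you have in effect supplied both proofs.
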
 

\begin{remark}\label{double-notation}
If $\theta\in\Aut_2(G)$, from (\ref{cartan-decomposition}) and
Proposition \ref{representations}, we have $\lieg^0=\lieg^+$,
$\lieg^1=\lieg^-$,$\iota^0=\iota^+$, $\iota^1=\iota^-$, $\iota_0=\iota_+$
and $\iota_1=\iota_-$.
\end{remark}

\subsection{Groups of Hermitian type}
\label{hermitian-groups}

In this section $G$ is a connected complex simple Lie group. The semisimple
case follows from this. We use the notation of Sections
\ref{realforms-group} and \ref{normalizers}.
Let $\theta\in \Aut_2(G)$, and  $\tau$ be a compact conjugation of
$G$ commuting with $\theta$, defining the conjugation of $G$ given by
$\sigma=\tau\theta$. Let $G^\theta$, $G_\theta$, $G^\sigma$, $G_\sigma$,
$U$, $U^\sigma$, and $U_\sigma$  be as in Section  \ref{normalizers}.

We consider in this section a particular class  of conjugations  of $G$ 
(and hence involutions) which  define what are called groups of
{\bf Hermitian type}. These are conjugations $\sigma$  for which the symmetric
space $M_\sigma:=G^\sigma/U^\sigma$ is of Hermitian type,
in fact a K\"ahler manifold \cite{kobayashi-nomizu,helgason}.
These are distinguished by the fact that $\liez(\lieu^\sigma)$,
the centre of $\lieu^\sigma$, is isomorphic to $\C$. A base element 
in $\liez(\lieu^\sigma)$ defines via de adjoint 
representation a complex structure in $\lieg^-$, the tangent space  
of $M_\sigma$ at the point corresponding to the coset $U^\sigma$.
This applies also to $G_\sigma$. In fact the symmetric space defined
by $G_\sigma$, given by $G_\sigma/U_\sigma$ coincides with $M_\sigma$.
It is well-known that groups of Hermitian type are of Hodge type, as
defined in Section \ref{realforms-group} (see \cite{helgason}).

The conjugations of $G$  that are not of Hermitian type have the property
that $\liez(\lieu^\sigma)=0$ (see \cite{helgason}), and hence the 
groups $U^\sigma$, $G^\theta$, $U_\sigma$, and $G_\theta$ are semisimple. 
This difference between Hermitian and non-Hermitian groups  will have 
important consequences in the theory of $G$-Higgs bundles studied below.

\section{Finite order automorphisms of principal bundles} \label{automorphisms}

In this section $G$ is a connected complex semisimple Lie group and $E$ is a
holomorphic principal $G$-bundle over a compact Riemann surface $X$.


The centre $Z$ of $G$ acts on $E$ by $\xi \to \xi z$, for every $\xi \in 
E$ and $z\in Z$. This gives an inclusion $Z \subset \Aut(E)$ as a 
subgroup. Moreover, $Z$ is in the centre of $\Aut(E)$, for if $z\in Z$ 
and $A\in \Aut(E)$ then $Az(\xi ) =  A(\xi z) = A(\xi )z = z(A(\xi))$. 

In this section we study how an element $A\in \Aut(E)$ such that
$A^n=z\in Z$ for some $n$ gives rise to a reduction of structure group
of the bundle $E$. Such an $A$ defines an automorphism  of finite order 
of the $\Ad(G)$-bundle associated to $E$.
We will generalise this to `twisted' automorphisms in
a sense that we will explain.

\subsection{Finite order automorphisms and reductions of structure group}



Let $E$ be a principal $G$-bundle and $A$ an automorphism. Then there is a natural 
morphism $f_A:E \to G$ given by $A(\xi) = \xi  f_A(\xi )$, for all 
$\xi \in E$. 

\begin{lemma} \label{equivariance}
The map $f_A$ is equivariant for the right action of $G$ on
  $E$ and the (right) adjoint action of $G$ on itself namely $x\to g^{-1}x g$
  for $x,g\in G$.
\end{lemma}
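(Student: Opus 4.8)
The plan is to unwind the definitions and check the equivariance identity directly from the defining relation $A(\xi) = \xi f_A(\xi)$. Since $A$ is an automorphism of the principal bundle $E$, it commutes with the right $G$-action, so $A(\xi g) = A(\xi) g$ for all $\xi \in E$ and $g \in G$. The entire proof rests on exploiting this compatibility together with the defining equation for $f_A$.

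First I would write down what $f_A(\xi g)$ must be. By definition of $f_A$ applied to the point $\xi g$, we have $A(\xi g) = (\xi g) f_A(\xi g)$. On the other hand, using that $A$ commutes with the right action and then the definition of $f_A$ at $\xi$, we compute
\begin{equation*}
A(\xi g) = A(\xi) g = \bigl(\xi f_A(\xi)\bigr) g = \xi \bigl(f_A(\xi) g\bigr).
\end{equation*}
Comparing the two expressions, and writing the first as $A(\xi g) = \xi \bigl(g f_A(\xi g)\bigr)$, we obtain $\xi \bigl(g f_A(\xi g)\bigr) = \xi \bigl(f_A(\xi) g\bigr)$. Because the $G$-action on the fibres of $E$ is free, I can cancel $\xi$ to deduce $g f_A(\xi g) = f_A(\xi) g$, hence
\begin{equation*}
f_A(\xi g) = g^{-1} f_A(\xi) g.
\end{equation*}
This is exactly the statement that $f_A$ intertwines the right $G$-action on $E$ with the right adjoint action $x \mapsto g^{-1} x g$ on $G$.

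The only subtlety to be careful about is the freeness of the action used in the cancellation step: in a principal bundle the right action is indeed free and transitive on fibres, so cancelling $\xi$ is legitimate and the identity $g f_A(\xi g) = f_A(\xi) g$ is forced rather than merely one solution among many. There is no real obstacle here; the computation is a short formal manipulation. If anything, the main point to state cleanly is the convention that $A$ being a bundle automorphism means it is $G$-equivariant for the right action, which is what makes the first line $A(\xi g) = A(\xi) g$ hold, and that the associativity of the right action lets one regroup $\bigl(\xi f_A(\xi)\bigr) g = \xi \bigl(f_A(\xi) g\bigr)$. With those conventions fixed, the result is immediate.
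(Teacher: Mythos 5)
Your proof is correct and is essentially identical to the paper's own argument: both compute $A(\xi g)$ in two ways, once via the defining equation at $\xi g$ and once via the $G$-equivariance of $A$ together with the defining equation at $\xi$, then cancel $\xi$ (using freeness of the fibrewise action) to obtain $g f_A(\xi g) = f_A(\xi) g$. Your write-up merely makes explicit the cancellation step that the paper leaves implicit; nothing further is needed.
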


\begin{proof} 
In fact, if $g \in G, \xi \in E$, we have 
$A(\xi g) = (\xi g)f_A(\xi g)$ on the one hand, and  
$A(\xi ) g =  \xi f_A(\xi ) g$ on the other, leading to the equality 
$gf_A(\xi g) = f_A(\xi )g$. 
\end{proof}

\begin{lemma} \label{composition}
If $A_1, A_2 \in \Aut(E)$, then $f_{A_1A_2} = f_{A_1}f_{A_2}$.
\end{lemma}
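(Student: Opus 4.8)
The plan is to compute $(A_1A_2)(\xi)$ for an arbitrary point $\xi\in E$ by unwinding the defining relation $A(\xi)=\xi f_A(\xi)$ twice, and then to compare the outcome with the defining relation for $f_{A_1A_2}$. Writing the composition as $(A_1A_2)(\xi)=A_1(A_2(\xi))$, I would first substitute $A_2(\xi)=\xi f_{A_2}(\xi)$ to obtain $A_1(\xi f_{A_2}(\xi))$.

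The key step is to push $A_1$ through the right translation by $f_{A_2}(\xi)$. This uses the fact that a principal-bundle automorphism is, by definition, $G$-equivariant for the right action, so that $A_1(\xi f_{A_2}(\xi))=A_1(\xi)f_{A_2}(\xi)$. Applying the definition of $f_{A_1}$ to the remaining $A_1(\xi)=\xi f_{A_1}(\xi)$ then yields
\[
(A_1A_2)(\xi)=\xi f_{A_1}(\xi)f_{A_2}(\xi).
\]
On the other hand, by definition $(A_1A_2)(\xi)=\xi f_{A_1A_2}(\xi)$. Since the $G$-action on $E$ is free, the element of $G$ carrying $\xi$ to $(A_1A_2)(\xi)$ is unique, so cancelling $\xi$ gives $f_{A_1A_2}(\xi)=f_{A_1}(\xi)f_{A_2}(\xi)$ for every $\xi$, which is exactly the asserted identity of maps $E\to G$ under pointwise multiplication.

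There is no genuine obstacle in this argument; it is a one-line computation. The only points requiring care are the convention that $A_1A_2$ means ``apply $A_2$ first'' and the direction of equivariance invoked when moving $A_1$ across the factor $f_{A_2}(\xi)$. I would emphasise that the equivariance used here is that of the automorphism $A_1$ itself (part of the definition of $\Aut(E)$), and is distinct from the twisted (adjoint) equivariance of $f_A$ recorded in Lemma \ref{equivariance}.
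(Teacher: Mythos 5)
Your proof is correct and is essentially identical to the paper's: both unwind $(A_1A_2)(\xi)$ via $A_2(\xi)=\xi f_{A_2}(\xi)$, use the $G$-equivariance of $A_1$ to move it past the right translation, and conclude $\xi f_{A_1A_2}(\xi)=\xi f_{A_1}(\xi)f_{A_2}(\xi)$. Your explicit remarks on freeness of the action and on which equivariance is being invoked are sound clarifications of steps the paper leaves implicit.
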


\begin{proof}
In fact, $\xi f_{A_1A_2}(\xi) = (A_1A_2)(\xi) = A_1(A_2\xi)=
A_1(\xi f_{A_2}(\xi )) = A_1(\xi )f_{A_2}(\xi) = 
\xi f_{A_1} (\xi )f_{A_2}(\xi )$, proving our assertion. 
\end{proof}

\begin{proposition} \label{orbit}
Let $A\in \Aut(E)$ be such that $A^n=z\in Z$. Then:

(1) $f_A$ maps $E$ onto a single orbit $S(E)$ of the set
$$
S^n:=\{s\in G\;:\; s^n=z\in Z\}
$$
under the right  action of  $G$ by inner automorphisms.

(2) Every element $s\in S(E)$ defines a reduction of structure group
of $E$ to $Z_G(s)$. 

(3) The automorphism $A$ leaves the $Z_G(s)$-bundle invariant and coincides
there with the central element $s$.

\end{proposition}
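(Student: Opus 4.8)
The plan is to use the two preceding lemmas (Lemma~\ref{equivariance} and Lemma~\ref{composition}) to extract all three statements from the single map $f_A$. First I would establish that the image of $f_A$ lands in $S^n$. Since $A^n=z\in Z$, Lemma~\ref{composition} gives $f_{A^n}=f_A\cdot(f_A\circ A)\cdots(f_A\circ A^{n-1})$, but the cleanest route is to compute directly: writing $A(\xi)=\xi f_A(\xi)$ and iterating, $A^2(\xi)=A(\xi f_A(\xi))=A(\xi)f_A(\xi)=\xi f_A(\xi)^2$ once one checks (via equivariance) that the factors multiply. More carefully, $A^n(\xi)=\xi f_A(\xi)^n$ should follow by induction using Lemma~\ref{equivariance}, and since $A^n=z$ acts as $\xi\mapsto\xi z$, we get $f_A(\xi)^n=z$, i.e. $f_A(\xi)\in S^n$ for every $\xi$.

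Next, for (1), the equivariance from Lemma~\ref{equivariance}, namely $f_A(\xi g)=g^{-1}f_A(\xi)g$, shows that as $\xi$ ranges over a fibre the value $f_A(\xi)$ sweeps out a full conjugacy orbit, and since $E$ is connected over each component and the orbit is the same across fibres (by local triviality and continuity, the conjugacy class is locally constant hence constant), the image is a single orbit $S(E)\subset S^n$ under the inner action $x\mapsto g^{-1}xg$. For (2), fix $s\in S(E)$ and set $E_s:=f_A^{-1}(s)\subset E$. The equivariance formula shows that $\xi g\in E_s$ precisely when $g^{-1}f_A(\xi)g=s$, so the stabiliser acting on $E_s$ is exactly the centraliser $Z_G(s)$; thus $E_s$ is a principal $Z_G(s)$-subbundle, i.e. a reduction of structure group to $Z_G(s)$. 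One must verify $E_s$ is nonempty over every point and $Z_G(s)$-equivariant, which is where the single-orbit statement from (1) is used.

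For (3), I would check that $A$ preserves $E_s$: if $\xi\in E_s$ then $f_A(\xi)=s$, and $A(\xi)=\xi s$, so I need $f_A(\xi s)=s$ as well. By Lemma~\ref{equivariance}, $f_A(\xi s)=s^{-1}f_A(\xi)s=s^{-1}ss=s$, confirming $A(\xi)=\xi s\in E_s$. Hence $A$ restricts to $E_s$ and there acts as right multiplication by the constant element $s\in Z_G(s)$, which is the assertion.

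The main obstacle I anticipate is (1): making rigorous that $f_A$ lands in a \emph{single} orbit rather than a union of conjugacy classes. The inductive identity $f_A(\xi)^n=z$ confines values to $S^n$ fibrewise, and equivariance gives a full orbit within each fibre, but to conclude global constancy of the orbit one must argue that the conjugacy class of $f_A(\xi)$ is locally constant (continuity of $f_A$ together with $S^n$ being a union of closed orbits) and invoke connectedness of $X$. Establishing the induction $A^n(\xi)=\xi f_A(\xi)^n$ correctly, being careful that $f_A$ is not a homomorphism but satisfies the twisted equivariance of Lemma~\ref{equivariance}, is the delicate computational point; everything else follows formally.
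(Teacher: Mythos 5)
Your proof is correct, and for parts (2) and (3) it follows essentially the paper's own argument (the paper declares (3) ``obvious''; your computation $f_A(\xi s)=s^{-1}f_A(\xi)s=s$ via Lemma \ref{equivariance} is exactly the detail it omits, and your first step $f_{A}(\xi)^n=z$ is the paper's application of Lemma \ref{composition}).

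For part (1), however, you take a genuinely different route, and this is where the comparison is instructive. The paper argues algebraically: by Lemma \ref{equivariance} the morphism $f_A$ descends to a morphism $X=E/G\to G\sslash G$ into the GIT quotient for the conjugation action, which must be constant because $X$ is projective and $G\sslash G$ is affine; since each $s\in S^n$ has finite order ($Z$ is finite, $G$ being semisimple), it is semisimple, its orbit is closed, and hence it is a stable point, so one point of $G\sslash G$ corresponds to exactly one orbit. You replace this by a purely topological argument: local constancy of the conjugacy class plus connectedness. This works, but the justification you sketch --- ``continuity of $f_A$ together with $S^n$ being a union of closed orbits'' --- is not quite enough: a disjoint union of infinitely many closed sets need not have each member open in the union, so closedness alone does not give local constancy. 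What rescues the argument is \emph{finiteness}: every $s\in S^n$ has order dividing $n\cdot\mathrm{ord}(z)$, and elements of bounded finite order in a connected reductive group form finitely many conjugacy classes (each lies in a maximal torus $T$, the relevant torsion points of $T$ are finite in number, and conjugacy of semisimple elements is detected by the Weyl group action on $T$). A finite disjoint union of closed classes makes each class open in $S^n$, and then continuity of $f_A$ and connectedness of $X$ (or of $E$) force the image into a single orbit. With that point nailed down your argument is complete, and it is in one respect more elementary than the paper's: it uses only continuity and connectedness, not projectivity of $X$ or GIT. The paper's formulation, on the other hand, transfers verbatim to the twisted situations of Propositions \ref{twisted-orbit} and \ref{Z-twisted-orbit} (where the analogous finiteness of orbits is less transparent and is handled instead through the semidirect product group of Proposition \ref{twisted-group}), which is why the paper sets the argument up that way.
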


\begin{proof}

By Lemma \ref{composition}, 
$f_{A ^n} = (f_A)^n$
is  the constant map  $\xi \to z$ for an
element  $z\in Z$. Hence $f_A$ 
maps $E$ into $S^n$. The morphism $f_A$ defines a morphism  
$\tilde{f_A}: E \to G\sslash G$, where $G\sslash G$ is the 
GIT quotient for the (right) action of $G$ on itself 
given by $x\mapsto g^{-1}x g$ for $x,g\in G$.
Since $\tilde{f_A}$ is constant on the fibres (Lemma \ref{equivariance}),
it descends to a morphism $X=E/G\to G\sslash G$, which must be 
constant since $X$ is projective and  $G\sslash G$ is affine.
But  $f_A(\xi )\in S^n$  and hence is  a semisimple element of $G$ for 
all $\xi \in E$ and hence  a stable point  for the GIT quotient $G\sslash G$. 
Therefore $f_A(\xi)$ for every $\xi\in E$ lie on a single orbit for the given 
action  of $G$  on $S^n$ as  claimed.




To prove (2), let $s\in S(E)$, and  $S:= f_A ^{-1}(s)$. 
Clearly $\xi$ and $\xi '$ on a
 fibre of $E\to X$ belong to $S$ if and only if 
$\xi ' = \xi g$ for some $g\in G$, and  $f_A(\xi ) = s $ and 
$s=f_A(\xi ') = f_A(\xi  g) = g^{-1} f_A(\xi ) g = g^{-1}sg$.  
This implies that $g\in Z_G(s)$. Thus
 the  set $S$ is acted upon transitively on fibres by $Z_G(s)$ giving an
 $Z_G(s)$-bundle to which $E$ is reduced.

Assertion (3) is obvious.

\end{proof}

\subsection{Twisted automorphisms of principal bundles}
\label{twisted-automorphisms}
Let $E$ be a $G$-bundle over $X$.  Let $\theta\in \Aut(G)$. We define 
the set of 
$\theta$-{\bf twisted automorphisms} of $E$ by
$$
\Aut_\theta(E):=\{A: E\to E\;\; \mbox {bijective} \;\;:\;\; A(\xi
g)=A(\xi)\theta(g)\;\;\mbox{for}\;\; \xi\in E, g\in G \}.
$$

Twisted automorphisms are related to ordinary isomorphisms between $E$ and
a related bundle. More precisely, the following is immediate.

\begin{proposition}\label{twisted-bundle}
Let $E$ be a $G$-bundle over $X$.  Let $\theta\in \Aut(G)$. Then:

(1) The $G$-bundle $\theta(E)$ is isomorphic to the $G$-bundle whose total 
space is $E$ with a $G$-action defined by 
$$
e\cdot g:=e\theta(g)\;\;\;\mbox{for}\;\;\; e\in E, g\in G.
$$

(2) Under the isomorphism given in (1) an isomorphism $E\to \theta(E)$ can be
identified with a $\theta$-twisted automorphism of $E$.

\end{proposition}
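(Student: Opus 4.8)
The plan is to prove Proposition \ref{twisted-bundle} directly from the definitions, as the statement is essentially a matter of unwinding how $\theta(E)$ is constructed and then matching structures. Recall that for $\theta\in\Aut(G)$, the associated bundle $\theta(E)$ is the $G$-bundle obtained from $E$ by changing the structure group along $\theta$; concretely it is $E\times_\theta G=(E\times G)/\sim$, where $(\xi,h)\sim(\xi g,\theta(g)^{-1}h)$ for $g\in G$, with the residual right $G$-action $[\xi,h]\cdot k=[\xi,hk]$.

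For part (1), I would write down the bundle $E'$ whose total space is $E$ equipped with the twisted action $e\cdot g:=e\theta(g)$, and exhibit an explicit isomorphism $E'\to\theta(E)$ of $G$-bundles over $X$. The natural candidate is the map $e\mapsto[e,1]$. First I would check this is well defined and fibre-preserving (it covers the identity on $X$ since both $E'$ and $\theta(E)$ have the same underlying base-point projection). Then I would verify $G$-equivariance: on $E'$ the action sends $e$ to $e\theta(g)$, while in $\theta(E)$ we compute $[e,1]\cdot g=[e,g]=[e\theta(g),\theta(\theta(g))^{-1}\cdot\ldots]$ — so the key check is that $[e,g]$ and $[e\theta(g),1]$ represent the same class, which holds precisely because $(e,g)\sim(e\theta(g),\theta(g)^{-1}g)$ is not quite right, so I must be careful with the equivalence. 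The correct computation is $[e\theta(g),1]=[e,\theta(g)\cdot 1]=[e,g]$ using the defining relation with group element $g$ itself; matching this against the image of $e\cdot g=e\theta(g)$ under $e\mapsto[e,1]$ confirms equivariance. Bijectivity on each fibre is then automatic since both are $G$-torsors and the map is $G$-equivariant covering the identity.

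For part (2), once (1) identifies $\theta(E)$ with the underlying space $E$ carrying the twisted action, an isomorphism $A\colon E\to\theta(E)$ becomes, after composing with the isomorphism of (1), a bijection $A\colon E\to E$ that intertwines the ordinary right $G$-action on the source with the twisted action $e\cdot g=e\theta(g)$ on the target. Writing out the intertwining condition, $A(\xi g)=A(\xi)\cdot g=A(\xi)\theta(g)$, which is exactly the defining property of $\Aut_\theta(E)$. Conversely any $\theta$-twisted automorphism, read through the identification of (1), gives a genuine bundle isomorphism $E\to\theta(E)$. I would phrase this as a two-way correspondence to make the identification precise.

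I expect the only real obstacle to be bookkeeping with the equivalence relation defining the associated bundle $\theta(E)$: it is easy to get the direction of $\theta$ versus $\theta^{-1}$ backwards in the relation, which would flip the twisting convention and spoil the equivariance check. I would fix conventions at the outset (stating explicitly the relation defining $\theta(E)$ and the residual action) and then let every subsequent identity follow mechanically. Since the paper labels this proposition as ``immediate,'' I would keep the write-up brief, presenting the explicit isomorphism of (1) and the one-line verification of (2), rather than belabouring the torsor-theoretic generalities.
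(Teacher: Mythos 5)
The paper offers no proof of this proposition (it is declared ``immediate''), so your write-up stands on its own; and your overall strategy --- exhibit the explicit map $e\mapsto[e,1]$, check equivariance, deduce (2) by transport of structure --- is certainly the intended one. But the key computation in (1) is wrong, and wrong in exactly the way you warned yourself about. With the convention you fix, $(\xi,h)\sim(\xi g,\theta(g)^{-1}h)$, one has
$$
[e\theta(g),1]=[e,\theta^{2}(g)],\qquad [e,g]=[e\theta^{-1}(g),1],
$$
so the identity $[e\theta(g),1]=[e,g]$ that you invoke (``using the defining relation with group element $g$ itself'') is equivalent to $\theta^{2}(g)=g$: it holds precisely when $\theta$ is an involution or the identity. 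For general $\theta$, the map $e\mapsto[e,1]$ intertwines the principal action of $E\times_\theta G$ with the action $e\cdot g=e\theta^{-1}(g)$ on $E$ --- not $e\theta(g)$; equivalently, under your convention an isomorphism $E\to\theta(E)$ corresponds to a $\theta^{-1}$-twisted automorphism. Since the proposition is stated for arbitrary $\theta\in\Aut(G)$, and the paper applies this circle of ideas to automorphisms of order $n>2$ (Proposition \ref{twisted-orbit}, Section \ref{higher-order-auto}), the argument as written does not prove the stated claim.

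The repair is purely a matter of conventions, but it must be made explicitly rather than forced through a false identity: either define $\theta(E)$ as the extension of structure group along $\theta^{-1}$, i.e.\ impose $(\xi,h)\sim(\xi g,\theta^{-1}(g)^{-1}h)$, in which case $(e,g)\sim(e\theta(g),1)$ and your map $e\mapsto[e,1]$ is equivariant for $e\cdot g=e\theta(g)$ exactly as claimed; or keep the standard convention and prove the statement with $\theta$ replaced by $\theta^{-1}$ throughout --- harmless for the paper's main case of involutions, where $\theta=\theta^{-1}$, but a genuine difference in the order-$n$ setting. Your part (2) is fine once (1) is in place: composing an isomorphism $E\to\theta(E)$ with the identification of (1) gives a bijection $A:E\to E$ satisfying $A(\xi g)=A(\xi)\theta(g)$, which is the defining property of $\Aut_\theta(E)$, and the correspondence is visibly bijective.
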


Let $A\in \Aut_\theta(E)$. We define the function $f_A:E\to G$
by the formula
\begin{equation}\label{function-A}
A(\xi)=\xi f_A(\xi)\;\;\mbox{for every} \;\; \xi\in E.
\end{equation}

\begin{lemma}\label{theta-equivariance}
Let $A\in\Aut_\theta(E)$ and let $f_A: E\to G$ the function given
by   (\ref{function-A}). The function $f_A$ is $G$-equivariant for the 
right action of $G$ on $E$ and the right action of $G$ on itself given
by
$$
x\mapsto g^{-1}x\theta(g)\;\; \mbox{for}\;\; x,g \in G. 
$$
\end{lemma}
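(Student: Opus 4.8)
The plan is to imitate directly the proof of Lemma \ref{equivariance}, the only new feature being that the right multiplication on the target $G$ is now twisted by $\theta$ on the right-hand factor. The whole statement reduces to a one-line computation: evaluate $A(\xi g)$ in two different ways and compare. On the one hand, applying the defining relation (\ref{function-A}) to the point $\xi g$ gives $A(\xi g)=(\xi g)f_A(\xi g)$. On the other hand, using the fact that $A\in\Aut_\theta(E)$ is a $\theta$-twisted automorphism, together with (\ref{function-A}) applied to $\xi$, gives $A(\xi g)=A(\xi)\theta(g)=\xi f_A(\xi)\theta(g)$.

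Setting these two expressions equal yields
$$
\xi\, g\, f_A(\xi g)=\xi\, f_A(\xi)\,\theta(g).
$$
Here I would invoke the fact that the right $G$-action on the principal bundle $E$ is free (so that $\xi\eta=\xi\eta'$ in the fibre forces $\eta=\eta'$ in $G$); cancelling the leading $\xi$ leaves $g\,f_A(\xi g)=f_A(\xi)\,\theta(g)$, and hence
$$
f_A(\xi g)=g^{-1}f_A(\xi)\,\theta(g),
$$
which is precisely the asserted equivariance for the action $x\mapsto g^{-1}x\,\theta(g)$.

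There is essentially no obstacle here: the argument is formal and parallels Lemma \ref{equivariance} verbatim, the only substantive point being that the twisting $A(\xi g)=A(\xi)\theta(g)$ replaces the untwisted $A(\xi g)=A(\xi)g$, which is exactly what produces the $\theta(g)$ on the right rather than $g$. The one thing worth stating explicitly, rather than leaving tacit, is the freeness of the principal action used to cancel $\xi$; everything else is a direct substitution.
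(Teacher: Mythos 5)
Your proof is correct and is essentially identical to the paper's: both evaluate $A(\xi g)$ two ways, once via the defining relation for $f_A$ and once via the twisted-automorphism property, then cancel to get $f_A(\xi g)=g^{-1}f_A(\xi)\theta(g)$. The only cosmetic difference is that you invoke freeness of the principal action explicitly, whereas the paper leaves it tacit (rewriting $\xi f_A(\xi)\theta(g)$ as $\xi g\, g^{-1}f_A(\xi)\theta(g)$ before comparing).
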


\begin{proof}
Let $\xi\in E$ and $g\in G$. On the one hand we have 
$A(\xi g)=(\xi g)f_A(\xi g)$. On the other hand 
$A(\xi g)=A(\xi)\theta(g)=\xi f_A(\xi)\theta(g)=\xi g g^{-1}f_A(\xi)\theta(g)$, 
thus concluding that $f_A(\xi g)= g^{-1} f_A(\xi)\theta(g)$. 
\end{proof}

\begin{lemma}\label{A-multiplication}
Let $\theta_1,\theta_2\in \Aut(G)$, and $A_1\in \Aut_{\theta_1}(E)$
and $A_2\in \Aut_{\theta_2}(E)$. Then

(1) $A_1A_2\in \Aut_{\theta_1\theta_2}(E)$.

(2) $f_{A_1A_2}= f_{A_1}\cdot\theta_1(f_{A_2})$, 
where this means that
$f_{A_1A_2}(\xi)= f_{A_1}(\xi)\theta_1((f_{A_2})(\xi))$, for every
$\xi\in E$.

\end{lemma}

\begin{proof}
Let $\xi\in E$ an $g\in G$.

\begin{align*}
A_1(A_2(\xi g))& = A_1(A_2(\xi)\theta_2(g))\\ 
                   & = A_1(A_2(\xi))\theta_1(\theta_2(g)) \\
                   & = (A_1 A_2)(\xi)(\theta_1\theta_2)(g), 
\end{align*}
proving (1). The proof of (2) is given by the following computation:

\begin{align*}
\xi f_{A_1A_2}(\xi)& = (A_1A_2)(\xi)\\ 
                   & = A_1(A_2(\xi)) \\
                   & = A_1(\xi f_{A_2}(\xi))\\
                   & = A_1(\xi)\theta_1(f_{A_2}(\xi))\\
                   & = \xi f_{A_1}(\xi)\theta_1(f_{A_2}(\xi)). 
\end{align*}

\end{proof}

From Lemma \ref{A-multiplication} we conclude the following.

\begin{proposition}
Let $\theta\in \Aut_n(G)$ and let $E$ be a $G$-bundle. The set
$$
\widehat{\Aut(E)}:=\bigcup_{i=0}^{n-1} \Aut_{\theta^i}(E)
$$
is a (possibly disconnected) group fitting in  an  exact sequence

$$
1 \to \Aut(E)\to \widehat{\Aut(E)}\to \Z/n.
$$

\end{proposition}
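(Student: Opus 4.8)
The plan is to verify that $\widehat{\Aut(E)}$ is closed under the group operations and then exhibit the exact sequence explicitly. First I would observe that Lemma \ref{A-multiplication}(1) immediately gives closure under composition: if $A_1\in\Aut_{\theta^i}(E)$ and $A_2\in\Aut_{\theta^j}(E)$, then $A_1A_2\in\Aut_{\theta^{i+j}}(E)$. Since $\theta\in\Aut_n(G)$ has order $n$, the exponent $i+j$ is to be read modulo $n$, so that $\Aut_{\theta^{i+j}}(E)=\Aut_{\theta^{(i+j)\bmod n}}(E)$, and hence the product lands back inside the union $\widehat{\Aut(E)}$. The identity element is $\Id_E\in\Aut_{\theta^0}(E)=\Aut(E)$.

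Next I would check existence of inverses within the set. Given $A\in\Aut_{\theta^i}(E)$, the set-theoretic inverse $A^{-1}$ exists because $A$ is bijective by definition; the point to verify is that $A^{-1}\in\Aut_{\theta^{-i}}(E)=\Aut_{\theta^{n-i}}(E)$. This follows by applying $A^{-1}$ to the twisting identity: from $A(\xi g)=A(\xi)\theta^i(g)$, setting $\eta=A(\xi)$ and $h=\theta^i(g)$ gives $A^{-1}(\eta h)=A^{-1}(\eta)\theta^{-i}(h)$, which is exactly the $\theta^{-i}$-twisting condition. Thus $\widehat{\Aut(E)}$ is a subgroup of the group of all bijections of $E$ (compatibility with the bundle projection and holomorphicity are inherited from the components $\Aut_{\theta^i}(E)$).

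To produce the exact sequence, I would define the map $\widehat{\Aut(E)}\to\Z/n$ by sending $A\in\Aut_{\theta^i}(E)$ to the class $i\bmod n$. This is well defined provided the sets $\Aut_{\theta^i}(E)$ are disjoint for distinct residues $i$, which holds precisely because $\theta$ has exact order $n$ in $\Aut(G)$: if $A$ were simultaneously $\theta^i$- and $\theta^j$-twisted, then $A(\xi)\theta^i(g)=A(\xi)\theta^j(g)$ for all $g$, forcing $\theta^i=\theta^j$ and hence $i\equiv j$. Lemma \ref{A-multiplication}(1) shows this map is a homomorphism, and its kernel is exactly $\Aut_{\theta^0}(E)=\Aut(E)$, giving the sequence
$$
1\to\Aut(E)\to\widehat{\Aut(E)}\to\Z/n.
$$

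The only genuine subtlety, and the point I would be most careful about, is the nonemptiness and surjectivity question: the statement as written ends the sequence with $\Z/n$ rather than $\Z/n\to 1$, so surjectivity is not actually being asserted. Indeed each $\Aut_{\theta^i}(E)$ may be empty (by Proposition \ref{twisted-bundle}, $\Aut_{\theta^i}(E)\neq\emptyset$ iff $E\cong\theta^i(E)$), so the image in $\Z/n$ is in general only a subgroup, and the bundle need not admit any twisted automorphism beyond those of $\Aut(E)$. This is why the proposition hedges with the phrase \emph{possibly disconnected} and omits the final arrow; I would make sure the statement and its proof agree on not claiming surjectivity, and note that the image is the subgroup of $\Z/n$ recording which powers $\theta^i$ preserve the isomorphism class of $E$.
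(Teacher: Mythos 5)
Your proof is correct and takes essentially the same route as the paper, which states the proposition as an immediate consequence of Lemma \ref{A-multiplication} without spelling out the details; you simply fill in the routine verifications (closure modulo $n$, inverses, disjointness of the sets $\Aut_{\theta^i}(E)$, identification of the kernel) that the paper leaves implicit. Your closing remark is also on target: the paper deliberately ends the sequence at $\Z/n$ without asserting surjectivity, since $\Aut_{\theta^i}(E)$ is nonempty only when $E\cong\theta^i(E)$.
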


One can easily prove the following.

\begin{proposition}
Let $E$ be a $G$-bundle and  $\widehat{E}$ be the $\widehat{G}$-bundle 
associated to $E$ by extension of structure group,
where $\widehat{G}$ is the group defined in Proposition \ref{twisted-group}.
Then
$$
\widehat{\Aut(E)} \cong \Aut(\widehat{E}). 
$$ 
\end{proposition}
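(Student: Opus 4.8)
The plan is to construct an explicit isomorphism $\Phi\colon\widehat{\Aut(E)}\to\Aut(\widehat{E})$ and check that it is a bijective homomorphism. First I would fix notation for the extended bundle: writing elements of $\widehat{G}$ as $(g,\theta^i)$ with the product $(g,\theta^i)\ast(h,\theta^j)=(g\theta^i(h),\theta^{i+j})$ of Proposition \ref{twisted-group}, the bundle $\widehat{E}=E\times_G\widehat{G}$ is the quotient of $E\times\widehat{G}$ by the relation $(\xi g,(h,\theta^j))\sim(\xi,(gh,\theta^j))$, carrying the right $\widehat{G}$-action $[\xi,(h,\theta^j)]\cdot(k,\theta^l)=[\xi,(h\theta^j(k),\theta^{j+l})]$. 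The key object is the canonical inclusion $\iota\colon E\to\widehat{E}$, $\xi\mapsto[\xi,(1,\theta^0)]$, which is injective because $G\hookrightarrow\widehat{G}$ is, and which satisfies $\iota(\xi)\cdot(g,\theta^0)=\iota(\xi g)$. Since every point of $\widehat{E}$ has the form $\iota(\xi)\cdot(h,\theta^j)$, any $\widehat{G}$-equivariant map is determined by its values on $\iota(E)$.

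Given $A\in\Aut_{\theta^i}(E)$, I would define $\Phi(A)=\widehat{A}$ by declaring $\widehat{A}(\iota(\xi)):=\iota(A(\xi))\cdot(1,\theta^i)$ and extending $\widehat{G}$-equivariantly. The one real verification is well-definedness: using $A(\xi g)=A(\xi)\theta^i(g)$ together with the semidirect-product identity $(1,\theta^i)\ast(g,\theta^0)=(\theta^i(g),\theta^i)=(\theta^i(g),\theta^0)\ast(1,\theta^i)$, both $\widehat{A}(\iota(\xi g)\cdot\hat h)$ and $\widehat{A}(\iota(\xi)\cdot(g,\theta^0)\hat h)$ collapse to $\iota(A(\xi))\cdot(\theta^i(g),\theta^i)\,\hat h$, so $\widehat{A}$ is a well-defined bijection covering $\mathrm{id}_X$, hence lies in $\Aut(\widehat{E})$. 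That $\Phi$ is a homomorphism then follows from Lemma \ref{A-multiplication}(1) by comparing both sides on $\iota(E)$: for $A_1\in\Aut_{\theta^i}(E)$ and $A_2\in\Aut_{\theta^j}(E)$ each side sends $\iota(\xi)$ to $\iota(A_1A_2(\xi))\cdot(1,\theta^{i+j})$.

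For bijectivity I would exploit the projection $\widehat{E}\to\widehat{E}/G$. Since $\widehat{E}$ is extended from $E$ and $G$ is normal in $\widehat{G}$, the $\Z/n=\widehat{G}/G$-bundle $\widehat{E}/G$ is canonically trivial, $\widehat{E}/G\cong X\times\Z/n$, and accordingly $\widehat{E}=\bigsqcup_i\widehat{E}_i$ with $\widehat{E}_i=\iota(E)\cdot(1,\theta^i)$. Any $\widehat{A}\in\Aut(\widehat{E})$ induces a $\Z/n$-automorphism of $X\times\Z/n$ over $X$, which by connectedness of $X$ is translation by a single $i\in\Z/n$; thus $\widehat{A}$ carries $\widehat{E}_0=\iota(E)$ bijectively onto $\widehat{E}_i$. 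Defining $A\colon E\to E$ by $\widehat{A}(\iota(\xi))=\iota(A(\xi))\cdot(1,\theta^i)$ and running the equivariance computation in reverse recovers $A(\xi g)=A(\xi)\theta^i(g)$, so $A\in\Aut_{\theta^i}(E)$ and $\Phi(A)=\widehat{A}$, giving surjectivity; injectivity is immediate, since $\Phi(A)=\mathrm{id}$ forces $i=0$ (comparing components) and then $A=\mathrm{id}$ by injectivity of $\iota$. This $\Phi$ is moreover compatible with the exact sequence $1\to\Aut(E)\to\widehat{\Aut(E)}\to\Z/n$ of the preceding proposition.

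I expect the main difficulty to be purely bookkeeping: keeping the left $G$-action used in the associated-bundle quotient consistent with the twisted right $\widehat{G}$-action, and tracking how the $\theta^i$ in the second factor of $\widehat{G}$ twists multiplication in the first factor. The one genuinely structural ingredient, as opposed to formal manipulation, is the identification $\widehat{E}/G\cong X\times\Z/n$ and the use of connectedness of $X$ to guarantee that a single $i\in\Z/n$ governs the whole automorphism; this is precisely what matches the $\Z/n$-grading $\widehat{\Aut(E)}=\bigsqcup_i\Aut_{\theta^i}(E)$ to the component structure of $\Aut(\widehat{E})$.
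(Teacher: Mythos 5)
Your proof is correct. The paper in fact offers no argument for this proposition at all --- it is stated immediately after the sentence ``One can easily prove the following'' --- so there is no authorial proof to compare against; your write-up supplies exactly the verification the authors left to the reader. Your construction is the natural one: the map $\Phi(A)(\iota(\xi)\cdot\hat h)=\iota(A(\xi))\cdot(1,\theta^{i})\ast\hat h$ is well defined by the semidirect-product identity $(1,\theta^{i})\ast(g,\theta^{0})=(\theta^{i}(g),\theta^{0})\ast(1,\theta^{i})$, it is a homomorphism by Lemma \ref{A-multiplication}, and your surjectivity argument via the canonical trivialization $\widehat{E}/G\cong X\times\Z/n$ together with connectedness of $X$ is precisely the structural point that matches the grading $\widehat{\Aut(E)}=\bigsqcup_{i}\Aut_{\theta^{i}}(E)$ with the component structure of $\Aut(\widehat{E})$; it also explains why $\Phi$ is compatible with the exact sequence $1\to\Aut(E)\to\widehat{\Aut(E)}\to\Z/n$ of the preceding proposition. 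The only implicit point worth flagging is that the disjointness of the pieces $\Aut_{\theta^{i}}(E)$, $0\leq i\leq n-1$, uses that $\theta$ has exact order $n$ (so $\theta^{i}\neq\theta^{j}$ for $i\neq j$), which is the standing hypothesis $\theta\in\Aut_n(G)$.
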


Here is the twisted version of Proposition \ref{orbit}.

\begin{proposition}\label{twisted-orbit}
Let $E$ be a $G$-bundle over a compact Riemann surface $X$. 
Let $\theta\in \Aut_n(G)$ and  $A\in \Aut_\theta(E)$ such that
$A^n=z\in Z\subset \Aut(E)$. Then: 

(1) The function $f_A$ given in  (\ref{function-A}) maps
$E$ onto a single orbit $S(E)$ of the set 
$S^n_\theta:=\{s\in G\;:\; s\theta(s)\cdots \theta^{n-1}(s)=z\in Z\}$
 defined 
in Proposition  \ref{cliques-n} under the right action of $G$ defined 
there, namely $s\cdot g= g^{-1}s\theta(g)$.

(2) Every element $s\in S(E)$ defines a reduction of structure group of
$E$ to $G^{\theta'}$, where $\theta'=\Int(s)\theta$, and $G^{\theta'}$ 
is the subgroup of $G$ of fixed points under $\theta'$.
\end{proposition}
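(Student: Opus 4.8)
The plan is to mirror, step by step, the proof of Proposition \ref{orbit}, replacing ordinary conjugation everywhere by the twisted action $s\cdot g=g^{-1}s\theta(g)$, and to use the semidirect product $\widehat{G}=G\rtimes\Z/n$ of Proposition \ref{twisted-group} to reduce the one genuinely new point --- closedness of the relevant orbit --- to the classical case. First I would check that $f_A$ takes values in $S^n_\theta$. Since $A\in\Aut_\theta(E)$, an easy induction from Lemma \ref{A-multiplication} shows $A^n\in\Aut(E)$ and
\[
f_{A^n}=f_A\cdot\theta(f_A)\cdots\theta^{n-1}(f_A),
\]
in the sense that $f_{A^n}(\xi)=f_A(\xi)\theta(f_A(\xi))\cdots\theta^{n-1}(f_A(\xi))$ for every $\xi\in E$. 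The hypothesis $A^n=z\in Z(G)$ gives $f_{A^n}\equiv z$, so each $f_A(\xi)$ satisfies $f_A(\xi)\theta(f_A(\xi))\cdots\theta^{n-1}(f_A(\xi))=z$; that is, $f_A(\xi)\in S^n_\theta$.

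For part (1), I would transport the twisted action to ordinary conjugation inside $\widehat{G}$. Under $s\mapsto(s,\theta)$ the twisted action $s\cdot g=g^{-1}s\theta(g)$ becomes conjugation $(s,\theta)\mapsto(g^{-1},1)(s,\theta)(g,1)$, and by Proposition \ref{twisted-group}(1) the condition $s\in S^n_\theta$ is exactly $(s,\theta)^n\in Z(G)$. Thus $(f_A(\xi),\theta)$ is an element of $\widehat{G}$ of finite order modulo the centre, hence semisimple, so its $G$-conjugacy orbit is closed: the full $\widehat{G}$-orbit is closed by semisimplicity and lies in the coset $G\theta$, and being a finite union of $G$-orbits of equal dimension (as $G$ is normal of finite index in $\widehat{G}$), each such $G$-orbit is closed. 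Now Lemma \ref{theta-equivariance} says $f_A$ is equivariant for the twisted action, so $\xi\mapsto(f_A(\xi),\theta)$ descends to a morphism from the projective variety $X=E/G$ to the affine GIT quotient $(G\theta)\sslash G$ (affine since $G$ is reductive). This morphism is therefore constant, so all the $(f_A(\xi),\theta)$ lie in a single fibre; being semisimple, and since the semisimple points of a fibre constitute its unique closed orbit, they all lie on one closed $G$-orbit $S(E)$, which proves (1).

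For part (2), fix $s\in S(E)$ and set $S=f_A^{-1}(s)$. If $\xi,\xi'$ lie in the same fibre of $E\to X$ with $f_A(\xi)=f_A(\xi')=s$, write $\xi'=\xi g$; then Lemma \ref{theta-equivariance} gives $s=f_A(\xi g)=g^{-1}s\theta(g)$, i.e. $\theta(g)=s^{-1}gs$, which is precisely the condition $\theta'(g)=g$ for $\theta'=\Int(s)\theta$. Conversely, every such $g$ preserves $S$, so $S$ is a torsor under $G^{\theta'}$ on each fibre, yielding the desired reduction of the structure group of $E$ to $G^{\theta'}$. One checks that $\theta'$ is of finite order, namely $(\Int(s)\theta)^n=\Int\bigl(s\theta(s)\cdots\theta^{n-1}(s)\bigr)\theta^n=\Int(z)=\Id$, so $\theta'\in\Aut_n(G)$ and $G^{\theta'}$ is well defined.

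I expect the main obstacle to be exactly the passage from ``a single fibre of the GIT quotient'' to ``a single orbit'' in the twisted setting. The identification with conjugation in $\widehat{G}$ makes the values $(f_A(\xi),\theta)$ semisimple and hence gives closed conjugacy classes in $\widehat{G}$, but one must upgrade this to closedness under the subgroup $G$ rather than under all of $\widehat{G}$. This is the only step with no direct analogue in Proposition \ref{orbit}, and I anticipate it follows cleanly from the finiteness of $\widehat{G}/G\cong\Z/n$: the $G$-orbits comprising a closed $\widehat{G}$-orbit are permuted by $\widehat{G}$, hence share a common dimension, and orbits of maximal dimension in a $G$-variety are closed.
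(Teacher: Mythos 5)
Your proof is correct and takes essentially the same route as the paper's: values of $f_A$ land in $S^n_\theta$ via Lemma \ref{A-multiplication}, the twisted GIT quotient together with projectivity of $X$ forces $\tilde{f_A}$ to be constant, semisimplicity is obtained by passing to $\widehat{G}=G\rtimes\Z/n$ as in Proposition \ref{twisted-group}, and part (2) mirrors Proposition \ref{orbit}(2). The only difference is that you spell out the passage from closedness of the $\widehat{G}$-conjugacy class to closedness of the individual $G$-orbits (via the finite-index, equal-dimension argument), a point the paper compresses into the assertion that elements of $S^n_\theta$ are stable for the twisted action.
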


\begin{proof}
We adapt the proof of Proposition \ref{orbit} to the twisted situation.
By Lemma \ref{A-multiplication}, 
$f_{A^n}=f_A\cdot \theta(f_A)\cdots \theta^{n-1}(f_A)=f_z$ 
where  $f_z: E\to G$ is the constant map  given by $f_z(\xi)=z$ for an
element  $z\in Z$ and every $\xi\in E$. By setting
$s:=f_A(\xi)$, this  implies that $s\in S^n_\theta$.

The morphism $f_A$ defines a morphism  
$\tilde{f_A}: E \to G\sslash_\theta G$, where $G\sslash_\theta G$ is the 
GIT quotient for the action of $G$ on itself 
given by the $\theta$-twisted action 
 $x\mapsto g^{-1}x \theta(g)$ for $x,g\in G$.
Since $\tilde{f_A}$ is constant on the fibres (Lemma \ref{theta-equivariance}),
it descends to a morphism $X=E/G\to G\sslash_\theta G$, which must be 
constant since $X$ is projective and  $G\sslash_\theta G$ is affine.

Now, we claim that every element $s\in S^n_\theta$ is stable for the 
$\theta$-twisted action. This follows from Proposition \ref{twisted-group},
since the element $s\in S^n_\theta$ is in correspondence with an element of
order $n$ in the twisted group $\widehat{G}$ defined in  Proposition
\ref{twisted-group} and is hence a semisimple element of $\widehat{G}$.
We thus deduce that  $f_A(\xi )\in S^n_\theta$  is a stable point 
for the GIT quotient $G\sslash_\theta G$, and hence  $f_A(\xi)$ 
for every $\xi$ lie on a single orbit for the $\theta$-twisted 
action  of $G$  on $S^n_\theta$ as we claim in (1).

The proof of (2) is a straightforward generalisation of that of (2) in
Proposition \ref{orbit}. 

\end{proof}

One may consider a different kind of automorphisms of a $G$-bundle that 
involves a central subgoup $\Gamma $ (in particular $\Gamma=Z$). Given $\theta \in \Aut(G)$, we 
define a {\bf $(\theta , \Gamma)$-twisted automorphism} to be a 
fibre-preserving morphism $A:E\to E$ satisfying $A(\xi g) = A(\xi )z \theta(g)$
for all $\xi \in E$ and $g\in G$ with $z\in \Gamma$ (depending on $A$, 
$\theta$, $\xi$ and $g$. We will denote the set of 
$(\theta , \Gamma)$-twisted automorphisms of $E$ by $\Aut^\Gamma_\theta(E)$.

Let then $E$ (resp. $\alpha $) be a principal $G$-bundle (resp. 
$\Gamma$-bundle). The fibre product $E\times _X\alpha $ is then in a 
natural way a principal ($G\times \Gamma $)-bundle. Since $\Gamma $ is 
abelian, we may, and will often, write the action of $\Gamma $ on $\alpha$ 
on the left.

Using the homomorphism $m:G\times \Gamma \to G$ given by multiplication, 
we get by extension of structure group, a principal $G$-bundle which we 
denote by $E\otimes \alpha $. This is clearly a quotient of $E\times 
_X\alpha $ by the action of $\Gamma $ given by $z(\xi, a) = (\xi z^{-1}, 
za)$. We will denote the image of $(\xi , a)$ in $E\otimes \alpha $ by 
$\xi\otimes a$. Notice that for any $z\in \Gamma $ we have $\xi.z\otimes a 
= \xi \otimes za$ for all $a\in \alpha$. 

The proof of the following proposition is similar to that of Proposition 
\ref{twisted-bundle}.

\begin{proposition}\label{alpha-theta-twisted-automorphisms}
Let $E$ and $\alpha $ be principal bundles with 
structure groups $G$ and a central subgroup $\Gamma$ of $G$ respectively.
Let $\theta\in \Aut(G)$.  
An isomorphism $E \to \theta(E)\otimes \alpha$ can be identified with 
a $(\theta,\Gamma)$-twisted automorphism of $E$.
\end{proposition}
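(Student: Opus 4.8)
The plan is to unwind both sides of the asserted identification into data living on the total space of $E$ and match them term by term, exactly as in the proof of Proposition \ref{twisted-bundle}; the only genuinely new ingredient is the central $\Gamma$-indeterminacy. First I would make $\theta(E)\otimes\alpha$ completely explicit. By part (1) of Proposition \ref{twisted-bundle} I identify $\theta(E)$ with the total space $E$ carrying the twisted right action $e\cdot g:=e\theta(g)$. Forming the extension by $m:G\times\Gamma\to G$ as in the paragraph preceding the statement, the total space of $\theta(E)\otimes\alpha$ is $(E\times_X\alpha)/\Gamma$ with $z\cdot(e,a)=(e\theta(z)^{-1},za)$, so that $e\theta(z)\otimes a=e\otimes za$ for $z\in\Gamma$, and the residual $G$-action is $(e\otimes a)g=e\theta(g)\otimes a$. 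Throughout I use that $\Gamma$ is $\theta$-invariant (automatic when $\Gamma=Z$), which is what keeps the two tensor factors compatible.

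For the forward direction, given an isomorphism $\Phi:E\to\theta(E)\otimes\alpha$ I would, for $\xi\in E_x$, write $\Phi(\xi)=A(\xi)\otimes a(\xi)$ with $A(\xi)\in E_x$ and $a(\xi)\in\alpha_x$, a representative being unique up to $(A(\xi),a(\xi))\mapsto(A(\xi)\theta(z),z^{-1}a(\xi))$ for $z\in\Gamma$. Applying $G$-equivariance of $\Phi$ together with the $G$-action above gives $\Phi(\xi g)=\Phi(\xi)g=A(\xi)\theta(g)\otimes a(\xi)$, so that $A(\xi g)=A(\xi)\,w\,\theta(g)$ with $w\in\theta(\Gamma)=\Gamma$ absorbing the change of representative. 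Since the definition of a $(\theta,\Gamma)$-twisted automorphism permits exactly such a central factor depending on $\xi$ and $g$, the fibrewise bijection $A$ (bijective because $\Phi$ is) is a $(\theta,\Gamma)$-twisted automorphism of $E$.

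For the converse I would start from $A\in\Aut^\Gamma_\theta(E)$ and define $z(\xi,g)\in\Gamma$ by $A(\xi g)=A(\xi)\,z(\xi,g)\,\theta(g)$. The computation $A(\xi g_1g_2)=A(\xi g_1)z(\xi g_1,g_2)\theta(g_2)=A(\xi)z(\xi,g_1)z(\xi g_1,g_2)\theta(g_1g_2)$, compared with the value read off directly, forces the cocycle identity $z(\xi,g_1g_2)=z(\xi,g_1)z(\xi g_1,g_2)$, where centrality of $\Gamma$ is used to commute the factors past $\theta(g)$. Restricted to a single fibre this identity exhibits $z$ as a coboundary $z(\xi,g)=\beta(\xi)^{-1}\beta(\xi g)$, and the obstruction to choosing $\beta$ globally is a class in $H^1(X,\Gamma)$, which one identifies with that of $\alpha$. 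Using local trivializations of $\alpha$ to normalize the $\beta$'s I would then set $\Phi(\xi):=A(\xi)\otimes a$ and check, by reversing the computation of the previous paragraph, that $\Phi$ is well defined across the relation $e\theta(z)\otimes a=e\otimes za$, is $G$-equivariant, and is an isomorphism of $G$-bundles.

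I expect the main obstacle to be precisely this $\Gamma$-bookkeeping: the splitting $\Phi(\xi)=A(\xi)\otimes a(\xi)$ is canonical only up to $\Gamma$, so promoting the assignment to a genuine holomorphic object and matching the central factors $z(\xi,g)$ with the transition cocycle of $\alpha$ is where one must invoke local triviality of $\alpha$ and verify the gluing. This is the one point that goes beyond Proposition \ref{twisted-bundle}, which is the special case $\alpha$ trivial and $\Gamma$ absent, and it is also where the centrality and $\theta$-invariance of $\Gamma$ in $G$ are essential, since they are what allow the factors $z$ to commute past $\theta(g)$ and keep the $G$-actions on the two bundles compatible.
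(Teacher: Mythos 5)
Your model of $\theta(E)\otimes\alpha$ is correct (the relation $e\theta(z)\otimes a=e\otimes za$ and the residual action $(e\otimes a)g=e\theta(g)\otimes a$), and this is indeed the reading of Proposition \ref{twisted-bundle} that the paper has in mind. But the step you defer as ``$\Gamma$-bookkeeping'' is not bookkeeping; it is where your argument fails. What an isomorphism $\Phi\colon E\to\theta(E)\otimes\alpha$ canonically gives is a map $E\times_X\alpha\to E$, $(\xi,a)\mapsto A(\xi,a)$, determined by $\Phi(\xi)=A(\xi,a)\otimes a$ and satisfying $A(\xi g,a)=A(\xi,a)\theta(g)$ and $A(\xi,za)=A(\xi,a)\theta(z)^{-1}$. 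To extract from this a genuine morphism $A\colon E\to E$ you must choose the $\alpha$-argument globally, i.e.\ a section of $\alpha$, and such a section exists if and only if $\alpha$ is trivial; ``local triviality of $\alpha$ plus gluing'' cannot rescue this, because the gluing obstruction \emph{is} the class of $\alpha$ in $H^1(X,\Gamma)$. You can also see the failure from your own cocycle computation by adding continuity: if $A\colon E\to E$ is an honest morphism with $A(\xi g)=A(\xi)z(\xi,g)\theta(g)$, then $z$ is a $\Gamma$-valued function on $E\times G$ which is continuous (the division map $E\times_X E\to G$ is holomorphic) and satisfies $z(\xi,1)=1$; since $\Gamma\subset Z(G)$ is finite and $E\times G$ is connected whenever $G$ is connected (which it is wherever the paper applies this proposition), $z\equiv 1$. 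Hence every genuine element of $\Aut^\Gamma_\theta(E)$ is already a $\theta$-twisted automorphism, the accompanying $a(\xi)$ descends to a global section trivializing $\alpha$, and both directions of your proposed correspondence can only ever produce trivial $\alpha$ --- whereas nontrivial $\alpha$ is the entire point of the statement.

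The way to make the identification precise --- and the one the paper itself quietly adopts when it actually proves something with this notion --- is to work not with a map $E\to E$ but with its $\Gamma$-quotient data: $\Phi$ induces a well-defined map $q\circ\Phi\colon E\to \theta(E)/\Gamma=E/\Gamma$ and hence a function $f_\Phi\colon E\to G/\Gamma$ with the twisted equivariance $f_\Phi(\xi s)=\overline{s}^{\,-1}f_\Phi(\xi)\,\overline{\theta(s)}$; equivalently one works with the map $E\times_X\alpha\to E$ above, or with local lifts $A$ over open sets of $X$ which agree up to the action of $\Gamma$. This is exactly how the detailed proof of the proposition following Proposition \ref{fixed-iota-theta-alpha} proceeds, and it is the only sense in which Lemma \ref{theta-Z-equivariance} and Proposition \ref{Z-twisted-orbit} are coherent (the looseness originates in the paper's own definition of $(\theta,\Gamma)$-twisted automorphism, which your attempt inherits). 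So your proof needs to be restated at that level: as written, the forward direction asserts the existence of a global holomorphic object that does not exist precisely in the cases the proposition is meant to cover.
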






Let $A\in \Aut^\Gamma_\theta(E)$. As in the previous cases, 
we define the function $f_A:E\to G$
by the formula
\begin{equation}\label{function-A-Z}
A(\xi)=\xi f_A(\xi)\;\;\mbox{for every} \;\; \xi\in E.
\end{equation}

As in Lemmas  \ref{theta-equivariance} and \ref{A-multiplication}, 
we can easily prove the following.

\begin{lemma}\label{theta-Z-equivariance}
Let $A\in\Aut_\theta^\Gamma(E)$ and let $f_A: E\to G$ the function given
by   (\ref{function-A-Z}). Then
$$
f_A(\xi g)=zg^{-1}f_A(\xi)\theta(g) \;\;\mbox{for}\;\; \xi\in E, g\in G,
$$
where $z\in \Gamma$ depends on $A$, $\theta$, $\xi$, and $g$. 
\end{lemma}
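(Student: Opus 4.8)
The plan is to mimic the computation used for Lemma~\ref{theta-equivariance}, the only new ingredient being the central factor $z\in\Gamma$ produced by the $(\theta,\Gamma)$-twisted condition. As in the untwisted case, everything will follow from evaluating $A(\xi g)$ in two different ways and then cancelling by freeness of the right $G$-action on the principal bundle $E$.

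First I would expand $A(\xi g)$ directly from the defining relation (\ref{function-A-Z}), which gives $A(\xi g)=(\xi g)f_A(\xi g)$. Next I would expand the same element using the definition of a $(\theta,\Gamma)$-twisted automorphism, namely $A(\xi g)=A(\xi)z\theta(g)$ for some $z\in\Gamma$ depending on $A$, $\theta$, $\xi$ and $g$, and then substitute $A(\xi)=\xi f_A(\xi)$ to obtain $A(\xi g)=\xi f_A(\xi)z\theta(g)$.

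Equating the two expressions yields $\xi\,g f_A(\xi g)=\xi\,f_A(\xi)z\theta(g)$. Since the right $G$-action on a principal bundle is free, I may cancel the common $\xi$ to get $g f_A(\xi g)=f_A(\xi)z\theta(g)$, and hence $f_A(\xi g)=g^{-1}f_A(\xi)z\theta(g)$. Finally, because $z$ lies in the central subgroup $\Gamma\subset Z(G)$, it commutes with every element of $G$, so I may move it to the front and rewrite the identity as $f_A(\xi g)=zg^{-1}f_A(\xi)\theta(g)$, which is exactly the claimed formula.

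There is essentially no serious obstacle here: the argument is a short manipulation entirely analogous to Lemmas~\ref{theta-equivariance} and~\ref{A-multiplication}. The only points requiring a moment's care are the bookkeeping of the element $z$ --- which is not a fixed constant but depends on $\xi$ and $g$ --- and the explicit use of the centrality of $\Gamma$ to commute $z$ past $g^{-1}$ and $f_A(\xi)$, so that it lands in the stated leftmost position.
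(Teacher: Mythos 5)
Your proof is correct and is exactly the argument the paper intends: the paper omits the proof, stating only that it follows ``as in'' Lemmas \ref{theta-equivariance} and \ref{A-multiplication}, and your computation is precisely that routine adaptation (two expansions of $A(\xi g)$, cancellation by freeness of the $G$-action, and centrality of $\Gamma$ to move $z$ to the front). Nothing is missing.
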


\begin{lemma}\label{A-Z-multiplication}
Let $\theta_1,\theta_2\in \Aut(G)$, and $A_1\in \Aut^\Gamma_{\theta_1}(E)$
and $A_2\in \Aut^\Gamma_{\theta_2}(E)$. Then

(1) $A_1A_2\in \Aut^\Gamma_{\theta_1\theta_2}(E)$.

(2) $f_{A_1A_2}(\xi)= zf_{A_1}(\xi)\theta_1(f_{A_2}(\xi))$, for 
$\xi\in E$, where $z\in \Gamma$ depends on $\xi$, $A_1$, $A_2$ and $\theta_1$.
\end{lemma}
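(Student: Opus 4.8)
The plan is to mimic the proof of Lemma \ref{A-multiplication} essentially verbatim, inserting the extra central factors coming from $\Gamma$ and keeping track of them by repeated use of two facts: elements of $\Gamma$ are central in $G$, and each $\theta_i$ preserves $\Gamma$ (automatic when $\Gamma=Z$, since any automorphism of $G$ fixes the centre setwise). Throughout I will use the defining property of a $(\theta,\Gamma)$-twisted automorphism, namely $A(\xi g)=A(\xi)z\,\theta(g)$ with $z\in\Gamma$, together with the relation $A(\xi)=\xi f_A(\xi)$.

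For part (1), I would take $\xi\in E$ and $g\in G$ and compute $(A_1A_2)(\xi g)$ by peeling off one twisted automorphism at a time. First, the defining property of $A_2\in\Aut^\Gamma_{\theta_2}(E)$ gives $A_2(\xi g)=A_2(\xi)\,z_2\,\theta_2(g)$ for some $z_2\in\Gamma$. Applying $A_1\in\Aut^\Gamma_{\theta_1}(E)$ and invoking its own twisting property with group element $z_2\theta_2(g)$ produces $A_1(A_2(\xi))\,z_1\,\theta_1\!\bigl(z_2\,\theta_2(g)\bigr)$ for some $z_1\in\Gamma$. Expanding $\theta_1(z_2\,\theta_2(g))=\theta_1(z_2)\,(\theta_1\theta_2)(g)$ and setting $z:=z_1\,\theta_1(z_2)$, I land on $(A_1A_2)(\xi g)=(A_1A_2)(\xi)\,z\,(\theta_1\theta_2)(g)$, which is exactly the condition defining $\Aut^\Gamma_{\theta_1\theta_2}(E)$.

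For part (2), I would evaluate $\xi f_{A_1A_2}(\xi)=(A_1A_2)(\xi)=A_1(\xi f_{A_2}(\xi))$ and apply the twisted property of $A_1$ with the group element $h=f_{A_2}(\xi)$, obtaining $A_1(\xi)\,z\,\theta_1(f_{A_2}(\xi))=\xi\,f_{A_1}(\xi)\,z\,\theta_1(f_{A_2}(\xi))$ for some $z\in\Gamma$. Cancelling $\xi$ by freeness of the $G$-action and then moving $z$ to the front by centrality yields $f_{A_1A_2}(\xi)=z\,f_{A_1}(\xi)\,\theta_1(f_{A_2}(\xi))$, the asserted formula.

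The only genuine subtlety, and the step I would flag, is the bookkeeping of the central factors: one must check that $\theta_1(z_2)$ again lies in $\Gamma$ so that the product $z_1\,\theta_1(z_2)$ is a legitimate element of $\Gamma$, and one must use centrality of $\Gamma$ to commute these factors past the $G$-valued terms. Both are harmless when $\Gamma=Z$, and in general require only that $\Gamma$ be a $\theta_i$-invariant central subgroup, which is part of the standing hypotheses. Everything else is the same string of identities as in Lemma \ref{A-multiplication}, so I expect no real difficulty beyond this clerical point.
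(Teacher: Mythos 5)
Your proof is correct and is exactly what the paper intends: the paper omits the proof of Lemma \ref{A-Z-multiplication}, remarking only that it follows ``as in Lemmas \ref{theta-equivariance} and \ref{A-multiplication}'', and your argument is precisely that adaptation, with the central factors tracked via centrality and $\theta_1$-invariance of $\Gamma$ (automatic for $\Gamma=Z$, the case used later in Proposition \ref{Z-twisted-orbit}). Your flag about needing $\theta_1(z_2)\in\Gamma$ in part (1) is a legitimate and slightly more careful point than the paper makes explicit in its definition, though it does appear as a hypothesis in the paper's subsequent general proposition.
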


\begin{proposition} \label{Z-twisted-orbit}
Let $E$ be a $G$-bundle over a compact Riemann surface $X$. 
Let $\theta\in \Aut_n(G)$ and  $A\in \Aut^Z_\theta(E)$  such 
that $A^n=f$, for a function $f:E\to Z$. Then:

(1) $f_A$ defined by (\ref{function-A-Z}) maps $E$ onto a single orbit $S(E)$ 
of the set $S_\theta^n$  under the action of  $Z\times G$  defined in 
Proposition \ref{cliques-n}.

(2) Every element $s\in S(E)$ defines a reduction of structure group
of $E$ to $G_{\theta'}$, where  $\theta'=\Int(s)\theta$ and $G_{\theta'}$ is
 defined as in  Section \ref{finite-order}. 
\end{proposition}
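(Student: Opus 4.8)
The plan is to reduce everything to the adjoint bundle, where the central ambiguity in the definition of $A$ disappears, and then to invoke the untwisted central version already established in Proposition \ref{twisted-orbit}. First I would pass to the associated $\Ad(G)$-bundle $\Ad(E) = E/Z$. Since $Z$ is central in $\Aut(E)$ and $\theta$ preserves $Z$, the map $A$ descends to $\bar A\colon \Ad(E)\to \Ad(E)$, and the defining relation $A(\xi g)=A(\xi)z\theta(g)$ becomes $\bar A(\bar\xi\,\bar g)=\bar A(\bar\xi)\,\bar\theta(\bar g)$. Thus $\bar A\in\Aut_{\bar\theta}(\Ad(E))$ is an honest $\bar\theta$-twisted automorphism with no central ambiguity, where $\bar\theta$ is the image of $\theta$ in $\Aut(\Ad(G))$. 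Moreover $A^n=f$ with $f\colon E\to Z$ gives $\bar A^n=\Id$, and since $Z(\Ad(G))=1$ the hypotheses of Proposition \ref{twisted-orbit} hold for $\bar A$. Hence $\bar f_A$ maps $\Ad(E)$ onto a single $\Ad(G)$-orbit of $S^n_{\bar\theta}=\{\bar s : \bar s\,\bar\theta(\bar s)\cdots\bar\theta^{n-1}(\bar s)=1\}$ under the twisted action $\bar s\cdot\bar g=\bar g^{-1}\bar s\,\bar\theta(\bar g)$.

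To obtain (1) I would lift this along the central isogeny $G\to\Ad(G)$. Reduction mod $Z$ carries $S^n_\theta$ onto $S^n_{\bar\theta}$ with fibres exactly the $Z$-orbits, and it intertwines the $Z\times G$-action of Proposition \ref{cliques-n} with the $\bar\theta$-twisted $\Ad(G)$-action; so it induces a bijection $S^n_\theta/(Z\times G)\leftrightarrow S^n_{\bar\theta}/\Ad(G)$. Since $f_A$ covers $\bar f_A$ and lands in $S^n_\theta$ (by iterating Lemma \ref{A-Z-multiplication}), the fact that $\bar f_A(\Ad(E))$ is a single $\Ad(G)$-orbit forces $f_A(E)$ to be a single $Z\times G$-orbit $S(E)\subset S^n_\theta$, which is exactly (1).

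For (2), fix $s\in S(E)$ and set $\theta'=\Int(s)\theta$, with image $\bar\theta'=\Int(\bar s)\bar\theta$. By part (2) of Proposition \ref{twisted-orbit}, the fibre $\bar f_A^{-1}(\bar s)$ reduces $\Ad(E)$ to $\Ad(G)^{\bar\theta'}$. The key observation is that the preimage of $\Ad(G)^{\bar\theta'}$ under $G\to\Ad(G)=G/Z$ is precisely $G_{\theta'}$: indeed $\bar g\in\Ad(G)^{\bar\theta'}$ means $\theta'(g)\in gZ$, i.e. $\theta'(g)=c(g)g$ with $c(g)\in Z$, which is the defining condition of $G_{\theta'}$ of Section \ref{finite-order}. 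Since $Z\subset G_{\theta'}$, reductions of $E$ to $G_{\theta'}$ correspond bijectively, via the $Z$-bundle $E\to\Ad(E)$, to reductions of $\Ad(E)$ to $G_{\theta'}/Z=\Ad(G)^{\bar\theta'}$; hence the reduction downstairs lifts to the required reduction of $E$ to $G_{\theta'}$. The same conclusion can be reached directly by adapting the GIT argument of Proposition \ref{twisted-orbit} to the $Z\times G$-action and computing from Lemma \ref{theta-Z-equivariance} that $\xi g\in f_A^{-1}(s)$ holds exactly when $\theta'(g)\in gZ$.

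The main obstacle is precisely part (2): one must recognise that the central twist upgrades the fixed-point group $G^{\theta'}$ of the untwisted case to the normaliser-type group $G_{\theta'}$, and that the reduction of the adjoint bundle genuinely lifts. Both points rest on the inclusion $Z\subset G_{\theta'}$ together with the identification of $G_{\theta'}$ as the full preimage of $\Ad(G)^{\bar\theta'}$. The only bookkeeping subtlety is that $\bar\theta$ may have order a proper divisor of $n$ while $\bar A^n=\Id$; this is harmless, since the semisimplicity of the elements of $S^n_{\bar\theta}$ needed for the single-orbit conclusion still follows from Proposition \ref{twisted-group} applied to $\Ad(G)\rtimes\Z/n$.
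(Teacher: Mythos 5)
Your proof is correct, but it follows a genuinely different route from the paper's. The paper argues directly at the level of $G$, in parallel with Propositions \ref{orbit} and \ref{twisted-orbit}: it iterates Lemma \ref{A-Z-multiplication} to see that $f_A$ lands in $S^n_\theta$, then reruns the GIT argument for the enlarged action, using that $\tilde{f_A}\colon E\to G\sslash_\theta(Z\times G)$ descends to a constant map on the projective curve $X$ and that points of $S^n_\theta$ are stable for this action; for (2) it sets $S=f_A^{-1}(s)$ and computes from Lemma \ref{theta-Z-equivariance} that two points of a fibre lying in $S$ differ by an element of $G_{\theta'}$. You instead quotient by the centre: on $\Ad(E)=E/Z$ the central ambiguity in the definition of a $(\theta,Z)$-twisted automorphism disappears (as $Z(\Ad(G))=1$), so $\bar A$ is an honest $\bar\theta$-twisted automorphism with $\bar A^n=\Id$, Proposition \ref{twisted-orbit} applies downstairs, and you lift back along $G\to\Ad(G)$ via the dictionary $S^n_\theta/(Z\times G)\leftrightarrow S^n_{\bar\theta}/\Ad(G)$ (in substance Proposition \ref{cohomology-n}) and the identification of $G_{\theta'}$ as the full preimage of $\Ad(G)^{\bar\theta'}$ (in substance Proposition \ref{c-map-n}(1)). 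Your route buys two things: you avoid redoing the GIT argument for the $Z\times G$-action, and in (2) your reduction is the preimage $f_A^{-1}(sZ)$ of $\bar f_A^{-1}(\bar s)$, which is genuinely $G_{\theta'}$-stable; the paper's set $f_A^{-1}(s)$ only clearly satisfies the ``two points on a fibre differ by $G_{\theta'}$'' half, since the central factor $z$ in Lemma \ref{theta-Z-equivariance} can move $f_A^{-1}(s)$ around inside $f_A^{-1}(sZ)$, so your formulation is actually the cleaner one. The paper's route, in exchange, stays uniform with the two earlier propositions and needs no isogeny bookkeeping. Finally, the subtlety you flag --- that $\bar\theta$ may have order a proper divisor of $n$, so Proposition \ref{twisted-orbit} does not apply verbatim --- is real, and your fix is the right one: the only input needed is semisimplicity of the relevant elements, which follows from Proposition \ref{twisted-group} with $\Z/n$ acting on $\Ad(G)$ possibly non-faithfully through $\bar\theta$.
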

\begin{proof}
We follow closely the proofs of Propositions \ref{orbit} and 
\ref{twisted-orbit}.
By Lemma \ref{A-Z-multiplication}, 
$$
f(\xi)=f_{A^n}(\xi)= zf_A(\xi)\theta(f_A(\xi))\cdots\theta^{n-1}(f_A(\xi)),
$$
where $z$ depends on $\xi$. 
Setting $s:=f_A(\xi)$,  we conclude
that $s\in S_\theta^n$ since $f(\xi)\in Z$.

The morphism $f_A$ defines now a morphism  
$\tilde{f_A}: E \to G\sslash_\theta (Z\times G)$, where 
$G\sslash_\theta (Z\times G)$ is the 
GIT quotient for the action of $Z\times G$ on $G$
given by 
 $x\mapsto z g^{-1}x \theta(g)$ for $x,g\in G$ and $z\in Z$.
Since $\tilde{f_A}$ is constant on the fibres 
(Lemma \ref{theta-Z-equivariance}),
it descends to a morphism $X=E/G\to G\sslash_\theta (Z\times G)$, which must be 
constant since $X$ is projective and  $G\sslash_\theta (Z\times G)$ is affine.
The rest of the argument to prove (1) is like in the proof 
of Proposition \ref{twisted-orbit}.

Again the  proof of (2) is a straightforward generalisation of that of (2) in
Proposition \ref{orbit}.  Setting  $S:= f_A ^{-1}(s)$ for $s\in S(E)$, we see 
that $\xi$ and $\xi '$ on a
 fibre of $E\to X$ belong to $S$ if and only if 
$\xi ' = \xi g$ for some $g\in G$, and  $f_A(\xi ) = s $ and $f_A(\xi' ) = s
 $.  But,
from Lemma \ref{theta-Z-equivariance},
$s=f_A(\xi ') = f_A(\xi  g) = zg^{-1} f_A(\xi ) \theta(g) $,  
which implies that $g\in G_{\theta'}$, with $\theta'=\Int(s)\theta$.
Thus the  set $S$ is acted upon transitively on fibres by $G_{\theta'}$ 
giving a $G_{\theta'}$-bundle to which $E$ is reduced.

\end{proof}
\section{$G$-Higgs bundles}
\label{higgs-bundles}

\subsection{Moduli space of $G$-Higgs bundles}\label{g-higgs}

Let $G$ be a complex semisimple Lie group (not necesarily connected)
 with Lie algebra $\lieg$. 
Let $X$ be a smooth projective curve over $\C$, equivalently a 
compact Riemann surface.
A {\bf $G$-Higgs bundle} over $X$ is a pair $(E,\varphi)$ where
$E$ is  a principal $G$-bundle $E$ over $X$ and
$\varphi$ is a  section of $E(\lieg)\otimes K$, where $E(\lieg)$ is
the bundle associated to $E$ via the adjoint representation of $G$, 
and $K$ is the canonical bundle on $X$.

Two $G$-Higgs bundles $(E,\varphi)$ and $(F,\psi)$ are isomorphic if there is
an isomorphism $f:E\to F$  such that  the induced isomorphism
$\Ad(f)\otimes \Id_K: E(\lieg)\otimes K \to F(\lieg)\otimes K$
sends $\varphi$ to $\psi$.

In order to consider moduli spaces of $G$-Higgs bundles we need the
corresponding notions of (semi,poly)stability.
We briefly recall the main definitions. Our approach follows  
\cite{garcia-prada-gothen-mundet}, where all these general 
notions are studied in detail.


Let $\lieu$ be the Lie algebra of a maximal compact subgroup $U$  of $G$. 
Given $s\in i\lieu$,
\begin{equation}\label{eq:parabolicPs}
P_s=\{g\in G\st e^{ts}ge^{-ts}\;\; \text{is bounded  as}\;\; t\to\infty\},
\end{equation}
is a parabolic subgroup of $G$, whose corresponding parabolic subalgebra 
of $\lieg$ is 
$$
\liep_s=\{v\in\lieg\st\Ad(e^{ts})(v)\;\; \text{is bounded  as}\;\; t\to\infty\}.
$$
If, moreover, we define 
\begin{equation}\label{eq:LeviLs}
L_s=\{g\in G\st\lim_{t\to\infty} e^{ts}ge^{-ts}=g\}
\end{equation}
then $L_s\subset P_s$ is a Levi subgroup of $P_s$, and 
$$\liel_s=\{v\in\lieg\st\lim_{t\to\infty}\Ad(e^{ts})(v)=0\}$$ is the 
corresponding Levi subalgebra of $\liep_s$.

When $G$ is connected, every parabolic subgroup $P$ is of the form 
\eqref{eq:parabolicPs} for some $s\in i\lieu$; the same holds for the 
Levi subgroups. For $G$ non-connected that may not be the case 
(cf. \cite[Remark 5.3]{martin:2003}). However, in order to define 
semistability, the parabolic subgroups which need to be considered 
are precisely the ones of the form \eqref{eq:parabolicPs}. 
Hence, for simplicity, and when no explicit mention to $s\in i\lieu$ 
is needed, we refer to these as the parabolic subgroups of $G$, 
keeping in mind that we mean the groups defined by \eqref{eq:parabolicPs}. 
We will do the same for the Levi subgroups, referring to \eqref{eq:LeviLs}.

Let $P$ be a parabolic subgroup of $G$. A character of the Lie algebra
$\liep$ of $P$ is a complex linear map $\liep\to\C$ which factors through
$\liep/[\liep,\liep]$. Let $\liel\subset\liep$ be the corresponding Levi 
subalgebra and let $\liez_\liel$ be the centre of $\liel$. 
Then, one has that $(\liep/[\liep,\liep])^*\cong\liez_\liel^*$, 
so the characters of $\liep$ are indeed classified by elements of 
$\liez_\liel^*$.
Using the Killing form a character $\chi \in\liez_\liel^*$ of $\liep$ is
uniquely determined 
by an element $s_{\chi}\in\liez_\liel$. Indeed, it can be shown that 
$\liez_\liel\subset i\lieu$, so that $s_{\chi}\in i\lieu$.
Now, the character $\chi$ of $\liep$ is said to be {\bf antidominant} 
if $\liep\subset\liep_{s_{\chi}}$ and  {\bf strictly antidominant} 
if $\liep=\liep_{s_{\chi}}$. 
Given a character $\chi:P\to\C^*$ of $P$, denote by $\chi_*$ the 
corresponding character of $\liep$. We say that $\chi$ is 
{\bf (strictly) antidominant} if $\chi_*$ is.
An element $s\in i\lieu$ defines clearly a character $\chi_s$ of $\liep_s$ 
since $\la s,[\liep_s,\liep_s]\ra=0$ (where $\la\cdot,\cdot\ra$ is 
the Killing form)  which is of course strictly antidominant.


Let $\chi:P\to\C^*$ be an antidominant character of $P$ and 
Let $\sigma\in \Gamma(X,E/P)$ be a reduction of the structure group of 
$E$ to $P$. Denote by $E_{\sigma}\subset E$ the corresponding holomorphic $P$-bundle. 
The {\bf degree} of $E$ with respect to $\sigma$ and  $\chi$, denoted by 
$\deg(E)(\sigma,\chi)$, 
is the degree of the line bundle obtained by extending the structure group 
of $E_\sigma$ through $\chi$. In other words,
\begin{equation}\label{def:degree}
\deg(E)(\sigma,\chi)=\deg(E_\sigma\times_\chi\C^*).
\end{equation}

Given  $s\in i\lieu$ and a reduction of structure group of $E$ to $P_s$
we can also define $\deg(E)(\sigma,s)$, even though $\chi_s$ may not lift
to a character of $P_s$. One way to do this is to use Chern--Weil theory.
This definition is  more natural when considering gauge-theoretic 
equations as we do below. For this, define $U_s=U\cap L_s$ and
$\lieu_s=\lieu\cap\liel_s$. 
Then $U_s$ is a maximal compact subgroup of $L_s$, so the inclusion
$U_s\subset L_s$ is a homotopy equivalence. Since the inclusion
$L_s\subset P_s$ is also a homotopy equivalence, given a reduction
$\sigma$ of the structure group of $E$ to $P_s$ one can
further restrict the structure group of $E$ to $U_s$ in a unique
way up to homotopy. Denote by $E'_{\sigma}$ the resulting $U_s$
principal bundle.
Consider now a connection $A$
on $E'_{\sigma}$ and let
$F_A\in\Omega^2(X,E'_{\sigma}(\lieu_s)$ be  its
curvature. Then $\chi_s(F_A)$ is a $2$-form on $X$ with
values in $i\R$, and 
\begin{equation}\label{degree-chern-weil}
\deg(E)(\sigma,s):=\frac{i}{2\pi}\int_X \chi_s(F_A).
\end{equation}
This coincides with $\deg(E)(\sigma,\widetilde{\chi_s})$ as defined in
(\ref{def:degree}) when $\chi_s$ can be lifted to a character 
$\widetilde{\chi_s}$ of $P_s$.


\begin{definition}\label{def:semipoly}
A $G$-Higgs bundle $(E,\varphi)$ over $X$ is:
 
{\bf semistable} if
$\deg(E)(\sigma,\chi)\geq 0$, 
for any parabolic subgroup $P$ of $G$, any non-trivial antidominant 
character $\chi$ of $P$ and any reduction of structure group $\sigma$ of 
$E$ to $P$ such that $\varphi\in H^0(X,E_\sigma(\liep)\otimes K)$.

{\bf stable} if 
$\deg(E)(\sigma,\chi)> 0$, for any non-trivial parabolic subgroup $P$ of 
$G$, any 
non-trivial antidominant  character $\chi$ of $P$ and any reduction of 
structure group $\sigma$ of $E$ 
to $P$ such that $\varphi\in H^0(X,E_\sigma(\liep)\otimes K)$.

 {\bf polystable} if it is semistable and if 
$\deg(E)(\sigma,\chi)=0$, for some parabolic subgroup 
$P\subset G$, some non-trivial strictly antidominant character $\chi$ of $P$ 
and some reduction of structure group $\sigma$ of $E$ to $P$
such that $\varphi\in H^0(X,E_\sigma(\liep)\otimes K)$, then there is a 
further holomorphic reduction of structure group $\sigma_L$ of $E_\sigma$ 
to the Levi subgroup $L$ of $P$ such that 
$\varphi\in H^0(X,E_{\sigma_L}(\liel)\otimes K)$.

\end{definition}

\begin{remark}\label{rm:(semi)stability of bundles}

(1) A $G$-Higgs bundle with $\varphi=0$ is a holomorphic principal $G$-bundle 
and a (semi)stability condition for these objects over compact Riemann
surfaces  was established by Ramanathan in \cite{ramanathan:1975}. 
One has a direct generalisation of Ramanathan's condition to 
the $G$-Higgs bundle 
case for $G$ complex (see for example 
\cite{biswas-gomez,garcia-prada-gothen-mundet}). 

(2) The notion of $G$-Higgs bundle given above makes sense also when $G$ is more
generally a complex reductive Lie group (in fact any complex Lie group). In
the general reductive case, however,  the stability 
criteria has to be modified  by replacing $\deg(E)(\sigma,\chi)$ in
Definition \ref{def:semipoly} 
with  $\deg(E)(\sigma,\chi)-\la\alpha,s_{\chi_*}\ra$, where  
$\la\cdot,\cdot\ra$ is
an invariant $\C$-bilinear pairing on $\lieg$ extending the Killing 
form on the semisimple part, and $\alpha$ is an element in $\in i\liez_\lieu$ 
determined by the topology of the $G$-bundle $E$.
There is no discrepancy with \cite{ramanathan:1975,biswas-gomez} where
the reductive case is treated and there is no parameter $\alpha$. The reason
is that these authors  consider characters 
which are trivial on the centre of $G$, and hence  the corresponding 
characters on the Lie 
algebra are orthogonal to $\alpha$ with respect to the pairing 
$\la\cdot,\cdot\ra$.
\end{remark}

Let  $\cM(G)$  be {\bf moduli space of semistable $G$-Higgs bundles}.
As usual, the moduli space $\cM(G)$ can also be viewed as parametrizing
isomorphism classes of polystable $G$-Higgs bundles. The space
$\cM(G)$ has the structure of a quasi-projective variety, as one can
see  from the Schmitt's general Geometric Invariant Theory construction 
(cf. \cite{schmitt:2008}). For related  constructions
see \cite{nitsure,simpson:1994,simpson:1995}. 
If we fix  the topological class $c$  of $E$ we can consider  
$\cM_c(G)\subset \cM(G)$, the  moduli space of semistable $G$-Higgs bundles
with fixed topological class $c$. If $G$ is connected the topological 
class is given by an element of $c\in \pi_1(G)$.
In this situation it is well-known (\cite{li,donagi-pantev}) 
that $\cM_c(G)$ is non-empty 
and connected.  A Morse-theoretic  proof of this fact has been given recently 
in \cite{garcia-prada-oliveira}, where the connectedness 
and non-emptyness  of  $\cM_c(G)$ 
is also proved when $G$ is a non-connected complex reductive Lie group.


For the rest of the section we will assume $G$ to be connected.

Let  $(E, \varphi )$ be a Higgs bundle. Since the induced action of $Z$ on 
$E(\lieg)$ is trivial, they preserve $\varphi $ and we see  that $Z$
is a central subgroup of $\Aut(E, \varphi)$, and that the quotient is finite if 
$(E,\varphi)$ is stable. 

\begin{definition}\label{simple-higgs}
A  $G$-Higgs bundle $(E, \varphi )$ 
is said to be {\bf simple} if  $\Aut(E, \varphi )$ is $Z$. 
\end{definition}

\begin{remark}
When $G = \GL(n,\C)$ or $SL(n,\C)$, every stable bundle is simple. This is not 
true in general. In the case $G = \SO(n,\C)$, the direct sum of two 
non-isomophic orthogonal bundles is still stable but is in general 
not simple.  The phenomenon of stable bundles not being simple has 
to do with the 
coefficients of the highest root of a simple Lie algebra being not $1$. This
phenomenon is related to the existence of elements of finite order in the
group $G$ whose centralizers are semisimple (not just reductive). One can 
show that $\GL(n,\C)$ or $SL(n,\C)$ are the only cases for which this does
not happen.
\end{remark}

\begin{remark}\label{non-connected-simplicity}
If $G$ is non-connected, the notion of simplicity of a $G$-Higgs bundle 
$(E,\varphi)$ depends on the topological type of $E$, more concretely,
on the monodromy class $w:\pi_1(X)\to \pi_0(G)$, where $\pi_0(G)$ is the 
group of connected components of $G$. Such a Higgs bundle includes  in
its group of automorphisms the
invariant part of $Z_G(G_0)$ under the action of the image of $w$, where
$G_0$ is the identity connected component, and $Z_G(G_0)$ is
the centralizer of $G_0$ in $G$. This is larger in general than $Z$
(see \cite{garcia-prada-gothen-mundet2}).
\end{remark}

We have the following (see \cite{garcia-prada-gothen-mundet}).

\begin{proposition}\label{smoothness}
A $G$-Higgs bundle which is stable and simple defines a smooth point in 
$\cM(G)$. 
\end{proposition}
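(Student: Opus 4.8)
The plan is to control the local structure of $\cM(G)$ at the point $[(E,\varphi)]$ through the deformation theory of the pair, encoded in the hypercohomology of the complex
$$
C^\bullet(E,\varphi): \quad E(\lieg) \xrightarrow{\ad\varphi} E(\lieg)\otimes K,
$$
placed in degrees $0$ and $1$, where $\ad\varphi(s)=[\varphi,s]$. The standard formalism for $G$-Higgs bundles identifies the hypercohomology groups as follows: $\HH^0(C^\bullet)$ is the space of infinitesimal automorphisms of $(E,\varphi)$, that is, the Lie algebra of $\Aut(E,\varphi)$; $\HH^1(C^\bullet)$ is the space of infinitesimal deformations, which will be the Zariski tangent space to $\cM(G)$ at the point; and $\HH^2(C^\bullet)$ is the obstruction space. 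First I would recall these identifications, noting that $\HH^0$ is the kernel of $\ad\varphi$ acting on $H^0(X,E(\lieg))$, namely the sections of $E(\lieg)$ that commute with $\varphi$.

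The heart of the argument is the vanishing of the obstruction space. Since $(E,\varphi)$ is simple, $\Aut(E,\varphi)=Z(G)$, which is finite because $G$ is semisimple; hence its Lie algebra is trivial and $\HH^0(C^\bullet)=0$. To pass from $\HH^0$ to $\HH^2$, I would invoke Serre duality at the level of hypercohomology: using the Killing form to identify $E(\lieg)$ with its own dual, and the $\ad$-invariance of the Killing form to match $\ad\varphi$ with minus its transpose, the complex $C^\bullet$ is self-dual up to the twist and shift intrinsic to Serre duality. This yields a natural isomorphism $\HH^2(C^\bullet)^*\cong\HH^0(C^\bullet)$, so that $\HH^0=0$ forces $\HH^2(C^\bullet)=0$. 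All obstructions therefore vanish and the local deformation space is smooth of dimension $\dim\HH^1(C^\bullet)$.

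Finally I would assemble these facts into the statement about $\cM(G)$. Stability guarantees that $(E,\varphi)$ is a GIT-stable point and that $\Aut(E,\varphi)/Z(G)$ is finite, so near $[(E,\varphi)]$ the moduli space is modelled on a local slice (a Kuranishi-type deformation space) quotiented by this finite group; simplicity makes the group trivial, so no orbifold singularity is introduced and $\cM(G)$ coincides locally with the deformation space, which we have shown to be smooth. The main obstacle I anticipate is the careful justification of the Serre-duality identification — tracking the degree shift, the $K$-twist, and the sign so that the dual differential is genuinely identified with $\ad\varphi$ under the Killing form — together with ensuring that unobstructedness of the deformation functor translates into smoothness of the coarse moduli space, which is precisely where stability, through the existence of a good local model, is essential.
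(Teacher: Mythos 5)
Your proof is correct and is essentially the argument behind the paper's treatment of this statement: the paper offers no proof of its own, citing \cite{garcia-prada-gothen-mundet}, where exactly this deformation-theoretic reasoning is carried out — hypercohomology of the complex $E(\lieg)\xrightarrow{\ad\varphi}E(\lieg)\otimes K$, vanishing of $\HH^0$ because simplicity forces $\Aut(E,\varphi)=Z(G)$ to be finite, and $\HH^2\cong(\HH^0)^*$ via Serre duality combined with the Killing-form self-duality of the complex (the transpose of $\ad\varphi$ being $-\ad\varphi$), so that unobstructedness plus triviality of the residual automorphism action gives a smooth local model. It is worth noting that the self-duality step you flag as the delicate point is precisely what breaks down for the twisted complexes $E(\lieg^+)\to E(\lieg^-)\otimes K$, which is why the paper later remarks that for $(G^\theta,-)$- and $(G_\theta,-)$-Higgs bundles stability and simplicity alone do not guarantee smoothness and an extra vanishing of the second hypercohomology must be imposed.
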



\subsection{$G$-Higgs bundles and Hitchin equations}
\label{section-hitchin-equations}

As above, let $G$ be a connected  complex semisimple Lie  group. Let 
$U\subset G$ be a maximal compact subgroup defined by a conjugation
$\tau$ of $G$.  Let $(E,\varphi)$ be a
$G$-Higgs bundle over a compact Riemann surface $X$. By a slight abuse
of notation, we shall denote the $C^\infty$-objects underlying $E$ and
$\varphi$ by the same symbols. In particular, the Higgs field can be
viewed as a $(1,0)$-form $\varphi \in
\Omega^{1,0}(E(\lieg))$ with values in $E(\lieg)$. 
Given a $C^\infty$ reduction of
structure group $h$ of the principal $G$-bundle $E$ to $U$, 
we can define
\begin{equation}\label{tau-h}
\tau_h\,\colon\,
\Omega^{1,0}(E(\lieg)) \,\longrightarrow\, \Omega^{0,1}(E(\lieg))
\end{equation}
the isomorphism induced by the
compact conjugation of $\lieg$ defined by $\tau$,
 combined with the complex
conjugation on complex $1$-forms.

we denote 
by $F_h$ the  curvature of the unique connection compatible with $h$ and 
the holomorphic structure on $E$ (see \cite{atiyah}). 
One has the following.

\begin{theorem} \label{higgs-hk}
Let $(E,\varphi)$ be a $G$-Higgs bundle. There is a reduction $h$ of 
structure group of $E$ from $G$ to $U$ that satisfies the Hitchin equation
  $$
 F_h -[\varphi,\tau_h(\varphi)]= 0
  $$
  if and only if $(E,\varphi)$ is polystable.
\end{theorem}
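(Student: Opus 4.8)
The plan is to read this statement as the Hitchin--Kobayashi correspondence for $G$-Higgs bundles. The left-hand equation $F_h-[\varphi,\tau_h(\varphi)]=0$ is the vanishing of the moment map for the action of the unitary gauge group $\mathcal{G}$ (the gauge group of the fixed $C^{\infty}$ reduction of $E$ to $U$) on the infinite-dimensional K\"ahler space of pairs $(\dbar_E,\varphi)$ consisting of a holomorphic structure together with a Higgs field, while polystability is the corresponding algebraic stability condition. The theorem is then an infinite-dimensional Kempf--Ness statement: a point is polystable if and only if the $\mathcal{G}^{\C}$-orbit of its holomorphic structure meets the zero locus of the moment map. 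Since the general correspondence of this type is established in \cite{garcia-prada-gothen-mundet}, I would organise the argument so that it reduces to checking the hypotheses of that general theorem, but I would carry out the elementary implication by hand and isolate the analytic core of the converse.

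First I would prove necessity (solution $\Rightarrow$ polystable), which is the direct part. Given a reduction $h$ solving $F_h-[\varphi,\tau_h(\varphi)]=0$, take any parabolic $P=P_s$ with $s\in i\lieu$, its antidominant character $\chi=\chi_s$, and a reduction $\sigma$ to $P$ with $\varphi\in H^0(X,E_\sigma(\liep_s)\otimes K)$. Pairing the equation with $s$ and integrating over $X$, Chern--Weil theory identifies $\frac{i}{2\pi}\int_X\chi_s(F_h)$ with $\deg(E)(\sigma,\chi)$ as in \eqref{degree-chern-weil}, while the term coming from $[\varphi,\tau_h(\varphi)]$ contributes with a favourable sign precisely because $\varphi$ takes values in $\liep_s$. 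This yields $\deg(E)(\sigma,\chi)\ge 0$, hence semistability; and when equality holds the contribution of $\varphi$ must vanish, which forces $\varphi$ to reduce to the Levi factor $\liel_s$, giving the further holomorphic reduction demanded by polystability in Definition \ref{def:semipoly}.

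The substantial direction is sufficiency (polystable $\Rightarrow$ solution), an existence result, and here lies the main obstacle. Fixing the holomorphic structure, I would minimise a Donaldson-type functional over the orbit of Hermitian metrics (equivalently over $\mathcal{G}^{\C}/\mathcal{G}$), whose Euler--Lagrange equation is exactly the Hitchin equation. The hard part will be the a priori estimates needed to control a minimising sequence: one must show the functional is proper modulo its stabiliser, i.e. that the minimising metrics do not degenerate. This is where polystability enters, through a dichotomy in the spirit of Uhlenbeck--Yau and Simpson: either one obtains the required $C^{0}$-bound on the relative endomorphism $h_0^{-1}h$ and extracts a smooth limit solving the equation, or the bound fails, in which case a weak-limit construction produces a $\varphi$-invariant reduction to a parabolic of strictly negative degree, contradicting semistability; in the borderline polystable case one instead extracts a splitting reducing the structure group to a Levi. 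I expect the delicate points to be the Uhlenbeck-type compactness, the Simpson-style estimate controlling the metric, and the verification that the destabilising object built from a weak limit is a genuine reduction of the $G$-bundle respecting $\varphi$. For $G$ semisimple this can be handled either intrinsically as in \cite{garcia-prada-gothen-mundet}, or by embedding $G\hookrightarrow\SL(n,\C)$ through a faithful representation, applying Simpson's theorem \cite{simpson} to the associated $\SL(n,\C)$-Higgs bundle, and then using uniqueness of the harmonic metric to descend the solution to a reduction of $E$ to $U\subset G$.
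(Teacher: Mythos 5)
Your proposal is correct and is essentially the paper's own approach: the paper does not prove Theorem \ref{higgs-hk} internally, but cites Hitchin \cite{hitchin1987} (for $\SL(2,\C)$), Simpson \cite{simpson,simpson:1992}, and \cite{garcia-prada-gothen-mundet}, and your outline --- the moment-map/Kempf--Ness framing, the Chern--Weil argument for the direct implication, and the Donaldson-functional minimisation with Uhlenbeck--Yau/Simpson-type estimates (or, alternatively, reduction to the general theorem of \cite{garcia-prada-gothen-mundet}, or a faithful embedding into $\SL(n,\C)$ followed by descent) --- is precisely the argument carried out in those references.
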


Theorem \ref{higgs-hk} was proved by
Hitchin \cite{hitchin1987}  for $G\,=\,\SL(2,\C)$, and by Simpson in
\cite{simpson,simpson:1992} for the general case (see also 
\cite{garcia-prada-gothen-mundet}).


\begin{remark}\label{alpha=0}
Theorem \ref{higgs-hk} can be extended to the case in which $G$ is a complex 
reductive Lie group, as well as non-connected. In this situation the Hitchin 
equation is replaced with  
$F_h -[\varphi,\tau_h(\varphi)]= -i\alpha\omega$, where  $\alpha\in
i\liez_\lieu$ is determined by the topology of $E$, via Chern--Weil theory
from the equation and  $\omega$ is  a volume form of $X$. 

\end{remark}

{}From the point of view of moduli spaces it is convenient
to fix  a $C^\infty$ principal   $U$-bundle
$\bE_U$  and study  the moduli space of solutions to 
\textbf{Hitchin's equations}
for a pair $(A,\varphi)$ consisting of  an $U$-connection $A$ and
a section $\varphi\,\in\, \Omega^{1,0}(X,\bE_U(\lieg))$:
\begin{equation}\label{hitchin}
\begin{array}{l}
F_A -[\varphi,\tau_h(\varphi)]= 0\\
\dbar_A\varphi = 0.
\end{array}
\end{equation}
Here $d_A$ is the covariant derivative associated to $A$, and
$\dbar_A$ is the $(0,1)$ part of $d_A$. The $(0,1)$ part of $d_A$ defines 
a holomorphic
structure on $\bE_U$. The gauge group $\UUU$  of $\bE_U$, i.e. the group of 
automorphisms of $\bE_U$  acts on the
space of solutions and the moduli space of solutions is
$$
\Mg_c(G):= \{ (A,\varphi)\;\;\mbox{satisfying}\;\;
(\ref{hitchin})\}/\UUU,
$$
where $c$ is the topological type of the bundle $\bE_U$. 
The irreducible solutions define smooth points in $\Mg_c(G)$.
Now, if $\cM_c(G)$ is the moduli space of $G$-Higgs bundles 
of topological type $c$, i.e. the $G$-Higgs bundles whose underlying
 $C^\infty$ bundle is the $G$-bundle obtained from $\bE_U$ by extension
of the structure group to $G$, 
Theorem \ref{higgs-hk} can be reformulated as follows.

\begin{theorem} \label{hk}
There is a homeomorphism
$$
\cM_c(G)\,\cong\, \Mg_c(G).
$$

Moreover, the irreducible solutions in $\Mg_c(G)$
correspond with the stable and simple Higgs bundles in $\cM_c(G)$.
\end{theorem}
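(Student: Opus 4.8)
The plan is to regard the statement as the moduli-space reformulation of the pointwise Hitchin--Kobayashi correspondence of Theorem \ref{higgs-hk}, so that the analytic heart is already available and the remaining work is to organise the two sides into mutually inverse continuous maps. I would fix once and for all the $C^\infty$ principal $U$-bundle $\bE_U$, write $\bE$ for the $C^\infty$ $G$-bundle obtained from it by extension of structure group, and recall that a pair $(A,\varphi)$ solving (\ref{hitchin}) determines a holomorphic structure on $\bE$ through the Dolbeault operator $\dbar_A$, the equation $\dbar_A\varphi=0$ making $\varphi$ a holomorphic section of $E(\lieg)\otimes K$.

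First I would construct the forward map $\Phi\colon\Mg_c(G)\to\cM_c(G)$. Given a solution $(A,\varphi)$, the resulting holomorphic $G$-bundle $E$ together with $\varphi$ is a $G$-Higgs bundle, and the canonical reduction of $E$ to $U$ coming from $\bE_U\subset\bE$ solves the Hitchin equation by hypothesis; hence by Theorem \ref{higgs-hk} the pair $(E,\varphi)$ is polystable and defines a point of $\cM_c(G)$. If two solutions lie in the same $\UUU$-orbit, the gauge transformation, viewed inside the complexified gauge group, is a holomorphic isomorphism carrying one Higgs bundle to the other, so $\Phi$ descends to gauge-equivalence classes.

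Next I would construct $\Psi\colon\cM_c(G)\to\Mg_c(G)$. Given a polystable $(E,\varphi)$ of topological type $c$, the converse direction of Theorem \ref{higgs-hk} furnishes a reduction $h$ to $U$ solving the Hitchin equation; identifying the underlying $C^\infty$ $U$-bundle with $\bE_U$ and taking $A$ to be the Chern connection of $h$ produces a solution of (\ref{hitchin}). The delicate point is well-definedness: the image must not depend on the choice of $h$ nor on the $C^\infty$ trivialisation. Two trivialisations differ by a complex gauge transformation, and the essential input is the uniqueness, up to $\Aut(E,\varphi)\cap\UUU$, of the metric solving the Hitchin equation within a fixed polystable isomorphism class---the uniqueness half of the Hitchin--Kobayashi package. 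Granting this, $\Psi$ is well-defined on isomorphism classes and is a two-sided inverse of $\Phi$, since both maps come from the single correspondence $(A,\varphi)\leftrightarrow(E,\varphi,h)$.

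It then remains to promote the bijection to a homeomorphism and to handle the final clause. For the topology I would use continuous dependence in both directions: $\Phi$ is continuous because $\dbar_A$ and $\varphi$ vary continuously with $(A,\varphi)$, while $\Psi$ is continuous because the harmonic metric solving the Hitchin equation depends continuously on the polystable Higgs bundle. The continuity of $\Psi$ is the genuinely analytic obstacle and I expect it to be the hardest step; I would control it through the estimates and Uhlenbeck-type compactness underlying the results of Hitchin and Simpson invoked for Theorem \ref{higgs-hk}. Finally, for the smoothness statement, a solution is irreducible exactly when its $\UUU$-stabiliser is the minimal possible, namely $Z(G)\cap U$, and under the correspondence this matches the condition $\Aut(E,\varphi)=Z(G)$, that is, $(E,\varphi)$ stable and simple in the sense of Definition \ref{simple-higgs}; combined with Proposition \ref{smoothness} this identifies the two smooth loci and completes the proof.
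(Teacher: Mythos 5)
Your proposal is correct and follows essentially the same route as the paper: the paper also derives Theorem \ref{hk} as a moduli-space reformulation of Theorem \ref{higgs-hk}, identifying $\cM_c(G)$ with the $\GGG$-orbit space of pairs $(\dbar_E,\varphi)$ on the fixed bundle $\bE_G$, using the Chern correspondence between $U$-connections and $\dbar$-operators, and invoking existence of a solution in each polystable $\GGG$-orbit together with its uniqueness up to $\UUU$-gauge. Your added attention to well-definedness of the inverse map and to continuity (via the estimates underlying Hitchin's and Simpson's work) makes explicit points the paper leaves implicit, but introduces no new method.
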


To explain this correspondence we interpret the moduli
space of $G$-Higgs bundles in terms of pairs $(\dbar_E, \varphi)$ consisting
of a $\dbar$-operator (holomorphic structure) $\dbar_E$
on the $C^\infty$ principal $G$-bundle $\bE_{G}$ obtained from
$\bE_U$ by the extension of structure group $U\,\hookrightarrow\, G$, and
$\varphi\in \Omega^{1,0}(X,\bE_G(\lieg))$ satisfying $\dbar_E\varphi\,=\,0$.
Such pairs are in one-to-one correspondence with  $G$-Higgs bundles $(E,\varphi)$,
where $E$ is the holomorphic $G$-bundle defined by the operator
$\dbar_E$ on $\bE_G$. The equation $\dbar_E\varphi\,=\,0$ is equivalent
to the condition that $\varphi\in H^0(X,E(\lieg)\otimes K)$. 
The moduli space of polystable $G$-Higgs bundles of topological type 
given by $\bE_U$ can now be identified with the orbit space
$$
\{ (\dbar_E,\varphi)\;\;:\;\; \dbar_E\varphi=0\;\;\mbox{which are polystable}\}/\GGG,
$$
where $\GGG$ is the gauge group of $\bE_G$, which is in fact
the complexification of $\UUU$.
Since  there is a one-to-one correspondence between
$U$-connections on $\bE_U$ and $\dbar$-operators on $\bE_{G}$,
the correspondence given in Theorem \ref{hk} can be reformulated
by saying that in the $\GGG$-orbit of a polystable $G$-Higgs
bundle $(\dbar_{E_0},\varphi_0)$ we can find another Higgs bundle
$(\dbar_E,\varphi)$
whose corresponding pair $(d_A,\varphi)$ satisfies the Hitchin equation
$F_A -[\varphi,\tau_h(\varphi)]\,=\, 0$ with this pair
$(d_A,\varphi)$ being unique up to $U$-gauge transformations.


Following \cite{hitchin1987}, one can see that
the smooth locus of the moduli space $\cM(G)$ 
has a hyperk\"ahler structure given by exhibiting the moduli
space of solution to Hitchin equations, and hence $\cM(G)$,
as a hyperk\"ahler quotient. So the smooth locus of $\cM(G)$  
is equipped with a 
Riemannian metric $g$ and complex structures $J_i$, $i=1,2,3$ 
satisfying the quaternion relations $J_i^2=-I$, 
$J_3=J_1J_2\,=\, -J_2J_1$, $J_2\,=\, -J_1J_3\,=\, J_3J_1$ and $J_1\,=\, J_2J_3
\,=\, -J_3J_2$ such that if we define $\omega_i(\cdot, \cdot)\,=\,g(J_i\cdot, \cdot)$, then
$(g,J_i,\omega_i)$ is a K\"ahler structure on $\cM(G)$.
Let $\Omega_i$ denote the holomorphic symplectic structure on $\cM(G)$ with respect
to the complex structure $J_i$. That is, $\Omega_1\,=\, \omega_2+\sqrt{-1}\omega_3$,
$\Omega_2\,=\, \omega_3+\sqrt{-1}\omega_1$ and $\Omega_3\,=\, \omega_1+\sqrt{-1}\omega_2$.

\subsection{Automorphisms of $\cM(G)$}\label{higgs-automorphisms}

Let $G$ be a complex semisimple Lie group and let $\Aut(G)$, $\Int(G)$ and
$\Out(G)$ be as defined in Section \ref{automorphisms-g}.

Let  $\theta\in \Aut(G)$, and let $(E,\varphi)$ be a $G$-Higgs bundle
over $X$. We define the $G$-Higgs bundle $(\theta(E),\theta(\varphi))$
by taking:
$$
\theta(E):=E\times_\theta G,
$$ 
and using  that $\theta(E)(\lieg)\cong E(\lieg)$
and the differential $d\theta\in \Aut(\lieg)$, associating
to the  section $\varphi$ of $E(\lieg)\otimes K$, 
a section of $\theta(E)(\lieg)\otimes K$, that we will
denote by $\theta(\varphi)$.
It is clear that if $\theta\in \Int(G)$, the Higgs bundle 
$(\theta(E),\theta(\varphi))$ is isomorphic to $(E,\varphi)$.
Hence  the group $\Out(G)$ acts on the set of isomorphisim classes
of $G$-Higgs bundles. It is easy to show that stability, semistability and
polystability are preserved by the action of $\Aut(G)$ and hence $\Out(G)$
acts on the moduli space of $G$-Higgs bundles $\cM(G)$.

We also have an  action of  $\mathbb{C}^*$ on the set of $G$-Higgs bundles
defined as follows. Let  $(E,\varphi)$ be a $G$-Higgs bundle and
$\lambda\in\mathbb{C}^*$, we define
$$
\lambda(E,\varphi):=(E,\lambda\varphi).
$$
It is immediate that stability, semistability and polystability are 
preserved by this action and this defines an action of  $\C^\ast$ on $\cM(G)$. 

Since the centre  $Z$ of $G$ is abelian, 
$H^1(X,Z)$  is a group and it is identified with
the set of isomorphism classes of principal
$Z$-bundles over $X$. 
For $\alpha,\beta\in H^1(X,Z)$ we consider first the $(Z\times Z)$-bundle
given by the fibre product
 $\alpha \times_X\beta$ over $X$. Now, via the group operation 
$Z\times Z\overset{m}{\rightarrow} Z$  we can associate to it the $Z$-bundle 
$$
\alpha\otimes \beta:=(\alpha \times_X\beta)\times_m Z,
$$
defining the group structure of  $H^1(X,Z)$.

Let $E$ be a principal $G$-bundle and let  $\alpha\in H^1(X,Z)$ be a principal
$Z$-bundle.  The fibre product $E\times_X\alpha$ has the structure of a 
principal $(G\times Z$)-bundle. 
Combining  this with the action of $Z$ on  $G$ given  by
multiplication $G\times Z \overset{m}{\rightarrow} G$,
by extension of structure group we  associate to
$E$ and $\alpha$ the principal $G$-bundle
$E\otimes \alpha:=(E\times_X\alpha)\times_m G$.

Since $G$ is semisimple $Z$ is finite, and the topological type of $E$ and
$E\otimes \alpha$ is the same.

It is clear that $E(\lieg)=(E\otimes\alpha)(\lieg)$ and hence we can associate
to a $G$-Higgs bundle   $(E,\varphi)$  and  
$\alpha\in H^1(X,Z)$ the $G$-Higgs bundle defined by
$$
\alpha\cdot(E,\varphi):=(E\otimes \alpha,\varphi).
$$

Again it is immediate to show  that this defines an action of
$H^1(X,Z)$ on the moduli space of $G$-Higgs bundles,
$\mathcal{M}(G)$. 

Every automorphism of $G$ restricts to an automorphism of $Z$. 
Inner automorphisms of $G$ induce the identity automorphism on $Z$, 
defining an action of $\Out(G)$ on $Z$, $\Out(G)\times Z\rightarrow Z$,
$(\sigma,\lambda)\mapsto\sigma(\lambda)$.  This
induces an action of $\Out(G)$ on $H^1(X,Z)$. We consider the
semidirect product $H^1(X,Z)\rtimes\Out(G)$ 
defined by

$$
(\beta,b)\cdot(\alpha,a)=(\beta\otimes
b(\alpha),ba).
$$

We thus notice that the group $H^1(X,Z)\rtimes\Out(G)$  acts on 
$\mathcal{M}(G)$ in the following way: if $(E,\varphi)$ is
a polystable $G$-Higgs bundle and $(\alpha,a)\in
H^1(X,Z)\rtimes\Out(G)$, then
$$
(\alpha,a)\cdot(E,\varphi)=(a(E)\otimes \alpha,a(\varphi)).
$$

We also have the semidirect product
$$
H^1(X,Z)\rtimes \Aut(X)
$$
defined by
$$
(\alpha_2,f_2)(\alpha_1,f_1)=(\alpha_2\otimes f_2^*\alpha_1,f_2f_1).
$$
One can show that we have an action of
$H^1(X,Z)\rtimes \Aut(X)$ on $\mathcal{M}(G)$  given by
$$
(\alpha,f)(E,\varphi)=(f^*E\otimes \alpha,f^*\varphi),
$$
where $(E,\varphi)\in\mathcal{M}(G)$ and $(\alpha,f)\in H^1(X,Z)\rtimes
\Aut(X)$.

Combining the preceding actions 
we obtain an action of the
group
$$
H^1(X,Z)\rtimes(\Out(G)\times \Aut(X))\times \C^*
$$
on $\mathcal{M}(G)$.


\begin{remark}
The above discussion can be extended to the case where $G$ is reductive. We
just need to replace  $H^1(X,Z)$ with $H^1(X,\underline{Z})$, where
$\underline{Z}$ be the sheaf of local $Z$-functions on $X$. Of course 
now the topological type of $E$ and
$E\otimes \alpha$ is the same only if $\alpha\in H^1(X,\underline{Z})$ 
has trivial topological class.
\end{remark}

\section{$G$-Higgs bundles and automorphisms of $G$}
\label{invo-higgs-bundles} 
In this section we will assume that $G$ is a connected complex semisimple 
Lie group. To study the  finite order automorphisms of $\cM(G)$  considered in 
the following sections, we introduce now  a class of Higgs bundles 
defined by an element $\theta\in \Aut_n(G)$. These involve
the subgroups  $G^\theta$ and $G_\theta$    
of $G$ defined in Sections \ref{normalizers} and \ref{finite-order}.

\subsection{Higgs bundles defined by automorphisms of $G$}\label{g-theta-higgs}

Let $\theta\in \Aut(G)$ of order $n$. 
Let $0\leq k\leq n-1$ and $\zeta_k:=\exp(2\pi i\frac{ k}{n})$. A  
{\bf $(G^\theta,\zeta_k)$-Higgs bundle} over $X$
is a pair $(E,\varphi)$ where
$E$ is  a principal $G^\theta$-bundle over $X$ and
$\varphi$ is a  section of $E(\lieg^k)\otimes K$, where $E(\lieg^k)$ 
is the bundle associated to $E$ via the representation 
$\iota^k:G^\theta \to \GL(\lieg^k)$ defined in Proposition
\ref{representations-n}, and $K$ is the canonical bundle on $X$.
If $\theta\in \Aut_2(G)$, we will also use the term 
{\bf $(G^\theta,\pm)$-Higgs bundle} over $X$
for a pair $(E,\varphi)$ where
$E$ is  a principal $G^\theta$-bundle over $X$ and
$\varphi$ is a  section of $E(\lieg^\pm)\otimes K$, where $E(\lieg^\pm)$ 
is the bundle associated to $E$ via the representation 
$\iota^\pm:G^\theta \to \GL(\lieg^\pm)$ defined in Proposition
\ref{representations} (see Remark \ref{double-notation}).

It is clear that a $(G^\theta,\zeta_0)$-Higgs bundle, and hence a  
 $(G^\theta,+)$-Higgs bundle when $\theta$ is of order 2, is simply a 
$G^\theta$-Higgs bundle as defined in Section \ref{g-higgs}. 
The notions of (semi)stability 
and polystability  for these objects are hence 
given in Definition \ref{def:semipoly} (see (2) in Remark 
\ref{rm:(semi)stability of bundles} for the case in
which $G^\theta$ is reductive). The cases of $(G^\theta,\zeta_k)$-Higgs 
bundles for $k>0$, in particular $(G^\theta,-)$-Higgs bundle
when $\theta$ is of order 2, requires  a new definition. In what follows 
we will give a  definition that accommodates all the cases simultaneously.

Let $\tau$ be a fixed compact conjugation of $G$ defining a maximal compact
subgroup $U:=G^\tau$, such that $\tau\theta=\theta\tau$. Then $\tau$ defines
also a conjugation on $G^\theta$, giving  the compact real form 
$U':=U\cap G^\theta$. Let $\lieu'$ be the Lie algebra of $U'$. Clearly
$\lieu'^\C=\lieg^\theta$. Given $s\in i\lieu'$, as explained in Section 
\ref{g-higgs},

\begin{equation}
P_s=\{g\in G^\theta\st e^{ts}ge^{-ts}\;\; \text{is bounded  as}\;\; t\to\infty\},
\end{equation}
is a parabolic subgroup of $G^\theta$.

For each summand in the decomposition (\ref{lie-decomposition}) we define

\begin{equation}\label{g_s}
\lieg^k_s=\{v\in\lieg^k\st\Ad(e^{ts})(v)\;\; \text{is bounded  as}\;\; t\to\infty\}.
\end{equation}

\begin{equation}\label{g_s^0}
\lieg^k_{s,0}=\{v\in\lieg^k \st\lim_{t\to\infty}\Ad(e^{ts})(v)=0\}.
\end{equation}

Clearly $\lieg^0_s=\liep_s$, the Lie algebra of $P_s$,
and $\lieg^0_{s,0}$ is the  Levi part $\liel_s$  of $\liep_s$.

As in Section \ref{g-higgs},  the only  parabolic subgroups of $G^\theta$ that 
we will consider are subgroups of form $P_s$ for some $s\in i \lieu'$.

We consider the non-degenerate 
invariant $\C$-bilinear pairing $\la\cdot,\cdot\ra$ 
on $\lieg^\theta$ induced by the Killing form on $\lieg$.


Let $\liez(\lieu')$  be the centre 
of $\lieu'$. In general  the natural stability criteria depend on a 
parameter $\alpha\in i \liez(\lieu')$  
(see \cite{garcia-prada-gothen-mundet} for details). 

We define the subalgebra $\lieu'_k$ as follows. 
Consider the decomposition 
$ \lieu' = \liez(\lieu') + [\lieu', \lieu'] $, 
and the representation 
$d\iota^k= \ad:\lieu'\to \End(\lieg^k)$. Let $
\liez'_k=\ker(d\iota^k|_{\liez(\lieu')})$  and 
take $\liez''_k$ such that $\liez(\lieu')=\liez'_k+\liez''_k$. Define 
the subalgebra 
$\lieu'_k := \liez''_k + [\lieu', \lieu']$. The subindex 
$k$ denotes that 
we have taken away the part of the centre $\liez(\lieu')$ acting 
trivially via  the isotropy representation $d\iota^k$.
Note that since $d\iota^0$ is the adjoint representation of 
$\lieu'$, $\liez'=\liez(\lieu')$ and 
$\lieu'_0 = [\lieu', \lieu']$. 

We have now all the ingredients to define stability.

\begin{definition}\label{def:g+-semipoly}
Let $\alpha\in i\liez(\lieu')$. A $(G^\theta,\zeta_k)$-Higgs bundle 
$(E,\varphi)$ over $X$ is:
 
{\bf $\alpha$-semistable} if for any $s\in i\lieu'$ and any reduction 
of structure group $\sigma$ of 
$E$ to $P_s$ such that $\varphi\in H^0(X,E_\sigma(\lieg^k_s)\otimes K)$
we have that $\deg(E)(\sigma,s)-\la\alpha,s\ra\geq 0$.

{\bf$\alpha$-stable} 
if for any $s\in i\lieu'_k$ and any reduction 
of structure group $\sigma$ of 
$E$ to $P_s$ such that $\varphi\in H^0(X,E_\sigma(\lieg^k_s)\otimes K)$
we have that $\deg(E)(\sigma,s)-\la\alpha,s\ra > 0$. 

 {\bf $\alpha$-polystable} if it is $\alpha$-semistable and 
$s\in i\lieu'_k$ and any reduction
of structure group $\sigma$ of 
$E$ to $P_s$ such that $\varphi\in H^0(X,E_\sigma(\lieg^k_s)\otimes K)$
and $\deg(E)(\sigma,s)-\la\alpha,s\ra = 0$ 
there is a further reduction of structure group $\sigma_L$ 
of $E_\sigma$  to the Levi subgroup $L_s$ of $P_s$ such that 
$\varphi\in H^0(X,E_{\sigma_L}(\lieg^k_{s,0})\otimes K)$.
\end{definition}

Let  $\cM^\alpha(G^\theta,\zeta_k)$  be the {\bf moduli space of polystable 
$(G^\theta,\zeta_k)$-Higgs bundles}. The construction of the moduli space 
of pairs given by Schmitt using Geometric Invariant Theory  
(cf. \cite{schmitt:2008}) 
applies also to these moduli spaces.

\begin{remark}
When $\alpha=0$ we will simply say stability to refer to $0$-stability and
will drope the index $\alpha$ denoting  the moduli space by  
$\cM(G^\theta,\pm)$.  
\end{remark}

\begin{remark}
As mentioned above, a $(G^\theta,\zeta_0)$-Higgs bundle $(E,\varphi)$ 
is simply a 
$G^\theta$-Higgs bundle, and, as pointed out in Remark 
\ref{rm:(semi)stability of bundles}, the parameter 
$\alpha$ is determined  by the topology of $E$. This is definitely not the 
case for $(G^\theta,\zeta_k)$-Higgs bundles for $k>0$, where $\alpha$ can 
take different values. When $\theta$ is of order $2$, this happens precisely 
when the real form defined by $\sigma:=\theta\tau$ has a factor of 
Hermitian type. In this case, the possible values of the parameter $\alpha$ are
governed by  a Milnor--Wood type  inequality  
(see \cite{biquard-garcia-prada-rubio}).
 \end{remark}

As for $G$-Higgs bundles, there are relevant Hitchin equations
linked to a $(G^\theta,\zeta_k)$-Higgs bundle.
With the same notation as in Theorem \ref{higgs-hk}, One has the 
following.

\begin{theorem} \label{higgs-g+-hk-connected}
Let $(E,\varphi)$ be a $(G^\theta,\zeta_k)$-Higgs bundle. 
There is a reduction $h$ of  structure group of $E$ from $G^\theta$ to 
$U'$ that satisfies the Hitchin equation
  $$
  F_h -[\varphi,\tau_h(\varphi)]= -i\alpha\omega 
  $$
  if and only if $(E,\varphi)$ is $\alpha$-polystable.
\end{theorem}

The proof of Theorem \ref{higgs-g+-hk-connected} is given in 
\cite{garcia-prada-gothen-mundet} (the $(G^\theta,\zeta_0)$-case 
is essentially given by Theorem \ref{higgs-hk}). 

Recall from Section \ref{finite-order}
the short exact sequence (\ref{g-theta-n}):

\begin{equation} \label{exact-sequence}
1 \lra G^\theta\lra G_\theta \lra \Gamma_\theta \lra 1.
\end{equation}

Similarly to $(G^\theta,\zeta_k)$-Higgs bundles we can consider
$(G_\theta,\zeta_k)$-Higgs bundles.
A  {\bf $(G_\theta,\zeta_k)$-Higgs bundle} over $X$
is a pair $(E,\varphi)$ where
$E$ is  a principal $G_\theta$-bundle over $X$ and
$\varphi$ is a  section of $E(\lieg^k)\otimes K$, where $E(\lieg^k)$ 
is the bundle associated to $E$ via the representation 
$\iota_k:G_\theta \to \GL(\lieg^k)$ defined in Proposition
\ref{representations-n}. Stability can be defined as in 
Definition \ref{def:g+-semipoly}, where now the parabolic subgroups defined 
by elements  $s\in i\lieu'$ are parabolic subgroups of $G_\theta$. Note
that the maximal compact subgroup of $G_\theta$ is the group 
$U'':=G_\theta\cap U$, and its Lie algebra is also $\lieu'$.
We denote the moduli space of $\alpha$-polystable 
such objects as $\cM^\alpha(G_\theta,\zeta_k)$ --- or simply
$\cM(G_\theta,\zeta_k)$ if $\alpha=0$.

A correspondence theorem analogous to Theorem \ref{higgs-g+-hk-connected} also 
exists for these objects (\cite{garcia-prada-gothen-mundet}):

\begin{theorem} \label{higgs-g+-hk}  
Let $(E,\varphi)$ be a $(G_\theta,\zeta_k)$-Higgs bundle. 
There is a reduction $h$ of  structure group of $E$ from $G_\theta$ to 
$U''$ that satisfies the Hitchin equation
  $$
  F_h -[\varphi,\tau_h(\varphi)]= -i\alpha\omega 
  $$
  if and only if $(E,\varphi)$ is $\alpha$-polystable.
\end{theorem}

If $\theta\in \Aut_2(G)$, we will also use the term 
{\bf $(G_\theta,\pm)$-Higgs bundle} over $X$
for a pair $(E,\varphi)$ where
$E$ is  a principal $G_\theta$-bundle over $X$ and
$\varphi$ is a  section of $E(\lieg^\pm)\otimes K$, where $E(\lieg^\pm)$ 
is the bundle associated to $E$ via the representation 
$\iota^\pm:G_\theta \to \GL(\lieg^\pm)$ defined in Proposition
\ref{representations} (see Remark \ref{double-notation}).

In the case of order $2$ we will also use the notations

$$
\begin{array}{ccc}
\cM^\alpha(G^\theta,+)& = & \cM^\alpha(G^\theta,\zeta_0),\\ 
\cM^\alpha(G^\theta,-) & = &\cM^\alpha(G^\theta,\zeta_1), \\
\cM^\alpha(G_\theta,+) & = & \cM^\alpha(G_\theta,\zeta_0),\\ 
\cM^\alpha(G_\theta,-) &  = &\cM^\alpha(G_\theta,\zeta_1).
\end{array}
$$

And similarly when $\alpha=0$. 
For reasons that will be apparent in Section \ref{higgs-reps}, 
$(G^\theta,-)$-Higgs bundles and $(G_\theta,-)$-Higgs bundles  
are referred in the literature as $G^\sigma$-Higgs bundles
and $G_\sigma$-Higgs bundles, where $\sigma=\theta\tau$ and 
$G^\sigma$ and $G_\sigma$ are defined as in Section \ref{normalizers}.

Consider the map
\begin{equation}\label{gamma-map}
\gamma_\theta: H^1(X,\underline{G_\theta}) \lra H^1(X,\Gamma_\theta)
\end{equation}
induced by the homomorphism   $G_\theta \lra \Gamma_\theta$.
This map associates to every $G_\theta$-bundle an invariant in 
$H^1(X,\Gamma_\theta)$.
If we fix $\gamma \in H^1(X,\Gamma_\theta)$ we can consider the moduli
space $\cM^\alpha_\gamma(G_\theta,\zeta_k)$ of $\alpha$-polystable elements  
$(E,\varphi)$ 
as above such that $\gamma_\theta(E)=\gamma$ (or simply
$\cM_\gamma(G_\theta,\zeta_k)$ for the moduli space $0$-polystable elements). 
Higgs bundles  $(E,\varphi)$ such that
$\gamma(E)=e \in  H^1(X,\Gamma_\theta)$ --- the identity element --- have the 
property that the structure  group of $E$  reduces to $G^\theta$.

\begin{proposition}\label{quotient-Gamma}
Let $\theta\in \Aut_n(G)$ and let $\cM_e(G_\theta,\zeta_k)$ be the moduli space 
polystable $(G_\theta,\zeta_k)$-Higgs bundles  $(E,\varphi)$ such that
$\gamma(E)=e \in  H^1(X,\Gamma_\theta)$.
The group $\Gamma_\theta$ acts on $\cM^\alpha(G^\theta,\zeta_k)$, and 
$$
\cM^\alpha_e(G_\theta,\zeta_k)= \cM^\alpha(G^\theta,\zeta_k)/\Gamma_\theta.
$$
\end{proposition}

\begin{proof}

Choose a section $s:\Gamma_\theta \to G_\theta$ of the extension
(\ref{exact-sequence}), i.e. $s$ is a map such that $p\circ s=\Id_{\Gamma_\theta}$,
where $p:G_\theta \to \Gamma_\theta$ is the natural projection in 
(\ref{exact-sequence}).
This section defines a map $\Gamma_\theta \to \Aut(G)$ 
given by $\gamma\mapsto s(\gamma)gs(\gamma)^{-1}$, for $\gamma\in \Gamma_\theta$
and $g\in G$. This is not in general
a homomorphism, but descends to a homomorphism
\begin{equation}\label{Gamma-action} 
\Gamma_\theta \to \Aut(G)/\Int(G^\theta),
\end{equation}
where $\Int(G^\theta)$ here  is the subgroup of $\Aut(G)$ consisting 
of conjugations in $G$  by elements of $G^\theta$.  In particular
we obtain the characteristic homomorphism 
$\Gamma_\theta\to \Out(G^\theta)=\Aut(G^\theta)/\Int(G^\theta)$, which then
defines an action on $H^1(X,\underline{G_\theta})$. So if $E$ is
$G^\theta$-bundle and $\gamma\in \Gamma_\theta$, we have another
$G^\theta$-bundle $\gamma(E)$. 
To obtain a section of 
$\gamma(E)(\lieg^k)\otimes K$ out of the Higgs field
 $\varphi\in E(\lieg^k)\otimes K$ we observe that $\Gamma_\theta$ acts
 also on $\lieg^k$ via the homomorphism (\ref{Gamma-action}) 
in such  a way that
$\iota^k: G^\theta\to \GL(\lieg^k)$ is $\Gamma_\theta$-equivariant. 
The preservation of semistability, stability
and polystability under this action is clear.

Let now $(E,\varphi)\in \cM^\alpha(G^\theta,\zeta_k)$. We can define a 
$(G_\theta,\pm)$-Higgs bundle by extending the structure group of 
$E$ to $G_\theta$ obtaining a $G_\theta$-bundle $E_{G_\theta}$. The extended
Higgs field $\varphi_{G_\theta}$ is in fact $\varphi$ itself since
$E(\lieg^k)=E_{G_\theta}(\lieg^k)$.  This map preserves polystability
as can be seen easily from Theorem \ref{higgs-g+-hk-connected}. 
Notice that
a reduction of the structure group of a $G^\theta$-bundle $E$ to the maximal
compact subgroup $U'$ is a smooth section of the bundle 
$E(G^\theta/U')$ and since these symmetric spaces $G^\theta/U'$
and 
$G_\theta/U''$ coincide $E(G^\theta/U')=
E_{G_\theta}(G_\theta/U'')$ --- recall that  
$U''$ is the maximal compact subgroup of $G_\theta$.
Since the different reductions of a  $G_\theta$-bundle whose with 
trivial class in $H^1(X,\Gamma_\theta)$ are given by  
$H^0(X,\Gamma_\theta)\cong \Gamma_\theta$ the map defined above 
descends to give the desired  map
 $\cM^\alpha_e(G_\theta,\zeta_k)= \cM^\alpha(G^\theta,\zeta_k)/\Gamma_\theta$.
\end{proof}

There are appropriate notions of simplicity for $(G^\theta,\zeta_k)$- and 
$(G_\theta,\zeta_k)$-Higgs bundles (see Remark \ref{non-connected-simplicity} and \cite{garcia-prada-gothen-mundet2}).


From Proposition \ref{smoothness} a stable 
and simple  $(G^\theta,\zeta_0)$-Higgs bundle 
(resp. $(G_\theta,\zeta_0)$-Higgs bundle) 
defines a smooth point in $\cM(G^\theta,\zeta_0)$ 
(resp.  $\cM(G_\theta,\zeta_0)$). In contrast, for  a 
$(G^\theta,\zeta_k)$-Higgs bundle and a $(G_\theta,\zeta_k)$-Higgs bundle
to define a smooth point,  
in addition to stability  and simplicity, the vanishing of a certain
obstruction living in the second degree hypercomology group of 
the deformation complex  defined by the Higgs bundle (see
\cite{garcia-prada-gothen-mundet}).

\subsection{$(G^\theta,\zeta_k)$- and $(G_\theta,\zeta_k)$-Higgs bundles versus 
$G$-Higgs bundles}

Let $(E,\varphi)$ be a $(G^\theta,\zeta_k)$-Higgs bundle. By extending the 
structure  group to $G$ we obtain a $G$-bundle $E_G$ and since 
$E_G(\lieg)=\oplus_k E(\lieg^k)$, we can associate to  the Higgs field 
$\varphi$  a section $\varphi_G$ of $E_G(\lieg)\otimes K$ by taking  $0$ in 
$E(\lieg^j)$ component for $j\neq k$. The $G$-Higgs bundle $(E_G,\varphi_G)$ will be
referred as the {\bf extension} of $(E,\varphi)$.
Also, if $(E,\varphi)$ is a $G$-Higgs bundle we say 
that it reduces to a $(G^\theta,\zeta_k)$-Higgs bundle if $E$ reduces to 
a $G^\theta$-bundle  $E_{G^\theta}$ and $\varphi$ takes values in 
$E_{G^\theta}(\lieg^k)\otimes K$, in which case we rename it
$\varphi_k$. We say that $(E_{G^\theta},\varphi_k)$ is a {\bf reduction} 
of $(E,\varphi)$. We consider the similar construction  
also for $(G_\theta,\zeta_k)$-Higgs bundles.
Recall that we  say that a $(G^\theta,\zeta_k)$-Higgs bundle 
($(G_\theta,\zeta_k)$-Higgs bundle) is polystable when it is $0$-polystable.

\begin{proposition} \label{extension-reduction-2}
Let  $\theta\in \Aut_n(G)$. 

\begin{enumerate}

\item
Let $(E,\varphi)$ be a  polystable  $(G^\theta,\zeta_k)$-Higgs bundle. Then the
corresponding $G$-Higgs bundles $(E_G,\varphi_G)$ is also polystable. 
We thus have a map $\cM(G^\theta,\zeta_k)\lra \cM(G)$.

\item
 Let $(E,\varphi)$ be a   $G$-Higgs bundle which reduces to a 
$(G^\theta,\zeta_k)$-Higgs bundle 
$(E_{G^\theta},\varphi_k)$. 
Then if  $(E,\varphi)$ is (semi,poly)stable, $(E_{G^\theta},\varphi_k)$
is also (semi,poly)polystable.

\item Let  $\theta,\theta'\in \Aut_n(G)$ such that 
$\theta'=\Int(g)\theta\Int(g^{-1})$, with $g\in G$.
Then $\Int(g)$ gives rise to a canonical isomorphism of 
$\cM(G^\theta,\zeta_k)$ with   $\cM(G^{\theta'},\zeta_k)$.
Since the action of $\Int(g)$ in $\cM(G)$ is 
trivial we have a commutative diagram

$$
\begin{array}{ccc}
\cM(G^\theta,\zeta_k)& \to & \cM(G)\\
\downarrow & \nearrow &        \\
\cM(G^{\theta'},\zeta_k).         &          &  
\end{array}
$$
\end{enumerate}

\end{proposition}

\begin{proof}
That the polystability of a $(G^\theta,\zeta_k)$-Higgs bundle $(E,\varphi)$
implies that of  $(E_G,\varphi_G)$ follows from 
Theorems \ref{higgs-hk}  and \ref{higgs-g+-hk}, together with the 
observation that $E(G^\theta/U')\subset E_G(G/U)$ and hence
a reduction of structure group of $E$ to $U'$ defines a reduction of
structure group of $E_G$ to $U$. The fact that the former satisfies the Hitchin
equation for $\alpha=0$ implies that the latter satisfies the Hitchin 
equation in Theorem  \ref{higgs-hk}  implying the polystability of
$(E_G,\varphi_G)$.

To prove (2) suppose that  $(E_{G_\theta},\varphi_k)$ is not semistable.
Following Definition \ref{def:g+-semipoly}, there is an 
$s\in \lieu'$ defining a parabolic subgroup $P_s\in G^\theta$, 
and a reduction $\sigma$ of
$E_{G^\theta}$ to a $P_s$-bundle such that $\deg(E_{G^\theta})(s,\sigma)< 0$.
But $s$ defines also a parabolic subgroup $\widetilde{P}_s$ of $G$, and 
the reduction $\sigma$ defines a reduction  $\widetilde{\sigma}$ of $E$ to
$\widetilde{P}_s$ such that  
$\deg(E)(s,\widetilde{\sigma})=\deg(E_{G^\theta})(s,\sigma)$. The result
follows now. The same argument applies to stability and polystability.

(3) follows from the fact that $\Int(G)$ acts trivially on $\cM(G)$ as
explained in Section \ref{automorphisms}.
\end{proof}

Similarly to Proposition \ref{extension-reduction-2}, we have the following.

\begin{proposition} \label{extension-reduction}
Let  $\theta\in \Aut_n(G)$. 

\begin{enumerate}

\item
Let $(E,\varphi)$ be a  polystable  $(G_\theta,\zeta_k)$-Higgs bundle. Then the
corresponding $G$-Higgs bundles $(E_G,\varphi_G)$ is also polystable. 
We thus have a map $\cM(G_\theta,\pm)\lra \cM(G)$.

\item
 Let $(E,\varphi)$ be a   $G$-Higgs bundle which reduces to a 
$(G_\theta,\zeta_k)$-Higgs bundle 
$(E_{G_\theta},\varphi_k)$. 
Then if  $(E,\varphi)$ is (semi,poly)stable, $(E_{G_\theta},\varphi_k)$
is also (semi,poly)polystable.

\item Let  $\theta,\theta'\in \Aut_n(G)$ such that 
$\theta'=\Int(g)\theta\Int(g^{-1})$, with $g\in G$.
Then $\Int(g)$ gives rise to a canonical isomorphism of 
$\cM(G_\theta,\zeta_k)$ with   $\cM(G_{\theta'},\zeta_k)$.
Since the action of $\Int(g)$ in $\cM(G)$ is 
trivial we have a commutative diagram

$$
\begin{array}{ccc}
\cM(G_\theta,\zeta_k)& \to & \cM(G)\\
\downarrow & \nearrow &        \\
\cM(G_{\theta'},\zeta_k).         &          &  
\end{array}
$$
\end{enumerate}

\end{proposition}

More can be proved if $\theta\in \Aut_2(G)$:

\begin{proposition}\label{stronger-order-2}
Let $\theta\in \Aut_2(G)$. Then the map $\cM(G^\theta,\pm)\lra \cM(G)$
is a $|\Gamma_\theta|:1$-map, where  $|\Gamma_\theta|$ is the order of $\Gamma_\theta$.
Moreover, the map 
$\cM(G_\theta,\pm)\lra \cM(G)$ is  injective.
\end{proposition}

\begin{proof}
The injectivity of the map $\cM(G_\theta,\pm)\lra \cM(G)$
follows from the fact that, as shown in Proposition 
\ref{theta-normalizer},  $G_\theta$ is the normalizer of $G^\theta$ in 
$G$. The first statement follows now from  Proposition 
\ref{quotient-Gamma}.
\end{proof}

\begin{remark}
(1) Note  that the relation in Propositions  \ref{extension-reduction-2} and
 \ref{extension-reduction} 
between  $(G^\theta,\zeta_k)$-Higgs bundles (resp. $(G_\theta,\zeta_k)$-Higgs bundles)
and $G$-Higgs bundles 
applies when the stability parameter $\alpha$ for the 
$(G^\theta,\zeta_k)$-Higgs bundles (resp. $(G_\theta,\zeta_k)$-Higgs bundles)
is $0$. Although the other values of $\alpha$ do not relate in general 
to polystable  $G$-Higgs bundles, they turn out to play an important 
role in the study of the topology of the moduli space for $\alpha=0$.

(2) In the more general case in which $G$ is reductive, there is an analogous 
results to Propositions \ref{extension-reduction-2} and 
\ref{extension-reduction}, 
for $\alpha$-polystable
objects, where the element $\alpha$  in the centre of $\lieg$  is determined 
by the topological class of the $G$-bundle.

\end{remark}


\section{Finite order automorphisms of $\cM(G)$}\label{higher-order-auto}

In this section $G$ is a connected complex semisimple Lie group, 
$X$ is a compact Riemann surface, $\cM(G)$ is  the moduli space
of $G$-Higgs bundles over $X$. 
Our goal is to study fixed 
points of automorphisms of $\cM(G)$
defined by finite order elements in $H^1(X,Z)\rtimes\Out(G)\times \C^*$. 
These  involve the  moduli spaces 
$\cM(G^\theta,\zeta_k)$ and  $\cM(G_\theta,\zeta_k)$ defined in Section 
\ref{g-theta-higgs}
where $\theta\in \Aut_n(G)$, $\zeta_k:=\exp(2\pi i\frac{k}{n})$, 
and $G^\theta$ and $G_\theta$ are the
subgroups of $G$ defined in Sections \ref{normalizers} and \ref{finite-order}.
We will denote by
$\cM(G)_{ss}$ the subvariety of stable and simple points of $\cM(G)$
and by $\widetilde{\cM}(G^\theta,\zeta_k)$ and  
$\widetilde{\cM}(G_\theta,\zeta_k)$ 
the images  of $\cM(G^\theta,\zeta_k)$ and  $\cM(G_\theta,\zeta_k)$ 
in $\cM(G)$ under the maps defined  in Propositions 
\ref{extension-reduction-2} and \ref{extension-reduction} respectively.

\begin{proposition}\label{ext-iso}
Let $\theta\in \Aut_n(G)$, and 
let $(E,\varphi)$ be a $(G^\theta,\zeta_k)$-Higgs bundle
and  $(E_G,\varphi_G)$ be the corresponding
extension to a $G$-Higgs bundle. Then
$(E_G,\varphi_G)$ is isomorphic to $(\theta(E_G),\zeta_k\theta(\varphi_G))$. 
\end{proposition}
\begin{proof}
The bundle $E_G$ is obtained from $E$ by the extension $G^{\theta} 
\subset G$ of structure group. The extension obtained by further 
composition with $\theta $ gives $\theta (E_G)$. Since $\theta $ is 
the identity on $G^{\theta }$ there is a canonical isomorphism of $E_G$  
with $\theta (E_G)$. If $\varphi $ takes values in the $\zeta_k$-eigenspace 
of $\ad(\theta )$, it gives rise to a Higgs field on $E_G$ on which 
$\ad(\theta )$ acts as $\zeta_k$.
 \end{proof}

\begin{proposition}\label{fixed-iota-theta}
Let $\theta\in \Aut_n(G)$, 
and let  $(E, \varphi )$ be  a simple $G$-Higgs 
bundle  isomorphic to  $(\theta(E), \zeta_k\theta(\varphi))$.
Then, except for $\theta\in \Int(G)$ and $k=0$, we have the following.
 
(1) The structure  group of $E$ can be reduced to $G^{\theta'}$ with 
$\theta'=\Int(s)\theta$ and $s\in S^n_\theta$, where $S^n_\theta$ as defined
in Proposition \ref{cliques-n}. Moreover $s$ is unique up to the action of
$G$ and $Z$ defined in Proposition \ref{cliques-n}.

(2) The Higgs field  $\varphi$  takes values in the
$\zeta_k$-eigenspace  of the automorphism of 
$\lieg$ defined by $\theta'$. In other words, $(E,\varphi)$ reduces to a
$(G^{\theta'},\zeta_k)$-Higgs bundle.
\end{proposition}

\begin{proof}
Suppose that $(E,\varphi)$ is isomorphic to 
$(\theta(E),\zeta_k\theta(\varphi))$. This means that there is an isomorphism 
of $A: E \to \theta(E)$ such that $\Ad(A)(\varphi)=\zeta_k \theta(\varphi)$.
From Proposition \ref{twisted-bundle}, 
the isomorphism $A$ can be identified with
a $\theta$-twisted automorphism of $E$,
as defined in Section \ref{twisted-automorphisms}. Since $\theta$ is of order 
$n$, $A^n$ defines  an  automorphism  of $(E,\varphi)$, and hence $A^n=z$ 
with $z\in Z$ since  $(E,\varphi)$ is simple.
From Proposition \ref{twisted-orbit}  
the  function $f_A$ given in  (\ref{function-A}) maps
$E$ onto a single orbit of the set $S^n_\theta$ defined 
in Proposition  \ref{cliques-n} under the action of $G$ defined 
there. 
If $A':E\to \theta (E)$ is another isomorphism such that 
$\Ad(A')(\varphi ) = \zeta_k \theta (\varphi )$ then $A^{-1}A' = z'$ for some 
$z'\in Z$ and the orbit defined by $A'$ is given by multiplication by 
$z'$. We thus obtain a unique orbit in $S^n_{\theta }$ under the action 
of $G\times Z$.

Now, by (2) in Proposition \ref{twisted-orbit},  every element $s$ 
in this  $Z\times G$-orbit defines a reduction  of structure group of
$E$ to $G^{\theta'}$, where $\theta'=\Int(s)\theta$, and $G^{\theta'}$ 
is the subgroup of $G$ of fixed points under $\theta'$.
This concludes the proof of (1).

To prove (2), let $\theta':=\Int(s)\theta$ for  $s\in S^n_\theta$ in the 
$Z\times G$-orbit defined in (1). The bundle $E$ reduces then to a
$G^{\theta'}$-bundle $E_{G^{\theta'}}$ and the adjoint
bundle decomposes as 
$$
E(\lieg)=\oplus_k E_{G^{\theta'}}(\lieg^k),
$$
where $\lieg=\oplus_k \lieg^k$ is the decomposition of $\lieg$ in
$(\zeta_k)$-eigenspaces of $\theta$ (see Proposition \ref{representations}). 
It is clear that $\Ad(A)(\varphi)=\zeta_k\theta(\varphi)$ is equivalent to
$\varphi\in H^0(X, E_{G^{\theta'}}(\lieg^k))$.
\end{proof}

Here is one of the main results of this paper.

\begin{theorem}\label{moduli-n-automorphism}
Let $a\in \Out_n(G)$ and $\zeta_k=\exp(2\pi i\frac{k}{n})$.
Consider the automorphism 

$$
\begin{aligned}
\iota(a,\zeta_k): \cM(G) & \to \cM(G) \\
(E,\varphi) & \mapsto (a(E),\zeta_k a(\varphi)).
  \end{aligned}
$$
Then
\begin{enumerate}

\item 
$$
\bigcup_{[\theta]\in \cl_n^{-1}(a)}
\widetilde{\cM}(G^\theta,\zeta_k)\subset \cM(G)^{\iota(a,\zeta_k)},
$$  

\item
$$
\cM(G)_{ss}^{\iota(a,\zeta_k)}
\subset 
\bigcup_{[\theta]\in \cl_n^{-1}(a)}
\widetilde{\cM}(G^\theta,\zeta_k)
$$
(except for $\iota(1,1)$),
\end{enumerate}
where $\cl_n:\Aut_n(G)/{\sim} \to \Out_n(G)$ is defined  in 
Proposition \ref{cliques-n}, and 
$\cl_n^{-1}(a)$ is, by 
Proposition \ref{n-clique-cohomology}, in bijection with
$H^1_a(\Z/n,\Ad(G))$.
\end{theorem}

\begin{proof}
Let $a\in \Out_n(G)$ and $(E,\varphi)$ be a $G$-Higgs
bundle. Recall from Section \ref{higgs-automorphisms}
that $(E,\varphi) \cong (a(E),\zeta_k a(\varphi))$ is equivalent to
$(E,\varphi) \cong (\theta(E),\zeta_k \theta(\varphi))$ for
any $\theta\in \Aut_n(G)$, such that $\pi(\theta)=a$, where 
$\pi:\Aut_n(G)\to \Out_n(G)$ is the natural projection. 

Let  $\theta\in \Aut_n(G)$, and $(E,\varphi)\in \cM(G^\theta,\zeta_k)$. 
From (1) in Proposition \ref{extension-reduction-2} 
the image of $(E,\varphi)$ in $\cM(G)$ is given
by the extended $G$-Higgs bundle $(E_G,\varphi_G)$. 
From Proposition \ref{ext-iso}, 
$(E_G,\varphi_G)$ is isomorphic to $(\theta(E_G),\zeta_k\theta(\varphi_G))$, 
showing that $(E_G,\varphi_G)\in 
\cM(G)^{\iota(a,\zeta_k)}$.
To complete the proof of (1), we apply (3) in Proposition 
\ref{extension-reduction-2}, which
says that if $\theta\sim\theta'$, $\cM(G^\theta,\zeta_k)$ and
$\cM(G^{\theta'},\zeta_k)$ are isomorphic and their images in 
$\cM(G)$ coincide.   

The proof of (2) follows from Proposition \ref{fixed-iota-theta} combined with
(2) and (3) of Proposition \ref{extension-reduction-2} and
(3) of Proposition \ref{cliques-n}.
\end{proof}

\begin{remark}
If $a=1$, the map $\iota(a,\zeta_0)$ is the identity map of $\cM(G)$ and 
(1) follows trivially from  Proposition \ref{extension-reduction-2}.
\end{remark}

\begin{remark}\label{empty-fixed-locus}
Of course the fixed point 
locus could be  entirely contained in the strictly polystable part, and 
hence $\cM(G)_{ss}^{\iota(a,\zeta_k)}$  be empty. 
\end{remark}

The case $a=1$ in Theorem \ref{moduli-n-automorphism} reduces to the study
of {\bf cyclic} Higgs bundles done by Simpson in \cite{simpson:2009} for 
$\SL(n,\C)$, and 
recently developed by Collier in general \cite{collier}. It should
be interesting to compare the different points of view.
In this situation we have the following.

\begin{theorem}\label{moduli-n-automorphism-cyclic}
Let $\zeta_1=\exp(\frac{2\pi i}{n})$.
Consider the automorphism 

$$
\begin{aligned}
\iota: \cM(G) & \to \cM(G) \\
(E,\varphi) & \mapsto E,\zeta_1 \varphi).
  \end{aligned}
$$
Then
\begin{enumerate}

\item 
$$
\bigcup_{[\theta]\in \Int_n(G)/\sim}
\widetilde{\cM}(G^\theta,\zeta_1)\subset \cM(G)^\iota,
$$  

\item
$$
\cM(G)_{ss}^\iota
\subset 
\bigcup_{[\theta]\in \Int_n(G)/\sim}
\widetilde{\cM}(G^\theta,\zeta_1)
$$
\end{enumerate}
\end{theorem}

Notice that in this case, by Proposition \ref{cliques-n}, 
$\cl_n^{-1}(1)=\Int_n(G)/\sim$.

\subsection{Automorphisms defined by elements in 
$H^1(X,Z)\rtimes\Out(G)$}

Let $(\alpha,a)\in (H^1(X,Z)\rtimes\Out(G))_n$ and 
$\zeta_k=\exp(2\pi i\frac{k}{n})$.
With such an  element  we can define
the automorphism
\begin{equation}\label{involution-a}
   \begin{aligned}
\iota(a,\alpha,\zeta_k): \cM(G) & \to \cM(G) \\
(E,\varphi) & \mapsto (a(E)\otimes\alpha,\zeta_k a(\varphi)).
  \end{aligned}
\end{equation}

To describe the fixed points of these automorphisms,
recall from Sections \ref{normalizers} and \ref{finite-order} 
that given  $\theta\in \Aut_n(G)$ 
we have subgroups  $G^\theta$ and $G_\theta$ of $G$
and the exact sequence (\ref{g-theta-n}).  We have the following.  

\begin{proposition}\label{ext-twisted-iso}
Let $\theta\in \Aut_n(G)$ and let
$$
c_\theta: H^1(X,\Gamma_\theta) \to H^1(X,Z)
$$
be the map induced by the injective
homomorphism  $\tilde{c}: \Gamma_\theta \to Z$ defined in   
Proposition  \ref{c-map-n}. Let  
$\gamma_\theta: H^1(X,\underline{G_\theta}) \lra H^1(X,\Gamma_\theta)$ be the
map defined in (\ref{gamma-map}).
We have the following:

(1) The map $c_\theta$ is injective.

(2) Let $(E,\varphi)$ be a $(G_\theta,\zeta_k)$-Higgs bundle with 
$\gamma_\theta(E)=\gamma$, and let
$\alpha:=c_\theta(\gamma)$. Let $(E_G,\varphi_G)$ be the 
extension of $(E,\varphi)$  to a $G$-Higgs bundle.  
Then $(E_G,\varphi_G)$ is isomorphic to 
$(\theta(E_G)\otimes\alpha,\pm \theta(\varphi_G))$.
\end{proposition}
\begin{proof}
Both $\Gamma _{\theta }$ and $Z$ are finite groups and hence the
first cohomology of $X$ with coefficient in these groups are given by
$\Hom(\pi_1(X) ,\Gamma _{\theta })$ and $\Hom(\pi_1(X) ,Z)$ respectively. 
Hence $c_{\theta }$ is injective.

(2) Let $(E, \varphi )$ be a $(G_{\theta }, \zeta_k )$-Higgs bundle. Under 
the natural homomorphisms $G_{\theta } \to \Gamma _{\theta } \to Z$ we 
get, by extension of structure group a $Z$-principal bundle $\alpha $. On 
the other hand, the inclusion of $G_{\theta }$ in $G$ gives rise to a 
$G$-Higgs bundle $(E_G, \varphi _G)$. Clearly, $\theta(E_G)$ is obtained 
from $E$ by extension of structure group $\theta\circ \iota $ where $\iota $ 
is the inclusion of $G_{\theta }$ in $G$. But $\theta \circ \iota = \iota \circ 
\theta|_{G_{\theta}}$. Now, if $x\in G_{\theta }$, we have $\theta (x) = 
xx^{-1}\theta (x) = x (\tilde{c}\circ \eta )(x)$ where $\eta $ is the 
natural surjection $G_{\theta } \to \Gamma _{\theta }$ and ${\tilde c}$ 
is the inclusion of $\Gamma _{\theta }$ in $Z$. Now $E_G \otimes \alpha$ 
is obtained by extension of structure group by the multiplication map 
$G \times Z \to G$. Hence $\theta (E_G)\otimes \alpha $ is got from $E$ by 
the extension of structure group by $g \to (\theta(g), \theta (g)^{-1}g) 
\to g$.  This proves our assertion.
\end{proof}

The following generalises Proposition \ref{fixed-iota-theta}.

\begin{proposition}\label{fixed-iota-theta-alpha}
Let $\theta\in \Aut_n(G)$ and $\alpha\in H^1(X,Z)$ such that
$\alpha\theta(\alpha)\cdots \theta^{n-1}(\alpha)=1$, and let  
$(E, \varphi )$ be  a simple $G$-Higgs 
bundle  isomorphic to  $(\theta(E)\otimes\alpha, \zeta_k\theta(\varphi))$.  
Then, except for the case $\theta\in \Int_n(G)$, $\alpha=1$ and
$k=0$, we have the following.
 
(1) The structure  group of $E$ can be reduced to $G_{\theta'}$ with 
$\theta'=\Int(s)\theta$ and $s\in S^n_\theta$, where $S^n_\theta$ as defined
in Proposition \ref{cliques-n}, with $s$  unique up to the action of
$Z\times G$ defined in Proposition \ref{cliques-n}. Moreover, if
$\gamma\in H^1(X,\Gamma_{\theta'})$ is the class of the reduced 
$G_{\theta'}$-bundle under the map defined in (\ref{gamma-map}), 
then $c_{\theta'}(\gamma)=\alpha$, with $c_{\theta'}$ is as defined in
Proposition \ref{ext-twisted-iso}.

(2) The Higgs field  $\varphi$  takes values in the
$\zeta_k$-eigenspace  of the automorphism of 
$\lieg$ defined by $\theta'$. In other words, $(E,\varphi)$ reduces to a
$(G_{\theta'},\zeta_k)$-Higgs bundle.
\end{proposition}

This proposition is a consequence of the following more general statement.

\begin{proposition}
Let $\theta \in \Aut_n(G)$ and $\Gamma $ a 
central subroup of $G$ invariant under $\theta $. Let $A$ be  
an isomorphism  of the Higgs bundle $(E, \varphi )$ with $(\theta (E) 
\otimes \alpha , \zeta_k\theta (\varphi ))$, where $\alpha $ 
is a $\Gamma$-bundle such that $\alpha\theta (\alpha)\cdots 
\theta^{n-1}(\alpha)=1$. 
Also assume that the composite of $\theta (A)$ with $A$ gives rise to an 
automorphism of $(E,\varphi )$ which is induced by an element of 
$\Gamma $. Then we have the following.

(1) If $t\in G$ such that $t\theta (t)$ is in the centre $Z$, then 
$\theta ' = \Int(t)\theta $ is also an involution of $G$. With our 
assumption above, $E$ can be reduced to a $G'$-bundle $E'$ where $G' = 
G_{\theta '} = \{ g\in G: g\theta '(g)^{-1} \in \Gamma \} $ for 
some $t$ satisfying $t\theta (t) \in \Gamma $. Let $\Gamma ' =\Gamma 
_{\theta '}$ be the image of $G'$ in $\Gamma $ given by the homomorphism 
$g\to g\theta'(g)^{-1}$. Then by extension of structure group by the 
homomorphism $G'\to \Gamma '\subset \Gamma $ the bundle $E'$ gives rise 
to a $\Gamma '$-bundle $\alpha ' $ and a $\Gamma $-bundle which is 
isomorphic to $\alpha $.

(2) The Higgs field $\varphi $ takes values in the $\zeta_k$-eigenspace of 
${\lieg}$ defined by $\theta '$. In other words, $(E, \varphi )$ 
reduces to a $(G', \zeta_k)$-bundle.

(3) The isomorphism $A$ is induced by the natural isomorphism of $E'$ 
with $\theta (E')\otimes \alpha '$ as $G'$-bundles using multiplication by
$t$ which gives an isomorphism of $\theta (E)$ and $\theta '(E)$. 

\end{proposition}

\begin{proof}
 Firstly, we recall that the bundle $E\otimes \alpha $ 
is simply the quotient of $E\times \alpha $ by the action $\gamma (\xi , 
a) = (\xi \gamma, \gamma ^{-1}a), \gamma \in \Gamma $. The (right) 
action of $G$ on $E\times \alpha $ given by $(\xi , a)s = (\xi s, a)$ 
goes down to an action on $E\otimes \alpha $ and makes it a principal 
$G$-bundle. The image of $(\xi , a)$ may as well be written as $\xi 
\otimes a$. The projection $E\times \alpha \to E$ goes down to a map 
$q:E\otimes \alpha \to E/\Gamma $. Note that $E/\Gamma $ is a principal 
$G/\Gamma $-bundle. We will denote the image of $\xi \in E$ in 
$E/\Gamma  $ by ${\overline \xi }$ and the image of $s\in G$ in $G/\Gamma $ by 
${\overline s}$. Then we have $q((\xi \otimes a)s) = q(\xi \otimes 
a)(\overline s)$.

Note that as a space over $X$, the bundle $\theta (E)$ is the same as 
$E$ except that the action of $G$ is now $\xi\cdot s = \xi \theta (s)$.  
Hence $F = \theta (E)\otimes \alpha $ is the quotient of $E\times \alpha
$ under the action of $\Gamma $ by $\gamma (\xi , a) = (\xi \theta 
\gamma , \gamma ^{-1}a)$. The action of $G$ on $\theta (E) \times 
\alpha $ given by $(\xi ,a)\cdot s = (\xi \theta (s), a)$ for $s\in G$ 
goes 
down to an action of $G$ on $F$ making it a principal $G$-bundle.

We are given an isomorphism $A: E\to \theta (E)\otimes \alpha $. Let $p$ 
be the composite $q\circ A: E \to \theta (E)/\Gamma = E/\Gamma $ and let 
$\xi $ be an element of $E$. If $A(\xi ) = \eta \otimes a$, we have 
$$A(\xi s) = (\eta \otimes a)\cdot s = (\eta \cdot s \otimes a) = (\eta \theta 
(s)\otimes a)$$ 
so that $p(\xi s) = p(\xi )\overline {\theta (s)}$ for 
all $s\in G$. We now have two (fibre-respecting) morphisms $E\to 
E/\Gamma $, namely, the natural quotient map $\pi $ and the morphism $p$. 
Hence there exists a morphism $f_A:E\to G/\Gamma $ such that $p(\xi ) = 
\pi (\xi )f_A(\xi )$. Also we have $\pi (\xi s) = \pi (\xi ){\overline 
s}$. Thus $p(\xi s) = p(\xi ) \overline {\theta (s)} = \pi (\xi )f_A(\xi 
)\overline {\theta (s)}$ on the one hand, and $p(\xi s) = \pi (\xi 
s)f_A(\xi s) = \pi (\xi ) \overline {s}f_A(\xi s)$ for all $s\in G$ on 
the other. Hence $f_A(\xi s) = \overline{ s}^{-1}f_A(\xi )\overline 
{\theta (s)}$.

We let $G$ act on $G/\Gamma $ by $s.\overline {g} = \overline {s} 
\overline {g} \overline {{\theta s}^{-1}} $.  The computation above shows 
that the composite $g_A$ of $f_A$ with the natural map of $G/\Gamma $ 
onto the quotient for the above action of $G$, is invariant under the 
action of $G$ on $E$. Hence it induces a morphism of $X$ into this 
quotient. Note now that $X$ is projective while this quotient is an 
affine variety. Hence the morphism $g_A$ is constant.

%

As in Proposition \ref{orbit}, we see that the image of $g_A$ consists of stable points 
for the action of $G/\Gamma $ on itself. Hence there is an element 
$\tau \in G/\Gamma $ such that for every $\xi \in E$, we have 
$f_A(\xi ) = \overline {g}\tau \overline {\theta 
(g)^{-1}}$ for some $g\in G$. Let $t\in G$ such that ${\overline t} 
=\tau $.

Now the isomorphism $A$ induces an isomorphism $\theta (E) \to E \otimes 
\theta (\alpha ) = E\otimes \alpha ^{-1}$ and hence an isomorphism 
$\theta (A):\theta (E) \otimes \alpha \to E$. Composing this with $A$ we 
get an automorphism of $(E, \varphi )$ which, by assumption, is given by 
an element $\gamma \in \Gamma $. Clearly this implies that $t\theta (t) 
\in \Gamma $, i.e. $t\in \Gamma _{\theta }$.

For every $\xi \in E$, there exists $a\in \alpha $ such that $A(\xi ) = 
\xi gt{\theta (g)}^{-1} \otimes a$ for some $g\in G$ and $a\in \alpha $. 
Consider now the subspace $E'$ of $E$ given by 
$$E' = \{\xi \in E: A(\xi ) = \xi t\otimes a{\rm~for~some~}a\in \alpha\}$$ 
Then for every $\xi \in E'$, we have $A(\xi s) = (\xi t\otimes a)\cdot s = \xi 
t\theta (s)\otimes a = \xi s.(s^{-1}t\theta (s))\otimes a$ for any $s\in 
G$. In particular, if $s$ satisfies $s^{-1}t\theta (s)t^{-1}\in \Gamma 
$, then (and only then) $\xi s \in E'$, that is to say, $\xi $ and $\xi 
s$ are both in $E'$ if and only if $s\in G' = G_{\Int(t)\theta }$. 
This shows that the structure group of $E$ can be reduced to $G'$ and 
that $E'$ provides such a reduction.

Moreover, if $\xi \in E'$ then there is a unique $a(\xi )$ such that 
$A(\xi ) = \xi t \otimes a(\xi )$. Let $s\mapsto s\theta '(s)^{-1}$ be 
the homomorphism $\rho :G' \to \Gamma '$. Then the above computation 
shows that $a(\xi s) = a(\xi )\rho (s)$ for all $\xi \in E'$ and $s\in 
\Gamma '$.  The image $a(E')$ is a $\Gamma '$ bundle which gives a 
reduction of the structure group of $\alpha $ to $\Gamma '$. 

This proves assertion (1).

Assertion (2) is obvious.

Assertion (3). Notice that $\alpha '$ is given by extension of structure 
group by the homomorphism $g \to g \theta '(g )^{-1}$ of $G' \to \Gamma 
'$. Hence $\theta '(E)\otimes \alpha '$ is given by the extension $g \to 
g \theta '(g)^{-1} \theta '(g)$ which is the identity! So there is a 
natural isomorphism of $E'$ with $\theta '(E')\otimes \alpha '$. From 
the definition it is clear that the extension of this natural isomorphism 
is $A$ as claimed. 
 \end{proof}

Our main result is the following.

\begin{theorem}\label{moduli-n-automorphism-alpha}
Let $(\alpha,a)\in (H^1(X,Z)\rtimes\Out(G))_n$ and
$\zeta_k=\exp(2\pi i\frac{k}{n})$. Consider the 
automorphism
$$
   \begin{aligned}
\iota(a,\alpha,\zeta_k): \cM(G) & \to \cM(G) \\
(E,\varphi) & \mapsto (a(E)\otimes\alpha,\zeta_k a(\varphi)).
  \end{aligned}
$$
Then
\begin{enumerate}

\item 

$$
\bigcup_{[\theta]\in \cl_n^{-1}(a), \; c_\theta(\gamma)=\alpha}
\widetilde{\cM_{\gamma}}(G_\theta,\zeta_k) \subset \cM(G)^{\iota(a,\alpha,\zeta_k)}.
$$  

\item

$$
\cM(G)_{ss}^{\iota(a,\alpha,\zeta_k)}
\subset 
\bigcup_{[\theta]\in \cl_n^{-1}(a), \; c_{\theta}(\gamma)=\alpha}
\widetilde{\cM_\gamma}(G_\theta,\zeta_k)
$$
(except for $\iota(1,1,1)$),
\end{enumerate}
where $\cl_n:\Aut_n(G)/{\sim} \to \Out_n(G)$ is defined  in 
Proposition \ref{cliques-n}, and 
$\cl_n^{-1}(a)$ is, by 
Proposition \ref{n-clique-cohomology}, in bijection with
$H^1_a(\Z/n,\Ad(G))$.
\end{theorem}

\begin{remark}
If $a=1$ and $\alpha=1$,  the map $\iota(a,\alpha,\zeta_0)$ is the identity map of $\cM(G)$ and (1) follows
trivially from  Proposition \ref{extension-reduction}.
\end{remark}

\begin{remark}
As pointed out in Remark \ref{empty-fixed-locus}, the fixed point 
locus could be entirely contained in the strictly polystable part and hence
$\cM(G)_{ss}^{\iota(a,\zeta_k,\alpha)}$ be empty.
\end{remark}

\begin{remark}\label{special-case}
It is clear that Theorem \ref{moduli-n-automorphism} is a special  
case of Theorem \ref{moduli-n-automorphism-alpha}, 
obtained  by taking $\alpha\in H^1(X,Z)$ to be the neutral element.
Note
that for the neutral element $e\in H^1(X,\Gamma_\theta)$, from 
Proposition \ref{quotient-Gamma}, we have
$\cM_e(G_\theta,\zeta_k)= \cM(G^\theta,\zeta_k)/\Gamma_\theta$, thus implying
from Propositions \ref{extension-reduction-2} and \ref{extension-reduction}, 
that
$\widetilde{\cM_e}(G_\theta,\zeta_k)= \widetilde{\cM}(G^\theta,\zeta_k)$.
\end{remark}

\begin{remark}\label{singular-locus}
It is important to point out that 
the second statement in Theorems \ref{moduli-n-automorphism} and 
\ref{moduli-n-automorphism-alpha} cannot be extended
in general to the whole moduli space $\cM(G)$. The fixed points of the singular 
locus in $\cM(G)$ may lead to extra components that are not in general
$(G_\theta,\zeta_k)$-Higgs bundles. Indeed, a strictly polystable $G$-Higgs bundle
can reduce its structure group to a reductive subgroup $H\subset G$ for which
the resulting $H$-Higgs bundle is both stable an simple (see
\cite{garcia-prada-oliveira}).  One has to analyse 
the precise relation between the cliques of $H$ and $G$ to give a 
description of the fixed points. This issue arises already for $G=\SL(n,\C)$
as we will see in Section \ref{sln}. 
\end{remark}

\begin{remark}\label{topological-invariants}
As mentioned in Section \ref{section-hitchin-equations}, fixing  the topological class $c$  
of $E$ we can consider  
$\cM_c(G)\subset \cM(G)$, the  moduli space of semistable $G$-Higgs bundles
with fixed topological class $c$. This is connected and non-empty
(see \cite{garcia-prada-oliveira}). If $G$ is conneced the topological
class is an element  $c\in \pi_1(G)$. Of course $\Out(G)$ acts on $\pi_1(G)$,
and we require for fixed points of the involutions studied above to exist
that $c$ be fixed under the element $a\in \Out_n(G)$. Recall  that,
since $G$ is semisimple, $Z$ is finite and hence the topological class of a 
$G$-bundle $E$ coincides with that of $E\otimes\alpha$ for any 
$\alpha\in H^1(X,Z)$.
\end{remark}

\section{Higgs bundles and representations of the fundamental group}
\label{higgs-reps}

\subsection{Representations of the fundamental group and harmonic reductions}

In this section we take  $G$ to be a reductive Lie group (real or complex).
By a {\bf representation} of $\pi_1(X)$ in
$G$ we understand a homomorphism $\rho\colon \pi_1(X) \to G$.
The set of all such homomorphisms,
$\Hom(\pi_1(X),G)$,  is an analytic  variety, which is algebraic
if $G$ is algebraic.
The group $G$ acts on $\Hom(\pi_1(X),G)$ by conjugation:
$$
(g \cdot \rho)(\gamma) = g \rho(\gamma) g^{-1}
$$
for $g \in G$, $\rho \in \Hom(\pi_1(X),G)$ and
$\gamma\in \pi_1(X)$. If we restrict the action to the subspace
$\Hom^+(\pi_1(X),g)$ consisting of reductive representations,
the orbit space is Hausdorff.  By a {\bf reductive representation} we mean
one that, composed with the adjoint representation in the Lie algebra
of $G$, decomposes as a sum of irreducible representations.
If $G$ is algebraic this is equivalent to the Zariski closure of the
image of $\pi_1(X)$ in $G$ being a reductive group.
(When $G$ is compact every representation is reductive.)  The
{\bf moduli space of reductive representations} of $\pi_1(X)$ in $G$
is defined to be the orbit space
$$
\mathcal{R}(G) = \Hom^{+}(\pi_1(X),G) / G. 
$$
If $G$ is complex $\calR(G)$ coincides with the GIT quotient
$$
\mathcal{R}(G) = \Hom(\pi_1(X),G) \sslash G. 
$$

It has the structure of an  analytic variety (see e.g.\cite{goldman}) 
which is algebraic if $G$ is algebraic and is real if $G$ is real or  
complex  if $G$ is complex.

Suppose that $G$ is connected. Let  $\rho:\pi_1(X)\to G$ be a representation of $\pi_1(X)$ in
$G$. Let $Z_G(\rho)$ be the centralizer in $G$ of
$\rho(\pi_1(X))$. We say that  $\rho$ is {\bf  irreducible} if and
only if it is reductive and $Z_G(\rho)=Z$, where $Z$ is the
centre  of $G$. If $G$ is not connected $Z$ must be replaced
by the invariant subgroup of $Z_G(G_0)$ under the action of the image of 
the map $\Hom(\pi_1(X),G)\to \Hom(\pi_1(X), \pi_0(G))$ associated 
to $\rho$, where
$G_0$ is the identity connected component, $Z_G(G_0)$ is
the centralizer of $G_0$ in $G$, and $\pi_o(G)$ is the group of
connected components of $G$  (see Remark \ref{non-connected-simplicity}).

Given a representation $\rho\colon\pi_{1}(X) \,\longrightarrow\,
G$, there is an associated flat principal $G$-bundle on
$X$, defined as
$$
  E_{\rho} \,=\, \widetilde{X}\times_{\rho}G\, ,
$$
where $\widetilde{X} \,\longrightarrow\, X$ is the universal cover
associated to $X$ and $\pi_{1}(X)$ acts
on $G$ via $\rho$.
This gives in fact an identification between the set of equivalence classes
of representations $\Hom(\pi_1(X),G) / G$ and the set of equivalence classes
of flat principal $G$-bundles, which in turn is parametrized by
the (nonabelian) cohomology set $H^1(X,\, G)$. We have the following.

An important result is the following  theorem of Corlette \cite{corlette},
also proved by Donaldson \cite{donaldson} when $G=\SL(2,\C)$ (see also 
\cite{labourie}).

\begin{theorem}\label{corlette}
Let $G$ be a reductive Lie group. Let $\rho$ be a representation of 
$\pi_1(X)$ in $G$ with corresponding
flat $G$-bundle $E_\rho$. Let $H\subset G$ be a maximal compact subgroup,
and let $E_\rho(G/H)$ be the associated $G/H$-bundle.
Then the existence of a harmonic section of $E_\rho(G/H)$ is equivalent to
the reductivity of $\rho$. 
\end{theorem}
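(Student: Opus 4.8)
The plan is to restate the theorem as an existence criterion for equivariant harmonic maps into a nonpositively curved symmetric space, and then to prove the two implications by genuinely different methods: a maximum-principle argument for ``harmonic $\Rightarrow$ reductive'' and the harmonic map heat flow (or the direct method of the calculus of variations) for ``reductive $\Rightarrow$ harmonic''. First I would reformulate the data. A section of $E_\rho(G/H)$ is the same thing as a $\rho$-equivariant smooth map $s\colon\widetilde{X}\to G/H$, and such a section is harmonic precisely when $s$ is a harmonic map for the $G$-invariant metric on $G/H$. Since $G$ is reductive and $H$ is maximal compact, the quotient $G/H$ is a complete simply connected manifold of nonpositive curvature (a Euclidean factor times a symmetric space of noncompact type), hence a Hadamard manifold; its squared distance function and the Busemann functions of its points at infinity are convex along geodesics. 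Because the energy density of $s$ is $\pi_1(X)$-invariant and $X$ is compact, the energy $E(s)=\tfrac12\int_X|ds|^2$ is a well-defined functional on the space of equivariant maps, whose critical points are exactly the equivariant harmonic maps.

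For the implication harmonic $\Rightarrow$ reductive I would argue by contraposition using convexity. Suppose $\rho$ is not reductive; then the Zariski closure of $\rho(\pi_1(X))$ is contained in a proper parabolic subgroup $P\subset G$, and $\rho$ fixes a corresponding point $\xi$ in the boundary at infinity $\partial_\infty(G/H)$. The Busemann function $b_\xi$ is convex on $G/H$ and, being invariant under the stabiliser of $\xi$, the composite $b_\xi\circ s$ descends to a function on the compact base $X$. The composition of a convex function with a harmonic map into a nonpositively curved target is subharmonic, so $b_\xi\circ s$ is a subharmonic function on the closed manifold $X$ and hence constant. This forces $ds$ to be everywhere tangent to the horospheres centred at $\xi$, so the image of $s$ never moves in the direction of the unipotent radical of $P$; consequently $\rho$ factors through the Levi subgroup $L$ of $P$. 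Since $L$ is reductive, $\rho$ is reductive, contradicting our assumption and completing this direction.

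For the converse, reductive $\Rightarrow$ harmonic, I would run the equivariant harmonic map heat flow $\partial_t s=\tau(s)$ starting from an arbitrary equivariant $s_0$, where $\tau$ is the tension field. Nonpositive curvature of $G/H$ gives long-time existence and monotone decay of the energy, as in Eells--Sampson theory transplanted to the equivariant setting over the compact quotient $X$. The whole difficulty is convergence as $t\to\infty$: the only way the flow can fail to subconverge is for the image to run off to infinity in $G/H$. Corlette's key observation is that such escape forces $\rho$ to fix a point of $\partial_\infty(G/H)$, hence to preserve a proper parabolic, hence to be non-reductive. Thus reductivity supplies exactly the a priori bound that confines the flow to a compact part of the space of equivariant maps, and one extracts a harmonic limit. (Equivalently, one may minimise $E$ directly and deduce the required properness of $E$ from reductivity.)

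The main obstacle is precisely this last a priori estimate in the existence direction: proving that reductivity of $\rho$ prevents minimising sequences, or the heat flow, from escaping to infinity. This is the analytic heart of the theorem and rests on a careful analysis of the behaviour of equivariant maps near the boundary at infinity of the nonpositively curved space $G/H$, relating escape of energy to the existence of an invariant parabolic. For this input I would cite Corlette \cite{corlette}, together with Donaldson \cite{donaldson} for $G=\SL(2,\C)$ and Labourie \cite{labourie}, rather than reprove it.
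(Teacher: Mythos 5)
The paper offers no proof of this statement: Theorem \ref{corlette} is imported directly from Corlette \cite{corlette} (together with Donaldson \cite{donaldson} for $G=\SL(2,\C)$ and Labourie \cite{labourie}), which are precisely the sources you defer to for the analytic heart, so your proposal is consistent with the paper's treatment. Your outline is indeed the standard argument from those references --- equivariant harmonic maps into the Hadamard manifold $G/H$, convexity for the implication harmonic $\Rightarrow$ reductive, heat flow or direct minimization with reductivity supplying the a priori compactness for the converse --- with one caveat worth noting: the full stabiliser of $\xi$ preserves $b_\xi$ only up to an additive constant (the Busemann character), so the claim that $b_\xi\circ s$ descends to $X$ requires the more careful choice of $\xi$ (or the displacement-function variant) carried out in \cite{corlette}.
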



\subsection{$G$-Higgs bundles and representations}\label{higgs-and-reps}

In this section  $G$ will be a complex  semisimple  Lie group.
We explain now the important relation between  $G$-Higgs bundles 
and representations of the 
fundamental group in $G$.  Let $(E,\varphi)$ be a 
$G$-Higgs bundle, and  let $U\subset G$ be a maximal compact subgroup of $G$.
Let $h$ be a reduction of structure group of $E$ from
$G$ to $U$. Let $d_h$ be the Chern connection --- the unique connection on $E$
compatible with $h$ and the holomorphic structure of $E$ ---
and let $F_h$ be its curvature. 
Theorem \ref{higgs-hk} states that the polystability of $(E,\varphi)$ is
equivalent to the existence of a reduction   $h$ satisfying 
the Hitchin equation. A computation shows that if that is the case
$$
D=d_h + \varphi - \tau_h(\varphi)
$$
defines a flat connection on the principal $G$-bundle $E$,
and the reduction $h$ is harmonic. Here $\tau_h$ is as defined in Section
\ref{section-hitchin-equations}. Hence 
the holonomy of this connection defines a representation of 
$\pi_1(X)$ in $G$, which, by Theorem \ref{corlette}, is reductive. In fact 
all  reductive representations  of $\pi_1(X)$ in $G$ arise in this way. 
More precisely, we have the following.

\begin{theorem}\label{naht}
Let $G$ be complex semisimple Lie group. The moduli space $\cM(G)$ of 
polystable  
$G$-Higgs bundles and the moduli space $\calR(G)$  reductive representations 
of $\pi_1(X)$ in $G$ are homeomorphic. Under this homeomorphism the 
irreducible representations of
$\pi_1(X)$ in $G$  are in correspondence with the stable and simple 
$G$-Higgs bundles.  
\end{theorem}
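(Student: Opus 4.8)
The plan is to construct mutually inverse continuous maps between $\cM(G)$ and $\calR(G)$ by assembling Theorems \ref{higgs-hk} and \ref{corlette}, the bijection being essentially formal once these are in hand and the genuine content lying in continuity. First I would pass from Higgs bundles to representations. Given a polystable $G$-Higgs bundle $(E,\varphi)$, Theorem \ref{higgs-hk} provides a reduction $h$ of structure group to $U$ solving the Hitchin equation $F_h-[\varphi,\tau_h(\varphi)]=0$. As indicated in the discussion preceding the statement, a direct computation — using that $h$ solves the equation and that $\dbar_E\varphi=0$ — shows that $D=d_h+\varphi-\tau_h(\varphi)$ is a flat connection on $E$; its holonomy defines a representation $\rho\colon\pi_1(X)\to G$, well-defined up to conjugation, hence a point of $\calR(G)$. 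Since $h$ is harmonic for $D$, Theorem \ref{corlette} guarantees that $\rho$ is reductive. Because the solution $h$ is unique up to $U$-gauge transformations (part of Theorems \ref{higgs-hk} and \ref{hk}), the assignment $(E,\varphi)\mapsto[\rho]$ descends to a well-defined map $\cM(G)\to\calR(G)$.

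Conversely, given a reductive representation $\rho$, form the flat bundle $E_\rho=\widetilde{X}\times_\rho G$ with its flat connection $D$. By Theorem \ref{corlette}, reductivity yields a harmonic reduction $h$ of $E_\rho$ to $U$, equivalently a harmonic section of $E_\rho(G/H)$. Decomposing $D=d_h+\Psi$, where $d_h$ is the $h$-unitary part and $\Psi$ is the fibrewise $h$-self-adjoint part valued in $E_\rho(\lieg)$, I would set $\varphi:=\Psi^{1,0}$ and equip $E$ with the holomorphic structure $\dbar_E$ given by the $(0,1)$ part of $d_h$. Harmonicity of $h$ is equivalent to $\dbar_E\varphi=0$, so that $(E,\varphi)$ is a $G$-Higgs bundle, and flatness of $D$ together with harmonicity recovers the Hitchin equation $F_h-[\varphi,\tau_h(\varphi)]=0$; hence by Theorem \ref{higgs-hk} the pair is polystable. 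This defines the inverse map $\calR(G)\to\cM(G)$. The two constructions are mutually inverse at the level of moduli, since on each side the relevant harmonic object ($h$ solving Hitchin's equation, resp. the harmonic metric on $E_\rho$) is unique up to gauge, so the decomposition $D=d_h+\varphi-\tau_h(\varphi)$ and its reverse undo one another.

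For the refined correspondence I would match symmetry groups: under the construction an automorphism of $(E,\varphi)$ — a holomorphic bundle automorphism preserving $\varphi$ — is precisely an automorphism commuting with the flat connection $D$, which after fixing a base point is exactly an element of the centralizer $Z_G(\rho)$, and conversely. Thus $\Aut(E,\varphi)\cong Z_G(\rho)$. By definition $\rho$ is irreducible when $Z_G(\rho)=Z(G)$, which therefore holds if and only if $\Aut(E,\varphi)=Z(G)$, i.e.\ $(E,\varphi)$ is simple in the sense of Definition \ref{simple-higgs}; in this irreducible locus stability is automatic, so irreducible representations correspond exactly to stable and simple $G$-Higgs bundles, as asserted.

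The main obstacle is the homeomorphism claim rather than the bijection. Establishing that the maps above are continuous in both directions requires comparing the analytic topology of the Betti space $\calR(G)$ with that of the Dolbeault space $\cM(G)$, controlling degenerations of harmonic metrics and flat connections simultaneously; this is the substantive analytic input carried out by Simpson \cite{simpson:1992,simpson:1994,simpson:1995} (building on \cite{hitchin1987,donaldson,corlette} for the rank-one case), and I would invoke it rather than reprove it. The remaining verifications — that the forward and backward constructions respect the equivalence relations defining the moduli spaces, and that $\Aut(E,\varphi)\cong Z_G(\rho)$ — are straightforward once this continuity is granted.
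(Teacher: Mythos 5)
Your proposal is correct and follows essentially the same route as the paper: the paper also obtains the homeomorphism by assembling Theorem \ref{higgs-hk} with Theorem \ref{corlette}, passing from a polystable Higgs bundle to the flat connection $D=d_h+\varphi-\tau_h(\varphi)$ and back via the harmonic metric decomposition, and attributing the analytic content (including continuity of the correspondence) to Hitchin, Donaldson, Simpson and Corlette rather than reproving it. The only minor difference is that the paper derives the stable-and-simple versus irreducible refinement directly from the second statement of Theorem \ref{hk} (irreducible solutions of the Hitchin equations correspond to stable and simple Higgs bundles), whereas you argue through the identification $\Aut(E,\varphi)\cong Z_G(\rho)$, which is the standard full-faithfulness fact but deserves a word of justification since a holomorphic automorphism preserving $\varphi$ is not a priori one commuting with $D$.
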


\begin{remark}\label{naht-reductive}
If  $G$ is reductive (but not semisimple) the same result is true if 
we require that the topological class of the $G$ bundle $E$  
(given by an element in $\pi_1(G)$ if $G$ is connected)  be trivial.
If we do not make any restriction on the topological class of $E$ 
there is a similar correspondence involving representations of the 
universal central extension of the fundamental group.
\end{remark}

\subsection{$(G^\theta,\zeta_k)$- and $(G_\theta,\zeta_k)$-Higgs bundles and 
representations}

Theorem \ref{naht} and Remark \ref{naht-reductive} imply the following.

\begin{theorem}
Let $G$ be a complex semisimple Lie group and let $\theta\in \Aut_n(G)$.
Then we have the following:

(1) $\cM(G^\theta,\zeta_0)$ is homeomorphic to $\calR(G^\theta)$,

(2) $\cM(G_\theta,\zeta_0)$ is homeomorphic to $\calR(G_\theta)$.

Under these homeomorphisms the irreducible representations  are in
correspondence with the stable and simple objects.
\end{theorem}

More can be said if $\theta\in \Aut_2(G)$. Recall 
from Section \ref{realforms-group} that in this case we can choose a compact 
conjugation   $\tau$  of $G$,  defining a compact real form $U$  of $G$  
such that $\tau\theta=\theta\tau$. We then 
consider the complex conjugation $\sigma$  of $G$ defined by 
$\sigma=\theta\tau$. This defines a real form $G^\sigma$ of $G$. 
Consider the notations used in Sections \ref{normalizers} and 
\ref{g-theta-higgs}.
Applying similar arguments to the ones used to prove 
Theorem \ref{naht}, we can  combine Theorem \ref{corlette} with 
Theorems \ref{higgs-g+-hk-connected}  and \ref{higgs-g+-hk} 
to prove the following (see \cite{garcia-prada-gothen-mundet} for details).

\begin{theorem}\label{naht-real}
Let $G$ be a complex semisimple Lie group and let $\theta\in \Aut_2(G)$. Let
$\tau$ be a compact conjugation of $G$ commuting with $\theta$ and 
$\sigma$ be the conjugation of $G$ defined by $\sigma:=\theta\tau$. We have the following:

(1) $\cM(G^\theta,-)$ is homeomorphic to $\calR(G^\sigma)$,

(2)  $\cM(G_\theta,-)$ is homeomorphic to $\calR(G_\sigma)$.

Under these homeomorphisms the irreducible representations  are in
correspondence with the stable and simple objects.
\end{theorem}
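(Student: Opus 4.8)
The plan is to run, for each of the four cases, the same two-step argument that proves the ordinary non-abelian Hodge correspondence of Theorem \ref{naht}, now carried out for the groups $G^\theta$, $G^\sigma$, $G_\theta$ and $G_\sigma$ in place of $G$: one direction solves the relevant Hitchin equation and takes holonomy, the other applies Corlette's theorem. Concretely, I would combine Theorems \ref{higgs-g+-hk-connected} and \ref{higgs-g+-hk} (with stability parameter $\alpha=0$) with Theorem \ref{corlette}. I will describe (2) in detail, since (1) is essentially immediate and (3)--(4) need only a substitution of groups. Recall that $\theta$ gives $\lieg=\lieg^+\oplus\lieg^-$ with $\lieg^+=\lieg^\theta=(\lieu^\sigma)^\C$, and that, writing the Cartan decomposition of the real form attached to $\sigma=\theta\tau$ as $\lieg^\sigma=\lieh\oplus\liem$, one has $\lieh=\lieu^\sigma$ and $\liem^\C=\lieg^-$. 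Hence a $(G^\theta,-)$-Higgs bundle is precisely a $G^\sigma$-Higgs bundle in the usual sense: a holomorphic $G^\theta=(U^\sigma)^\C$-bundle $E$ together with $\varphi\in H^0(X,E(\lieg^-)\otimes K)$.

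For the Higgs-to-representation direction I would start from a polystable $(G^\theta,-)$-Higgs bundle $(E,\varphi)$ and apply Theorem \ref{higgs-g+-hk-connected} to obtain a reduction $h$ of the structure group of $E$ to $U^\sigma$ solving $F_h-[\varphi,\tau_h(\varphi)]=0$. As in Theorem \ref{naht}, one then forms $D=d_h+\varphi-\tau_h(\varphi)$ and checks, using the Hitchin equation together with $\dbar_E\varphi=0$, that $D$ is flat. The point special to the real case is that $D$ is valued in $\lieg^\sigma$: the connection $d_h$ is a $\lieu^\sigma=\lieh$-connection, while, since $\tau$ commutes with $\theta$ it preserves $\lieg^-$ and there $\sigma=\theta\tau=-\tau$, so a direct computation using $\tau^2=\Id$ shows that $\varphi-\tau_h(\varphi)$ is fixed by the conjugation induced by $\sigma$ and hence takes values in $\liem$. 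Therefore $D$ is a flat $G^\sigma$-connection on the $G^\sigma$-bundle extending the $U^\sigma$-reduction, and its holonomy defines a representation $\rho\colon\pi_1(X)\to G^\sigma$, reductive by Theorem \ref{corlette}.

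For the converse I would begin with a reductive $\rho\colon\pi_1(X)\to G^\sigma$ and its flat $G^\sigma$-bundle $E_\rho$. Corlette's Theorem \ref{corlette} furnishes a harmonic reduction of $E_\rho$ to $U^\sigma$; decomposing the flat connection according to $\lieg^\sigma=\lieh\oplus\liem$ produces the Chern connection $d_h$ of a holomorphic $G^\theta$-bundle $E$ and a $(1,0)$-form $\varphi$ valued in $E(\lieg^-)$, with harmonicity equivalent to $\dbar_E\varphi=0$ and to the Hitchin equation, so that $(E,\varphi)$ is a polystable $(G^\theta,-)$-Higgs bundle. These constructions are mutually inverse and continuous, giving the homeomorphism (2); an irreducible $\rho$ is one with $Z_{G^\sigma}(\rho)=Z(G^\sigma)$, which matches the stable and simple objects exactly as in Theorem \ref{naht}.

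Statement (1) is the non-abelian Hodge correspondence applied to the complex reductive group $G^\theta$, since a $(G^\theta,+)$-Higgs bundle is a $G^\theta$-Higgs bundle. For (3) and (4) I would repeat the above verbatim, using Theorem \ref{higgs-g+-hk} in place of Theorem \ref{higgs-g+-hk-connected} and replacing $G^\theta$, $U^\sigma$ by $G_\theta$, $U_\sigma$; here $\lieg^-$ is unchanged and the only new feature is that $G_\theta$ and $G_\sigma$ are disconnected, with the same component group $\Gamma_\theta=\Gamma_\sigma$ by Proposition \ref{gamma=gamma}. I expect the genuinely delicate point to be the verification that the flat connection $D$ is valued in the real form $G^\sigma$ (resp.\ $G_\sigma$) rather than merely in $G$, i.e.\ the interplay between $\theta$, $\tau$, $\sigma$ and $\tau_h$ sketched above, together with the bookkeeping of the finite group $\Gamma_\sigma$ in the normalizer cases; once these are settled, the correspondences and the identification of irreducible representations with stable and simple objects follow formally as in Theorem \ref{naht} (see \cite{garcia-prada-gothen-mundet} for the full details).
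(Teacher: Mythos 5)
Your proposal is correct and follows essentially the same route as the paper: the paper proves Theorem \ref{naht-real} precisely by combining Corlette's Theorem \ref{corlette} with the Hitchin--Kobayashi correspondences of Theorems \ref{higgs-g+-hk-connected} and \ref{higgs-g+-hk} (at $\alpha=0$), repeating the argument of Theorem \ref{naht} and deferring details to \cite{garcia-prada-gothen-mundet}. Your additional verification that $D=d_h+\varphi-\tau_h(\varphi)$ is $\lieg^\sigma$-valued (via $\sigma=-\tau$ on $\lieg^-$) and the bookkeeping of $\Gamma_\theta=\Gamma_\sigma$ are exactly the points the paper leaves implicit.
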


\begin{remark}
In contrast with  the order 2 case, when $\theta\in \Aut_n(G)$, the
$(G^\theta,\zeta_k)$- and  $(G_\theta,\zeta_k)$-Higgs
bundles for $k>0$ do not have in general  an interpretation in terms of 
representations of the fundamental group of $X$, unless $n$ is even and $k=n/2$.
They are related,  however, to the so-called Hodge
bundles and variations of Hodge structure.
\end{remark}

\section{Involutions of $\cM(G)$ and $\calR(G)$}
\label{section-involutions-higgs-reps}
As in Section \ref{higher-order-auto}, in this section $G$ is a connected complex semisimple 
Lie group,  $X$ is a compact Riemann surface, and $\cM(G)$ is the moduli space
of $G$-Higgs bundles over $X$. 
We specialise the results of Section \ref{higher-order-auto} to the case
of involutions of $\cM(G)$  and study the  resulting involutions
on $\calR(G)$, the moduli space of representations of $\pi_1(X)$ in $G$,
under the non-abelian Hodge theory correspondence (Theorem \ref{naht}). 

\subsection{Involutions of $\cM(G)$}
\label{section-involutions}
Our description involves now the  moduli spaces 
$\cM(G^\theta,\pm)$ and  $\cM(G_\theta,\pm)$ defined in Section 
\ref{invo-higgs-bundles}, 
where $\theta\in \Aut_2(G)$, and $G^\theta$ and $G_\theta$ are the
subgroups of $G$ defined in Section \ref{normalizers}.
As in Section \ref{higher-order-auto}, we will denote by
$\cM(G)_{ss}$ the subvariety of stable and simple points of $\cM(G)$
and by $\widetilde{\cM}(G^\theta,\pm)$ and  $\widetilde{\cM}(G_\theta,\pm)$ 
the images  of $\cM(G^\theta,\pm)$ and  $\cM(G_\theta,\pm)$, respectively  
in $\cM(G)$ under the maps defined  in Propositions 
\ref{extension-reduction-2} and \ref{extension-reduction} respectively.

Theorem \ref{moduli-n-automorphism} specialises in the case of involutions 
to the following.

\begin{theorem}\label{fixed-connected-a}
Let $a\in \Out_2(G)$. Consider the  involutions 

$$
   \begin{aligned}
\iota(a,\pm): \cM(G) & \to \cM(G) \\
(E,\varphi) & \mapsto (a(E),\pm a(\varphi)).
  \end{aligned}
$$
Then
\begin{enumerate}

\item 
$$
\bigcup_{[\theta]\in \cl^{-1}(a)}
\widetilde{\cM}(G^\theta,\pm)\subset \cM(G)^{\iota(a,\pm)},
$$  

\item
$$
\cM(G)_{ss}^{\iota(a,\pm)}
\subset 
\bigcup_{[\theta]\in \cl^{-1}(a)}
\widetilde{\cM}(G^\theta,\pm)
$$
(except for $\iota(1,+)$),
\end{enumerate}
where $\cl:\Aut_2(G)/{\sim} \to \Out_2(G)$ is defined  in 
Proposition \ref{cartan-versus-inner-groups},
and $\cl^{-1}(a)$ is, by Proposition \ref{clique-cohomology},
in bijection  with $H^1_a(\Z/2,\Ad(G))$.
\end{theorem}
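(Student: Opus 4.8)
The plan is to adapt the proof of Theorem \ref{fixed-connected}, replacing the inner involution used there by an arbitrary lift of $a$ and feeding in the twisted machinery of Section \ref{twisted-automorphisms}. The structural fact underpinning everything is that, by the discussion in Section \ref{higgs-automorphisms}, inner automorphisms act trivially on $\cM(G)$, so the action of $a\in\Out_2(G)$ on $\cM(G)$ coincides with that of any lift $\theta\in\Aut_2(G)$; concretely, the class of $(\theta(F),\pm\theta(\psi))$ represents $\iota(a,\pm)$ applied to the class of $(F,\psi)$, independently of which lift $\theta$ is chosen. Since the extension (\ref{outer-extension-group}) splits, a lift $\theta$ of order $2$ exists, as noted in the proof of Proposition \ref{cartan-versus-inner-groups}.

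For the inclusion (1) I would fix a class $[\theta]\in\cl^{-1}(a)$, so that $\pi(\theta)=a$, and take $(E,\varphi)\in\cM(G^\theta,\pm)$ with extension $(E_G,\varphi_G)$, which is polystable by Proposition \ref{extension-reduction-2}(1). Proposition \ref{ext-iso} gives a canonical isomorphism $(E_G,\varphi_G)\cong(\theta(E_G),\pm\theta(\varphi_G))$, and by the remark above the right-hand side represents $\iota(a,\pm)(E_G,\varphi_G)$; hence $(E_G,\varphi_G)\in\cM(G)^{\iota(a,\pm)}$. To see that the union over $[\theta]\in\cl^{-1}(a)$ is well defined as a subvariety of $\cM(G)$, I would invoke Proposition \ref{extension-reduction-2}(3): if $\theta\sim\theta'$ then $\cM(G^\theta,\pm)$ and $\cM(G^{\theta'},\pm)$ have the same image $\widetilde{\cM}$ in $\cM(G)$, so the assignment $[\theta]\mapsto\widetilde{\cM}(G^\theta,\pm)$ factors through $\Aut_2(G)/\sim$.

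For the reverse inclusion (2) I would start with a stable and simple fixed point $(E,\varphi)\in\cM(G)_{ss}^{\iota(a,\pm)}$. Choosing a lift $\theta\in\Aut_2(G)$ of $a$, the fixed-point condition reads $(E,\varphi)\cong(\theta(E),\pm\theta(\varphi))$. Proposition \ref{fixed-iota-theta} then yields $s\in S_\theta$, unique up to the $Z\times G$-action of Proposition \ref{cliques}, such that, setting $\theta':=\Int(s)\theta$, the bundle reduces to a $(G^{\theta'},\pm)$-Higgs bundle; this reduction is polystable by Proposition \ref{extension-reduction-2}(2), so $(E,\varphi)\in\widetilde{\cM}(G^{\theta'},\pm)$. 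Since $s\in S_\theta$ forces $\theta'\in\Aut_2(G)$ and $\pi(\theta')=\pi(\Int(s)\theta)=\pi(\theta)=a$, we get $[\theta']\in\cl^{-1}(a)$, placing $(E,\varphi)$ in the required union; here Proposition \ref{cliques}(3) identifies the possible classes $[\theta']$ precisely with $\cl^{-1}(a)$, in bijection with $H^1_a(\Z/2,\Ad(G))$ by Proposition \ref{clique-cohomology}. The case $\iota(1,+)$ is excluded because there $a=1$ and the $+$ sign make $\iota$ the identity, whose fixed locus is all of $\cM(G)$, which is not captured by reductions to proper subgroups $G^\theta$.

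The genuinely delicate step is the passage, in (2), from an abstract fixed point to a geometric reduction of structure group: one must know that a twisted automorphism $A$ realizing $(E,\varphi)\cong(\theta(E),\pm\theta(\varphi))$ satisfies $A^2\in Z(G)$ (using simplicity, since $A^2$ is an honest automorphism of $(E,\varphi)$) and that the associated element of $S_\theta$ pins down the correct form $G^{\theta'}$. This is exactly the content of Propositions \ref{twisted-orbit} and \ref{fixed-iota-theta}, so the remaining work is bookkeeping: verifying that the sign convention in Proposition \ref{ext-iso} matches the chosen $\pm 1$-eigenspace $\lieg^\pm$, and that the clique index $\pi(\theta')=a$ is preserved throughout.
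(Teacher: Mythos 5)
Your proposal is correct and follows essentially the same route as the paper: reduce the outer action of $a$ to that of an arbitrary lift $\theta\in\Aut_2(G)$ (using that inner automorphisms act trivially on $\cM(G)$), prove (1) via Proposition \ref{extension-reduction-2}(1), Proposition \ref{ext-iso} and the well-definedness statement \ref{extension-reduction-2}(3), and prove (2) via Proposition \ref{fixed-iota-theta} together with Propositions \ref{extension-reduction-2}(2),(3) and \ref{cliques}(3). The paper's proof is exactly this chain of citations, including the same treatment of the excluded case $\iota(1,+)$.
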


An important particular case of Theorem \ref{fixed-connected-a} is the 
case when $a$ is the trivial element.

\begin{theorem}\label{fixed-connected}
Consider the involution
$$
   \begin{aligned}
\iota: \cM(G) & \to \cM(G) \\
(E,\varphi) & \mapsto (E,- \varphi).
  \end{aligned}
$$

Then
 
\begin{enumerate}

\item 
$$
\bigcup_{[\theta] \in \Int_2(G)/\sim}
\widetilde{\cM}(G^\theta,-) \subset \cM(G)^\iota,
$$  

\item
$$
\cM(G)_{ss}^\iota
\subset 
\bigcup_{[\theta] \in \Int_2(G)/{\sim}}\widetilde{\cM}(G^\theta,-).
$$
\end{enumerate}

\end{theorem}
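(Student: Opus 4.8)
The plan is to prove the two inclusions separately, with essentially all of the content of (2) already packaged in Proposition \ref{fixed-iota}.

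For inclusion (1) I would start from a polystable $(G^\theta,-)$-Higgs bundle $(E,\varphi)$, where $\theta=\Int(s)\in\Int_2(G)$ with $s^2\in Z$, and exhibit an explicit isomorphism between its extension $(E_G,\varphi_G)$ and $(E_G,-\varphi_G)$. The key observation is that $s$ lies in $G^\theta=Z_G(s)$ and is central there, so right multiplication $A\colon\xi\mapsto\xi s$ is a $G^\theta$-equivariant automorphism of the $G^\theta$-bundle $E$, and hence of $E_G$. The induced automorphism of the adjoint bundle $E_G(\lieg)$ is given fibrewise by $\Ad(s)=\theta$, which acts as $-1$ on $\lieg^-$; since $\varphi$ takes values in $E(\lieg^-)\otimes K$ by definition of a $(G^\theta,-)$-Higgs bundle, this gives $\Ad(A)(\varphi_G)=-\varphi_G$. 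Thus $A$ realises an isomorphism $(E_G,\varphi_G)\to(E_G,-\varphi_G)$, so $(E_G,\varphi_G)\in\cM(G)^\iota$. Ranging over all classes $[\theta]\in\Int_2(G)/\sim$ and using that $\widetilde{\cM}(G^\theta,-)$ depends only on $[\theta]$ (Proposition \ref{extension-reduction-2}(3)) yields the first inclusion.

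For inclusion (2) I would take $(E,\varphi)\in\cM(G)_{ss}^\iota$, so $(E,\varphi)$ is stable and simple and $(E,\varphi)\cong(E,-\varphi)$. Proposition \ref{fixed-iota} then applies directly: it produces $s\in G$ with $s^2\in Z$ such that, setting $\theta=\Int(s)$, the structure group of $E$ reduces to $G^\theta=Z_G(s)$ and $\varphi$ takes values in $E(\lieg^-)\otimes K$; that is, $(E,\varphi)$ is the extension of a $(G^\theta,-)$-Higgs bundle $(E_{G^\theta},\varphi^-)$. Since $(E,\varphi)$ is stable, hence polystable, Proposition \ref{extension-reduction-2}(2) guarantees that the reduction $(E_{G^\theta},\varphi^-)$ is itself polystable, so it represents a point of $\cM(G^\theta,-)$ whose image is $(E,\varphi)$; therefore $(E,\varphi)\in\widetilde{\cM}(G^\theta,-)$. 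Finally $\theta$ is inner and $\theta^2=\Int(s^2)=\Id$, so $\theta\in\Int_2(G)$ (of order exactly $2$ whenever $\varphi\neq0$), and the uniqueness of $s$ up to inner conjugation and multiplication by $Z$ from Proposition \ref{fixed-iota}(1) shows that the class $[\theta]\in\Int_2(G)/\sim$ is well defined, placing $(E,\varphi)$ in the asserted union.

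The genuinely substantive step is Proposition \ref{fixed-iota}, which is where simplicity is used to promote the isomorphism $(E,\varphi)\cong(E,-\varphi)$ to an automorphism $A$ of $E$ with $A^2\in Z$, and where Proposition \ref{orbit} converts this finite-order automorphism into the reduction of structure group to $Z_G(s)=G^\theta$; granting that, the remainder is bookkeeping. The two points I would be most careful about are the following. First, the degenerate case $\varphi=0$, where Proposition \ref{fixed-iota} forces $s\in Z$ and hence $\theta=\Id\notin\Int_2(G)$: here $\lieg^-=0$ and the fixed point is merely a stable $G$-bundle, lying outside the union indexed by genuine involutions, so this locus must be excluded or recorded separately. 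Second, the two halves of Proposition \ref{extension-reduction-2} play genuinely distinct roles and both are needed: part (1) to land the extensions inside $\cM(G)$ in inclusion (1), and part (2) to descend polystability from $(E,\varphi)$ to the reduction $(E_{G^\theta},\varphi^-)$ in inclusion (2).
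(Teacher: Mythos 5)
Your proof is correct and follows essentially the same route as the paper: part (1) rests on the extension $(E,\varphi)\mapsto(E_G,\varphi_G)$ and the isomorphism induced by the inner involution $\theta=\Int(s)$ (the paper phrases this via the canonical isomorphism $E_G\cong\theta(E_G)$ and the triviality of inner automorphisms on $\cM(G)$, while you realise it explicitly as right multiplication by $s$, which is the same mechanism), and part (2) is exactly the paper's combination of Proposition \ref{fixed-iota} with parts (2),(3) of Proposition \ref{extension-reduction-2} and part (4) of Proposition \ref{cliques}. Your caveat about the $\varphi=0$ locus is resolved by the paper's convention that $\Id\in\Int_2(G)$ (see Proposition \ref{cliques}(4), where the paper writes $\theta=\Id\in\Aut_2(G)$), so the class $[\Id]$, for which $\lieg^-=0$ and $(G^{\Id},-)$-Higgs bundles are just $G$-bundles with vanishing Higgs field, is already part of the union and the zero-Higgs-field fixed points need not be excluded.
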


Theorem \ref{moduli-n-automorphism-alpha} in the case of order 2 gives the 
following.

\begin{theorem}\label{fixed-connected-a-alpha}
Let $a\in \Out_2(G)$ and $\alpha\in H^1(X,Z)$ such that
$a(\alpha)=\alpha^{-1}$. Consider the 
involutions
$$
   \begin{aligned}
\iota(a,\alpha,\pm): \cM(G) & \to \cM(G) \\
(E,\varphi) & \mapsto (a(E)\otimes\alpha,\pm a(\varphi)).
  \end{aligned}
$$
Then
\begin{enumerate}

\item 

$$
\bigcup_{[\theta]\in \cl^{-1}(a), \; c_\theta(\gamma)=\alpha}
\widetilde{\cM_{\gamma}}(G_\theta,\pm) \subset \cM(G)^{\iota(a,\alpha,\pm)}.
$$  

\item
$$
\cM(G)_{ss}^{\iota(a,\alpha,\pm)}
\subset 
\bigcup_{[\theta]\in \cl^{-1}(a), \; c_{\theta}(\gamma)=\alpha}
\widetilde{\cM_\gamma}(G_\theta,\pm)
$$
(except for $\iota(1,1,+)$),
\end{enumerate}
where $\cl:\Aut_2(G)/{\sim} \to \Out_2(G)$ is defined  in 
Proposition \ref{cartan-versus-inner-groups},
and $\cl^{-1}(a)$ is, by Proposition \ref{clique-cohomology},
in bijection with $H^1_a(\Z/2,\Ad(G))$.
\end{theorem}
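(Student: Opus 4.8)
The plan is to follow the proof of Theorem \ref{fixed-connected-a} verbatim, substituting Proposition \ref{ext-twisted-iso} for Proposition \ref{ext-iso}, Proposition \ref{fixed-iota-theta-alpha} for Proposition \ref{fixed-iota-theta}, and Proposition \ref{extension-reduction} for Proposition \ref{extension-reduction-2}; the only genuinely new ingredient is the bookkeeping of the invariant $\gamma$ and the constraint $c_\theta(\gamma)=\alpha$. Throughout I would fix a lift $\theta\in\Aut_2(G)$ of $a$, so that $\pi(\theta)=a$. Since the action of $\theta$ on $Z$, and hence on $H^1(X,Z)$, factors through $a=\pi(\theta)$, the hypothesis $a(\alpha)=\alpha^{-1}$ gives $\theta(\alpha)=\alpha^{-1}$, which is precisely the condition required to invoke Propositions \ref{ext-twisted-iso} and \ref{fixed-iota-theta-alpha}.

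For the inclusion (1), I would fix a class $[\theta]\in\cl^{-1}(a)$ and an invariant $\gamma\in H^1(X,\Gamma_\theta)$ with $c_\theta(\gamma)=\alpha$, and take $(E,\varphi)\in\cM_\gamma(G_\theta,\pm)$. By (1) of Proposition \ref{extension-reduction} the extension $(E_G,\varphi_G)$ is polystable and defines a point of $\cM(G)$, and by (2) of Proposition \ref{ext-twisted-iso} it is isomorphic to $(\theta(E_G)\otimes\alpha,\pm\theta(\varphi_G))$. As $\pi(\theta)=a$ and $\Int(G)$ acts trivially on $\cM(G)$, this last pair equals $\iota(a,\alpha,\pm)(E_G,\varphi_G)$, so $(E_G,\varphi_G)\in\cM(G)^{\iota(a,\alpha,\pm)}$. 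That the image $\widetilde{\cM_\gamma}(G_\theta,\pm)$ depends only on $[\theta]$ follows from (3) of Proposition \ref{extension-reduction}, so the union over $[\theta]\in\cl^{-1}(a)$ with $c_\theta(\gamma)=\alpha$ is well defined and contained in the fixed locus.

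For the inclusion (2), I would take a stable and simple $(E,\varphi)$ fixed by $\iota(a,\alpha,\pm)$. As recalled in Section \ref{automorphisms}, being fixed means $(E,\varphi)\cong(\theta(E)\otimes\alpha,\pm\theta(\varphi))$. Excluding the trivial case $\iota(1,1,+)$, Proposition \ref{fixed-iota-theta-alpha} then reduces the structure group of $E$ to $G_{\theta'}$ for some $\theta'=\Int(s)\theta$ with $s\in S_\theta$, produces a reduction to a $(G_{\theta'},\pm)$-Higgs bundle, and guarantees that its invariant $\gamma\in H^1(X,\Gamma_{\theta'})$ satisfies $c_{\theta'}(\gamma)=\alpha$. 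By (2) of Proposition \ref{extension-reduction} this reduction is polystable, hence represents a point of $\cM_\gamma(G_{\theta'},\pm)$ whose image in $\cM(G)$ is $(E,\varphi)$; and by (3) of Proposition \ref{cliques}, as $s$ runs over $S_\theta/(Z(G)\times G)$ the classes $[\theta']=[\Int(s)\theta]$ exhaust $\cl^{-1}(a)$. This places $(E,\varphi)$ in the asserted union.

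The main obstacle is not conceptual but lies in tracking the invariant $\gamma$: one must verify in both inclusions that the $H^1(X,\Gamma_{\theta'})$-class carried by the reduced $G_{\theta'}$-bundle maps, under $c_{\theta'}$, exactly to the prescribed $\alpha\in H^1(X,Z)$. This compatibility is already encoded in the extension-of-structure-group computations of Propositions \ref{ext-twisted-iso} and \ref{fixed-iota-theta-alpha} through the factorization $G_{\theta'}\to\Gamma_{\theta'}\hookrightarrow Z$, so once those are in hand the remaining argument is the same diagram chase as for Theorem \ref{fixed-connected-a}.
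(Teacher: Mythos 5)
Your proposal is correct and follows essentially the same route as the paper's own proof: both inclusions are established by the identical combination of Proposition \ref{ext-twisted-iso} and Proposition \ref{extension-reduction} for (1), and Proposition \ref{fixed-iota-theta-alpha} together with (2)--(3) of Proposition \ref{extension-reduction} and (3) of Proposition \ref{cliques} for (2), with the same reduction to a fixed lift $\theta$ of $a$ via the triviality of the $\Int(G)$-action on $\cM(G)$. The tracking of the invariant $\gamma$ through $c_{\theta'}(\gamma)=\alpha$ that you flag as the key bookkeeping point is exactly what Proposition \ref{fixed-iota-theta-alpha}(1) supplies in the paper's argument as well.
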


\subsection{Involutions of $\calR(G)$}
\label{involutions-reps}

The involutions of $\cM(G)$ studied in Section 
\ref{section-involutions}
induce naturally involutions on the moduli space of representations
$\calR(G)$, under the homeomorphism between $\cM(G)$ and $\calR(G)$
 given by Theorem \ref{naht}. We analyse now these involutions.


Let $(E,\varphi)$ be a polystable $G$-Higgs bundle and let $h$ be 
a solution to the Hitchin equation given by Theorem 
\ref{higgs-hk}. Recall from  Section \ref{higgs-and-reps} that
$$
D=d_h + \varphi - \tau_h(\varphi)
$$
defines a flat connection on the principal $G$-bundle $E$, where 
$d_h$ is the unique connection on $E$
compatible with $h$ and the holomorphic structure of $E$,
and $\tau_h$ is defined, by $h$,  a 
conjugation $\tau$  of $G$ defining a compact real
form, and the natural conjugation 
on $(1,0)$-forms on $X$ (see  (\ref{tau-h}) for the precise definition).

Let $\theta\in \Aut_2(G)$. From Proposition 
\ref{conjugations-versus-involutions}, we can choose $\theta$
in the class in $\Aut_2(G)/\sim$ such that $\theta\tau=\tau\theta$. Let
$\sigma$ be the conjugation of $G$ defined by $\sigma:=\theta\tau$,
and $\sigma_h:=\theta\tau_h$.
We have the following

\begin{proposition} 
(1) The flat $G$-connection corresponding to $(\theta(E),\theta(\varphi))$ is
given by $\theta(D)$.

(2) The flat $G$-connection corresponding to $(\theta(E),-\theta(\varphi))$ is
given by $\sigma_h(D)$.
\end{proposition}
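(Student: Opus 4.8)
The plan is to exploit the Hitchin--Kobayashi correspondence (Theorem \ref{higgs-hk}) together with the explicit expression $D = d_h + \varphi - \tau_h(\varphi)$ for the flat connection attached to a polystable Higgs bundle. The central observation is that, since $\theta$ commutes with $\tau$, it preserves the maximal compact subgroup $U = G^\tau$; hence the reduction $h$ of $E$ to $U$ transforms into a reduction $\theta(h)$ of $\theta(E)$ to $\theta(U) = U$, and this new reduction will turn out to solve the Hitchin equation for both $(\theta(E), \theta(\varphi))$ and $(\theta(E), -\theta(\varphi))$. Once this is in place, both statements reduce to substituting into the flat connection formula and rearranging.

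First I would record the compatibility $\tau_{\theta(h)}(\theta(\varphi)) = \theta(\tau_h(\varphi))$. This holds because the fibrewise compact conjugation determined by $\theta(h)$ corresponds, under the canonical identification $\theta(E)(\lieg) \cong E(\lieg)$ given by $d\theta$, to $\theta\tau\theta^{-1} = \tau$ (using $\theta\tau = \tau\theta$), while complex conjugation of forms commutes with the $\C$-linear map $d\theta$. Applying $\theta$ to the Hitchin equation $F_h - [\varphi, \tau_h(\varphi)] = 0$ and using that $d\theta$ is a Lie algebra automorphism gives $F_{\theta(h)} - [\theta(\varphi), \tau_{\theta(h)}(\theta(\varphi))] = 0$, so $\theta(h)$ solves the Hitchin equation for $(\theta(E),\theta(\varphi))$. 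By Theorem \ref{higgs-hk} the corresponding flat connection is then $d_{\theta(h)} + \theta(\varphi) - \tau_{\theta(h)}(\theta(\varphi)) = \theta(d_h) + \theta(\varphi) - \theta(\tau_h(\varphi)) = \theta(D)$, which is part (1).

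For part (2), the same reduction $\theta(h)$ solves the Hitchin equation for $(\theta(E), -\theta(\varphi))$ as well, since the two sign changes in $[-\theta(\varphi), \tau_{\theta(h)}(-\theta(\varphi))]$ cancel; hence the associated flat connection is $\theta(d_h) - \theta(\varphi) + \theta(\tau_h(\varphi))$. It then remains to recognise this as $\sigma_h(D) = \theta(\tau_h(D))$. For this I would use that $\tau_h$, viewed as the antiholomorphic involution of the space of connections built from the compact conjugation and form conjugation, fixes the unitary Chern connection $d_h$ and, because $\tau_h^2 = \Id$, negates the Higgs part: $\tau_h(\varphi - \tau_h(\varphi)) = \tau_h(\varphi) - \varphi = -(\varphi - \tau_h(\varphi))$. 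Thus $\tau_h(D) = d_h - \varphi + \tau_h(\varphi)$, and applying $\theta$ yields exactly $\theta(d_h) - \theta(\varphi) + \theta(\tau_h(\varphi))$, completing the identification.

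The main obstacle I anticipate is making precise the action of $\tau_h$ (and of $\theta$) on the full connection $D$ rather than merely on the Higgs field, in particular justifying that $\tau_h$ fixes the Chern connection $d_h$ and that the conjugation induced by the transported reduction $\theta(h)$ is again governed by $\tau$. These points amount to bookkeeping about how the reduction $h$ determines the compact conjugation fibrewise, but they must be handled carefully so that the formal manipulations with $\theta$, $\tau_h$ and $\sigma_h$ are genuine identities of connections on $\theta(E)$.
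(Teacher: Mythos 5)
Your proposal is correct and takes essentially the same route as the paper: both arguments rest on the explicit formula $D = d_h + \varphi - \tau_h(\varphi)$ (equivalently $d_h = \dbar_E + \tau_h(\dbar_E)$), the commutation $\theta\tau_h = \tau_h\theta$, and the resulting formal identification of $\theta(D)$ and $\sigma_h(D)=\theta\tau_h(D)$ with the flat connections attached to $(\theta(E),\theta(\varphi))$ and $(\theta(E),-\theta(\varphi))$. The only difference is one of completeness: you spell out that the transported reduction $\theta(h)$ solves the Hitchin equation for the transformed pairs (so that Theorem \ref{higgs-hk} legitimately produces ``the'' flat connection), a point the paper's computation leaves implicit, but this is a clarification rather than a different argument.
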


\begin{proof}
Let us represent the holomorphic structure of $E$  by a Dolbeault operator 
$\dbar_E$. We then have that $d_h=\dbar_E+\tau_h(\dbar_E)$. From this
we have
$$
\theta(D)=\theta(\dbar_E)+\theta\tau_h(\dbar_E)+\theta(\varphi)-
\theta\tau_h(\varphi).
$$
But $\theta\tau_h=\tau_h\theta$ and $\theta(\dbar_E)=\dbar_{\theta(E)}$,
proving (1).

The proof of (2) follows from the following  computation:
\begin{align*}
\sigma_h(D) & =\sigma_h(\dbar_E)+\sigma_h\tau_h(\dbar_E)+\sigma_h(\varphi)-
\sigma_h\tau_h(\varphi)\\
            & =\tau_h\theta(\dbar_E)+
\theta(\dbar_E)+\tau_h\theta(\varphi)-\theta(\varphi)\\
             & =
\dbar_{\theta(E)}+ \tau_h(\dbar_{\theta(E)}) -\theta(\varphi)- 
\tau_h(- \theta(\varphi)).
\end{align*}
\end{proof}

From this we immediately have the following.

\begin{proposition}\label{correspondence-involutions}
Let $\theta\in \Aut_2(G)$, and $\sigma=\theta\tau$.
Let $\alpha\in H^1(X,Z)$ and 
$\lambda: \pi_1(X)\to Z$ its corresponding representation such that
$\theta(\alpha)=\alpha^{-1}$, which is equivalent to 
$\theta(\lambda)=\sigma(\lambda)=\lambda^{-1}$.
Let $(E,\varphi)$ be a polystable $G$-Higgs
bundle and  $\rho$ be the 
corresponding element in $\calR(G)$. Then:

(1)  The involution of $\cM(G)$ defined by  $(E,\varphi)\mapsto 
(\theta(E)\otimes \alpha,\theta(\varphi))$
corresponds with the involution of $\calR(G)$ given by 
$\rho\mapsto \lambda\theta(\rho)$. 

(2) The involution of $\cM(G)$ defined by $(E,\varphi)\mapsto 
(\theta(E)\otimes \alpha,-\theta(\varphi))$
corresponds with the involution of $\calR(G)$ given by 
$\rho\mapsto \lambda\sigma(\rho)$.
\end{proposition}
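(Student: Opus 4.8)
The plan is to leverage the previous proposition, which already identifies the flat connections associated to $(\theta(E),\theta(\varphi))$ and $(\theta(E),-\theta(\varphi))$ as $\theta(D)$ and $\sigma_h(D)$ respectively, and then to account for the twist by $\alpha\in H^1(X,Z)$. First I would recall that under the non-abelian Hodge correspondence of Theorem \ref{naht}, the flat connection $D$ on $E$ determines a representation $\rho:\pi_1(X)\to G$ via its holonomy, well-defined up to conjugation. The holonomy of $\theta(D)$ is then $\theta\circ\rho$, since applying the automorphism $\theta$ to the connection applies $\theta$ to the parallel transport, and hence to the holonomy around each loop; this gives the representation $\theta(\rho)$.

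Next I would handle the effect of tensoring by $\alpha$. The key point is that a $Z$-bundle $\alpha\in H^1(X,Z)$ corresponds, since $Z$ is finite (as $G$ is semisimple), to a representation $\lambda:\pi_1(X)\to Z$, and the flat connection on $E\otimes\alpha$ is obtained from that on $E$ by tensoring the transition data, so that its holonomy is the pointwise product $\lambda\cdot\rho$ (using that $Z$ is central, so this product is again a homomorphism into $G$). Combining this with the computation of the previous proposition, I would conclude that $(\theta(E)\otimes\alpha,\theta(\varphi))$ has associated representation $\lambda\,\theta(\rho)$, proving (1). For (2), the previous proposition identifies the flat connection of $(\theta(E),-\theta(\varphi))$ as $\sigma_h(D)$; I would observe that $\sigma_h$ restricts on the holonomy to the conjugation $\sigma=\theta\tau$ of $G$, so the holonomy of $\sigma_h(D)$ is $\sigma\circ\rho$, and then tensoring by $\alpha$ multiplies by $\lambda$ to yield $\lambda\,\sigma(\rho)$.

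I would also verify the consistency of the compatibility hypotheses: that $\theta(\alpha)=\alpha^{-1}$ translates to $\theta(\lambda)=\lambda^{-1}$ and $\sigma(\lambda)=\lambda^{-1}$. This uses that $\theta$ and $\sigma=\theta\tau$ act on $Z$ through the same outer class $a=\pi(\theta)$ (as $\tau$, being a compact conjugation fixing the center pointwise up to the action already accounted for, and $\Int(G)$ acting trivially on $Z$), so that $\theta$ and $\sigma$ induce the same automorphism of $Z$ and hence of $H^1(X,Z)$. This confirms that the condition $a(\alpha)=\alpha^{-1}$ of the involution $\iota(a,\alpha,\pm)$ matches the condition stated, and that $\lambda\theta(\rho)$ and $\lambda\sigma(\rho)$ are indeed order-two maps on $\calR(G)$.

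The main obstacle I anticipate is making precise the claim that tensoring the flat bundle by $\alpha$ multiplies the holonomy representation by $\lambda$; this requires being careful that the flat structure on $E\otimes\alpha$ is the tensor (via the central multiplication map $m:G\times Z\to G$) of the flat structures on $E$ and on $\alpha$, and that the harmonic metric behaves compatibly so that Theorem \ref{naht} indeed sends $E\otimes\alpha$ to $\lambda\cdot\rho$. Since $Z$ is finite, $\alpha$ is automatically flat with locally constant transition functions valued in $Z$, which simplifies this considerably, but I would still need to check that the Hitchin equation solution $h$ on $E$ pulls back/extends to a solution on $E\otimes\alpha$ (which holds because $\alpha$ carries a flat unitary structure and $E(\lieg)=(E\otimes\alpha)(\lieg)$, so the equation is unchanged). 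Everything else is a direct translation through the holonomy functor, so I expect the proof to be short once this tensoring step is pinned down.
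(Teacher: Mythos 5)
Your proposal is correct and follows essentially the same route as the paper: the paper derives this proposition ``immediately'' from the preceding proposition identifying the flat connections of $(\theta(E),\theta(\varphi))$ and $(\theta(E),-\theta(\varphi))$ as $\theta(D)$ and $\sigma_h(D)$, which is exactly your starting point. The details you supply --- that the holonomy of $\theta(D)$ (resp.\ $\sigma_h(D)$) is $\theta(\rho)$ (resp.\ $\sigma(\rho)$), that the flat $Z$-bundle $\alpha$ corresponds to $\lambda$ and tensoring multiplies the holonomy by the central representation $\lambda$, and that the harmonic metric is unaffected since $E(\lieg)=(E\otimes\alpha)(\lieg)$ --- are precisely the steps the paper leaves implicit.
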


The involutions 
$\rho\mapsto \lambda\theta(\rho)$ and $\rho\mapsto \lambda\sigma(\rho)$
depend naturally only on the cliques $\cl([\theta])$ and 
$\widehat{\cl}([\sigma])$, respectively, where $\cl$ and $\widehat{\cl}$
are defined in Section \ref{realforms-group}. 
Let $a\in \Out_2(G)$ and $\theta\in \Aut_2(G)$ such that 
$\cl([\theta])=a$. Let $\sigma=\theta\tau$ be the corresponding
conjugation. Of course $\widehat{\cl}([\sigma])=a$.
Let $\rho\in\calR(G)$.  We define
$$
a^+(\rho)=\theta(\rho)\;\;\;\mbox{and}\;\;\; a^-(\rho)=\sigma(\rho).
$$

Recall from Theorem \ref{naht} that the smooth locus $\calR(G)_i\subset\calR(G)$
consisting of irreducible representations is homeomorphic to $\cM(G)_{ss}$,
the smooth locus of $\cM(G)$ consisting of stable and simple objects.

Of course we have statements corresponding to Propositions
\ref{extension-reduction-2}  and \ref{extension-reduction}
for  the moduli
spaces of representations $\calR(G^\theta)$,   $\calR(G^\sigma)$,
$\calR(G_\theta)$,   and $\calR(G_\sigma)$ (these can be proved directly or
invoking  Theorem \ref{naht-real}). We denote their images in
$\calR(G)$, respectively, by   
$\widetilde{\calR}(G^\theta)$,   $\widetilde{\calR}(G^\sigma)$,
$\widetilde{\calR}(G_\theta)$,   and $\widetilde{\calR}(G_\sigma)$. 

From Theorems \ref{naht-real} and \ref{fixed-connected-a} we have 
the following.

\begin{theorem}\label{fixed-a-reps}
Let $a\in \Out_2(G)$. Consider the  involutions 

$$
   \begin{aligned}
\widehat{\iota}(a,\pm): \calR(G) & \to \calR(G) \\
\rho  & \mapsto a^\pm(\rho).
  \end{aligned}
$$

Then

\begin{enumerate}

\item 
$$
\bigcup_{[\theta]\in \cl^{-1}(a)}
\widetilde{\calR}(G^\theta)\subset \calR(G)^{\widehat{\iota}(a,+)},
$$  

\item
$$
\calR(G)_{i}^{\widehat{\iota}(a,+)}
\subset 
\bigcup_{[\theta]\in \cl^{-1}(a)}
\widetilde{\calR}(G^\theta),
$$
except for $a=1$.
\item 
$$
\bigcup_{[\sigma]\in {\widehat{\cl}}^{-1}(a)}
\widetilde{\calR}(G^\sigma)\subset \calR(G)^{\widehat{\iota}(a,-)},
$$  

\item
$$
\calR(G)_{i}^{\widehat{\iota}(a,-)}
\subset 
\bigcup_{[\sigma]\in {\widehat{\cl}}^{-1}(a)}
\widetilde{\calR}(G^\sigma).
$$
\end{enumerate}
\end{theorem}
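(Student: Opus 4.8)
The plan is to derive Theorem \ref{fixed-a-reps} as a direct translation of Theorem \ref{fixed-connected-a} through the non-abelian Hodge correspondence, using the dictionary established in Proposition \ref{correspondence-involutions}. The essential observation is that the homeomorphism $\cM(G)\cong\calR(G)$ of Theorem \ref{naht} is equivariant for the two families of involutions: by Proposition \ref{correspondence-involutions}, the Higgs-bundle involution $\iota(a,+)$ given by $(E,\varphi)\mapsto(\theta(E),\theta(\varphi))$ corresponds on the representation side to $\rho\mapsto\theta(\rho)=a^+(\rho)=\widehat\iota(a,+)(\rho)$ (taking $\alpha=1$, so $\lambda=1$), and the involution $\iota(a,-)$ given by $(E,\varphi)\mapsto(\theta(E),-\theta(\varphi))$ corresponds to $\rho\mapsto\sigma(\rho)=a^-(\rho)=\widehat\iota(a,-)(\rho)$. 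Hence the fixed-point loci match up under the homeomorphism: $\calR(G)^{\widehat\iota(a,\pm)}$ is the image of $\cM(G)^{\iota(a,\pm)}$, and similarly on the smooth/irreducible locus, since by Theorem \ref{naht} the stable-and-simple Higgs bundles correspond exactly to the irreducible representations, so $\calR(G)_i^{\widehat\iota(a,\pm)}$ is the image of $\cM(G)_{ss}^{\iota(a,\pm)}$.

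With the equivariance in hand, I would transport both inclusions of Theorem \ref{fixed-connected-a} across the correspondence. For the $(+)$-case, apply Theorem \ref{naht-real}(1), which identifies $\cM(G^\theta,+)$ with $\calR(G^\theta)$, so that the image $\widetilde{\cM}(G^\theta,+)$ in $\cM(G)$ corresponds to $\widetilde{\calR}(G^\theta)$ in $\calR(G)$; the two inclusions of Theorem \ref{fixed-connected-a} for $\iota(a,+)$ then become statements (1) and (2). For the $(-)$-case, apply Theorem \ref{naht-real}(2), which identifies $\cM(G^\theta,-)$ with $\calR(G^\sigma)$, where $\sigma=\theta\tau$; here one must reindex the union. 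The point is that the map $\theta\mapsto\sigma=\theta\tau$ of Proposition \ref{conjugations-versus-involutions} is a bijection $\Aut_2(G)/\!\sim\;\longleftrightarrow\conj(G)/\!\sim$ which is compatible with the clique maps, i.e. it carries $\cl^{-1}(a)$ onto $\widehat\cl^{-1}(a)$ by the very definition of $\widehat\cl$ in \eqref{clique-conjugations}. Thus the union $\bigcup_{[\theta]\in\cl^{-1}(a)}\widetilde{\cM}(G^\theta,-)$ corresponds precisely to $\bigcup_{[\sigma]\in\widehat\cl^{-1}(a)}\widetilde{\calR}(G^\sigma)$, yielding statements (3) and (4).

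I would also record the two exceptional-case remarks inherited from Theorem \ref{fixed-connected-a}: statement (2) excludes $a=1$, matching the exclusion of $\iota(1,+)$ (which is the identity) in Theorem \ref{fixed-connected-a}(2); no such exclusion is needed for the $(-)$-case statements (3) and (4), since $\iota(a,-)$ is never the identity.

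The main obstacle will be verifying that the clique bookkeeping is consistent, namely that the reindexing $[\theta]\mapsto[\sigma]$ genuinely sends $\cl^{-1}(a)$ bijectively onto $\widehat\cl^{-1}(a)$ and that $\widetilde{\cM}(G^\theta,-)\cong\widetilde{\calR}(G^\sigma)$ holds at the level of subvarieties of $\cM(G)$ and $\calR(G)$ respectively, rather than merely abstractly. This requires checking that the equivalence $\theta\sim\theta'$ produces the same locus after passing through both Theorem \ref{naht-real} and the compatibility of Proposition \ref{int-subgroups}(1) (giving $G^{\sigma'}=\Int(g)G^\sigma$) with the triviality of the $\Int(G)$-action on $\cM(G)$ and $\calR(G)$; these are exactly the ingredients supplied by part (3) of Propositions \ref{extension-reduction-2} and \ref{extension-reduction}, transported through the correspondence. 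Once this identification of indexing sets and loci is confirmed, the theorem follows formally, and I would present it as such, citing Theorems \ref{naht}, \ref{naht-real}, \ref{fixed-connected-a} and Proposition \ref{correspondence-involutions} as the inputs.
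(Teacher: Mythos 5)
Your proposal is correct and follows essentially the same route as the paper, which derives Theorem \ref{fixed-a-reps} directly by combining Theorem \ref{fixed-connected-a} with the non-abelian Hodge correspondences of Theorem \ref{naht-real}, the equivariance being supplied by Proposition \ref{correspondence-involutions}. Your additional bookkeeping (the reindexing $\cl^{-1}(a)\leftrightarrow\widehat{\cl}^{-1}(a)$ via Proposition \ref{conjugations-versus-involutions} and the matching of the exceptional case $a=1$) is exactly what the paper's terse proof implicitly relies on.
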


\begin{remark}
The  particular case of Theorem \ref{fixed-a-reps} for 
$\hat{\iota}(a,-)$ 
gives the representation statement corresponding to Theorem 
\ref{fixed-connected}. This case
involves only the equivalence classes of real forms that are inner equivalent
to the compact conjugation, that is the real forms of Hodge type.
\end{remark}

To describe the fixed points of the involutions of $\calR(G)$ involving also
an element $\lambda\in \calR(Z)=\Hom(\pi_1(X),Z)$, recall the extensions
(\ref{sigma-exact-sequence}) and (\ref{g-theta}).
These define  maps
$$
\hat{\gamma}_\theta:\calR(G_\theta)\to \calR(\Gamma_\theta),
\;\;\;\;\mbox{and}\;\;\;\; 
\hat{\gamma}_\sigma: \calR(G_\sigma)\to \calR(\Gamma_\sigma),
$$
which assign to every $\rho\in \calR(G_\theta)$
(resp. $\rho\in \calR(G_\sigma)$)
an invariant 
$\hat{\gamma}_\theta(\rho)\in \calR(\Gamma_\theta)=\Hom(\pi_1(X), \Gamma_\theta)$
(resp. $\hat{\gamma}_\sigma(\rho)\in \calR(\Gamma_\sigma)=\Hom(\pi_1(X), \Gamma_\sigma)$).
We will denote by $\calR_{\gamma}(G_\theta)$ (resp. $\calR_{\gamma}(G_\sigma)$) 
the subvariety of $\calR(G_\theta)$ (resp. $\calR(G_\sigma)$) 
with fixed invariant $\gamma$, and by
$\widetilde{\calR_{\gamma}}(G_\theta)$ 
(resp. $\widetilde{\calR_{\gamma}}(G_\sigma)$), 
its corresponding image in 
$\calR(G)$.  

From Propositions \ref{c-map}, \ref{gamma=gamma} and \ref{ext-twisted-iso},
we also have injective homomorphisms
$$
{\hat c}_\theta:\calR(\Gamma_\theta)\to \calR(Z), \;\;\;\;\mbox{and}\;\;\;\;
{\hat c}_\sigma:\calR(\Gamma_\sigma)\to \calR(Z).
$$

\begin{theorem}\label{fixed-a-lambda-reps}
Let $a\in \Out_2(G)$ and $\lambda\in\calR(Z)$ such that
$a(\lambda)=\lambda^{-1}$.

Consider the  involutions 

$$
   \begin{aligned}
\widehat{\iota}(a,\lambda,\pm): \calR(G) & \to \calR(G) \\
\rho  & \mapsto \lambda a^\pm(\rho).
  \end{aligned}
$$

Then

\begin{enumerate}

\item 
$$
\bigcup_{[\theta]\in \cl^{-1}(a),\; \hat{c}_\theta(\gamma)=\lambda}
\widetilde{\calR_\gamma}(G_\theta)\subset \calR(G)^{\widehat{\iota}(a,\lambda,+)},
$$  

\item
$$
\calR(G)_{i}^{\widehat{\iota}(a,\lambda,+)}
\subset 
\bigcup_{[\theta]\in \cl^{-1}(a),\; \hat{c}_\theta(\gamma)=\lambda}
\widetilde{\calR_\gamma}(G_\theta),
$$
except for $a=1$ and $\lambda=1$,

\item 
$$
\bigcup_{[\sigma]\in \widehat{\cl}^{-1}(a), \; \hat{c}_\sigma(\gamma)=\lambda}
\widetilde{\calR_\gamma}(G_\sigma)\subset \calR(G)^{\widehat{\iota}(a,\lambda,-)},
$$  

\item
$$
\calR(G)_{i}^{\widehat{\iota}(a,\lambda,-)}
\subset 
\bigcup_{[\sigma]\in \widehat{\cl}^{-1}(a), \; \hat{c}_\sigma(\gamma)=\lambda}
\widetilde{\calR_\gamma}(G_\sigma).
$$

\end{enumerate}
\end{theorem}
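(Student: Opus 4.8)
The plan is to deduce this theorem directly from its Higgs-bundle counterpart, Theorem \ref{fixed-connected-a-alpha}, by transporting everything through the non-abelian Hodge correspondence. First I would fix $\theta\in\Aut_2(G)$ lifting $a$ and commuting with a compact conjugation $\tau$, set $\sigma:=\theta\tau$, and let $\alpha\in H^1(X,Z)$ correspond to $\lambda\in\calR(Z)=\Hom(\pi_1(X),Z)$ under the standard identification of $Z$-torsors with $Z$-representations, which is valid since $Z$ is finite; under this dictionary the hypothesis $a(\lambda)=\lambda^{-1}$ becomes $\theta(\alpha)=\alpha^{-1}$. Proposition \ref{correspondence-involutions} then identifies, via the homeomorphism $\cM(G)\cong\calR(G)$ of Theorem \ref{naht}, the involution $\iota(a,\alpha,+)$ of $\cM(G)$ with $\widehat{\iota}(a,\lambda,+):\rho\mapsto\lambda a^+(\rho)$, and $\iota(a,\alpha,-)$ with $\widehat{\iota}(a,\lambda,-):\rho\mapsto\lambda a^-(\rho)$. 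Consequently the fixed-point loci match: $\calR(G)^{\widehat{\iota}(a,\lambda,\pm)}$ is the image of $\cM(G)^{\iota(a,\alpha,\pm)}$, and the smooth locus $\calR(G)_i$ of irreducible representations corresponds to the stable-and-simple locus $\cM(G)_{ss}$.

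With this correspondence in hand, the four inclusions of the theorem become the two inclusions of Theorem \ref{fixed-connected-a-alpha} read off on the representation side. For the two $(+)$-statements I would invoke Theorem \ref{naht-real}(3), $\cM(G_\theta,+)\cong\calR(G_\theta)$, which sends the image $\widetilde{\cM_\gamma}(G_\theta,+)$ in $\cM(G)$ to $\widetilde{\calR_\gamma}(G_\theta)$ in $\calR(G)$; here one uses that the Hodge correspondence is functorial for the extension of structure group $G_\theta\into G$, so it carries the map of Proposition \ref{extension-reduction} to the corresponding map on representation spaces and preserves the invariant $\gamma\in H^1(X,\Gamma_\theta)=\calR(\Gamma_\theta)$. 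For the two $(-)$-statements I would instead use Theorem \ref{naht-real}(4), $\cM(G_\theta,-)\cong\calR(G_\sigma)$, sending $\widetilde{\cM_\gamma}(G_\theta,-)$ to $\widetilde{\calR_\gamma}(G_\sigma)$. The index sets agree because the bijection $\conj(G)/{\sim}\leftrightarrow\Aut_2(G)/{\sim}$ of Proposition \ref{conjugations-versus-involutions} intertwines $\cl$ and $\widehat{\cl}$, so $[\theta]\in\cl^{-1}(a)$ matches $[\sigma]\in\widehat{\cl}^{-1}(a)$; and since $\Gamma_\theta=\Gamma_\sigma$ by Proposition \ref{gamma=gamma}, the torsor-level condition $c_\theta(\gamma)=\alpha$ of Theorem \ref{fixed-connected-a-alpha} translates into $\hat{c}_\theta(\gamma)=\lambda$ in the $(+)$-case and $\hat{c}_\sigma(\gamma)=\lambda$ in the $(-)$-case, these being the representation-theoretic incarnations of $c_\theta$ under the identification $H^1(X,-)=\calR(-)$ for finite groups.

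The routine but necessary bookkeeping is to check that each arrow in the correspondence is genuinely compatible: that the flat connection attached by Theorem \ref{naht} to an extended Higgs bundle $(E_G,\varphi_G)$ is the extension of the flat connection attached to $(E,\varphi)$ as a $G_\theta$- (resp.\ $G_\sigma$-) object, and that this extension is exactly the image under $\widetilde{(\cdot)}$. This is where the explicit description of the flat connection $D=d_h+\varphi-\tau_h(\varphi)$ from Section \ref{higgs-and-reps}, together with the commutation relations $\theta\tau_h=\tau_h\theta$ and $\sigma_h=\theta\tau_h$ used in Proposition \ref{correspondence-involutions}, does the work. The only genuine subtlety --- and the point I expect to require the most care --- is the one flagged in Remark \ref{singular-locus}: the reverse inclusion can fail on the strictly polystable (reducible) locus, where a fixed representation may reduce to a proper reductive subgroup whose cliques need not match those of $G$. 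This is precisely why statements (2) and (4) are asserted only on the irreducible locus $\calR(G)_i$, and why the exceptional pair $a=1$, $\lambda=1$ --- for which $\widehat{\iota}(a,\lambda,+)$ is the identity --- must be excluded, mirroring the exclusion of $\iota(1,1,+)$ in Theorem \ref{fixed-connected-a-alpha}.
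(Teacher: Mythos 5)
Your proposal is correct and follows exactly the route the paper intends: the paper states Theorem \ref{fixed-a-lambda-reps} without a separate proof, treating it as the direct translation of Theorem \ref{fixed-connected-a-alpha} through the non-abelian Hodge correspondences of Theorem \ref{naht-real}, with Proposition \ref{correspondence-involutions} matching $\iota(a,\alpha,\pm)$ to $\widehat{\iota}(a,\lambda,\pm)$, and Propositions \ref{gamma=gamma} and \ref{ext-twisted-iso} supplying the identifications $\Gamma_\theta=\Gamma_\sigma$ and $c_\theta\leftrightarrow\hat{c}_\theta,\hat{c}_\sigma$ under $H^1(X,-)=\calR(-)$ for finite groups. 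Your write-up simply makes explicit the bookkeeping (functoriality of the correspondence under extension of structure group, the irreducible versus stable-and-simple loci, and the exclusion of $a=1$, $\lambda=1$) that the paper leaves implicit.
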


\begin{remark}
One can make corresponding considerations to those
in Remarks \ref{special-case}, \ref{singular-locus} and 
\ref{topological-invariants}
for the involutions on $\calR(G)$ studied in this section.
\end{remark}

\subsection{Hyperk\"akler and Lagrangian subvarieties of $\cM(G)$}

The  fixed points on the smooth locus of $\cM(G)$ of the involutions 
studied in Section \ref{section-involutions} provide examples of 
hyperk\"ahler and Lagrangian subvarieties of $\cM(G)$. Recall from 
Section \ref{section-hitchin-equations} that
the smooth locus $\cM(G)_{ss}\subset\cM(G)$ has a hyperk\"ahler 
structure, obtained as a hyperk\"ahler quotient
by solving Hitchin equations. In particular  $\cM(G)_{ss}$ has  
complex structures $J_i$, $i=1,2,3$ 
satisfying the quaternion relations $J_i^2=-I$, 
and real symplectic structures $\omega_i$, $i=1,2,3$.
The Lagrangian condition we are referring above is with respect 
to the $J_1$-holomorphic symplectic form
$\Omega_1\,=\, \omega_2+\sqrt{-1}\omega_3$.
More precisely we have the following.

\begin{theorem}\label{hyperkahler-lagrangian}
Let $a\in \Out_2(G)$ and $\alpha\in H^1(X,Z)$, such that
$a(\alpha)=\alpha^{-1}$. Then, for every 
$\theta\in\Aut_2(G)$ and $\gamma \in H^1(X,\Gamma_\theta)$
such that 
$[\theta]\in \cl^{-1}(a)$, and  $c_{\theta}(\gamma)=\alpha$,
we have 

(1) 
$ \cM(G)_{ss}\cap \widetilde{\cM_\gamma}(G_\theta,+)$
is a hyperk\"ahler submanifold of $\cM(G)_{ss}$.
In particular $\cM(G)_{ss}\cap \widetilde{\cM}(G^\theta,+)$
is a hyperk\"ahler submanifold of $\cM(G)_{ss}$.

(2) $\cM(G)_{ss}\cap \widetilde{\cM_\gamma}(G_\theta,-)$
is a $(J_1,\Omega_1)$-complex Lagrangian  submanifold of $\cM(G)_{ss}$.
In particular $\cM(G)_{ss}\cap \widetilde{\cM}(G^\theta,-)$
is a $(J_1,\Omega_1)$-complex Lagrangian submanifold  of $\cM(G)_{ss}$.

\end{theorem}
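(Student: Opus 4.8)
The plan is to realise each involution $\iota(a,\alpha,\pm)$ as an isometry of the hyperk\"ahler structure on $\cM(G)_{ss}$ constructed in Section \ref{hyperkahler}, and to read off the behaviour of its fixed locus from the way its differential interacts with the complex structures $J_1,J_2,J_3$. First I would reduce to a purely infinitesimal computation. Fix a point of $\cM(G)_{ss}$ fixed by $\iota(a,\alpha,\pm)$; by Theorem \ref{fixed-connected-a-alpha} it is represented by a polystable $(G_\theta,\pm)$-Higgs bundle with $[\theta]\in\cl^{-1}(a)$ and $c_\theta(\gamma)=\alpha$, and we may take $\theta$ to commute with the compact conjugation $\tau$. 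Because $\alpha$ is a finite-order (hence flat) $Z$-bundle with $E(\lieg)=(E\otimes\alpha)(\lieg)$, and because the twisted isomorphism $A$ realising the fixed-point condition acts through the adjoint bundle, tensoring by $\alpha$ and conjugating by $A$ act trivially on the infinitesimal deformation space. Identifying the tangent space of $\cM(G)_{ss}$ with harmonic representatives $(\eta,\psi)\in\Omega^{0,1}(X,E(\lieg))\oplus\Omega^{1,0}(X,E(\lieg))$ as in Section \ref{hyperkahler}, and choosing the harmonic metric $h$ compatible with $\theta$ (so that $\theta\tau_h=\tau_h\theta$; possible by the uniqueness of solutions to the Hitchin equation up to gauge), the differential of $\iota(a,\alpha,\pm)$ becomes
$$
T_\pm(\eta,\psi)=(\theta(\eta),\pm\,\theta(\psi)).
$$

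Next I would record how $T_\pm$ interacts with the hyperk\"ahler data. Since $\theta$ is a $\C$-linear automorphism preserving the Killing form $B$ and commuting with $\tau_h$, both $T_+$ and $T_-$ are isometries of the metric $g$ of Section \ref{hyperkahler} and commute with $J_1$; each is an involution since $\theta^2=\Id$. A direct check using the formula for $J_2$ (and $J_3=J_1J_2$) gives
$$
J_2 T_+=T_+ J_2,\qquad J_2 T_-=-\,T_- J_2,
$$
so that $T_+$ commutes with all three $J_i$ while $T_-$ commutes with $J_1$ and anticommutes with $J_2$ and $J_3$. In the $+$ case $T_+$ is an isometric involution preserving each $J_i$, so its fixed locus is totally geodesic with $J_i$-invariant tangent space for every $i$; the restriction of $(g,J_1,J_2,J_3)$ is therefore again hyperk\"ahler, which proves (1).

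In the $-$ case I would write the tangent space at a fixed point as the orthogonal sum $V_+\oplus V_-$ of the $(\pm1)$-eigenspaces of the isometric involution $T_-$. Commutation with $J_1$ shows $V_+$ is $J_1$-invariant, so the fixed locus is a $J_1$-complex submanifold; anticommutation with $J_2$ shows $J_2V_+=V_-$, whence $\dim_\R V_+=\tfrac12\dim_\R\cM(G)_{ss}$. For $u,v\in V_+$ we have $J_2u\in V_-$, so $\omega_2(u,v)=g(J_2u,v)=0$ by orthogonality of the eigenspaces, and likewise $\omega_3(u,v)=0$; hence $\Omega_1|_{V_+}=\omega_2|_{V_+}+\sqrt{-1}\,\omega_3|_{V_+}=0$. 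A $J_1$-complex, $\Omega_1$-isotropic subspace of half the real dimension is $(J_1,\Omega_1)$-Lagrangian, which proves (2).

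Finally I would assemble the components and the ``in particular'' clauses. Each $\cM(G)_{ss}\cap\widetilde{\cM_\gamma}(G_\theta,\pm)$ lies in the smooth fixed locus $\cM(G)_{ss}^{\iota(a,\alpha,\pm)}$ and, by Theorem \ref{fixed-connected-a-alpha}, is a union of connected components of it, so it inherits the structure established above. The statements for $\widetilde{\cM}(G^\theta,\pm)$ follow by specialising to $\alpha=1$ and the neutral class $\gamma=e\in H^1(X,\Gamma_\theta)$, using $\widetilde{\cM_e}(G_\theta,\pm)=\widetilde{\cM}(G^\theta,\pm)$ from Remark \ref{special-case}. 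I expect the main obstacle to be the opening reduction: one must justify that the harmonic metric can be chosen $\theta$-compatible and that the discrete data $\alpha$ and $A$ act trivially on harmonic space, so that the differential is exactly $T_\pm$; this rests on the uniqueness part of Theorem \ref{hk} together with the fact that $\iota(a,\alpha,\pm)$ preserves the Hitchin equations.
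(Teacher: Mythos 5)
Your proof is correct, and at the top level it is the same Hitchin-style argument the paper uses: both show that $\iota(a,\alpha,+)$ is an isometric involution holomorphic for $J_1$, $J_2$ and $J_3$ (so its fixed locus is hyperk\"ahler), and that $\iota(a,\alpha,-)$ is an isometric involution that is $J_1$-holomorphic but $J_2$- and $J_3$-antiholomorphic (so $\omega_2$ and $\omega_3$, hence $\Omega_1=\omega_2+\sqrt{-1}\omega_3$, vanish on the fixed locus). The genuine difference is in how the interaction with $J_2$ is established. The paper gets it structurally from Proposition \ref{correspondence-involutions}: under the non-abelian Hodge correspondence the involutions become $\rho\mapsto\lambda\theta(\rho)$ and $\rho\mapsto\lambda\sigma(\rho)$ on $\calR(G)$, and since $\theta$ is a holomorphic automorphism of $G$ while $\sigma$ is a conjugation, these are manifestly $J_2$-holomorphic, respectively $J_2$-antiholomorphic; no choice of metric or pointwise computation is required. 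You instead work infinitesimally on the gauge-theoretic model of Section \ref{hyperkahler}, computing the differential $T_\pm$ at a fixed point and checking commutation or anticommutation against the explicit formula for $J_2$; your eigenspace argument ($V_+\oplus V_-$, $J_2V_+=V_-$, isotropy of $\Omega_1$ on $V_+$, half-dimensionality) spells out precisely what the paper leaves implicit in the Lagrangian case. The cost of your route is that the real work migrates into the opening reduction, which you correctly flag as the main obstacle: one must produce a $\theta$-compatible harmonic metric ($\theta\tau_h=\tau_h\theta$) and identify the differential with $T_\pm$, which rests on the uniqueness statement in Theorem \ref{hk} together with Theorem \ref{higgs-g+-hk} (the harmonic reduction of the extended $G$-Higgs bundle comes from a reduction to $U_\sigma$). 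So your approach buys an explicit, self-contained tangent-space verification, while the paper's buys brevity and sidesteps the metric-compatibility issue entirely by routing the $J_2$-statement through the representation variety.
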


\begin{proof}
From Proposition \ref{correspondence-involutions},  
the involution $\iota(a,\alpha,+)$ on $\cM(G)$ is holomorphic with 
respect to complex structure $J_2$ (the natural complex structure on 
$\calR(G)$). Since it is
$J_1$-holomorphic (recall that $J_1$ is the natural complex structure on
$\cM(G)$) is also $J_3$-holomorphic, and hence (1) follows.

The proof of (2) follows from the fact that the involution $\iota(a,\alpha,-)$ 
on $\cM(G)$ is $J_1$-holomorphic and
$J_2$-antiholomorphic, 
 by Proposition \ref{correspondence-involutions},    and hence
 $J_3$-antiholomorphic. Since it is an isometry this implies that $\omega_2$,
and $\omega_3$, and hence $\Omega_1$ vanish on the fixed point locus,
proving the assertion. This argument is given by Hitchin in 
\cite{hitchin1987} to give 
this result when $G=\SL(2,\C)$, and  more generally in 
\cite{garcia-prada-2007,garcia-prada}.  
\end{proof}

Part (1) of Theorem \ref{hyperkahler-lagrangian} generalises in 
a straightforward manner to
higher order automorphisms, namely we have the following.

\begin{theorem}\label{hyperkahler-n}
Let $a\in \Out_n(G)$ and $\alpha\in H^1(X,Z)$, such that
$\alpha a(\alpha)\cdots a^{n-1}(\alpha)=1$. Then, for every 
$\theta\in\Aut_n(G)$ and $\gamma \in H^1(X,\Gamma_\theta)$
such that 
$[\theta]\in \cl_n^{-1}(a)$, and  $c_{\theta}(\gamma)=\alpha$,
we have that
$ \cM(G)_{ss}\cap \widetilde{\cM_\gamma}(G_\theta,\zeta_0)$
is a hyperk\"ahler submanifold of $\cM(G)_{ss}$.
In particular $\cM(G)_{ss}\cap \widetilde{\cM}(G^\theta,\zeta_0)$
is a hyperk\"ahler submanifold of $\cM(G)_{ss}$.
\end{theorem}

In the context of mirror symmetry for Calabi--Yau  manifolds
\cite{kapustin-witten}, one refers to a
complex submanifold equipped with a holomorphic bundle (or more generally a 
sheaf) as a $B$-brane, and to a Lagrangian submanifold equipped with a flat
bundle as an $A$-brane. In this hyperk\"ahler situation we thus have branes
of various types according to complex structures $J_i$, $i=1,2,3$, and 
K\"ahler forms $\omega_i$, $i=1,2,3$. Namely one can have branes of types
$(B,B,B)$, $(B,A,A)$, $(A,B,A)$ and $(A,A,B)$. In this language, it is
clear that the submanifolds
$\cM(G)_{ss}\cap \widetilde{\cM_\gamma}(G_\theta,+)$ in Theorem 
\ref{hyperkahler-lagrangian} are the support of  $(B,B,B)$-branes,
while the submanifolds $\cM(G)_{ss}\cap \widetilde{\cM_\gamma}(G_\theta,-)$ 
are the support of $(B,A,A)$-branes.

\begin{remark}
Support for branes of types  $(A,B,A)$ and $(A,A,B)$, that is,  complex 
 Lagrangian with 
respect to $(J_2,\Omega_2)$ and $(J_3,\Omega_3)$, respectively can also be
obtained from involutions on $\cM(G)$, involving now also  conjugations 
on $X$ 
(see \cite{biswas-garcia-prada,baraglia-schaposnik}).
\end{remark}

\section{Involutions of $\cM(\SL(n,\C))$}\label{sln}

To ilustrate our main results, we will consider the case $G=\SL(n,\C)$.
For this group, like for all classical groups, it is convenient to consider 
$G$-Higgs bundles in terms of vector bundles. From this point of view, a 
Higgs bundle over $X$ is a pair $(V,\varphi)$, where $V$ is a holomorphic 
vector bundle over $X$ and $\varphi\in H^0(X,\End(V)\otimes K)$, that is
a homomorphism $\varphi:V\to V\otimes K$. In this case stability is defined
in terms of slopes. Recall that the {\bf slope} of a bundle is defined as 
$\mu(V)=\deg V/\rank V$.  We say that $(V,\varphi)$ is stable if
for every proper subbundle $V'\subset V$ such that $\varphi(V')\subset V'$
we have $\mu(V')<\mu(V)$. The Higgs bundle is said to be polystable if
$(V,\varphi)=\oplus_i (V_i,\varphi_i)$, with $(V_i,\varphi_i)$ stable and 
$\mu(V_i)=\mu(V)$ for every $i$. This is the notion introduced in the 
original paper by Hitchin \cite{hitchin1987}.

In order for a Higgs bundle $(V,\varphi)$ to correspond to a $\SL(n,\C)$-Higgs
bundle we must require that $\det V$ be trivial and $\Tr(\varphi)=0$. The 
stability conditions for the principal and vector bundle points of view 
coincide naturally.   

We will start with the simplest  possible situation: $G=\SL(2,\C)$.

\subsection{$G=\SL(2,\C)$}

In this case $\Out(G)=\{1\}$, and the compact real form $\SU(2)$ and 
split real form $\SL(2,\R)\cong \SU(1,1)$ of $\SL(2,\C)$,
corresponding to conjugations $\tau(A)={(\overline{A}^t)}^{-1}$ and
$\sigma_s(A)=\overline{A}$, respectively, are indeed inner equivalent.
We can see this explictly at the level of Lie algebras for example, since
the conjugation
with respect to  the real form $\liesu(2)$,   
$\tau(A)= -\overline{A}^t$, and the conjugation  with respect to  the real 
form $\liesl(2,\R)$, $\sigma_s(A)=\overline{A}$, are related by
$$
\sigma_s(A)=J\tau(A)J^{-1}
$$
for $J\in \liesl(2,\R)$ given by
$$
J =
\begin{pmatrix}
  0 & 1 \\
  -1  & 0
\end{pmatrix},
$$
simply because for every $A\in\liesl(2,\R)$, one has 
$ JA=-A^tJ$.

We consider now the involutions $\iota(a,\pm)$  defined in Section 
\ref{section-involutions}.
Since $\iota(1,+)$ is the identity map, the only non-trivial involution of this
type is $\iota(1,-)$ --- the special case studied in Section 
\ref{section-involutions}. This case is studied by Hithin in \cite{hitchin1987}
(see also \cite{garcia-prada-ramanan-rank2}).

The elements in $\Aut_2(G)$ corresponding to the conjugations $\tau$ and 
$\sigma_c$ are given, respectivey by $\theta_c=\tau^2=\Id_G$ and
$\theta_s=\sigma_c\tau$. We thus have that $\theta_s(A)={(A^t)}^{-1}$, and hence
$G^{\theta_c}=G=\SL(2,\C)$ and $G^{\theta_s}=\SO(2,\C)\cong \C^\ast$.

The moduli space $\cM(\SL(2,\C),-)$ is then  isomorphic to the moduli space of
polystable $\SL(2,\C)$-bundles, since $\lieg^-=0$ in this case, and hence the
Higgs fields must  vanish identically. By the theorem of Narasimhan and 
Seshadri \cite{narasimhan-seshadri} this is homeomorphic to $\calR(\SU(2))$.

On the other hand the moduli space $\cM(\SO(2,\C),-)$ is described by the 
isomorphism clasess of Higgs bundles of the form

\begin{equation}\label{higgs-bundle}
V = L\oplus L^{-1} \;\;\;\; \;\; \mbox{and}\;\;\;\;\;\; 
\varphi =
\begin{pmatrix}
  0 & \beta\\
  \gamma & 0
\end{pmatrix},
\end{equation}
where $L$ is a line bundle, $\beta\in H^0(X,L^2\otimes K)$ and 
$\gamma\in H^0(X,L^{-2}\otimes K)$. From stability one deduces 
(see \cite{hitchin1987}) that if 
$d=\deg L$, then $|d|\leq g-1$, where $g$ is the genus of $X$. The subspace
of elements of $\cM(\SO(2,\C),-)$ with fixed 
$d$ satisfying this inequality defines a connected component, as shown 
in \cite{hitchin1987}, where an explicit description of this subspaces is given.

The moduli space $\cM(\SO(2,\C),-)$ is homeomorphic to 
$\calR(\SL(2,\R))$. From the point of view of representations the inequality 
$|d|\leq g-1$ is proved by Milnor \cite{milnor} and the connectedness fixing
the degree is proved by Goldman \cite{goldman}, who also shows that the
components with maximal degree $d$ is identified with Teichm\"uller space.

In the image of $\cM(\SO(2,\C),-)$ in $\cM(\SL(2,\C)$, denoted
by $\widetilde{\cM}(\SO(2,\C),-)$, the degree $d$ component with $d\neq 0$ is 
identified with the degree $-d$ component, while in the degree $0$ component
there are some identifications.

Note that, although $\iota(1,+)$ is the identity, the moduli space
$\cM(\SO(2,\C))$, consisting  of 
Higgs bundles of the form

\begin{equation}\label{higgs-bundle}
V = L\oplus L^{-1} \;\;\;\; \;\; \mbox{and}\;\;\;\;\;\; 
\varphi =
\begin{pmatrix}
  \psi & 0\\
  0 & \psi
\end{pmatrix},
\end{equation}
where $L$ is a line bundle and $\psi\in H^0(X,K)$, maps to $\cM(\SL(2,\C))$, going to the 
strictly polystable locus.

We consider now the involutions appearing in Theorem 
\ref{fixed-connected-a-alpha}.
In our situation  $Z=\{\pm I\}\cong \Z/2$, and hence $H^1(X,Z)=J_2$, the 
$2$-torsion elements in the Jacobian $J$ of $X$. A direct approach to this 
is given in \cite{garcia-prada-ramanan-rank2}.

Consider the normalizer $N\SO(2,\C)$ of $\SO(2,\C)$ in $\SL(2,\C)$. This is
generated by $\SO(2,\C)$ and 
$J=\begin{pmatrix}
  0 & i \\
  i  & 0
\end{pmatrix}$.
The group generated by $J$ is isomorphic to $\Z/4$
and fits in the exact sequence 
\begin{equation}\label{z4}
0\lra \Z/2\lra \Z/4\lra  \Z/2\lra 1,
\end{equation}
where the subgroup $\Z/2\subset \Z/4$ is $\{\pm I\}$.

We thus have  an exact sequence

\begin{equation}\label{c-normalizer}
0\lra \SO(2,\C)\lra N\SO(2,\C)\lra  \Z/2\lra 1.
\end{equation}
Of course this is the sequence defining the Weyl group of $\SL(2,\C)$.

Similarly, we also have that   
$N\SL(2,\R)$, the normalizer of $\SL(2,\R)$ in $\SL(2,\C)$, 
is given by

\begin{equation}\label{normalizer=sl2r}
0\lra \SL(2,\R)\lra N\SL(2,\R)\lra  \Z/2\lra 1.
\end{equation}

Recall from Section \ref{normalizers} that
$G_{\theta_s}=N\SO(2,\C)$, $G_{\sigma_s}=N\SL(2,\R)$, and 
hence  
$\Gamma_{\theta_s}=\Gamma_{\sigma_s}=\Z/2$. Since
$Z=\Z/2$ we have that the maps $c_{\theta_s}$  and
$\hat{c}_{\sigma_s}$ intervening  in Theorems
\ref{fixed-connected-a-alpha} and \ref{fixed-a-lambda-reps} are  
isomorphisms. On the other hand $\Gamma_\tau=\{1\}$.

For every $\alpha\in H^1(X,Z)=J_2$ we consider the involutions 
$\iota(\alpha,\pm)$ of 
$\cM(\SL(2,\C))$ defined by $(V,\varphi)\mapsto (V\otimes\alpha,\pm \varphi)$.
From Theorems \ref{fixed-connected-a-alpha} and \ref{fixed-a-lambda-reps} 
we see that the fixed point locus of 
$\iota(\alpha,+)$  (with $\alpha\neq 1$) is described by the moduli space
$\cM_\alpha(N\SO(2,\C),+)$, which in turn is homeomorhic to 
$\calR_\alpha(N\SO(2,\C))$; while the fixed points of $\iota(\alpha,-)$ are
described by the moduli space 
$\cM_\alpha(N\SO(2,\C),-)$,  homeomorhic to 
$\calR_\alpha(N\SL(2,\R))$. In the latter case $\alpha$ is allowed to be $1$, 
and  in that case the fixed point locus is described by 
$\cM(\SO(2,\C),-)$, homeomorhic to $\calR(\SL(2,\R))$.

One can construct  the moduli spaces
$\cM_\alpha(N\SO(2,\C),\pm)$ in terms of Prym varieties:
If $\alpha$ is a  non-trivial element of $J_2$, there is associated to it a
canonical $2$-sheeted \'etale cover $\pi: X_\alpha \to X$. Consider the 
norm homomorphism $\Nm:\Pic(X_\alpha)\to \Pic(X)$. Its kernel consists of 
two components and the one that contains the trivial bundle is the 
{\bf Prym variety} $P_\alpha$ associated to $\alpha$.
If $L$ is a line bundle on $X_\alpha$, its direct image $\pi_*L$ is a rank
two vector bundle $V$. Moreover $V$ is polystable \cite{narasimhan-ramanan}.
Since  $\det E=\Nm(L)\otimes \alpha$, we must take line bundles in
$S_\alpha=\Nm^{-1}(\alpha)$.  This consists of two cosets of $P_\alpha$, each
of which is left invariant under the Galois involution. 
In \cite{garcia-prada-ramanan-rank2} we give a description of the 
moduli spaces $\cM_\alpha(N\SO(2,\C),\pm)$ in terms of $S_\alpha$.

\subsection{$G=\SL(n,\C)$,\ $n>2$}

In this case $\Out(G)=\Z/2$. There are hence two cliques: $a=1$ and
$a=-1$. The classes in $\conj(G)/\sim$ corresponding to the trivial clique
$a=1$ are represented (see \cite{helgason} e.g.) by the conjugations
$\sigma_{p,q}$, with $0\leq p\leq q$ and $p+q=n$ given by 
\begin{equation}\nonumber
   \begin{aligned}
  \sigma_{p,q}:\SL(n,\C)   & \to \SL(n,\C) \\
    A  &\mapsto I_{p,q} (\overline{A}^t)^{-1} I_{p,q},
  \end{aligned}
\end{equation}
where
\begin{equation}\label{ipq}
I_{p,q} =
\begin{pmatrix}
  I_p & 0 \\
  0 & -I_q
\end{pmatrix}.
\end{equation}

One has $G^{\sigma_{p,q}}=\SU(p,q)$. In particular $\tau:=\sigma_{0,n}$ gives
the compact real form $\SU(n)$.
The elements in $\Aut_2(G)$ corresponding to $\sigma_{p,q}$ are given by
$\theta_{p,q}:=\tau\sigma_{p,q}$, and hence
$G^{\theta_{p,q}}=\SSS(\GL(p,\C)\times \GL(q,\C))$. 

From Theorem \ref{naht-real} we have homeomorphisms
$$
\cM(G^{\theta_{p,q}},+)\cong \calR(\SSS(\GL(p,\C)\times \GL(q,\C))
\;\;\;\mbox{and}\;\;\; 
\cM(G^{\theta_{p,q}},-)\cong \calR(\SU(p,q)).
$$
The moduli spaces $\cM(G^{\theta_{p,q}},-)$ and the correspondence with
$\calR(\SU(p,q))$ have been extensively studied in
\cite{bradlow-garcia-prada-gothen,bradlow-garcia-prada-gothen2}, 
where there is counting of the number of connected
components in terms of the so-called {\bf Toledo invariant}, an integer
invariant similar to the one appearing above for $\SU(1,1)$. Of course 
$\cM(G^{\theta_{0,n}},-)$ is the moduli space of polystable 
$\SL(n,\C)$-bundles and the homeomorphism with $\calR(\SU(n))$ is 
given by the Narasimhan--Seshadri theorem \cite{narasimhan-seshadri}.

The involutions  $\iota(-1,\pm)$ on $\cM(\SL(n,\C))$
for the outer clique $a=-1$ is given by
\begin{equation}\nonumber
   \begin{aligned}
\iota(-1,\pm): 
\cM(\SL(n,\C)) & \to \cM(\SL(n,\C)) \\
(V,\varphi) & \mapsto (V^*,\mp {\varphi}^t),
  \end{aligned}
\end{equation}
where $V^*$ is the dual vector bundle and ${\varphi}^t$ is the dual of 
$\varphi$ tensored with the identity of $K$.

To describe the fixed points of $\iota(-,\pm)$ given by Theorem 
\ref{fixed-connected-a},
we recall (see \cite{helgason} e.g.) that the classes in 
$\conj(G)/\sim$ corresponding to the outer clique $a=-1$
are represented by the conjugations
$\sigma_s(A)=\overline{A}$, corresponding to the {\bf split} real form, and
if $n=2m$, to $\sigma_*(A)=J_m\overline{A} J_m^{-1}$,
where 
$$
J_m =
\begin{pmatrix}
  0 & I_m \\
  -I_m  & 0
\end{pmatrix}.
$$
 
We have the corresponding elements in $\Aut_2(G)$ given by 
$\theta_s:=\tau\sigma$ and $\theta_*:=\tau\sigma_*$, and hence
$G^{\sigma_s}=\SL(n,\R)$, $G^{\theta_s}=\SO(n,\C)$, and if $n=2m$,
 $G^{\sigma_*}=\SU^*(2m)$, and $G^{\theta_*}=\Sp(2m,\C)$.

As above, from Theorem \ref{naht-real} we have homeomorphisms
$$
\cM(G^{\theta_s},+)\cong \calR(\SO(n,\C))
\;\;\;\mbox{and}\;\;\; 
\cM(G^{\theta_s},-)\cong \calR(\SL(n,\R)),
$$
and  
$$
\cM(G^{\theta_*},+)\cong \calR(\Sp(2m,\C))
\;\;\;\mbox{and}\;\;\; 
\cM(G^{\theta_*},-)\cong \calR(\SU^*(2m)),
$$

The correspondence $\cM(G^{\theta_s},-)\cong \calR(\SL(n,\R))$ is studied
by Hitchin in \cite{hitchin1992}, where he counts the number of connected 
components of
$\calR(\SL(n,\R))$ and introduces what he calls the higher
 Teichm\"uller components, now known as {\bf Hitchin components},  components
 analogous to the Teichm\"uller components for $\SL(2,\R)$ mentioned above, and
that exist for the split real form of every semisimple complex Lie group $G$.
The correspondence 
  $ \cM(G^{\theta_*},-)\cong \calR(\SU^*(2m))$ is studied in
  \cite{garcia-prada-oliveira-u*}, where it is
  shown that $\calR(\SU^*(2m))$ is connected.

The case of the involution $\iota(-,\pm)$ provides with  a very good example 
to ilustrate the need for restrecting to the smooth locus $\cM(G)_{ss}$  
of $\cM(G)$ in statement (2) of Theorem \ref{fixed-connected-a} (see Remark
\ref{singular-locus}). Indeed, if 
an $\SL(n,\C)$-Higgs bundle $(V,\varphi)$ is not stable (which in this case implies also
simple), the Higgs bundle is polystable and hence 
$(V,\varphi)=\oplus (V_i,\varphi_i)$ with $(V_i,\varphi_i)$ stable and 
$\deg V_i=0$, i.e. the structure group of $(V,\varphi)$  reduces
to $\SSS(\Pi_i \GL(n_i,\C))$ with $\sum n_i=n$. On each summand the 
involution $\iota(-,\pm)$ sends
$(V_i,\varphi_i)\mapsto (V_i^\ast,\mp \varphi_i^t)$, implying  in 
particular that, if $(V,\varphi)$ is a fixed point of the involution, 
the bundles $V_i$ reduce their structure group 
to $\OO(n_i,\C)$ or $\Sp(n_i,\C)$. But since there is no need for all the 
bundles $V_i$  simultaneously to be orthogonal  or  symplectic,
the object $(V,\varphi)$ may not be included in 
$\widetilde{\cM}(G^\theta,\pm)$ for $\theta=\theta_s$ or $\theta=\theta_*$.
In contrast with this, in the case of the involution $\iota(1,-)$, which sends
$(V,\varphi)\to (V,-\varphi)$, 
the inclusion (2) of  Theorem \ref{fixed-connected-a} extends to the 
whole moduli space $\cM(G)$ and not just the smooth locus.

Let $\alpha\in J_2(X)$. We consider now the involutions $\iota(1,\alpha,\pm)$
given  by 
$(V,\varphi)\mapsto (V\otimes \alpha,\pm \varphi)$. To describe the fixed
points, we have to compute the groups $G_{\theta_{p,q}}$, given by 
(\ref{g-theta}). 
A computation shows that
$\Gamma_{\theta_{p,q}}=\{1\}$ if $p\neq q$ and $\Gamma_{\theta_{p,q}}\cong
\Z/2$  if $p=q$ with $n=2p$. We have a situation similar to that of $\SL(2,\R)$
in the previous section.  
We thus have  exact sequences for the normalizers in $\SL(n,\C)$ of
$\SSS(\GL(p,\C)\times \GL(p,\C))$ and $\SU(p,p)$, respectively given by
 \begin{equation}\label{normalizer-glp-glp}
0\lra \SSS(\GL(p,\C)\times \GL(p,\C))
\lra 
N\SSS(\GL(p,\C)\times \GL(p,\C))
\lra  \Z/2\lra 1,
\end{equation}
and
\begin{equation}\label{normalizer-supp}
0\lra \SU(p,p)\lra N\SU(p,p)\lra  \Z/2\lra 1.
\end{equation}

Since $Z=\Z/2p$ we have that the map 
$c_{\theta_{p,p}}:H^1(X,\Gamma_{\sigma_{p,p}})\to H^1(X,Z)$ intervining  
in Theorem \ref{fixed-connected-a-alpha} is given by the injection
$J_2(X) \hookrightarrow J_{2p}(X)$. Similarly for the map
$\hat{c}_{\sigma_{p,p}}$ intervening  in Theorem \ref{fixed-a-lambda-reps}. 
From Theorems \ref{fixed-connected-a-alpha} and \ref{fixed-a-lambda-reps} 
we see that the fixed point locus of 
$\iota(1,\alpha,+)$  (with $\alpha\neq 1$) is described by the moduli space
$\cM_\alpha(N\SSS(\GL(p,\C)\times \GL(p,\C)),+)$, 
which in turn is homeomorhic to 
$\calR_\alpha(N\SSS(\GL(p,\C)\times \GL(p,\C))$. The fixed points of 
$\iota(1,\alpha,-)$ are
described by the moduli space
$\cM_\alpha(N\SSS(\GL(p,\C)\times \GL(p,\C)),-)$ homeomorphic to 
$\calR_\alpha(N\SU(p,p))$. 
In the latter case $\alpha$ is allowed to be $1$, 
and  in that case 
the fixed point locus is described by 
$\cM(\SSS(\GL(p,\C)\times \GL(p,\C)),-)$ homeomorphic to 
$\calR(\SU(p,p))$. 

As in the $n=2$ case, one can describe the moduli spaces 
$\cM_\alpha(N\SSS(\GL(p,\C)\times \GL(p,\C)),\pm)$ in terms of certain
objects in the $2$-sheeted \'etale cover $\pi: X_\alpha \to X$
defined by $\alpha$. This involves now {\bf generalised Prym varieties}
in the sense of Narasimhan--Ramanan 
(see \cite{narasimhan-ramanan}).

There are no fixed points for the involutions $\iota(-1,\alpha,\pm)$ since
$\Gamma_{\theta_s}=\Gamma_{\theta_*}=\{1\}$.

A direct approach to the study of the involutions
$\iota(a,\pm)$ for $\SL(n,\C)$ is carried out in
\cite{garcia-prada-2007}.

\section{$\Spin(8,\C)$-Higgs bundles and triality}\label{triality}

In this section we give an application of Theorems
\ref{fixed-connected}, \ref{fixed-connected-a} 
\ref{fixed-a-reps}  and  \ref{moduli-n-automorphism}  
to the case $G=\Spin(8,\C)$. This is the (simply connected) simple complex 
Lie group with the largest
group of outer automorphisms, namely $\Out(G)=S_3$ (see Table 
\ref{outer-groups}), thus exhibiting very interesting phenomena.  

\subsection{Involutions}
$G=\Spin(8,\C)$ fits in the exact sequence

\begin{equation}\label{spin}
1 \lra \Z/2 \lra \Spin(8,\C)\lra \SO(8,\C)  \lra 1,
\end{equation}
and hence its Lie algebra is  $\lieso(8,\C)$.
The set (\ref{cartan-classes}) of isomorphism classes  of conjugations 
for $\lieg=\lieso(8,\C)$ is represented (see e.g. \cite{helgason}) 
by the conjugations
$$
\sigma^{p,q}(A)=I_{p,q}\overline{A}I_{p,q} \;\;\mbox{with}\;\; 0\leq p\leq q\;\;\mbox{and}\;\; p+q=8,
$$
where $I_{p,q}$ is given by (\ref{ipq}), corresponding to the real forms
$\lieso(p,q)$, and
$$
\sigma^*(A)=J_4\overline{A}J_4^{-1},
$$
where 
$$
J_4= 
\begin{pmatrix}
  0 & I_4 \\
  -I_4 & 0
\end{pmatrix},
$$
corresponding to the real form $\lieso^*(8)$. 
The compact conjugation is given by $\tau=\sigma^{0,8}$ and the elements 
in $\Aut_2(G)$ corresponding to the conjugations above are 
$\theta^{p,q}=\tau\sigma^{p,q}$ and $\theta^*=\tau\sigma^*$ and thus 
given by
\begin{equation}\label{thetapq}
\theta^{p,q}(A)=I_{p,q} AI_{p,q} \;\;\mbox{with}\;\; 0\leq p\leq q\;\;\mbox{and}\;\; p+q=8,
\end{equation}
and
\begin{equation}\label{theta*}
\theta^*(A)=J_4AJ_4^{-1}.
\end{equation}
Of course, all these conjugations and involutions can be lifted to 
$\Spin(8,\C)$ (denoted in the same way), leading to real
forms $\Spin_0(p,q)$ with $p+q=8$ (see Remark \ref{spin-real-forms})  
and $\Spin^*(8)$.
One can show that $\Spin(8)$, $\Spin_0(2,6)$, $\Spin_0(4,4)$, and
$\Spin^*(8)$ are in the trivial clique, that is, are the real forms
of Hodge type, while  $\Spin_0(1,7)$ and $\Spin_0(3,5)$ are in a non-trivial
clique, say $a_1\in \Out_2(G)$. If $b\in \Out(G)=S^3$ is an element of order 
$3$, the group  $S^3$ can be generated with $a_1$ and $b$. The elements
$a_2:=ba_1b^{-1}$ and $a_3:=b^2a_1b^{-2}$ have also order $2$, and hence if
$B$ is a lift of $b$ to $\Aut_3(G)$, the real groups
$B(\Spin_0(1,7))$ and $B(\Spin_0(3,5))$ and  
$B^2(\Spin_0(1,7))$ and $B^2(\Spin_0(3,5))$ are isomorphic to  
$\Spin_0(1,7)$ and $\Spin_0(3,5)$, respectively, but of course by outer 
isomorphisms. Among the complex simple Lie algebras, this is the 
only case for which there is this type of phenomenon.
To distinguish these different subgroups of $\Spin(8,\C)$
we introduce the notation
$$
\Spin_0(1,7)_i:=B^{i-1}(\Spin_0(1,7))\;\;\mbox{with}\;\; i=1,2,3,
$$
and 
$$
\Spin_0(3,5)_i:=B^{i-1}(\Spin_0(3,5))\;\;\mbox{with}\;\; i=1,2,3.
$$
These correspond to conjugations and holomorphic involutions 
$\sigma^{1,7}_i:=B^{i-1}\sigma^{1,7}B^{1-i}$,
$\theta^{1,7}_i:=B^{i-1}\theta^{1,7}B^{1-i}$,
and 
$\sigma^{3,5}_i:=B^{i-1}\sigma^{3,5}B^{1-i}$,  
$\theta^{3,5}_i:=B^{i-1}\theta^{3,5}B^{1-i}$,
respectively, with $i=1,2,3$.

The subgroup $G^{\theta^{p,q}}$ of fixed points of the 
involution $\theta^{p,q}$ given by
(\ref{thetapq}) is $\Spin(p,\C)\times \Spin(q,\C)$
The decomposition $\lieg=\lieg_{p,q}^+\oplus \lieg_{p,q}^-$ in 
$(\pm 1)$-eingenspaces 
for $\theta^{p,q}$ is given by

$$
\lieg_{p,q}^+=\left\{\left(%
\begin{array}{cc}
  X & 0 \\
  0 & Y \\
\end{array}%
\right)\mid
X\in\mathfrak{so}(p,\C),Y\in\mathfrak{so}(q,\C)\right\},
$$ 
and
$$
\lieg^-_{p,q}
=\left\{\left(%
\begin{array}{cc}
  0 & Z\\
  -Z^t & 0 \\
\end{array}%
\right)\mid Z\text{ complex $(p\times q)$-matrix}\right\}.
$$ 

Clearly, for $p=1,q=7$ one has
$G^{\theta_i^{1,7}}=B^{i-1}(\Spin(1,\C)\times\Spin(7,\C))$ and
the $(\pm 1)$-eingenspace decompostion of $\lieg$ 
for $\theta_i^{1,7}$ is given 
$$
\lieg=B^{i-1}(\lieg^+_{1,7})\oplus B^{i-1}(\lieg^-_{1,7}),
$$
where, as usual, we are denoting by the same letter 
the element in $\Aut(\lieg)$ induced
by an element in $\Aut(G)$.
Similarly for the other outer case corresponding to 
$p=3,q=5$.

The subgroup 
$G^{\theta^*}$ of fixed points of the involution (\ref{theta*}) is the double 
cover determined by 
the exact sequence (\ref{spin}) of the subgroup $\GL(4,\C)\subset \SO(8,\C)$ 
given by elements
$$
 \begin{pmatrix}
  A & 0 \\
  & (A^t)^{-1}
\end{pmatrix},
$$
where $A\in \GL(4,\C)$.
The decomposition $\lieg=\lieg^+\oplus \lieg^-$ in 
$(\pm 1)$-eingenspaces 
for $\theta^*$ is given by
$\lieg^+= \liegl(4,\C)$ and $\lieg^-=\Lambda^2(\C^4)\oplus \Lambda^2(\C^4)^*$.

We have all the ingredients now to apply Theorems \ref{fixed-connected} 
and \ref{fixed-connected-a} to our situation.

\begin{theorem}
Let $G=\Spin(8,\C)$. Consider the involution
$$
   \begin{aligned}
\iota: \cM(G) & \to \cM(G) \\
(E,\varphi) & \mapsto (E,- \varphi).
  \end{aligned}
$$

Then
 
\begin{enumerate}

\item 
$$
\bigcup_{p=0,2,4;\; p+q=8}
\widetilde{\cM}(G^{\theta^{p,q}},-)\bigcup \widetilde{\cM}(G^{\theta^*},-)
\subset \cM(G)^\iota,
$$  

\item
$$
\cM(G)_{ss}^\iota
\subset 
\bigcup_{p=0,2,4;\; p+q=8}
\widetilde{\cM}(G^{\theta^{p,q}},-)\bigcup \widetilde{\cM}(G^{\theta^*},-)
$$
\end{enumerate}
\end{theorem}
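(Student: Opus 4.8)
The plan is to deduce this statement directly from Theorem \ref{fixed-connected}, whose two inclusions read
$$
\bigcup_{[\theta]\in\Int_2(G)/\sim}\widetilde{\cM}(G^\theta,-)\subset \cM(G)^\iota
\quad\text{and}\quad
\cM(G)_{ss}^\iota\subset\bigcup_{[\theta]\in\Int_2(G)/\sim}\widetilde{\cM}(G^\theta,-).
$$
Thus the entire content of the proof is the identification of the index set $\Int_2(\Spin(8,\C))/\sim$ with the four classes represented by $\theta^{0,8}=\Id$, $\theta^{2,6}$, $\theta^{4,4}$ and $\theta^*$; once this is done, substituting these representatives into the two inclusions above yields exactly the displayed unions. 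Here the term $p=0$ accounts for $\theta=\Id$, where $\lieg^-_{0,8}=0$ and $\widetilde{\cM}(G^{\theta^{0,8}},-)$ is simply the locus of polystable $\Spin(8,\C)$-bundles with vanishing Higgs field, which is manifestly $\iota$-invariant.

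First I would recall, via Proposition \ref{cartan-versus-inner-groups}, that $\Int_2(G)/\sim=\cl^{-1}(1)$ is precisely the set of $\sim$-classes lying in the trivial clique, that is, the classes of involutions whose associated real forms are of Hodge type. By Cartan's classification the real forms of $\lieso(8,\C)$ are the $\lieso(p,q)$ with $0\le p\le q$, $p+q=8$, together with $\lieso^*(8)$, and these lift to $\Spin(8,\C)$ as in Remark \ref{spin-real-forms}. The key computation is to decide which of the corresponding involutions $\theta^{p,q}=\tau\sigma^{p,q}$ and $\theta^*=\tau\sigma^*$ are inner: since $\theta^{p,q}$ is conjugation by a lift of $I_{p,q}$ of (\ref{ipq}), and $I_{p,q}\in\SO(8,\C)$ if and only if $q$ (equivalently $p$) is even, the involutions $\theta^{p,q}$ with $p\in\{0,2,4\}$ are inner while those with $p\in\{1,3\}$ are outer; similarly $\theta^*$ is conjugation by $J_4$, which lies in $\SO(8,\C)$, so $\theta^*$ is inner. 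This isolates $\theta^{0,8},\theta^{2,6},\theta^{4,4},\theta^*$ as the inner ones, in agreement with the list of Hodge-type forms recorded before the statement.

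It then remains to check that these four inner involutions give four \emph{distinct} classes in $\Int_2(G)/\sim$ and that together they \emph{exhaust} it. Distinctness is easy: the four forms $\lieso(8),\lieso(2,6),\lieso(4,4),\lieso^*(8)$ are pairwise non-isomorphic, hence pairwise inequivalent already under $\sim_c$, and a fortiori under $\sim$. Exhaustion follows because, by Proposition \ref{conjugations-versus-involutions}, $\Int_2(G)/\sim$ is in bijection with the $\sim$-classes of Hodge-type conjugations, and Cartan's list contains no inner real form beyond these four.

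The step I expect to be the main obstacle is the passage from $\sim_c$-classes (isomorphism classes of real forms) to the finer $\sim$-classes that actually index the union: because $\Out(\Spin(8,\C))=S_3$ acts nontrivially through triality, one must verify that triality does not split any of the four $\sim_c$-classes into several inequivalent $\sim$-classes, which would introduce extra terms in the union. Concretely I would argue, using the $\Out(G)$-equivariance of $\cl$ in Proposition \ref{cartan-versus-inner-groups}(2) together with the cohomological description $\Int_2(G)/\sim\cong H^1_{\Id}(\Z/2,\Ad(G))$ of Propositions \ref{cliques} and \ref{cohomology}, that each of these inner classes is fixed by the triality action on $\Int_2(G)/\sim$, so that $\Int_2(G)/\sim\to\Int_2(G)/\sim_c$ is a bijection onto the four Hodge-type forms; the fact that the triality ``tripling'' phenomenon affects only the outer forms $\Spin_0(1,7)$ and $\Spin_0(3,5)$ supports this. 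Granting this point, the two inclusions of Theorem \ref{fixed-connected} specialise verbatim to the asserted statement, and an appeal to Proposition \ref{extension-reduction-2}(3) guarantees that the choice of representative within each class does not affect the image $\widetilde{\cM}(G^\theta,-)\subset\cM(G)$.
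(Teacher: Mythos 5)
Your strategy coincides with the paper's own proof, which consists of nothing more than invoking Theorem \ref{fixed-connected} together with the assertion, stated in Section \ref{triality} without proof, that the trivial clique of $\Spin(8,\C)$ is represented by $\theta^{0,8}$, $\theta^{2,6}$, $\theta^{4,4}$ and $\theta^{*}$; your proposal is an attempt to actually prove that assertion, and you have correctly located the crux in the passage from $\sim_c$-classes to $\sim$-classes. Unfortunately both of your arguments at that point fail. First, the distinctness argument: $\lieso(2,6)$ and $\lieso^{*}(8)$ are \emph{isomorphic} real Lie algebras --- this is the exceptional isomorphism induced by triality (both are Hermitian of real rank $2$ with maximal compact subalgebra $\lieu(4)$; equivalently, the bounded domains of type $D\,III_{4}$ and type $IV_{6}$ coincide) --- so $[\theta^{2,6}]_c=[\theta^{*}]_c$, and non-isomorphism of real forms cannot be used; distinctness of $[\theta^{2,6}]$ and $[\theta^{*}]$ under $\sim$ must instead be seen directly (the representatives $I_{2,6}$ and $J_4$ in $\SO(8,\C)$ square to $+I$ and $-I$, which cannot be matched even after multiplying by the centre). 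Second, and fatally, your key claim that triality acts trivially on $\cl^{-1}(1)=\Int_2(G)/{\sim}$ is false. By Propositions \ref{cliques} and \ref{cohomology} this set is in bijection with the set of conjugacy classes of elements of order at most $2$ in $\Ad(G)=\PSO(8,\C)$, and there are \emph{five} such classes: $1$, $[I_{2,6}]$, $[I_{4,4}]$, $[J_4]$ and $[J_4']$, where $J_4':=I_{1,7}J_4I_{1,7}\in\SO(8,\C)$. Indeed, the $\OO(8,\C)$-conjugacy class of $J_4$ splits into two $\SO(8,\C)$-classes, because the centralizer of $J_4$ in $\OO(8,\C)$ is a copy of $\GL(4,\C)$ acting block-diagonally on the two isotropic eigenspaces of $J_4$, hence connected and contained in $\SO(8,\C)$; and since $-J_4=I_{4,4}J_4I_{4,4}$ with $I_{4,4}\in\SO(8,\C)$, passing to $\PSO(8,\C)$ does not re-merge the two classes. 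The three classes $[\theta^{2,6}]$, $[\theta^{*}]$ and $[\theta^{*\prime}]$, where $\theta^{*\prime}$ is the inner involution defined by a lift of $J_4'$, all lie in the trivial clique and are permuted transitively by the $S_3$-action of triality: they are distinguished by how the three $8$-dimensional representations of $\Spin(8,\C)$ (vector and the two half-spin representations) decompose on restriction to the fixed subgroup, an invariant of the $\sim$-class. So the ``tripling'' phenomenon is not confined to the outer forms $\Spin_0(1,7)$ and $\Spin_0(3,5)$, contrary to what you (and the text preceding the statement) assume.

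Consequently your proof cannot be completed as written, and the obstruction is not merely in your argument: with the correct index set, Theorem \ref{fixed-connected} produces a union of \emph{five} subvarieties, and the fifth piece $\widetilde{\cM}(G^{\theta^{*\prime}},-)$ cannot be absorbed into the other four. Indeed, by the uniqueness part of Proposition \ref{fixed-iota}, a stable and simple fixed point of $\iota$ determines a unique class in $\Int_2(G)/{\sim}$, so the pieces attached to distinct classes are pairwise disjoint inside $\cM(G)_{ss}$; and the piece for $[\theta^{*\prime}]$ meets $\cM(G)_{ss}$, since by Theorems \ref{naht-real} and \ref{naht} it contains the Higgs bundles of representations of $\pi_1(X)$ that are Zariski-dense in the corresponding (triality-moved) copy of $\Spin^{*}(8)$, whose centralizer in $\Spin(8,\C)$ is the centre. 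Hence inclusion (1) of the statement is correct as it stands (a subunion of the five pieces), but inclusion (2) holds only after adding the term $\widetilde{\cM}(G^{\theta^{*\prime}},-)$, which is the image of $\widetilde{\cM}(G^{\theta^{*}},-)$ under the outer automorphism of $\cM(G)$ interchanging the two half-spin representations. Your instinct about where the difficulty lies was exactly right; the resolution, however, goes in the opposite direction to the one you proposed, and it affects the displayed statement itself, not only your proof of it.
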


\begin{theorem} Let $G=\Spin(8,\C)$ and 
let $1\neq a_i\in \Out_2(G)$ with $i=1,2,3$. Consider the  involutions 

$$
   \begin{aligned}
\iota(a_i,\pm): \cM(G) & \to \cM(G) \\
(E,\varphi) & \mapsto (a_i(E),\pm a_i(\varphi)).
  \end{aligned}
$$
Then
\begin{enumerate}

\item 
$$
\bigcup_{p=1,3;\; p+q=8}
\widetilde{\cM}(G^{\theta_i^{p,q}},\pm)
\subset \cM(G)^{\iota(a_i,\pm)},
$$  

\item
$$
\cM(G)_{ss}^{\iota(a_i,\pm)}
\subset 
\bigcup_{p=1,3;\; p+q=8}
\widetilde{\cM}(G^{\theta_i^{p,q}},\pm)
$$
\end{enumerate}
\end{theorem}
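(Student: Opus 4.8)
The plan is to obtain this theorem as a direct specialization of Theorem \ref{fixed-connected-a} to $G=\Spin(8,\C)$ with $a=a_i$. That theorem already supplies both inclusions, phrased in terms of $\bigcup_{[\theta]\in\cl^{-1}(a_i)}\widetilde{\cM}(G^\theta,\pm)$, and since $a_i\neq 1$ statement (2) holds without the exceptional case. Hence the entire content reduces to the purely Lie-theoretic task of computing the fibre $\cl^{-1}(a_i)\subset\Aut_2(G)/{\sim}$ and checking that it equals $\{[\theta_i^{1,7}],[\theta_i^{3,5}]\}$, whereupon substitution into Theorem \ref{fixed-connected-a} finishes the argument for both signs simultaneously.

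First I would list, via Cartan's classification, the classes in $\Aut_2(G)/{\sim_c}$: these are $\theta^{p,q}$ (for $0\le p\le q$, $p+q=8$) together with $\theta^*$, i.e. the real forms $\lieso(p,q)$ and $\lieso^*(8)$. Then I would separate inner from outer through the projection $\pi\colon\Aut_2(G)\to\Out_2(G)$: since $\theta^{p,q}$ is the lift of $\Ad(I_{p,q})$, it is inner exactly when $I_{p,q}\in\SO(8,\C)$, that is when $\det I_{p,q}=(-1)^q=1$, i.e. when $q$ is even; and $\theta^*=\Ad(J_4)$ with $J_4\in\SO(8,\C)$ is inner as well. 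So the four inner classes $\theta^{0,8},\theta^{2,6},\theta^{4,4},\theta^*$ give the Hodge-type forms $\Spin(8),\Spin_0(2,6),\Spin_0(4,4),\Spin^*(8)$ over the trivial clique, while the two outer classes are $\theta^{1,7}$ and $\theta^{3,5}$ ($q$ odd). Both lift the same transposition $a_1:=\pi(\theta^{1,7})=\pi(\theta^{3,5})\in\Out_2(G)$, and since $\lieso(1,7)\not\cong\lieso(3,5)$ they are already distinct under $\sim_c$, hence under $\sim$; thus $\{[\theta^{1,7}],[\theta^{3,5}]\}\subseteq\cl^{-1}(a_1)$.

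Next I would propagate to $a_2,a_3$ and prove exactness using the $\Out(G)$-equivariance of $\cl$ from Proposition \ref{cartan-versus-inner-groups}(2), together with the identification $\Aut_2(G)/{\sim_c}=(\Aut_2(G)/{\sim})/\Out(G)$. Conjugating by a lift $B$ of an order-$3$ element $b\in S_3=\Out(G)$ carries $[\theta^{p,q}]$ to $[\theta_i^{p,q}]$, and equivariance gives $\cl([\theta_i^{p,q}])=b^{i-1}a_1 b^{1-i}=a_i$. For exactness I would run an orbit--stabilizer count over each outer $\sim_c$-class, which is a single $\Out(G)$-orbit of $\sim$-classes: taking $\alpha=\theta^{1,7}$ (itself a lift of $a_1$) in the action formula $a\cdot[\theta]=[\alpha\theta\alpha^{-1}]$ yields $a_1\cdot[\theta^{1,7}]=[\theta^{1,7}]$, so $\la a_1\ra\subseteq\operatorname{Stab}([\theta^{1,7}])$ and the orbit has size at most $3$; since it surjects onto the conjugacy orbit $\{a_1,a_2,a_3\}$ of transpositions in $S_3$, it is exactly $\{[\theta_i^{1,7}]\}_{i=1}^3$ with $\cl([\theta_i^{1,7}])=a_i$, and likewise for $\theta^{3,5}$. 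This gives $\cl^{-1}(a_i)=\{[\theta_i^{1,7}],[\theta_i^{3,5}]\}$ exactly. The main obstacle is precisely this exactness: ruling out any further $\sim$-class lying over $a_i$ and confirming that the two abstract outer real forms distribute as exactly one $\sim$-class per transposition-clique. This is where triality must be handled carefully, and the orbit--stabilizer computation above is the device I would rely on to control it; everything else is bookkeeping of the explicit eigenspace decompositions already recorded before the statement.
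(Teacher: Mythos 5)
Your proposal is correct and follows essentially the same route as the paper: there the theorem is presented as a direct specialization of Theorem \ref{fixed-connected-a} to $G=\Spin(8,\C)$, with the key Lie-theoretic fact that $\cl^{-1}(a_i)=\{[\theta_i^{1,7}],[\theta_i^{3,5}]\}$ (inner classes $\theta^{0,8},\theta^{2,6},\theta^{4,4},\theta^*$ over the trivial clique, outer classes distributed one per transposition by triality) asserted without proof. Your orbit--stabilizer argument, using the $\Out(G)$-equivariance of $\cl$ from Proposition \ref{cartan-versus-inner-groups} and the lift $\alpha=\theta^{1,7}$ to show $a_1$ stabilizes $[\theta^{1,7}]$, correctly supplies exactly the detail the paper leaves implicit.
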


Of course we have the corresponding versions of Theorem \ref{fixed-a-reps} 
regarding the corresponding involutions in the moduli space
of representations $\calR(G)$.

It is clear that the subvarieties 
$\widetilde{\cM}(G^{\theta_i^{p,q}},\pm)$ for $p=1,3$ are transformed one into 
another by the action of the order $3$ element  $b\in \Out(G)$, namely,
$$
\widetilde{\cM}(G^{\theta_i^{p,q}},\pm)=b^{i-1}(\widetilde{\cM}(G^{\theta^{p,q}},\pm)),
$$
for $i=1,2,3$.

\subsection{Order $3$ automorphisms}
Let $b\in \Out_3(G)$, and let
$\zeta_k:=\exp(2\pi i\frac{ k}{3})$, with $k=0,1,2$. We study now
the order $3$ automorphism of $\cM(G)$ given by
$(E,\varphi)\mapsto (b(E),\zeta_kb(\varphi))$. 
To do this, we need to understand the $3$-clique $b$, that is 
we need to know the set $\cl_3^{-1}(b)\in \Aut_3(G)/\sim$.
This is studied in \cite{wolf-gray}. It turns out that $\cl_3^{-1}(b)$ consists
of two classes. One can choose representatives $\theta_1,\theta_2\in \Aut_3(G)$
of these two classes such that
$$
G^{\theta_1}=\PSL(3,\C)\subset\Spin(8,\C)
$$
and the decomposition of $\lieg$ in  $\zeta_k$-eigenspaces for $k=0,1,2$ is
$$
\lieg=\lieg^0\oplus \lieg^1 \oplus \lieg^2=
\liesl(3,\C)\oplus S^2(\C^3) \oplus S^2(\C^3)^*,
$$
where  $S^2(\C^3)$ is the $2$-symmetric tensor product of the fundamental
representation of $\SL(3,\C)$, which is thus $10$-dimensional;
and 
$$
G^{\theta_2}= G_2\subset\Spin(8,\C),
$$
and
$$
\lieg=\lieg^0\oplus \lieg^1 \oplus \lieg^2=
\lieg_2 \oplus \C^7\oplus \C_7,
$$
where $\C^7$ is the fundamental representation of $G_2$. 

We can now apply Theorem \ref{moduli-n-automorphism} to
obtain the following. 

\begin{theorem}
Let $G=\Spin(8,\C)$, $b$ be a non-trivial element of order $3$ in 
$\Out(G)=S^3$, and $\zeta_k:=\exp(2\pi i\frac{ k}{3})$, with $k=0,1,2$.
Consider the order $3$ automorphism
$$
\begin{aligned}
\iota(b,\zeta_k): \cM(G) & \to \cM(G) \\
(E,\varphi) & \mapsto (b(E),\zeta_k b(\varphi)).
  \end{aligned}
$$
Then
\begin{enumerate}

\item 
$$
\widetilde{\cM}(\PSL(3,\C),\zeta_k)\cup
\widetilde{\cM}(G_2,\zeta_k)
\subset \cM(G)^{\iota(b,\zeta_k)},
$$  

\item
$$
\cM(G)_{ss}^{\iota(b,\zeta_k)}
\subset 
\widetilde{\cM}(\PSL(3,\C),\zeta_k)\cup
\widetilde{\cM}(G_2,\zeta_k).
$$
\end{enumerate}

\end{theorem}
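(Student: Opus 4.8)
The plan is to derive this theorem as a direct application of Theorem \ref{moduli-n-automorphism} with $n = 3$ and $a = b \in \Out_3(G)$. That theorem already expresses the fixed-point locus of $\iota(b,\zeta_k)$ in terms of the moduli spaces $\widetilde{\cM}(G^\theta,\zeta_k)$ as $[\theta]$ ranges over $\cl_3^{-1}(b) \subset \Aut_3(G)/\sim$, so the entire content to be supplied is the identification of this fibre together with the corresponding subgroups $G^\theta$.

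First I would invoke Proposition \ref{n-clique-cohomology}, which gives the bijection $\cl_3^{-1}(b) \longleftrightarrow H^1_b(\Z/3,\Ad(G))$, reducing the problem to a Lie-theoretic computation. The classification of the classes in $\Aut_3(G)/\sim$ lying over $b$, carried out in \cite{wolf-gray} and recalled in the paragraph preceding the statement, shows that $\cl_3^{-1}(b)$ consists of exactly two classes, represented by $\theta_1,\theta_2 \in \Aut_3(G)$ with
$$
G^{\theta_1} = \PSL(3,\C) \quad\text{and}\quad G^{\theta_2} = G_2,
$$
both sitting inside $\Spin(8,\C)$, and with the stated $\zeta_k$-eigenspace decompositions of $\lieg$.

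Substituting these two classes into Theorem \ref{moduli-n-automorphism} then identifies the union $\bigcup_{[\theta]\in \cl_3^{-1}(b)} \widetilde{\cM}(G^\theta,\zeta_k)$ with $\widetilde{\cM}(\PSL(3,\C),\zeta_k) \cup \widetilde{\cM}(G_2,\zeta_k)$. This yields inclusion (1) immediately, and inclusion (2) as well, since the excluded case $\iota(1,1)$ does not arise here: $b$ has order $3$, and in particular $b \neq 1$.

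The step I expect to be the only substantive one is the input from \cite{wolf-gray}, namely that $\cl_3^{-1}(b)$ contains precisely the two classes above and no others; everything else is formal, being the specialisation of the general fixed-point theorem. It is worth emphasising in the write-up that the two fixed-point subgroups are of genuinely different type --- one reductive of type $A_2$ and the other the exceptional $G_2$ --- which is the manifestation of triality at the level of the fixed-point subvarieties of $\cM(G)$.
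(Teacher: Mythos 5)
Your proposal is correct and follows essentially the same route as the paper: the paper also obtains this theorem by specialising Theorem \ref{moduli-n-automorphism} to $n=3$, $a=b$, with the only substantive input being the classification from \cite{wolf-gray} that $\cl_3^{-1}(b)$ consists of exactly two classes, represented by $\theta_1,\theta_2$ with $G^{\theta_1}=\PSL(3,\C)$ and $G^{\theta_2}=G_2$, together with the stated eigenspace decompositions. Your observation that the excluded case $\iota(1,1)$ cannot occur because $b\neq 1$ is also the (implicit) reason the paper states part (2) without exception here.
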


A direct approach to this case has been given in \cite{alvaro},
where the stability conditions for the $(\PSL(3,\C),\zeta_k)$- and
$(G_2,\zeta_k)$-Higgs bundles have been explictly worked out.

One has
$b^{-1}=a_iba_i$ for any $a\neq 1$ in $\Out_2(G)$, and hence if $A\in \Aut_2(G)$
is a lift of $a$,  the $3$-clique $b^{-1}$ has 
$\theta_1'=A\theta_1 A^{-1}$ and $\theta_2'=A\theta_2 A^{-1}$ as 
representatives of the two
classes in $\cl_3^{-1}(b^{-1})$, thus having  
$G^{\theta_1'}=A(\PSL(3,\C))$ and $G^{\theta_2'}=A(G_2)$.
The fixed points of $\iota(b,\zeta_k)$ are hence moved to the fixed
points of $\iota(b^{-1},\zeta_k)$ by the action of $b$ on $\cM(G)$.

\providecommand{\bysame}{\leavevmode\hbox to3em{\hrulefill}\thinspace}

\end{document}